\documentclass[aos]{imsart}

\RequirePackage{amsthm,amsmath,amsfonts,amssymb}
\RequirePackage[numbers]{natbib}
\RequirePackage[colorlinks,citecolor=blue,urlcolor=blue]{hyperref}

\usepackage{dsfont}

\startlocaldefs
\numberwithin{equation}{section}
\theoremstyle{plain}
\newtheorem{conjecture}{Conjecture}
\newtheorem{theorem}{Theorem}[section]
\newtheorem{corollary}{Corollary}[section]
\newtheorem{proposition}{Proposition}[section]
\newtheorem{lemma}{Lemma}[section]
\theoremstyle{definition}
\newtheorem*{assumption}{Assumption}
\newtheorem{remark}{Remark}[section]

\def\R{\mathbb{R}}
\def\K{[K]}
\def\A{\mathcal{A}}
\def\J{\mathcal{J}}
\def\C{\mathcal{C}}

\def\P{\mathbb{P}}
\def\E{\mathbb{E}}
\def\L{L}
\def\onee{\mathds{1}}
\def\boh{\mathbf{h}}
\def\bohk{\mathbf{h}_k}
\def\bog{\mathbf{g}}

\def\RLK{\mathbb{R}^K\times L_2^{K^2}}
\def\RLKO{\mathbb{R}^K\times L_{2,\boh^0}}
\def\RLKOp{\mathbb{R}^K\times L_{2,\boh^0}^+}
\def\bth{\boldsymbol{\theta}}
\def\lti{\lfloor \log(T)\rfloor}
\def\VERT{\vert\hspace{-0.04cm} \Vert}

\endlocaldefs

\begin{document}

\begin{frontmatter}
\title{The Bernstein-von Mises theorem and efficiency for semiparametric inference in multivariate Hawkes processes}
\runtitle{Semiparametric BvM for Hawkes processes}

\begin{aug}
\author[A]{\fnms{Maël}~\snm{Duverger}\ead[label=e1]{duverger@ceremade.dauphine.fr}}
\and
\author[A]{\fnms{Judith}~\snm{Rousseau}\ead[label=e2]{rousseau@ceremade.dauphine.fr}} 

\address[A]{CEREMADE,  University Paris Dauphine - PSL\printead[presep={,\ }]{e1,e2}}

\end{aug}

\begin{abstract}
In this paper, we study semiparametric inference for linear multivariate Hawkes processes, a class of point processes widely used to describe self and mutually exciting phenomena. We establish a convolution theorem giving the best limiting distribution for a regular estimator of smooth functional. Then, in the Bayesian setting, we prove a semiparametric Bernstein-von Mises (BvM) theorem for nonparametric random series priors. We apply this result to histogram and wavelet based priors. Taken together, the convolution and BvM theorems show that, from a frequentist point of view, semiparametric Bayesian procedures have asymptotically the optimal behavior. Deriving the BvM property for random series priors led us to prove $\L_{2}$ posterior contraction, complementing for these priors the results of Donnet, Rivoirard and Rousseau (2020).
\end{abstract}

\begin{keyword}[class=MSC]
\kwd[Primary ]{62G20}
\kwd{60G55}
\kwd[; secondary ]{62G05}
\end{keyword}

\begin{keyword}
\kwd{Hawkes processes}
\kwd{semiparametric Bayesian estimation}
\kwd{Bernstein-von Mises}
\end{keyword}

\end{frontmatter}
\tableofcontents


\section{Introduction}
\subsection{Hawkes processes}

Multivariate Hawkes processes are point processes introduced by Alan G. Hawkes in 1971 \cite{hawkes:71}. Let $N$ be a marked point process on $\R$ with marks in $\K:=\{1,...,K\}$. For a real Borel set $A$, $N(A)$ is the number of occurrences in $A$, $N^k(A)$ is the number of occurrences in $A$ with mark $k$ and for $t\geq 0$, $N_t^k:=N^k([0, t])$. Let $\mathcal{G} = \{\mathcal{G}_t, t\in \R\}$ be the internal history, the stochastic (predictable) intensity of the process is given by 
\begin{equation}
    \lambda_t^k   =  \underset{\Delta\searrow 0}{\lim}\frac{\P\big(N^k(]t,t+\Delta]) = 1 \vert  \mathcal{G}_{t-}\big) }{\Delta}, \hspace{0.3cm}k\in \K. \nonumber
\end{equation}
For a linear multivariate  Hawkes process $N$ with marks in $\K$, we have
\begin{equation}\label{intensity_linear}
    \lambda_t^k(\nu_k, \bohk) = \nu_k + \sum_{l=1}^K \int_{-\infty}^{t-} h_{l,k}(t-u)dN_u^l\hspace{0.1cm}, \hspace{0.3cm}k\in \K,
\end{equation}
$\nu_k >0$ is the background rate and the functions $\bohk = (h_{l,k})_{l\in\K}$,  called interaction functions, are non negative and supported on $[0,+\infty[$.  The parameters of a Hawkes process are thus $f=(\nu,\boh)$  with $\nu=(\nu_1,...,\nu_K)\in ]0,+\infty[^K$ and $\boh$ is the $K\times K $ matrix whose $k$-th column is $\boh_k= (h_{1,k}, ..., h_{K,k})$.

Hawkes processes are used in a wide range of fields such as seismology \cite{Ogata_88, verejones_ozaki}, genomics \cite{gusto_al, reynaud_bouret_schbath}, neuroscience \cite{bonnet_dion_lemler,reynaud_genom}, finance \cite{bacry_delattre_hoffmann,bacry_muzy_mastro, embrechts_al} and cyber security \cite{hillairet_bessy,hillairet_reveillac_rosenbaum}.

There is a vast literature on statistical inference in Hawkes processes. In parametric models, starting from the seminal works \cite{Ogata_78} and \cite{Kutoyants} (chapter 4), several extensions and generalisations have been proposed,  see for instance \cite{bonnet_martinez_sangnier_1,bonnet_martinez_sangnier_2} and the references therein. In particular, nonparametric estimation was first studied in \cite{reynaud_bouret_schbath} and then a Lasso-type estimator was proposed in \cite{hansen_al}. Shortly after,  an \textit{edge screening} approach, with a low computational cost, to recover the graph of interaction was proposed in \cite{chen_al}. In the Bayesian setting,  nonparametric posterior contraction was established in \cite{donnet_concentration,Sulem_concentration}, extensions to settings where $K$ is high dimensional have been recently proposed in \cite{rousseau_highdim} while variational Bayes methods for Hawkes processes have been studied in \cite{Sulem_variational}, see also references therein such as \cite{zhou2020}.

However, none of these works address the important question of semiparametric inference. Certain functionals are of special interest in the analysis of a Hawkes process such as the background rate $\nu_k$ or the interaction matrix $\rho$ defined by  $\rho_{l,k} = \int h_{l,k}(x)dx$. Indeed, in the univariate case, the matrix $\rho$ reduces to $\int h$ which corresponds to the average number of offspring "produced" by a point when the Hawkes process is viewed as a Poisson cluster process, see \cite{hawkes_oakes}. Moreover, there is a causal interpretation of $\rho$: one can say that the process $N^l$ does not "Granger-cause" $N^k$ if  $\rho_{l,k} =0$, see \cite{eichler_al_causal}. It is notably worthwhile to test the nullity of some coefficients $\rho_{l,k}$, as done for parametric models  \cite{bonnet_dion_sadeler, lotz} and in for a nonparametric model in \cite{kutoyants_dachian} (in the univariate case). However the nonparametric result in \cite{kutoyants_dachian} requires to know the background rate $\nu$ and is specific to the univariate case, where under the null hypothesis the process is a Poisson process. For a multivariate Hawkes process, the process is Poisson only if $\rho_{l,k}=0$ for all $(l,k)\in \K^2$, having only some $\rho_{l,k}=0$ is not sufficient to find back a Poisson process and proposing such a nonparametric testing procedure in the multivariate case is both of practical and theoretical interest. In \cite{Sulem_concentration} the authors estimate consistently $\delta_{l,k} = \onee_{\rho_{l,k}\neq 0}$ but under unsatisfying assumptions on the prior. Beyond testing nullity, they are other general key questions. Which functionals can be estimated at a fast rate? What is the efficiency theory in these models? What is the limiting behavior of the marginal posterior distribution of a smooth functional of $f$? In this paper, we aim to fill these gaps, which are important both from a frequentist and Bayesian perspective.

\subsection{Our contribution}
We consider the same asymptotic framework as in \cite{donnet_concentration,Sulem_concentration} where a stationary Hawkes process $N$ with \textit{true} parameters $f^0$ is observed over a time window of order $T$ and $T\rightarrow  +\infty$. We are interested in the semiparametric estimation of \textit{smooth} functionals $\psi(f^0)\in \R$ of the infinite dimensional parameter $f^0$. We first derive a convolution theorem which states that for a regular estimator $\hat\psi_T$, the asymptotic distribution of $\sqrt{T}(\hat\psi_T -\psi(f^0))$ can be written as the convolution between some probability distribution and a Gaussian distribution with optimal variance $V_0$, denoted $\mathcal{N}(0,V_0)$. An estimator achieving the limit distribution $\mathcal{N}(0,V_0)$ is said to be efficient. This derivation is done following the standard semiparametric theory as presented in chapter 25 of \cite{VDV_AS}: a nonparametric LAN expansion is proved, giving rise to a LAN inner product that is used to define $V_0$. Parametric LAN expansion for point processes has been studied in \cite{Kutoyants}  (see chapter 4 and the references within) where a central limit theorem tailored to this purpose is proved.

Secondly, based on the general approach of \cite{castillo_rousseau} and using the recent results of \cite{donnet_concentration,Sulem_concentration}, we study the semiparametric Bernstein-von Mises (BvM) property for nonparametric priors on $f$. This property says that, given a smooth functional $\psi$ and an efficient estimator $\hat\psi_T$, the marginal posterior distribution of $\sqrt{T}(\psi(f)-\hat\psi_T)$ converges weakly, in probability, to a $\mathcal{N}(0,V_0)$ as  $T\rightarrow +\infty$. In this way, by the convolution theorem, semiparametric Bayesian inference is efficient from a frequentist point of view.
In the context of priors on the
$h_{l,k}$ based on expansions on a basis, we obtain in Theorem \ref{main_theorem} an approximation of the posterior
distribution of functional $\psi(f)$ as a mixture of Gaussian distributions. To understand when
this mixture of Gaussian distributions corresponds to
a single Gaussian distribution and thus that the BvM property holds, we need to study more precisely the least favourable direction and show that it can be well approximated under the prior distribution. The difficulty here is that the least favourable direction is non explicit and depends on the Palm distribution of the process, see Lemma \ref{explicit_gamma}. Nevertheless we show, under Holder smoothness assumptions on the functions $h^0_{l,k}$, that the least favourable direction is Holder smooth (see corollary \ref{g0_palm_holder}). This allows us to derive explicit sufficient conditions for the BvM to hold. Moreover, in order to derive the asymptotic distribution of the marginal posterior, we also need to obtain posterior contraction rates in $L_2$, which is of independent interest.

The semiparametric BvM property allows efficient and flexible estimation of smooth functionals of the parameters. In particular, $ 1 - \alpha$ Bayesian credible intervals for $\psi(f)$ are then  asymptotically $1-\alpha$ frequentist confidence intervals. It also allows to study  the consistency of Bayes tests for $\rho_{l,k}=0$. Moreover, since the least favourable direction is not explicit, constructing concrete confidence intervals with a frequentist procedure is not straightforward, whereas Bayesian procedures provide an automatic approach using the quantiles of the marginal posterior distribution.

Our work is part of a broader line of research aimed at extending the BvM theorem to more general settings. Over the last two decades, the BvM theorem for parametric models (see section 10 of \cite{VDV_AS} for a statement with minimal assumptions) has been generalized in various ways. In particular, \cite{rousseau_rivoirard}, \cite{castillo_ptrf} and  \cite{castillo_rousseau} (upon which we build) investigated the BvM property for semiparametric models while nonparametric version of the BvM property was proposed in \cite{castillo_nickl}. We also point out that the BvM phenomena for parametric models that are non regular and exhibit boundary effects was analyzed in \cite{bvm_truncated}. More recently, the semiparametric and the nonparametric BvM property have been investigated in more complex models than those initially considered in the seminal papers \cite{ castillo_ptrf,castillo_nickl,castillo_rousseau,rousseau_rivoirard}. The nonparametric version, have been notably studied for  nonlinear inverse problems  \cite{nickl_IP1,nickl_IP2} and for multidimensional diffusions \cite{nickl_ray}. Semiparametric BvM has been studied, among others,  in Hidden Markov Models (HMM) \cite{moss_rousseau}, for multidimensional diffusions \cite{giordano_ray}, in a missing data model (with a causal inference equivalent formulation) \cite{ray_vdv} and for some linear inverse problems \cite{magra_vdv_vz}.  In our case, as mentioned earlier, one of the main difficulties for the Hawkes model lies in the study of the  regularity of the least favourable direction. The latter is defined as the image of the usual $\L_2$ Riesz representor of the functional by the inverse of the so called \textit{information operator} which is at first glance non explicit in this model. In Section \ref{subsec_explicit_gamma}, we obtain an explicit formula for this operator using Palm calculus and then we prove in Section \ref{sec_lfeq} that under a Hölder regularity assumption on $h_{l,k}^0$, the least favourable direction is (almost) as regular as the $\L_2$ Riesz representor. Similar issues concerning the information operator and the least favourable direction have also arisen recently in some PDEs and inverse problems models, see \cite{giordano_ray, monard_nickl_paternain, nickl_IP2}.

The paper is organized as follows. In Section \ref{sec_semi_param_eff} we present the results on the semiparametric efficiency (the LAN expansion, the convolution theorem and the derivation of an explicit formula for the information operator). Section \ref{BVM} is dedicated to Bayesian inference, we state a theorem for the $L_2$ posterior contraction, then we present our main results on the BvM property and we illustrate them on histogram and wavelet based priors. The study of the regularity of the least favourable direction is presented in Section \ref{sec_lfeq}. In Section \ref{sec_proofs}, we prove our main result on the BvM property (Theorem \ref{main_theorem}) and the main result for the regularity of the last favorable direction (Lemma \ref{final_lemma_operator}) which are the more interesting and novel proofs. In the Appendix \ref{appendix_palm}, we briefly introduce Palm calculus giving the results we use in this paper. Appendices \ref{proof_section_semiparam},  \ref{proof_subsection_main_result} and  \ref{sec:pr:tech_lemmas}  contain the other proofs and technical lemmas and the Appendix \ref{appendix_omega} recalls a Lemma from \cite{Sulem_concentration} which is used repeatedly throughout this paper.

\subsection{Setup and notations}\label{not_assump}

The parameters of a Hawkes process are $f=(\nu,\boh)$ with $\nu=(\nu_1,...,\nu_K)\in ]0,+\infty[^K$ and $\boh$ is the $K\times K $ matrix whose $k$-th column is $\boh_k= (h_{1,k}, ..., h_{K,k})$. The intensity of the $k$-th process depends only on $f_k=(\nu_k, \boh_k)$. To avoid confusion, we will always denote in bold matrices or vectors of functions. Throughout this paper we assume that the interactions functions $h_{l,k}$ are supported on a bounded interval $[0,A]$ with $A$ known. It is a common assumption (see \cite{donnet_concentration} and \cite{hansen_al}) and it implies that the Hawkes process has a finite memory, some renewal properties (see \cite{Sulem_concentration} and \cite{Costa_renewal}) and its intensity is equal to 
\begin{equation}\label{intensity_linear_A}
    \lambda_t^k(f_k) = \nu_k + \sum_{l=1}^K \int_{t-A}^{t-} h_{l,k}(t-u)dN_u^l, \hspace{0.3cm}k\in \K.
\end{equation}
By Theorem 7 of \cite{bremaud_massoulie_96}, if the $K\times K$ matrix $\rho$ with entry $(l,k)$ equal to $\rho_{l,k}=\int_0^A h_{l,k}(x)dx$ has a spectral radius $r(\rho)$ strictly smaller than $1$, then there exists a unique stationary distribution for the
multivariate Hawkes process $N$ with stochastic intensity given by (\ref{intensity_linear_A}). Hence, we define the following  sets of parameters
\begin{align}
    &\mathcal{\bar H} = \Big\{\boh = (h_{l,k})_{(l,k)\in \K^2}; \hspace{0.1cm} supp(h_{l,k}) \subseteq [0,A], \hspace{0.1cm} \forall (l,k)\in \K^2 \Big\},\nonumber\\
    &\mathcal{H} = \Big\{\boh\in \mathcal{\bar H}:\hspace{0.1cm} \forall (l,k)\in \K^2, h_{l,k}\geq 0 , \hspace{0.1cm}r(\rho)<1\Big\}. \nonumber
\end{align}
 The statistical model we consider is $\mathcal{P} = \big\{\P_{f}, \hspace{0.1cm}f=(\nu, \boh)\in ]0,+\infty[^K\times \mathcal{H}\big\}$, which is identifiable. We assume that the true generating process of $N$, $\mathbb P_0$,  is associated to a \textit{true} parameter $f^0\in ]0,+\infty[^K\times \mathcal{H}$, so that $\P_0 =\P_{f^0}$;   $\L_2(\P_0) $ denotes the space of random variables square integrable with respect to $\P_0$ and an expectation under $\P_0$ is denoted $\E_0$.  We denote by $\P_0(.\vert \mathcal{G}_{0-})$ the conditional distribution of $\P_0$ given $\mathcal{G}_{0-}$.
 
 We assume that we observe on an interval $[-A,T]$ the stationary multivariate linear Hawkes process with parameters $f^0$. For some parameters $f$, the log-likelihood on $[0, T]$ conditionally on $\mathcal{G}_{0-}$ (or equivalently on $\sigma\big(N^k([s,0[), -A\leq s <0, k \in \K\big)$) is given by
\begin{align}\label{likelihood}
    L_T\big(f\big) =  \sum_{k=1}^K \int_0^T \log\big(\lambda_t^k(f)\big) dN_t^k - \int _0^T \lambda_t^k(f) dt.
\end{align}
$\P_{f}$ is the distribution of a Hawkes process parameterized by $f$ and we have
\begin{align}
    d\P_f(.\vert \mathcal{G}_{0^-}) = e^{L_T(f) - L_T(f^0)}d\P_0(.\vert \mathcal{G}_{0^-}).\nonumber
\end{align}

Although we are interested in linear multivariate Hawkes processes, we will also consider ReLU nonlinear multivariate Hawkes processes for which the functions $(h_{l,k})_{l,k}$ can take negative values and the intensity of the $k$-th process is given by
\begin{align}\label{intensity_non_linear}
    \lambda_t^{k}(f_k)  = \Big(\nu_k + \sum_{l=1}^K \int_{t-A}^{t-} h_{l,k}(t-u)dN_u^l\Big)_+\hspace{0.1cm},
\end{align}
with still $\nu_k>0$. Note that if the functions $(h_{l,k})_{l,k}$ are non negative, a ReLU nonlinear Hawkes  process and a linear Hawkes process both parameterized by $f$ have the same intensity. Linear Hawkes processes are thus particular cases of nonlinear ReLU Hawkes processes. We denote by $\rho_+$ the matrix with entry $(l,k)$ equal to $\int_0^A h_{l,k}^+(x)dx$ and we set $\mathcal{H}_R = \{\boh\in \mathcal{\bar H}:\hspace{0.1cm} r(\rho_+)<1\}$. By Theorem 7 of \cite{bremaud_massoulie_96}, given $f\in ]0,+\infty[^K\times \mathcal{H}_R$, there exists a unique stationary distribution with intensity (\ref{intensity_non_linear}). Now, we set
\begin{align}
    &
    &\mathcal{F}_R = \Big\{(\nu,\boh)\in ]0,+\infty[^K\times \mathcal{H}_R: \hspace{0.1cm} \underset{k\in \K}{\min}( \nu_k -\underset{l\in \K}{\max} \Vert h^-_{l,k}\Vert_\infty)>0\Big\}. \nonumber
\end{align}
By proposition 2.3 of \cite{Sulem_concentration},  the model $\mathcal{P}_R = \big\{\P_f, f\in \mathcal{F}_R\big\}$  is identifiable. The reason why we consider this larger ReLU set of parameters $\mathcal{F}_R$ is that $f^0$ is on the boundary of $]0,+\infty[^K \times \mathcal{H}$ when the functions $\boh^0$ are not bounded away from $0$ whereas $f^0$ is always in the \textit{interior} of $\mathcal{F}_R$. Being on the boundary of the parameters set modifies the BvM property and to avoid this, we will consider priors supported on $\mathcal{F}_R$ in Section \ref{BVM} when the functions $\boh^0$ are not assumed to be bounded away from $0$. For this reason, we will also give a convolution theorem at $\P_0$ in the model $\mathcal{P}_R$ at the end of Section \ref{sec_semi_param_eff}.

The convergences in distribution, in probability and almost sure are respectively indicated by $\overset{d}{\rightarrow}$, $\overset{\P}{\rightarrow}$ and $\overset{a.s.}{\rightarrow}$. $\mathcal{N}(\varepsilon,\mathcal{A}, d)$ is the covering number of a set $\mathcal{A}$ by balls of radius $\varepsilon$ in terms of metric $d$. The symbol $\lesssim$ will denote an inequality up to a positive multiplicative constant that might be universal or depend only on the true parameter $f^0$,  and $a\asymp b$ means that $a\lesssim b \lesssim a$.  For $p\in [1,+\infty]$, $(\L_p([0,A]), \Vert.\Vert_p)$ is the usual Lebesgue normed space on $[0,A]$ and we simply write $\L_p = L_p([0,A])$. The product space $\bigotimes_{i=1}^n \L_p$ is denoted $\L_p^{n}$ (typically $n=K$ or $n=K^2$). Any norm on the space $\R
^K\times L_p^{K^2}$ induces a norm on $\L_p^{K^2}$ and on $\L_p$ and we  keep the same notation for the induced norms. In particular, for $p=1,2$, we have
\begin{align}
    \Vert (\nu,\boh)\Vert_p = \Big( \sum_{k=1}^K \vert \nu_k\vert_p^p + \sum_{l=1}^K\sum_{k=1}^K \Vert h_{l,k}\Vert_p^p\Big)^{1/p}\hspace{0.2cm},\nonumber
\end{align}
as well as  $\Vert (\nu,\boh)\Vert_\infty = \max (\hspace{0.1cm}\max\{\vert \nu_k\vert, k\in \K\},\hspace{0.1cm} \max\{\Vert h_{l,k}\Vert_\infty, (l,k)\in \K^2\})$. 

Finally, let $\boh$ and $\bog$ be two $K\times K$ matrices of functions, $\boh .\bog$ and $\boh./\bog$ are the $K\times K$ matrices whose entries $(l,k)$ are the functions $h_{l,k}  g_{l,k}$  and $h_{l,k}/g_{l,k}$ (provided that $g_{l,k}\neq 0$), respectively. Given $\gamma: \R\rightarrow \R$, the matrix with entry $(l,k)$ equal to $\gamma(h_{l,k})$ is denoted $\gamma(\boh)$.

\section{Semiparametric efficiency}\label{sec_semi_param_eff}

In this Section, we study the efficiency theory on the estimation of smooth functionals $\psi(f)$ as presented in \cite{VDV_AS} for i.i.d. models and in \cite{mcneney} for non i.i.d. models. To do so, we need to first derive the LAN property. Then, using the convolution theorem, we find the optimal asymptotic Gaussian distribution for the class of regular estimators. Finally, we obtain, using Palm calculus, an explicit expression for the \textit{information operator} which can be useful to study the optimal variance for regular estimators.

\subsection{LAN expansion} \label{subsec_LAN}
We expand the log-likelihood at second order around $(\nu^0,\boh^0)$. Let $(\xi,\bog)\in \R^K\times \L_2^{K^2}$, here and after the quantity $\tilde \lambda_t^k(\xi,\bog)$ is given by (\ref{intensity_linear_A}), namely
\begin{equation}
    \tilde \lambda_t^k(\xi_k, \bog_k) = \xi_k + \sum_{l=1}^K \int_{t-A}^{t-} g_{l,k}(t-u)dN_u^l.\nonumber
\end{equation}
We take a path $(\xi_T, \bog_T)$ directed by $(\xi,\bog)$ and passing through $f^0= (\nu^0,\boh^0)$ as $T\rightarrow +\infty$. To simplify the presentation in this paragraph, we consider that $\bog$ is such that for $T$ large enough the linear path $\boh^0 +\bog/\sqrt{T}$ is in $\mathcal{H}$. We write $\log(1+x) = x - \frac{x^2}{2}  + x^2R(x), \hspace{0.2cm}x>-1$, and simple algebra (see the proof of Lemma \ref{lemma_LAN}) implies 
\begin{align} \label{likelihood_expansion}
    &L_T\Big(\nu^0+ \frac{\xi}{\sqrt{T}}, \boh^0+ \frac{\bog}{\sqrt{T}}\Big) - L_T\big(\nu^0, \boh^0\big) =W_T(\xi,\bog) -  \frac{\Vert (\xi,\bog)\Vert_L^2}{2} + R_T\big(\frac{\xi}{\sqrt{T}},\frac{\bog}{\sqrt{T}}\big),
\end{align}
with
\begin{align}
    W_T(\xi,\bog) = \frac{1}{\sqrt{T}}\sum_{k=1}^K \int_0^T \frac{\tilde \lambda_t^k(\xi_k, \bog_k)}{\lambda_t^k(f^0_k)} \big(dN_t^k - \lambda_t^k(f^0_k) dt\big), \hspace{0.1cm}\Vert (\xi,\bog)\Vert_L^2 = \sum_{k=1}^K\E_0\bigg[\frac{\tilde\lambda_A^k(\xi_k,\bog_k)^2}{\lambda_A^k(f^0_k)} \bigg],\nonumber
\end{align}
and the remainder $R_T\big(\frac{\xi}{\sqrt{T}},\frac{\bog}{\sqrt{T}}\big)$ is given by (\ref{remainderLAN_def}) in Section \ref{sec:pr:LAN_2}. Next, we set
\begin{align}
    \L_{2,h^0_{l,k}} := \Big\{ g:[0,A]\rightarrow\R \hspace{0.1cm},\hspace{0.1cm}\int_0^A\frac{g^2(x)}{\nu_0^k+h^0_{l,k}(x)} dx<+\infty\Big\}\hspace{0.2cm}\text{and}\hspace{0.2cm}
    \L_{2,\boh^0} :=\bigotimes_{(l,k)\in \K^2} \L_{2,h^0_{l,k}}   .\nonumber
\end{align}
The canonical product norm on the space $\RLKO$ is written $\Vert .\Vert_{(2,\boh^0)}$, 
\begin{align}
    \Vert (\xi,\bog)\Vert_{(2,\boh^0)}^2 = \sum_{k=1}^K \xi_k^2 + \sum_{(l,k)\in \K^2} \int_0^A \frac{g_{l,k}^2(x) }{\nu_0^k + h^0_{l,k}(x)}dx.\nonumber
    \end{align}
Let $\L_{2,h^0_{l,k}}^+:=\big\{ g\in \L_{2,h^0_{l,k}}, g(x)\geq 0 \hspace{0.1cm}\text{when }\hspace{0.1cm} h^0_{l,k}(x) =0\big\}$ and $ \L_{2, \boh^0}^+ :=\bigotimes_{(l,k)\in \K^2} \L_{2,h^0_{l,k}}^+$. The latter set is a convex cone, its linear span is $\L_{2,\boh^0}$ and when the functions $h^0_{l,k}$ are bounded away from $0$, $\L_{2, \boh^0}^+=\L_{2,\boh^0}$. Given the first order term $W_T$, we define the operator $\mathcal{S}$ by
\begin{equation}\label{def_score}
\begin{aligned}
    \mathcal{S} &:\hspace{0.1cm} \R^K \times \L_{2,\boh^0} \xrightarrow{\hspace{0.5cm}}\mathcal{S}\big(\R^K \times \L_{2,\boh^0}\big)\\
    &\hspace{0.3cm}\big(\xi,\bog \big)\xrightarrow{\hspace{0.5cm}}  \bigg(\int_0^1 \tilde\lambda_t^k(\xi_k,\bog_k) \big(\frac{dN_t^k}{\lambda_t^k(f^0_k)} -dt\big)\bigg)_{k\in\K} .
\end{aligned}
\end{equation}
In the semiparametric terminology, $\mathcal{S}$ is a score operator that maps a perturbation $(\xi,\bog)$ to a tangent vector $\mathcal{S}(\xi,\bog)$ (note that $\E_0[\mathcal{S}(\xi,\bog)]=0)$, the range of this operator is the tangent set. Furthermore, this operator defines a bilinear map over $\RLKO$ by 
\begin{align}
    \langle (\xi,\bog), (\xi',\bog')\rangle_L :=\E_0 \Big[\mathcal{S}(\xi,\bog)^T \mathcal{S}(\xi',\bog')\Big] = \sum_{k=1}^K \E_0\bigg[ \frac{\tilde\lambda_A^k(\xi_k,\bog_k)\tilde\lambda_A^k(\xi_k',\bog_k')}{\lambda_A^k(f^0_k)}\bigg].\nonumber
\end{align}
 $\Vert (\xi,\bog)\Vert_L^2 = \langle (\xi,\bog), (\xi,\bog)\rangle_L$, we then show in the following lemma that  $\langle\cdot{,}\cdot\rangle_L$ is an inner product, we endow the space $\RLKO$ with this LAN inner product. It plays a major role in semiparametric efficiency, see again chapter 25 of \cite{VDV_AS}  and \cite{mcneney}.

\begin{lemma}\label{eq}
    The bilinear map $\langle\cdot{,}\cdot\rangle_L$ is an inner product on $\RLKO$ and induces the LAN norm $\Vert .\Vert_L$. In addition,  $\Vert.\Vert_L$ is equivalent to the canonical norm $\Vert .\Vert_{(2,\boh^0)}$ over $\RLKO$. Consequently, $\mathcal{S}$ is a bounded, bijective, linear operator between the Hilbert spaces $\big(\RLKO, \Vert .\Vert_{(2,\boh^0)}\big)$ and $\big(\mathcal{S}(\RLKO),  \Vert . \Vert_{\L_2(\P_0)}\big)$.
\end{lemma}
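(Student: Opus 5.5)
The proof has two parts: an upper bound $\Vert (\xi,\bog)\Vert_L\lesssim \Vert(\xi,\bog)\Vert_{(2,\boh^0)}$ and a matching lower bound. Positive definiteness of $\langle\cdot,\cdot\rangle_L$ then follows from the lower bound. Bilinearity and symmetry are immediate, since $\tilde\lambda^k_A$ is linear in $(\xi_k,\bog_k)$. The identity $\E_0[\mathcal{S}(\xi,\bog)^T\mathcal{S}(\xi',\bog')]=\sum_k\E_0[\tilde\lambda^k_A\tilde\lambda'^k_A/\lambda^k_A]$ follows from the predictable-variance formula for the martingale $\int_0^1 H_t(dN^k_t-\lambda^k_tdt)$, with $H_t=\tilde\lambda^k_t/\lambda^k_t$. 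We also use that the marks almost surely do not jump simultaneously, so cross-$k$ brackets vanish, and stationarity, which replaces $\int_0^1\E_0[\cdot]dt$ by the value at $t=A$. Once the norms are shown equivalent, the last claim follows directly. $\mathcal{S}$ is linear and, by the upper bound, bounded. It is injective by the lower bound, since $\Vert\mathcal{S}(\xi,\bog)\Vert_{\L_2(\P_0)}=\Vert(\xi,\bog)\Vert_L$. It is surjective onto its range by definition. The range is closed, because an isometry up to constants maps the Hilbert space $\RLKO$ onto a closed subspace, so it is itself a Hilbert space.

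\emph{Upper bound.} Since $\lambda^k_A\ge\nu^0_k>0$, we have $\E_0[\tilde\lambda^k_A{}^2/\lambda^k_A]\le \nu_k^{0,-1}\E_0[\tilde\lambda^k_A{}^2]$. Expanding $\tilde\lambda_A^k=\xi_k+\sum_l\int g_{l,k}(A-u)dN^l_u$, we need to control $\E_0[(\int g(A-u)dN^l_u)^2]$. Using the second-order moment measure of the stationary Hawkes process, which equals $m_l\,\delta$ on the diagonal plus an absolutely continuous part with bounded density when $\boh^0$ is bounded (or at least when $\boh^0\in\L_1$ with $r(\rho)<1$), this is bounded by $C(\Vert g\Vert_2^2+\Vert g\Vert_1^2)\lesssim \Vert g\Vert_2^2$, because $A<\infty$. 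Finally, $\Vert g\Vert_2^2\le(\nu^0_k+\Vert h^0_{l,k}\Vert_\infty)\int g^2/(\nu^0_k+h^0_{l,k})$. If $h^0$ is not bounded, I would instead use $\lambda^k_A$ in the denominator more cleverly. That is the reason for the weighted norm: conditioning on a point at $A-x$ (Palm calculus, Appendix \ref{appendix_palm}) gives $\lambda^k_A\ge \nu^0_k+h^0_{l,k}(x)$, and the diagonal term $\E_0[\int g^2(A-u)/\lambda^k_A\,dN^l_u]=m_l\int g^2(x)\E^{!}_{l,x}[1/\lambda^k_A]dx$ is then $\le m_l\int g^2/(\nu_k^0+h^0_{l,k})$. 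The off-diagonal terms are handled by Cauchy--Schwarz together with the bounded pair-correlation density.

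\emph{Lower bound (main obstacle).} Here we must show $\E_0[\tilde\lambda^k_A{}^2/\lambda^k_A]\gtrsim \xi_k^2+\sum_l\int g_{l,k}^2/(\nu^0_k+h^0_{l,k})$. My plan is to restrict to events on which the process has a simple configuration in $[0,A)$. Fix $k$. On the event $E_0$ of no points in $[0,A)$, which has positive probability by the renewal/regeneration properties (Lemma from \cite{Sulem_concentration} in Appendix \ref{appendix_omega}), we get $\tilde\lambda^k_A=\xi_k$ and $\lambda^k_A=\nu^0_k$, giving a contribution $\gtrsim\xi_k^2$. On the event $E_{l}$ of exactly one point in $[0,A)$, located at $A-x$ and of mark $l$, we get $\tilde\lambda_A^k=\xi_k+g_{l,k}(x)$ and $\lambda_A^k=\nu^0_k+h^0_{l,k}(x)$. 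The density of such a single-point configuration is bounded below by a positive constant $c$ uniformly in $x\in[0,A]$: the conditional intensity before the point is at least $\nu^0_l$, and the probability of no further points is bounded below by $e^{-\int\lambda}$, controlled through the past. Hence
\begin{align*}
\E_0\Big[\frac{\tilde\lambda_A^k{}^2}{\lambda^k_A}\Big]\ \ge\ c\,\xi_k^2+c\sum_l\int_0^A\frac{(\xi_k+g_{l,k}(x))^2}{\nu^0_k+h^0_{l,k}(x)}dx .
\end{align*}
Then $(a+b)^2\ge b^2/2-a^2$ together with $\int_0^A dx/(\nu_k^0+h^0_{l,k})\le A/\nu^0_k$ lets us absorb the cross term. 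We take a convex combination with weight $\epsilon$ small on the one-point events, which gives $\gtrsim\xi_k^2+\sum_l\int g_{l,k}^2/(\nu^0_k+h^0_{l,k})$. The delicate part is the uniform lower bound on the one-point density conditional on the past. To get it, I would first condition on the event that $[-A,0)$ is empty, so that the dynamics on $[0,A]$ starts afresh, and then compute the Janossy density explicitly.
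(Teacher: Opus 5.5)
Your proof is correct in substance, but it reaches the equivalence of norms on the whole space by a different route from the paper.

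\emph{How the paper does it.} The paper proves the lower bound only on $\{0\}\times \L_{2,\boh^0}$. It restricts to the events $\{N([-A,0])=0,\ N(]0,A])=N^l(]0,A])=1\}$ and compares with a unit Poisson process; this is the same computation as your one-point Janossy density after conditioning on an empty $[-A,0)$. It gets positive definiteness separately from the empty event $\{N([0,A])=0\}$. It then extends the equivalence to $\RLKO$ non-constructively. The range $\mathcal{S}(\RLKO)=\mathcal{S}(\R^K\times\{0\})\oplus\mathcal{S}(\{0\}\times\L_{2,\boh^0})$ is closed, being a closed subspace plus a finite-dimensional one. Hence $(\RLKO,\Vert\cdot\Vert_L)$ is complete, and Banach--Schauder turns the one-sided domination into equivalence.

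\emph{How yours differs.} You keep $\xi_k$ in both the empty and the one-point events. You absorb the cross term with $(a+b)^2\ge b^2/2-a^2$ and $\int_0^A dx/(\nu^0_k+h^0_{l,k})\le A/\nu^0_k$, using a small weight on the one-point events. This gives the full lower bound directly, with explicit constants. Positive definiteness then comes for free, and closedness of the range becomes a consequence rather than an ingredient. The paper's route avoids the bookkeeping with $\xi$, but its constant is existence-only.

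\emph{Upper bound.} Your argument for bounded $\boh^0$ also differs from the paper's. You use $1/\lambda^k_A\le 1/\nu^0_k$, the bounded density $m_{l,k}$ of the Palm first moment measure, and then $\Vert g\Vert_2^2\le(\nu^0_k+\Vert h^0_{l,k}\Vert_\infty)\Vert g\Vert^2_{(2,\boh^0)}$. The paper instead pulls $\big((\nu^0_k+h^0_{l,k}(A-t))/\lambda^k_A\big)^{1/2}\le 1$ inside the stochastic integral. It then uses the martingale decomposition of $dN^l$ and finiteness of $\E_0[\lambda^2]$. Your route is fine in the setting the paper later adopts, $\boh^0\in\L_\infty^{K^2}$.

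\emph{One correction.} Your parenthetical that the pair-correlation density is bounded as soon as $\boh^0\in\L_1$ with $r(\rho)<1$ is false. The second-order density of a linear Hawkes process contains $h^0$ itself (first-generation offspring), so it is unbounded whenever $\boh^0$ is. In that case the crude bound $1/\lambda^k_A\le 1/\nu^0_k$ can fail on the off-diagonal terms as well as the diagonal. Handle them with the same pointwise bound you use for the diagonal. For two distinct atoms at $A-x$ and $A-y$ with marks $l$ and $j$, $\lambda^k_A\ge\big((\nu^0_k+h^0_{l,k}(x))(\nu^0_k+h^0_{j,k}(y))\big)^{1/2}$. This reduces everything to $\E_0\big[(\sum_l\int\phi_l\,dN^l)^2\big]$ with $\phi_l=|g_{l,k}|/(\nu^0_k+h^0_{l,k})^{1/2}\in\L_2$. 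That quantity is controlled by Young's inequality with the integrable covariance density, or by the paper's martingale argument when $\E_0[\lambda^2]<\infty$.
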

We can now state the result concerning the LAN expansion. Here the LAN property is in the sense of definition 2.1 of \cite{mcneney}. 

\begin{lemma}\label{lemma_LAN} Let $(\nu^0,\boh^0)$ be in $]0,+\infty[^K\times \mathcal{H}$. The model $\mathcal{P}$ verifies the LAN property at $\P_0$ with respect to $\mathcal{S}(\RLKOp)$. 
\end{lemma}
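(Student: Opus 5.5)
We follow Definition 2.1 of \cite{mcneney}. For every $(\xi,\bog)\in\RLKOp$ we need a path $f_T=(\nu^0+\xi_T/\sqrt{T},\boh^0+\bog_T/\sqrt{T})$ in the model with
\[
L_T(f_T)-L_T(f^0)=W_T(\xi,\bog)-\tfrac12\Vert(\xi,\bog)\Vert_L^2+o_{\P_0}(1),
\]
and, jointly over finitely many directions, $(W_T(\xi^{(j)},\bog^{(j)}))_j\overset{d}{\rightarrow}\mathcal{N}(0,(\langle\cdot,\cdot\rangle_L)_{j,j'})$. By Lemma \ref{eq}, $\langle\cdot,\cdot\rangle_L$ is the covariance of the scores $\mathcal{S}(\xi,\bog)$, so this is LAN with respect to $\mathcal{S}(\RLKOp)$.

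\textbf{Step 1: the path.} An element $\bog\in\L_{2,\boh^0}^+$ may be unbounded or negative where $h^0_{l,k}>0$ is small. So I take a perturbed path $\bog_T$ instead of the linear one:
\[
\bog_T=\max\big(\bog\onee_{|\bog|\le M_T},\,-\sqrt{T}\boh^0\big),\qquad M_T\to\infty,\quad M_T/\sqrt{T}\to0 .
\]
Then $\Vert\bog_T-\bog\Vert_{(2,\boh^0)}\to0$ by dominated convergence, using $g\ge0$ where $h^0=0$. The perturbed functions stay nonnegative. Since $\rho(\boh^0+\bog_T/\sqrt T)\to\rho^0$ and the spectral radius is continuous, $r<1$ for $T$ large. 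Lemma \ref{eq} gives $\Vert(0,\bog_T-\bog)\Vert_L\to0$, and the martingale isometry gives $\E_0[(W_T(\xi,\bog_T)-W_T(\xi,\bog))^2]=\Vert(0,\bog_T-\bog)\Vert_L^2\to0$. Hence it suffices to expand along $\bog_T$.

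\textbf{Step 2: algebraic expansion.} Write $x_t^k=\tilde\lambda_t^k(\xi_k,\bog_{T,k})/(\sqrt T\lambda_t^k(f^0_k))>-1$. Then
\[
L_T(f_T)-L_T(f^0)=\sum_k\int_0^T\log(1+x^k_t)\,dN^k_t-\int_0^T\lambda^k_t x^k_t\,dt .
\]
Using $\log(1+x)=x-x^2/2+x^2R(x)$ yields $W_T$, minus $\frac12\sum_k\int x^2\,dN$, plus the remainder $\sum_k\int (x_t^k)^2R(x_t^k)\,dN_t^k$.
\begin{itemize}
\item Split $\frac1T\int\tilde\lambda^2/(\lambda^0)^2\,dN$ into a martingale part and the compensator $\frac1T\int_0^T\tilde\lambda_t^2/\lambda_t^0\,dt$.
\item The compensator tends to $\Vert(\xi,\bog)\Vert_L^2$ by the ergodic theorem for the stationary Hawkes process (with an $L_1$ approximation argument for $\bog_T$ versus $\bog$).
\item The martingale part has variance $T^{-2}\int\E_0[\tilde\lambda^4/(\lambda^0)^3]$. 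This is not obviously bounded for $g\in L_2$, so I would truncate: it is $o(1)$ for bounded $\bog_T$ with $M_T$ chosen slowly enough.
\end{itemize}

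\textbf{Step 3: CLT for $W_T$.} $W_T$ is $T^{-1/2}$ times a martingale whose predictable variation is $\frac1T\int_0^T\tilde\lambda_t^2/\lambda_t^0\,dt\to\Vert\cdot\Vert_L^2$ a.s. by ergodicity. The jumps are $T^{-1/2}\tilde\lambda_t/\lambda_t^0$, and the Lindeberg condition $\frac1T\int\E_0[\tilde\lambda^2/\lambda^0\,\onee_{|\tilde\lambda/\lambda^0|>\epsilon\sqrt T}]\to0$ follows from stationarity and the integrability $\E_0[\tilde\lambda_A^2/\lambda_A^0]<\infty$. The martingale CLT for point processes (\cite{Kutoyants}, Chapter 4) then applies, and Cramér–Wold handles the joint convergence since $W_T$ is linear in $(\xi,\bog)$.

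\textbf{Main obstacle.} The remainder $\int x^2R(x)\,dN$ is the hardest term, because $x$ can approach $-1$ where $\lambda^0$ is small. I would use $|R(x)|\lesssim|x|$ for $|x|\le1/2$ and control the event $\sup_t|x_t|>1/2$. Since $\lambda^0\ge\nu^0_k>0$, one has $|x_t|\le(|\xi|+M_T N([t-A,t)))/(\nu^0\sqrt T)$. The $\log T$ bound on $\sup_t N([t-A,t))$ from the Lemma in Appendix \ref{appendix_omega} then makes this event negligible once $M_T\log T=o(\sqrt T)$. On its complement, the remainder is bounded by $\sup_t|x_t|\cdot\sum_k\int x^2\,dN=o_{\P_0}(1)\cdot O_{\P_0}(1)$.
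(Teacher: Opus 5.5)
Your proof is correct but organized differently from the paper's. Your path $h^0_{l,k}+g_{T,l,k}/\sqrt T=(h^0_{l,k}+g_{l,k}\onee_{|g_{l,k}|\le M_T}/\sqrt T)_+$ is the hard-truncation analogue of the paper's path \eqref{nonlinear_path_model_lin}, which uses the smooth truncation $a_T^{-1}\arctan(a_Tg/\sqrt T)$ followed by the positive part.

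\textbf{Structure.} You fold both the truncation and the positivity correction into the direction $\bog_T$ and expand exactly along $\bog_T$. You then return to $\bog$ in one step, using the isometry $\E_0[(W_T(\xi,\bog_T)-W_T(\xi,\bog))^2]=\Vert(0,\bog_T-\bog)\Vert_L^2$ and the $L_1$-continuity of the quadratic term. The paper instead expands around the linear direction. It must control the extra terms $\bar R_{T,1},\bar R_{T,2},\bar R_{T,3}$ generated by $\delta(a_T\bog/\sqrt T)$, first assuming $\boh^0$ bounded away from $0$, and then handle the positive part separately through the terms $\bar R'_{T,i}$ of Lemma \ref{technical_lemma_LAN}. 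Your route avoids this case distinction.

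\textbf{Cost.} You pay for this in the two remaining steps.
\begin{itemize}
\item You need rates on $M_T$. For instance $M_T=o(T^{1/4})$ makes the martingale part of the quadratic term vanish, and it also gives $M_T\log T=o(\sqrt T)$ for the cubic remainder.
\item You also need the $\log T$ control of local counts from Lemma \ref{omega_T} and finite fourth moments of $N([0,A])$.
\end{itemize}
The paper obtains $\Delta Q_T\to0$ from Ogata's ergodic lemma with respect to $dN/\lambda$ after truncating the integrand. It obtains $\max_{t_i}|x_{t_i}|\to0$ from a Lindeberg-type Markov bound plus dominated convergence. It therefore uses only $\Vert(\xi,\bog)\Vert_L<\infty$, with no condition on $a_T$ beyond $a_T\to\infty$ and $a_T/\sqrt T\to0$.

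\textbf{Downstream use.} The paper's differentiability notion \eqref{def_diff_vdv}, and hence its convolution theorem, is formulated along the arctan path. Your argument proves the lemma as stated, since the paths are existential in the LAN definition of \cite{mcneney}. To combine it with the convolution theorem, however, you would have to state differentiability of $\psi$ along your truncated path instead.
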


 Lemmas \ref{eq} and \ref{lemma_LAN} are proved in Section \ref{sec:pr:LAN_1} and Section \ref{sec:pr:LAN_2} respectively . Lemma \ref{eq} shows that the perturbations must belong to $\RLKOp$ and since $\mathcal{S}$ is linear, $\mathcal{S}(\RLKOp)$ is the maximal tangent set (see chapter 25 of \cite{VDV_AS} for further details).  In the proof of Lemma \ref{lemma_LAN}, we take a nonlinear path, instead of the linear one used in (\ref{likelihood_expansion}), given by
\begin{align}\label{nonlinear_path_model_lin}
(\xi(T), \bog(T))_{\mathcal{P}}:=\bigg(\big(\nu_k^0 + \frac{\xi_k}{\sqrt{T}}\big)_{k\in \K} \hspace{0.05cm}, \hspace{0.05cm}\Big(\Big(h_{l,k}^0 + \frac{1}{a_T}\arctan\big(\frac{a_T g_{l,k}}{\sqrt{T}}\big)\Big)_+\Big)_{(l,k)\in \K^2}\bigg)\hspace{0.1cm},
\end{align}
with $(a_T)_T$ a positive sequence such that $a_T\rightarrow +\infty$ and $a_T/\sqrt{T}\rightarrow 0$ as $T$ goes to $+\infty$. As $\arctan(x) \sim x$ in $0$, this nonlinear path is asymptotically equivalent to the linear path of (\ref{likelihood_expansion}) and one can check that
the perturbed parameters defined by (\ref{nonlinear_path_model_lin}) are in $]0,+\infty[^K\times \mathcal{H}$ for $T$ large enough, since  $\arctan$ is bounded.

\subsection{Convolution theorem} \label{subsec_convolution}

Using the LAN property, we can now obtain the convolution theorem for smooth functionals. For simplicity, we restrict to the case of real-valued functionals but the results presented in this paper can be directly extended to $\R^d$-valued functionals, $d\geq 2$ . Similarly to \cite{VDV_AS} and \cite{mcneney}, we consider that the functional $\Psi: \mathcal{P}\rightarrow \R$  is differentiable at $\P_0$, i.e.  there exists $\tilde \psi^0_L = (\xi^0_L,\bog^0_L)\in \RLKO$ such that for all $(\xi,\bog) \in\RLKOp$,
\begin{align}
\label{def_diff_vdv}
    \sqrt{T}\bigg(\Psi\big(\P_{(\xi(T),\bog(T))_{\mathcal{P}}}\big)-\Psi\big(\P_{0}\big)\bigg) \xrightarrow[T\rightarrow +\infty]{}\langle (\xi,\bog) , \tilde \psi^0_L\rangle_L\hspace{0.1cm},
\end{align}
so that $\tilde \psi^0_L$ is the least favourable direction. Since the model $\mathcal{P}$ is identifiable, $\Psi(\P_f) =\psi(f)$ and from now on we work directly with $\psi(f)$.  The LAN norm is equivalent to the norm $\Vert .\Vert_{(2,\boh^0)}$ (Lemma \ref{eq}) so (\ref{def_diff_vdv}) is equivalent to  the existence of an element $\tilde \psi^0_2=(\xi^0_2,  \bog^0_2 )\in \RLKO$ such that for all $(\xi,\bog) \in \RLKOp$,
\begin{align}\label{diff_l2}
\sqrt{T}\bigg(\psi\big((\xi(T),\bog(T))_\mathcal{P}\big) -\psi(f^0)\bigg)\xrightarrow[T\rightarrow +\infty]{}\langle (\xi,\bog) , \tilde \psi^0_2\rangle_2\hspace{0.1cm},
\end{align}
and, denoting $\mathcal{S}^*$ the adjoint operator of $\mathcal{S}$ and letting $\Gamma:=\mathcal{S}^*\mathcal{S}$,  we have
\begin{align}\label{eq_least_fav}
   \tilde \psi^0_2 = \Gamma(\tilde \psi^0_L ).
\end{align}
$\Gamma$ is called the \textit{information operator}, see section 25.5 of \cite{VDV_AS}. For instance, the functionals
\begin{align}\label{typical_functionals}
    \psi^b_{k}(\nu,\boh) = \nu_k;\hspace{0.2cm}\psi_{l,k}^2(\nu,\boh) = \int_0^A h_{l,k}^2(x) dx \hspace{0.2cm}\text{and} \hspace{0.2cm}      \psi_{l,k}^a(\nu,\boh) = \int_0^A a(x)h_{l,k}(x) dx\hspace{0.1cm},
\end{align}
with $a\in L_2$, verify (\ref{diff_l2}) (and (\ref{def_diff_vdv})) with  $\tilde \psi^0_2$ respectively given by 
$(e_{k}, 0)$, $(0,2h^0_{l,k} E_{l,k})$ and $(0, aE_{l,k})$ where  $E_{l,k}$ is the $K\times K$ matrix that contains functions all equal to $0$ except the one at entry $(l,k)$ equal to the identity and $e_{k}$ is the vector of length $K$ with all element equal to $0$ except the $k$-th one equal to $1$.

The LAN property and differentiability are the two assumptions of the convolution theorem (theorem  2.4 in \cite{mcneney}) for regular estimators (see again \cite{mcneney} or \cite{VDV_AS} for a definition).

\begin{theorem}
 Let $f^0\in]0,+\infty[^K\times \mathcal{H}$. Consider  a functional $\psi :]0,+\infty[^K\times \mathcal{H}\rightarrow \R$  differentiable at $f^0$ in the sense of (\ref{diff_l2}) with least favourable direction $\tilde \psi^0_L$. The asymptotic distribution of every regular estimator of $\psi(f^0)$ can be written as the convolution between $\mathcal{N}\big(0, \Vert \tilde \psi^0_{L}\Vert_{L}^2\big)$ and some other probability distribution.
\end{theorem}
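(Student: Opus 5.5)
The plan is to deduce the statement directly from the abstract convolution theorem for non-i.i.d. models, Theorem 2.4 of \cite{mcneney}. That theorem needs two ingredients: a LAN property relative to a tangent set with an inner product, and differentiability of the functional relative to that tangent set. Both are essentially available from the results above, so the work is to check that they fit the framework of \cite{mcneney} exactly.

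First, the local experiment. Take the index set to be the convex cone $\RLKOp$ with the inner product $\langle\cdot,\cdot\rangle_L$. By Lemma \ref{eq}, $\langle\cdot,\cdot\rangle_L$ is an inner product on the linear span $\RLKO$. Its norm is equivalent to $\Vert\cdot\Vert_{(2,\boh^0)}$, so $\big(\RLKO,\langle\cdot,\cdot\rangle_L\big)$ is a Hilbert space isometric via $\mathcal{S}$ to $\mathcal{S}(\RLKO)\subset \L_2(\P_0)$.

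Along the nonlinear paths (\ref{nonlinear_path_model_lin}), Lemma \ref{lemma_LAN} gives
\[
\log \frac{d\P_{(\xi(T),\bog(T))_{\mathcal{P}}}}{d\P_0}=W_T(\xi,\bog)-\tfrac12\Vert(\xi,\bog)\Vert_L^2+o_{\P_0}(1),
\]
where $W_T$ is linear in $(\xi,\bog)$. The finite-dimensional vectors $(W_T(\xi^{(1)},\bog^{(1)}),\dots,W_T(\xi^{(m)},\bog^{(m)}))$ converge jointly to a centered Gaussian vector with covariance $\langle\cdot,\cdot\rangle_L$. This is exactly Definition 2.1 of \cite{mcneney}. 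If joint convergence is only stated marginally, the Cramér--Wold device together with linearity of $W_T$ upgrades it.

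Second, differentiability. The hypothesis (\ref{diff_l2}) says that along the same paths
\[
\sqrt{T}\big(\psi(\cdot)-\psi(f^0)\big)\to\langle(\xi,\bog),\tilde\psi^0_2\rangle_2 .
\]
By Lemma \ref{eq}, the map $(\xi,\bog)\mapsto\langle(\xi,\bog),\tilde\psi^0_2\rangle_2$ is a continuous linear functional on $\big(\RLKO,\Vert\cdot\Vert_L\big)$. The Riesz representation theorem then gives $\tilde\psi^0_L$ with $\langle(\xi,\bog),\tilde\psi^0_L\rangle_L=\langle(\xi,\bog),\tilde\psi^0_2\rangle_2$, which is (\ref{def_diff_vdv}) and $\tilde\psi^0_2=\Gamma\tilde\psi^0_L$. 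So $\psi$ is differentiable in the sense of \cite{mcneney} with gradient $\tilde\psi^0_L$.

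Applying Theorem 2.4 of \cite{mcneney} then shows that the limit law of $\sqrt{T}(\hat\psi_T-\psi(f^0))$ for any regular $\hat\psi_T$ is $\mathcal{N}(0,\Vert\tilde\psi^0_L\Vert_L^2)*M$. The variance is the squared norm of the gradient projected on the closure of the tangent set's linear span. That span is all of $\RLKO$, and $\tilde\psi^0_L$ already lies there, so no projection is needed.

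The main obstacle is that the tangent set is a cone rather than a linear space whenever some $h^0_{l,k}$ vanish. The convolution theorem in \cite{mcneney}, as in Theorem 25.20 of \cite{VDV_AS}, requires the tangent set to be a convex cone whose linear span is dense, and applies regularity only along the cone. I would verify explicitly that $\RLKOp$ is a convex cone spanning $\RLKO$, as already noted before Lemma \ref{eq}. I would also verify that the paths (\ref{nonlinear_path_model_lin}) remain in the model for large $T$. With these checks the cone version of the theorem applies verbatim.
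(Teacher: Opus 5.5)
Your proposal is correct and follows the paper's own route: the LAN property of Lemma \ref{lemma_LAN} along the paths (\ref{nonlinear_path_model_lin}) and the differentiability (\ref{diff_l2}) (equivalently (\ref{def_diff_vdv}) via Lemma \ref{eq}) are fed into the convolution theorem of \cite{mcneney}. One small correction: that result in \cite{mcneney} is stated for linear tangent spaces, so when some $h^0_{l,k}$ vanish and the tangent set is only the convex cone $\mathcal{S}(\RLKOp)$, you should take the cone version of the convolution theorem from \cite{vdv_88}, as the paper does, rather than applying \cite{mcneney} verbatim.
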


Note that we have chosen a path asymptotically equivalent to the additive path $\boh^0 + \bog/\sqrt{T}$ instead of a path equivalent to the multiplicative path $\boh^0(1 + \bog/\sqrt{T})$ so that the functions $h_{l,k}^0$ can be perturbed on the sets $\{x: h_{l,k}^0(x)=0\}$. Note also that Theorem 12 of \cite{mcneney} is written for linear tangent spaces.  When the functions $h^0_{l,k}$ are not all bounded away from $0$, the tangent set $\mathcal{S}(\RLKOp)$ is not a linear space but a convex cone and the convolution theorem remains valid in this more general case, see \cite{vdv_88}.  The convolution theorem implies that a regular estimator is asymptotically
efficient if its limit distribution is $\mathcal{N}\big(0, \Vert \tilde \psi^0_L\Vert_L^2\big)$. Furthermore, by proposition 5 of \cite{mcneney}, a regular estimator $\hat \psi_T$ is efficient if and only if
\begin{align} \label{eq_efficient_estimator}
     \hat \psi_T = \psi(f^0) + \frac{W_T(\tilde \psi^0_L)}{\sqrt{T}} + o_{\P_0}\big( \frac{1}{\sqrt{T}}\big).
\end{align}

We point out that in some models with non open parameter set, it is possible to modify an efficient estimator by leveraging the property that defines the boundary of the parameter set and obtaining a new estimator, regular in the interior and with "better performance" on the boundary. For this reason, the notion of regular estimators and the convolution theorem can be seen as less relevant when the true parameters are on the boundary of the parameters set, see \cite{vdv_88} for instance the parametric example 2.15. In our model $\mathcal{P}$, if the functions $\boh^0$ are not bounded away from $0$, then $\boh^0$ is on the boundary of $\mathcal{H}$.

In the following Section \ref{BVM}, although $f^0\in ]0,+\infty[^K\times \mathcal{H}$, we will consider priors supported on a somehow larger set (compared to $]0,+\infty[^K\times \mathcal{H}$) $\mathcal{F}_R^1\subset \mathcal{F}_R$ when it is not assumed that the functions $\boh^0$ are bounded away from $0$. This is done to avoid more serious boundary effects for the BvM property (see the discussion just after Theorem \ref{main_theorem}). We thus also derive the semiparametric efficiency at $\P_0$ in the  model $\mathcal{P}_R$.  In this model, when $\mathbf h^0\geq 0$ we consider perturbations $(\xi, \bog)\in \RLKO$ and we use the path
\begin{align}\label{gen_non_linear_path_relu}
    (\xi(T), \bog(T))_{\mathcal{P}_R}:=\bigg(\big(\nu_k^0 + \frac{\xi_k}{\sqrt{T}}\big)_{k\in \K} \hspace{0.05cm}, \hspace{0.05cm}\Big(h_{l,k}^0 + \frac{1}{a_T}\arctan\big(\frac{a_T g_{l,k}}{\sqrt{T}}\big)\Big)_{(l,k)\in \K^2}\bigg) .
\end{align}
One can check  that for $T$ large enough $(\xi(T), \bog(T))_{\mathcal{P}_R}\in \mathcal{F}_R$. Then, we have the following lemma on the LAN expansion and the convolution theorem at $f^0$ in the model $\mathcal{P}_R$.

\begin{lemma}\label{lemma_LAN_convol_RELU}
 Let $(\nu^0,\boh^0)$ be in $]0,+\infty[^K\times \mathcal{H}$. The model $\mathcal{P}_R$ verifies the LAN property at $\P_0$ with respect to $\mathcal{S}(\RLKO)$. Moreover, consider a functional $\psi: \mathcal{F}_R\rightarrow \R$ differentiable at $\P_0$ in the sense that there exists $\tilde \psi^0_L = (\xi^0_L,\bog^0_L)\in \RLKO$ such that for all $(\xi,\bog) \in\RLKO$,
 \begin{align}
    \sqrt{T}\bigg(\psi\big((\xi(T),\bog(T))_{\mathcal{P}_R}\big)-\psi(f^0)\bigg) \xrightarrow[T\rightarrow +\infty]{}\langle (\xi,\bog) , \tilde \psi^0_L\rangle_L\nonumber.
\end{align}
    Then, the asymptotic distribution of every regular estimator of $\psi(f^0)$ can be written as the convolution between $\mathcal{N}(0, \Vert \tilde \psi^0_L\Vert_L^2)$ and some other probability distribution.
\end{lemma}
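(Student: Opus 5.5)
The plan is to reduce Lemma \ref{lemma_LAN_convol_RELU} to the same argument as Lemma \ref{lemma_LAN}, using the path (\ref{gen_non_linear_path_relu}), and then invoke the convolution theorem (theorem 2.4 of \cite{mcneney}) in its linear tangent-space form.

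\textbf{Step 1: the ReLU intensity equals the linear intensity along the path.} Put $g_{l,k}^T := a_T^{-1}\arctan(a_T g_{l,k}/\sqrt{T})$, so $\Vert g_{l,k}^T\Vert_\infty\le \pi/(2a_T)\to 0$. Since $\boh^0\ge 0$ and $\nu^0_k>0$, the negative part of the perturbed functions satisfies $\Vert (h^0_{l,k}+g^T_{l,k})^-\Vert_\infty\le \pi/(2a_T)$. Hence for $T$ large the parameter lies in $\mathcal{F}_R$, and the spectral radius of $\rho_+$ stays below $1$ because $\Vert g^T_{l,k}\Vert_1\to 0$.

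I expect the main point of the proof to be that the argument of $(\cdot)_+$ in (\ref{intensity_non_linear}) remains positive. Write
\[
\nu^0_k+\xi_k/\sqrt{T}+\sum_l\int_{t-A}^{t-}(h^0_{l,k}+g^T_{l,k})(t-u)\,dN^l_u \;\ge\; \nu^0_k/2 - \frac{\pi}{2a_T}\sum_l N^l([t-A,t)).
\]
This lower bound is positive unless the number of points in a window of length $A$ exceeds a multiple of $a_T$. To control that event I would use the exponential moments of $N([t-A,t))$ under $\P_0$ (the lemma from \cite{Sulem_concentration} recalled in Appendix \ref{appendix_omega}) together with a union bound over $O(T)$ windows. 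This shows that $\sup_{t\le T}N([t-A,t))=O_{\P_0}(\log T)$, so the positive part is inactive with probability tending to one, provided $a_T$ grows faster than $\log T$. The sequence $a_T$ is ours to choose, so we take $\log T\ll a_T\ll\sqrt{T}$. On this event the log-likelihood ratio coincides with the linear formula (\ref{likelihood_expansion}), with $\bog$ replaced by $\sqrt{T}\,\bog^T$.

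\textbf{Step 2: LAN expansion.} On the event of Step 1, the analysis from the proof of Lemma \ref{lemma_LAN} applies verbatim.
\begin{itemize}
\item $W_T(\xi,\sqrt{T}\bog^T)-W_T(\xi,\bog)\to 0$ in $L_2(\P_0)$. This follows from $\sqrt{T}g^T_{l,k}\to g_{l,k}$ in $\L_{2,h^0_{l,k}}$, by dominated convergence using $|\arctan x|\le|x|$, together with the isometry of Lemma \ref{eq}.
\item $W_T(\xi,\bog)\overset{d}{\rightarrow}\mathcal{N}(0,\Vert(\xi,\bog)\Vert_L^2)$ by the martingale CLT.
\item The quadratic term converges to $\Vert(\xi,\bog)\Vert_L^2/2$ by the ergodic theorem.
\item The remainder is negligible because $\sup_t|\tilde\lambda_t^k(\xi_k,\sqrt{T}\bog^T_k)|/(\sqrt{T}\lambda_t^k(f^0_k))\to 0$ in probability, again by the count bound of Step 1.
\end{itemize}
The difference from Lemma \ref{lemma_LAN} is that no positivity constraint is imposed on $\bog$. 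Every $(\xi,\bog)\in\RLKO$ is therefore admissible, and the tangent set $\mathcal{S}(\RLKO)$ is a linear space.

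\textbf{Step 3: convolution theorem.} Since the tangent set is linear and $\psi$ is differentiable with representer $\tilde\psi^0_L\in\RLKO$, theorem 2.4 of \cite{mcneney} applies directly, with no need for the cone version of \cite{vdv_88}. The efficient gradient is $\mathcal{S}(\tilde\psi^0_L)$, and its variance is $\Vert\tilde\psi^0_L\Vert_L^2$ by Lemma \ref{eq}. This yields the stated decomposition of the limit law of any regular estimator as $\mathcal{N}(0,\Vert\tilde\psi^0_L\Vert_L^2)$ convolved with another distribution.
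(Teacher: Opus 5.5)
Your proposal is correct and follows essentially the same route as the paper. The paper also restricts to the event $\Omega_T$ of Lemma \ref{omega_T}, uses $\log(T)/a_T\to 0$ to show that the ReLU truncation is inactive along the path (\ref{gen_non_linear_path_relu}), carries over the linear-model argument of Lemma \ref{lemma_LAN}, and concludes with theorem 2.4 of \cite{mcneney} on the now linear tangent set $\mathcal{S}(\RLKO)$. Your explicit choice $\log T\ll a_T\ll\sqrt{T}$ makes precise a requirement the paper uses without having stated it when it introduced $a_T$. The only step you gloss is that the quadratic term with the $T$-dependent direction $\sqrt{T}\bog^T$ still converges to $\Vert(\xi,\bog)\Vert_L^2/2$; this follows from $\sqrt{T}\bog^T\to\bog$ in $\L_{2,\boh^0}$ together with Cauchy--Schwarz.
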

 This result is proved in Section \ref{sec:pr:LAN_2}, with similar arguments as in model $\mathcal{P}$.
 
 From now on , we assume that $\boh^0$ belongs to $\L_\infty^{K^2}$, with this assumption the space of perturbations $\L_{2,\boh^0}$ is equal to $\L_2^{K^2}$ and the norms $\Vert .\Vert_{2, \boh^0}$ and $\Vert . \Vert_2$ are equivalent.
 
 \subsection{Explicit formula of the information operator}\label{subsec_explicit_gamma}
 
In this Section, we show how Palm calculus allows to obtain an explicit expression for the information operator $\Gamma$ (Lemma \ref{explicit_gamma}). In particular, this implies that the least favorable direction satisfies the fixed-point equation (\ref{converse_expr}). This is of interest in order to study the optimal variance for regular estimators $\Vert \tilde \psi^0_L \Vert_L^2$. This will also serve as our starting point in Section \ref{sec_lfeq}, where we study the regularity of the least favorable direction to derive explicit conditions for the BvM property. In the Appendix \ref{appendix_palm}, we briefly introduce Palm calculus and we state the useful results that we use,  giving each time references for the proofs and further details.

Let $(\xi,\bog)\in \R^K\times \L_2^{K^2}$ and set $(\xi',\bog') = \Gamma(\xi,\bog)= \mathcal{S}^*\mathcal{S}(\xi,\bog) $. By definition of the LAN norm, we have for all $(\bar \xi, \bar \bog)\in \R^K\times \L_2^{K^2}$, $\langle (\bar \xi, \bar \bog), (\xi',  \bog')\rangle_2 = \langle (\bar \xi, \bar \bog) (\xi,  \bog)\rangle_L$ which can be written as
\begin{align}
    \sum_{k=1}^K \bar \xi_k \xi'_{k}+ \sum_{j=1}^{K} \langle  \bar g_{j,k}, g'_{j,k}\rangle_2 = \sum_{k=1}^K \E_0\bigg[\frac{\tilde \lambda_A^k(\bar \xi_k, \bar \bog_k)\tilde \lambda_A^k(\xi_{k}, \bog_{k})}{\lambda_A^k(f^0_k)}\bigg]. \nonumber
\end{align}
Taking  $\bar \bog=0$, we find that for all $k\in \K$, $\xi'_k = \xi_{k}\E_0\big[\frac{1}{\lambda_A^k(f^0_k)}\big]+ \E_0\big[\frac{\tilde \lambda_A^k(0,\bog_{k})}{\lambda_A^k(f^0_k)}\big]$. Then, let $(l,k)\in \K^2$, take $\bar \xi=0$ and $\bar \bog$ with all entry equal to $0$ except the entry $(l,k)$ equal to some function $\bar g_{l,k}\in \L_2$, we have
\begin{align}
     \langle \bar g_{l,k}, g'_{l,k}\rangle_2 &= 
      \xi_k\E_0\Big[\int_0^A \bar g_{l,k}(A-u) \frac{dN_u^l}{\lambda_A^k(f^0_k)}\Big] +\E_0\bigg[\int_0^A \bar g_{l,k}(A-u) g_{l,k} (A-u)  \frac{dN_u^l}{\lambda_A^k(f^0_k)}\bigg]\nonumber\\ 
      &\hspace{2cm}+\sum_{j=1}^K \E_0\bigg[\int_0^A\int_0^A\onee_{(u,l)\neq (s,j)} \bar g_{l,k}(A-u) g_{j,k} (A-s)  \frac{dN_s^jdN_u^l}{\lambda_A^k(f^0_k)}\bigg]. \label{main_decomp_palm}
\end{align}
Now,  for $(l,k,j)\in \K^3$, we define the function $p_{l,k}: [0,A]\rightarrow \R$ by 
\begin{align}
    p_{l,k}(A-u):=\E_0^{(u,l)}\Big[\frac{1}{ \lambda_A^{k}(f^0_k)}\Big],\label{def_plk}
\end{align}
and the operator $\zeta_{l,j,k} :\L_2\rightarrow \L_2$ by 
\begin{align}
    \zeta_{l,j,k}(g)(A-u) := \displaystyle \E^{(u,l)}_0\bigg[ \int_0^A \onee_{(s,j)\neq (u,l)} g(A-s) \frac{dN_s^j}{ \lambda_A^{k}(f^0_k)}\bigg],\label{def_zeta}
\end{align}
where $\E_0^{(u,l)}$ is an expectation under the Palm distribution $\P^{(u,l)}_0$ of $N$, see the Appendix \ref{appendix_palm}. Then, using (\ref{int_eq_palm}),  we can rewrite the right hand side of (\ref{main_decomp_palm})  with the functions $p_{l,k}$ and $\zeta_{l,j,k}(g_{l,k})$:
\begin{align}
    \langle \bar g_{l,k}, g'_{l,k}\rangle_2= \langle \bar g_{l,k}, \mu_l^0\xi_{k}p_{l,k}\rangle_2 + \langle \bar g_{l,k}, \mu^0_lg_{l,k}p_{l,k}\rangle_2 + \langle \bar g_{l,k}, \mu_l^0\sum_{j=1}^K \zeta_{l,j,k}(g_{j,k})\rangle_2.\nonumber
\end{align}
 As this holds for any $\bar g_{l,k}\in \L_2$, we deduce that $g'_{l,k} = \mu_l^0\xi_k p_{l,k} +\mu_l^0g_{l,k} p_{l,k}+ \mu_l^0 \sum_{j=1}^K \zeta_{l,j,k}(g_{j,k})  $. We  have thus proved the following lemma.

\begin{lemma}\label{explicit_gamma}
Let $(\xi,\bog)\in \R^K\times \L_2^{K^2}$ and set $(\xi',\bog') = \Gamma(\xi,\bog) $, for all $(l,k)\in \K^2$:
\begin{align}
    \xi'_k = \xi_{k}\E_0\Big[\frac{1}{\lambda_A^k(f^0_k)}\Big]+ \E_0\Big[\frac{\lambda_A^k(0,\bog_{k})}{\lambda_A^k(f^0_k)}\Big]\hspace{0.1cm},\hspace{0.2cm} g'_{l,k} =\mu_l^0 \xi_k p_{l,k}+ \mu_l^0 g_{l,k}  p_{l,k} + \mu_l^0 \sum_{j=1}^K \zeta_{l,j,k}(g_{j,k}), \nonumber
\end{align}
\end{lemma}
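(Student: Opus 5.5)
The argument identifies $\Gamma=\mathcal{S}^*\mathcal{S}$ through its weak formulation and tests it against well-chosen directions. By Lemma \ref{eq}, $\mathcal{S}$ is bounded and linear from $\R^K\times\L_2^{K^2}$ (which equals $\RLKO$ here, with an equivalent norm, since $\boh^0\in\L_\infty^{K^2}$) into $\L_2(\P_0)$. Hence $\Gamma$ is well defined, and $(\xi',\bog')=\Gamma(\xi,\bog)$ is characterised by
\[
\langle (\bar\xi,\bar\bog),(\xi',\bog')\rangle_2=\langle(\bar\xi,\bar\bog),(\xi,\bog)\rangle_L \quad\text{for all } (\bar\xi,\bar\bog)\in\R^K\times\L_2^{K^2}.
\]
The right-hand side equals $\sum_k\E_0[\tilde\lambda_A^k(\bar\xi_k,\bar\bog_k)\tilde\lambda_A^k(\xi_k,\bog_k)/\lambda_A^k(f^0_k)]$ by stationarity. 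To see this, write $\E_0[\mathcal{S}(\cdot)^T\mathcal{S}(\cdot)]$ as the predictable quadratic covariation of the compensated martingales $\int\tilde\lambda\,(dN/\lambda^0-dt)$, which gives $\int_0^1\E_0[\tilde\lambda_t\tilde\lambda'_t/\lambda_t^0]\,dt$, and then shift time to $t=A$.

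\emph{Step 1: the $\xi'$ component.} Take $\bar\bog=0$ and $\bar\xi=e_k$. Then $\tilde\lambda_A^k(\bar\xi_k,0)=1$, and only the $k$-th term survives. Splitting $\tilde\lambda_A^k(\xi_k,\bog_k)=\xi_k+\tilde\lambda_A^k(0,\bog_k)$ yields the stated formula for $\xi'_k$. All expectations are finite because $\lambda^k_A(f^0_k)\ge\nu^0_k>0$ and $\E_0[N([0,A])^2]<\infty$ in the stable regime $r(\rho)<1$.

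\emph{Step 2: the $\bog'$ component.} Take $\bar\xi=0$ and $\bar\bog=\bar g_{l,k}E_{l,k}$, so that $\tilde\lambda_A^k(0,\bar\bog_k)=\int_0^A\bar g_{l,k}(A-u)\,dN^l_u$. Expanding the product with $\xi_k+\sum_j\int g_{j,k}(A-s)\,dN^j_s$ and separating the diagonal $(s,j)=(u,l)$ from the off-diagonal part of the double integral gives exactly the three terms of (\ref{main_decomp_palm}). Each term then has the form $\E_0[\int_0^A F(u,N)\,dN^l_u]$ with $F$ depending on $u$ and on the configuration. The Campbell–Little–Mecke formula, i.e. (\ref{int_eq_palm}), turns this into $\mu_l^0\int_0^A\E_0^{(u,l)}[F(u,N)]\,du$, where $\mu^0_l$ is the stationary intensity of $N^l$ and the Palm measure places an atom at $(u,l)$. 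For the three terms, $F(u,N)$ is respectively $\bar g_{l,k}(A-u)\xi_k/\lambda^k_A$, $\bar g_{l,k}(A-u)g_{l,k}(A-u)/\lambda^k_A$, and $\bar g_{l,k}(A-u)\sum_j\int\onee_{(s,j)\ne(u,l)}g_{j,k}(A-s)\,dN^j_s/\lambda^k_A$. The indicator matters here: under $\P_0^{(u,l)}$ the atom at $u$ is present, and the diagonal contribution has already been accounted for. By the definitions (\ref{def_plk}) and (\ref{def_zeta}), the three terms become $\langle\bar g_{l,k},\mu^0_l\xi_kp_{l,k}\rangle_2$, $\langle\bar g_{l,k},\mu_l^0g_{l,k}p_{l,k}\rangle_2$ and $\langle\bar g_{l,k},\mu^0_l\sum_j\zeta_{l,j,k}(g_{j,k})\rangle_2$. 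The change of variable $x=A-u$ matches the parametrisation of these functions on $[0,A]$.

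\emph{Step 3: identification.} Since $\bar g_{l,k}\in\L_2$ is arbitrary, the Riesz/density argument identifies $g'_{l,k}$ with the sum of the three functions. This requires them to lie in $\L_2$. Boundedness of $p_{l,k}$ is immediate from $p_{l,k}\le 1/\nu^0_k$. For $\zeta_{l,j,k}$, Cauchy–Schwarz gives
\[
|\zeta_{l,j,k}(g)(A-u)|^2\le \nu_k^{0\,-2}\,\E_0^{(u,l)}\big[N([0,A])\big]\,\E_0^{(u,l)}\Big[\int_0^A g^2(A-s)\,dN^j_s\Big].
\]
The second Palm expectation involves the reduced second-moment measure, which has a bounded density for a stationary Hawkes process with bounded kernels. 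This gives $\Vert\zeta_{l,j,k}(g)\Vert_2\lesssim\Vert g\Vert_2$.

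\textbf{Main obstacle.} The delicate step is justifying the Palm rewriting rigorously: applying (\ref{int_eq_palm}) to functionals that depend on the whole configuration, with the integrability needed for Fubini and the correct handling of the atom at $(u,l)$ in the off-diagonal term. The control of the second-order (reduced moment) measure, via the cluster representation and $r(\rho)<1$, is what makes $\zeta_{l,j,k}$ a bounded operator on $\L_2$ and the identification legitimate.
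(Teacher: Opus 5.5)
Your proposal is correct and follows essentially the same route as the paper. You use the weak characterisation $\langle(\bar\xi,\bar\bog),\Gamma(\xi,\bog)\rangle_2=\langle(\bar\xi,\bar\bog),(\xi,\bog)\rangle_L$, test first with $\bar\bog=0$ and then with $\bar\bog=\bar g_{l,k}E_{l,k}$, split into the three terms of (\ref{main_decomp_palm}), and convert each term with (\ref{int_eq_palm}) into the definitions of $p_{l,k}$ and $\zeta_{l,j,k}$. Your extra justifications (the covariation identity for $\langle\cdot{,}\cdot\rangle_L$ and the boundedness of $\zeta_{l,j,k}$ via the Palm first moment measure) are sound. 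The paper gets the even stronger bound $\Vert\zeta_{l,j,k}(g)\Vert_\infty\lesssim\Vert g\Vert_1$ directly, without Cauchy--Schwarz, in Lemma \ref{lemma_lb_p}.
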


Conversely,  $(\xi,\bog)$ verifies the following fixed point equation:
\begin{equation}
\begin{aligned}\label{converse_expr}
    &\xi_k =\E_0\big[\lambda_A^k(f^0_k)^{-1}\big]^{-1}\bigg(\xi'_{k}-\E_0\Big[\frac{\tilde \lambda_A^k(0,\bog_{k})}{\lambda_A^k(f^0_k)}\Big]\bigg)\hspace{0.1cm},\hspace{0.2cm}\forall k\in \K\hspace{0.1cm},\\
    &g_{l,k} = \frac{g'_{l,k} - \mu^0_l\sum_{j=1}^K \zeta_{l,j,k}(g_{j,k}) }{\mu^0_l p_{l,k}} - \xi_{k} \hspace{0.1cm},\hspace{0.2cm}\forall (l,k)\in \K^2.
\end{aligned}
\end{equation}
Note that the right hand side of (\ref{converse_expr}) is well defined because for each $(l,k)\in\K^2$, the function $p_{l,k}$ is lower bounded by some positive constant $c_{l,k}$ (see Lemma \ref{lemma_lb_p}). In particular, $(\xi^0_L, \bog^0_L)$ verifies this fixed point equation with $(\xi',\bog')= (\xi^0_2,\bog^0_2)$.

In the following section, we apply the above results to Bayesian semiparametric estimation of $\psi(f^0)$ and we study the BvM property in this context.

\section{Main results for Bayesian inference}\label{BVM}

We recall that we observe on an interval $[-A,T]$ a stationary and linear Hawkes process with parameters $f^0$. As in \cite{donnet_concentration} and \cite{Sulem_concentration}, we assume for the inference the slightly stronger condition $\VERT \rho^0\VERT_1<1$ with $\VERT .\VERT_1$ the matrix operator norm related to the the norm $\Vert . \Vert_1$.  We consider a Bayesian nonparametric approach and we put a prior $\Pi$ on $f\in \mathcal{F}_R^1:=\mathcal{F}_R\cap\{f: \max_{l,k}(\rho_+)_{l,k}<1\}$. The prior is thus allowed to put some mass on parameters $f=(\nu,\boh)$ with some $h_{l,k}$ taking negative values; as in the previous section, $\P_f$ is then the distribution of a ReLU nonlinear Hawkes process. As in \cite{donnet_concentration}, the posterior distribution is given by:
\begin{align}
    \Pi(B\vert N) = \frac{\int_B e^{L_T(f)}d\Pi(f)}{\int_{\mathcal{F}_R^1}e^{L_T(f)}d\Pi(f)}\hspace{0.1cm},\nonumber
\end{align}
with $L_T(f)$ the log-likelihood (\ref{likelihood}). We study, under $\P_0$, the behavior of the posterior distribution when $T\rightarrow +\infty$. More specifically, we are interested in the marginal posterior distribution of $\sqrt{T}(\psi(f)-\hat\psi_T)$, that we denote $\mathcal{L}^\Pi\big(\sqrt{T}(\psi(f) - \hat\psi_T)\vert N\big)$,  with $\tilde \psi_T$ satisfying (\ref{eq_efficient_estimator}). Let $d_{BL}$ be the bounded Lipschitz metric on the space of probability measures on $\R$, we say that we have the (semiparametric) Bernstein-von Mises (BvM) property when
\begin{align}\label{def_bvm}
    d_{BL}\Big(\mathcal{L}^\Pi\big(\sqrt{T}(\psi(f) - \hat\psi_T)\vert N\big), \mathcal{N}(0,\Vert \tilde \psi^0_L\Vert_L^2\big)\Big)\xrightarrow[T\rightarrow+\infty]{\P_0} 0.
\end{align}
A first step to prove the BvM property (\ref{def_bvm}) is to obtain posterior contraction on $f^0$ with respect to some metric $d$ and rate $\epsilon_T\rightarrow 0$: $\Pi(f:d(f,f^0)\leq \epsilon_T \vert N) \xrightarrow[]{\P_0} 1$ as $T\rightarrow +\infty$. Posterior contraction in $\L_1$ norm has been studied by \cite{donnet_concentration, rousseau_highdim, Sulem_concentration}.  To prove the BvM property, we need a similar result but with respect to  the $\L_2$ norm. 

We first present the family of  prior models we consider. Then, we give assumptions from which we derive a $\L_2$ posterior contraction rate, which have an interest on its own. Next, with some additional assumptions, we state our most important results, those on the BvM property. Finally, these results are illustrated on specific prior models.

\subsection{Prior model}\label{subsec_prior_construct}

We choose a prior distribution $\Pi$ on the parameters $f=(\nu, \boh)$ of the form $\Pi \propto (\Pi_\nu\otimes\Pi_{\boh})\onee_{(\nu,\boh)\in \mathcal{F}_R^1}$. $\Pi_\nu$ is a probability distribution on $]0,+\infty[^K$ with a positive and continuous density and such that $\Pi_\nu(\Vert \nu \Vert_\infty\geq x)\lesssim x^{-a}$ for some $a>1$ and positive $x$ large enough. For the functional part, the prior is based on  finite random series,  these models are widely used to obtain adaptive priors, see for instance Section 10.4 of \cite{bnp_book}. For all integer $J\geq 1$, consider a family of linearly independent functions $B_J = (b_1^J, ....b_J^J)\in\L_\infty^J$,  in $\L_2$. The prior $\Pi_\boh$ is built by choosing first a random $J\geq 1$ from $\Pi_J$, then drawing random coefficients $\bth= (\theta_{l,k})_{(l,k)\in \K^2}$ with $\theta_{l,k}\in \R^{J}$ from $\Pi_{\bth|J}$, and finally setting
\begin{align}
    \tilde h_{l,k} = \theta_{l,k}^TB_J =\sum_{i=1}^J \theta_{l,k}^i b_i^J\hspace{0.1cm},\hspace{0.3cm}\forall (l,k)\in \K^2\hspace{0.1cm},\hspace{0.2cm}\boh = \varphi(\tilde \boh) = \big( \varphi(\tilde h_{l,k})\big)_{(l,k)\in \K^2}\hspace{0.1cm},\nonumber
\end{align}
where $\varphi:\R\rightarrow \R$ is a given function, non-decreasing and $L$-Lipschitz for some $L>0$, typically used to ensure that $h_{l,k}\geq0$. $\Pi_\boh$ is thus induced by the prior $\Pi_{J,\bth}$ on $(J, \bth)$ which is defined hierarchically: $d\Pi_{J,\bth} =d\Pi_{\bth\vert J} d\Pi_J$.  To write shortly the $K^2$ equalities $\tilde h_{l,k} = \theta_{l,k}^TB_J$, we will use the notation $\tilde \boh = \bth^TB_J$. For the sake of simplicity, we consider the case where $J_{l, k } =J$ for all $(l,k)\in \K^2$. Our results extend to the more general case easily. 

We set $\mathcal{\tilde H}(j):= Span(b_1^j,...,b_j^j)$ and we assume that there exists $R\geq 2$ such that $\mathcal{\tilde H}(j)\subseteq \mathcal{\tilde H}(Rj)$ for all $j\geq 1$, this is weaker than assuming that $(\mathcal{\tilde H}(j))_j$ is increasing. We also impose that for all integer $j\geq 1$ and all $\tilde h_\theta = \theta^TB_j\in \mathcal{\tilde H}(j)$,
\begin{align}
    & \Vert \theta\Vert_2\asymp \gamma(j) \Vert    \tilde h_\theta\Vert_2\hspace{0.1cm}, \label{condi_rho} \\
    &\Vert  \tilde h_\theta \Vert_\infty \lesssim \sqrt{j}\Vert \tilde h_\theta\Vert_2\hspace{0.1cm}, \label{condi_sup}
\end{align}
for some positive and monotone sequence $\gamma$  such that $\gamma$ and $1/\gamma$ have at most polynomial growth. Note that if $(b_i)_{i\in \mathbb{N}}$ is an orthornormal basis of $\L_2$ and $B_j =  (b_i)_{i\leq j}$ (truncated orthonormal family), then $(\mathcal{\tilde H}(j))_j$ is increasing, (\ref{condi_rho}) is verified with $\gamma(j)=1$ and, by Cauchy-Schwarz inequality,  (\ref{condi_sup}) is verified if $\Vert \sum_{i=1}^j b_i^2\Vert_\infty\lesssim j$. The latter is for instance true for Fourier basis or for a wavelet basis with bounded and compactly supported mother wavelet. Also, if $B_j$ is a B-splines basis associated to the regular partition of $[0,A]$ into $j$ bins, then for all $j\geq 2$, $\mathcal{\tilde H}(j) \subset \mathcal{\tilde H}(2j)$ and (\ref{condi_rho}) and (\ref{condi_sup}) are verified with  $\gamma(j)=\sqrt{j}$ (see lemma E.6 of \cite{bnp_book}).

Finally, we assume there exists some $c_1>0$ such that
\begin{align}\label{cond_piJ}
    \Pi_J(J=j) \asymp e^{-c_1j\log(j)}\hspace{0.1cm},
\end{align}
and for $\Pi_{\bth\vert J}$, we assume that there exists $U:\R^+\rightarrow \R^+$ such that for any $j\geq1$ and $M>0$, 
 \begin{align}\label{sieve_theta}
     \Pi_{\bth\vert j} (\Vert \bth\Vert_\infty \geq M)\lesssim j e^{-U(M)}.
 \end{align}
These two conditions are common for random series priors (see again Section 10.4 of \cite{bnp_book}).

We introduce some notations. Given $(B_j)_j$, we denote by $P^{j}_2$ and $P^j_{L}$ the orthogonal projections on $\R^K\times \mathcal{\tilde H}(j)^{K^2}$ in terms of $\L_2$ and LAN inner products respectively. Note that  $P^{j}_2(\tilde \boh):= P^{j}_2(0, \tilde \boh)$ is the orthogonal projection of  $\tilde \boh$ on $\mathcal{\tilde H}(j)^{K^2}$ in terms of  the $L_2$ norm. When it exists, we denote $\varphi^{-1}(\boh^0)$ by $\tilde \boh^0$ and $(\nu^0, \tilde \boh^0)$ by $\tilde f^0$. 

\subsection{Posterior contraction in $L_1$ and $L_2$ norms}

We now give the assumptions and the results concerning the posterior contraction, first in $L_1$ norm and then in $L_2$ norm. We begin with an assumption on $\boh^0$.

\begin{assumption}[A] \label{as_A} We assume that for all $(l,k)\in \K^2$, there exists a bounded function $\tilde h^0_{l,k}$ such that $h^0_{l,k} = \varphi(\tilde h^0_{l,k})$.
\end{assumption} 

 Note that assumption \hyperref[as_A]{(A)} allows $\varphi(x) =x\onee_{x\geq c_0}$ provided that for all $(l,k)\in \K^2$, $h^0_{l,k}\geq c_0$ and in this case $\tilde h^0_{l,k} = h^0_{l,k}$. We also assume that $\boh^0$ verifies the following assumption.

\begin{assumption}[P1]
\label{as_P1}  Let $c_2>0$ and $c_3\in \mathbb{N}$ be such that $16c_1c_3\leq c_2$ with $c_1$ as in (\ref{cond_piJ}). Set $c_4:=c_2+ 11 $ and $J_0:=4c_4/(c_1c_3)\geq 1$. We assume that there exist  an integer valued sequence $(\bar J_T)_T$ and a positive sequence $(\bar\varepsilon_T)_T$  with $\bar\varepsilon_T \rightarrow 0$, $\log(T)^3\lesssim T\bar\varepsilon_T^2 $ and $ \log(\bar J_T)\leq \log(T)$, such that for $T$ large enough and for all $j\in [\bar J_T,RJ_0\bar J_T]$:
\begin{align}\label{approx_prior_B}
    \log\big(\Pi_{\bth\vert j}\big(\bth\in \R^{K^2j}:\forall (l,k)\in \K^2,  \tilde h^0_{l,k}\leq \theta_{l,k}^TB_{j} \leq \tilde h^0_{l,k} + \frac{\bar\varepsilon_T}{L}\big) \big)\geq -\frac{c_2}{2}\bar J_T\log(T),
\end{align}
and such that for some $M_T\rightarrow +\infty$ with at most polynomial growth,
\begin{align}\label{P1_rate}
     \bar J_T= c_3\lceil T\bar \varepsilon_T^2 \log(T)\rceil\hspace{0.3cm}\text{and}\hspace{0.3cm} c_4T\bar\varepsilon_T^2 \log(T)^2 \leq U(M_T)  .
\end{align}
\end{assumption}

 This assumption is a specialization of the prior mass condition in \cite{Sulem_concentration} to our random series prior. If $\varphi(x)=x$ and the prior $\Pi_\boh$ puts all its mass on non negative functions, it is possible to use the two-sided neighborhood $ h^0_{l,k} - \bar\varepsilon_T \leq \theta_{l,k}^TB_{j} \leq h^0_{l,k} + \bar\varepsilon_T $ as in \cite{donnet_concentration}. We set
\begin{align}\label{def:JTepsT}
    \varepsilon_T = \log(T)\bar\varepsilon_T\hspace{0.2cm}\text{and}
    \hspace{0.2cm}J_T= J_0\bar J_T,
\end{align}
We can now state the first posterior contraction result in terms of the  $\L_1$ norm. 

\begin{proposition}\label{th_concentration_L1}
    Let $N$ be a stationary, linear and multivariate Hawkes process with parameters $f^0 = (\nu^0, \boh^0)$ such that $\VERT \rho^0\VERT_1<1$ and $\boh^0\in\L_\infty^{K^2}$. Let $\Pi$ be a prior distribution on $\mathcal{F}_R^1$ constructed as in section \ref{subsec_prior_construct} and verifying the assumptions therein. Assume \hyperref[as_P1]{(P1)} and if $\varphi$ is not the identity function, assume also \hyperref[as_A]{(A)}. Then, for some $C_1>0$
    \begin{align}
        \Pi\big(\Vert f-f^0\Vert_1\geq C_1\varepsilon_T\big \vert N\big) \xrightarrow[T\rightarrow +\infty]{\P_0}0.\nonumber
    \end{align}
\end{proposition}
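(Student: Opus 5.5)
The approach is to derive Proposition \ref{th_concentration_L1} from the general $\L_1$ contraction theorem of \cite{Sulem_concentration} (Theorem 3.2 there, extending \cite{donnet_concentration} to ReLU nonlinear Hawkes processes with priors on $\mathcal{F}_R^1$). That theorem gives posterior contraction at rate of order $\log(T)\bar\varepsilon_T=\varepsilon_T$ in $\L_1$ under three conditions:
\begin{enumerate}
\item a prior mass condition on a Kullback--Leibler-type neighbourhood $B_\infty(\bar\varepsilon_T)$ of $f^0$, of the form $\{f:\nu^0_k\leq\nu_k\leq \nu^0_k+\bar\varepsilon_T,\ h^0_{l,k}\leq h_{l,k}\leq h^0_{l,k}+\bar\varepsilon_T\}$, requiring $\Pi(B_\infty(\bar\varepsilon_T))\geq e^{-c T\bar\varepsilon_T^2}$ (up to the log factors in their statement);
\item a sieve $\mathcal{F}_T$ with $\Pi(\mathcal{F}_T^c)\leq e^{-(c+2)T\bar\varepsilon_T^2\ldots}$;
\item a metric entropy bound $\log \mathcal{N}(\zeta\varepsilon_T,\mathcal{F}_T,\Vert\cdot\Vert_1)\lesssim T\varepsilon_T^2$.
\end{enumerate}
The task therefore reduces to verifying these for the random series prior, with the constants $c_1,\dots,c_4,J_0$ tuned as in \hyperref[as_P1]{(P1)}.

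\textbf{Step 1: prior mass.} Restrict to $J=j\in[\bar J_T,RJ_0\bar J_T]$; in fact a single $j=\bar J_T$ suffices. Since $\varphi$ is non-decreasing and $L$-Lipschitz, the event in (\ref{approx_prior_B}) implies
\begin{equation*}
h^0_{l,k}=\varphi(\tilde h^0_{l,k})\leq h_{l,k}\leq h^0_{l,k}+\bar\varepsilon_T .
\end{equation*}
Assumption \hyperref[as_A]{(A)} is used here, and is trivial when $\varphi$ is the identity. The $\nu$ part contributes a factor $\gtrsim\bar\varepsilon_T^K$ by positivity and continuity of the density. The indicator of $\mathcal{F}_R^1$ is harmless, because for small $\bar\varepsilon_T$ these $f$ satisfy $\VERT\rho\VERT_1<1$, since $\VERT\rho^0\VERT_1<1$. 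Combining with (\ref{cond_piJ}) gives
\begin{equation*}
\log\Pi(B_\infty)\gtrsim -c_1\bar J_T\log\bar J_T-\tfrac{c_2}{2}\bar J_T\log T - K\log(1/\bar\varepsilon_T)\geq -c_2\bar J_T\log T .
\end{equation*}
Since $\bar J_T\asymp T\bar\varepsilon_T^2\log T$, this is of order $-T\bar\varepsilon_T^2\log^2T$. This is exactly the form required when the rate is inflated to $\varepsilon_T=\log(T)\bar\varepsilon_T$.

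\textbf{Step 2: sieve.} Take
\begin{equation*}
\mathcal{F}_T=\{f:\ J\leq J_T,\ \Vert\bth\Vert_\infty\leq M_T,\ \Vert\nu\Vert_\infty\leq T\}.
\end{equation*}
Then $\Pi(J>J_T)\lesssim e^{-c_1J_T\log J_T/2}$, and $J_0=4c_4/(c_1c_3)$ makes this $\leq e^{-2c_4 T\bar\varepsilon_T^2\log T\cdot\log(\ldots)}$, which dominates the prior mass exponent; this is where $16c_1c_3\leq c_2$ enters. Next, (\ref{sieve_theta}) with $U(M_T)\geq c_4T\bar\varepsilon_T^2\log^2T$ controls the $\bth$ part, and the polynomial tail of $\Pi_\nu$ controls $\nu$, up to renormalisation by $\Pi(\mathcal{F}_R^1)$, which is a fixed positive constant. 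I expect the $\nu$ tail to be too weak for an exponential bound. If so, I would follow \cite{Sulem_concentration} and handle large $\nu$ separately, using that the likelihood ratio is small when $\nu$ is huge compared with the observed counts, or use their version of the theorem that only needs $\Pi(\mathcal{F}_T^c)=o(e^{-\ldots})$ for the $\bth,J$ part. This is the likely delicate point.

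\textbf{Step 3: entropy.} For fixed $j\leq J_T$, the Lipschitz property of $\varphi$ and (\ref{condi_sup}) give
\begin{equation*}
\Vert\varphi(\theta^TB_j)-\varphi(\theta'^TB_j)\Vert_1\lesssim L\sqrt{j}\,\Vert\theta-\theta'\Vert_2/\gamma(j).
\end{equation*}
Hence a $\Vert\cdot\Vert_2$-net of the ball of radius $\sqrt{j}M_T$ in $\R^{K^2j}$ yields
\begin{equation*}
\log\mathcal{N}\lesssim K^2J_T\log\big(J_TM_T\,\mathrm{poly}(J_T)/\varepsilon_T\big)+K\log(T/\varepsilon_T)\lesssim J_T\log T\lesssim T\varepsilon_T^2 .
\end{equation*}
Here the polynomial growth of $M_T$, $\gamma$ and $1/\gamma$, together with $\log\bar J_T\leq\log T$, is used. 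Summing over $j\leq J_T$ costs only $\log J_T$. With the three conditions verified, the cited theorem yields the claim for some $C_1$.
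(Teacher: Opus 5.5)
Your overall route matches the paper's: you verify the prior-mass, sieve and entropy conditions of the $\L_1$ contraction result of \cite{Sulem_concentration} (the paper uses its Proposition 3.5, case 1). You use the same one-sided neighbourhood $B_\infty(\bar\varepsilon_T)$, the same choice $J=\bar J_T$ and essentially the same entropy count. The gap is in Step 2. With the sieve $\Vert\nu\Vert_\infty\leq T$, the assumed tail $\Pi_\nu(\Vert\nu\Vert_\infty\geq x)\lesssim x^{-a}$ only gives $\Pi_\nu(\Vert\nu\Vert_\infty>T)\lesssim T^{-a}$. That is far from the required exponentially small bound, of order $o(e^{-(c_4-1)T\varepsilon_T^2/2})$. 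Your two fallbacks (a separate likelihood-ratio argument for large $\nu$, or an unspecified variant of the theorem) are not carried out. So, as written, the sieve condition is not verified.

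No new idea is needed, only a larger sieve for $\nu$. Take $\Upsilon_T=\{\nu:\Vert\nu\Vert_\infty\leq e^{(c_4-1)T\varepsilon_T^2/2}\}$, as the paper does. Then $\Pi_\nu(\Upsilon_T^c)\lesssim e^{-a(c_4-1)T\varepsilon_T^2/2}=o\big(e^{-(c_4-1)T\varepsilon_T^2/2}\big)$, precisely because $a>1$; this is why that tail assumption is imposed. Since $\nu\in\R^K$, the exponentially large box costs only $K\log\big(e^{(c_4-1)T\varepsilon_T^2/2}/\varepsilon_T\big)\lesssim T\varepsilon_T^2$ in entropy. In any case, as the paper applies the cited result, the entropy condition concerns only $\mathcal{H}_T$. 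The $J$ and $\bth$ parts of your Step 2 are as in the paper.

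Three smaller remarks. First, $16c_1c_3\leq c_2$ is used in the prior-mass step, to make the cost $c_1\bar J_T\log\bar J_T$ of fixing $J=\bar J_T$ a small fraction of $c_2T\varepsilon_T^2$; it is not used in the sieve step. Second, in Step 3 you do not need \eqref{condi_sup}: $\Vert\varphi(\tilde h)-\varphi(\tilde h')\Vert_1\leq L\sqrt{A}\Vert\tilde h-\tilde h'\Vert_2$ together with \eqref{condi_rho} suffices, and your extra $\sqrt{j}$ is harmless. Third, the prior is supported on $\mathcal{F}_R^1$ rather than on the parameter set assumed in \cite{Sulem_concentration}. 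The paper needs \cite{rousseau_highdim} to justify applying the result anyway, which you take for granted.
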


 This proposition is a direct consequence of proposition 3.5 of \cite{Sulem_concentration} (case 1) for random series priors, we prove it  in Section \ref{sec:pr:concentration} . Note that assumption (\ref{condi_sup}) is not used in this proof.

Next, to derive the $L_2$ posterior contraction rate when $\varphi$ is not the identity function, we need an other assumption on the sup-norm of the $h_{l,k}$ drawn from the prior. For this assumption, we first define for  $C_1>0$ introduced in Proposition \ref{th_concentration_L1},  the sets
\begin{align}
    &\mathcal{W}^1_T(j):= \bigg\{f = (\nu,\boh)= (\nu, \varphi(\bth^TB_j))\in \mathcal{F}_R :  \Vert \bth\Vert_\infty\leq M_T\hspace{0.1cm}, \Vert f-f^0\Vert_1\leq C_1\varepsilon_T\bigg\}\hspace{0.1cm},\nonumber\\
    &\mathcal{W}^1_T = \bigcup_{j\leq J_T} \mathcal{W}^1_T(j) \nonumber.
\end{align}
The assumption is then the following.

\begin{assumption}[P2]\label{P2}\hfill When the link function $\varphi$ is not the identity function, it is assumed that there exist $G>0$ and a positive sequence $(r_T)_T$ such that for $T$ large enough,
\begin{align}
    &\Pi\big(\mathcal{W}^1_T\cap \{\hspace{0.1cm}\Vert \boh\Vert_\infty \geq r_T\}\big) \lesssim  e^{-c_4T\varepsilon_T^2} \label{P2_h+}\hspace{0.1cm},\\
    &\Pi\big(\mathcal{W}^1_T\cap \{ \Vert \boh - \boh^0\Vert_2\leq  \log(T) r_T\varepsilon_T, \hspace{0.1cm} \Vert \boh-\boh^0 \Vert_\infty \geq G\}\big) \lesssim  e^{-c_4T\varepsilon_T^2}.\label{P2_G}
\end{align}
with $c_4$ defined in \hyperref[as_P1]{(P1)}.
\end{assumption}
 The $L_2$ posterior contraction rate is then bounded by $\log(T) \varepsilon_T$ as shown below.

\begin{theorem}\label{th_concentration_L2}
    Let $N$ be a stationary, linear and multivariate Hawkes process with parameters $f^0 = (\nu^0, \boh^0)$ such that $\VERT \rho^0\VERT_1<1$ and $\boh^0\in\L_\infty^{K^2}$. Let $\Pi$ be a prior distribution on $\mathcal{F}_R^1$ constructed as in section \ref{subsec_prior_construct} and verifying the assumptions therein. Assume \hyperref[as_P1]{(P1)} holds and let $\varepsilon_T$ and $J_T$, defined in \eqref{def:JTepsT}, be such that $\varepsilon_T\sqrt{J_T \log(T)}\rightarrow 0$, and when $\varphi$ is not the identity function,  assume also \hyperref[as_A]{(A)} and \hyperref[P2]{(P2)}. Then,
    \begin{align}
        \Pi\big(f:\Vert f-f^0\Vert_2\geq \log(T)\varepsilon_T\vert N\big) \xrightarrow[T\rightarrow +\infty]{\P_0}0.\nonumber
    \end{align}
\end{theorem}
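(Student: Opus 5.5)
We start from Proposition \ref{th_concentration_L1}: with posterior probability tending to one, $\Vert f-f^0\Vert_1\leq C_1\varepsilon_T$. We then use (\ref{cond_piJ}), (\ref{sieve_theta}) and (P1) to restrict further to the sieve $\mathcal{W}^1_T$, i.e.\ $J\leq J_T$ and $\Vert\bth\Vert_\infty\leq M_T$. By the usual denominator lower bound of \cite{Sulem_concentration} (which also gives Proposition \ref{th_concentration_L1}), the evidence satisfies $\int e^{L_T(f)-L_T(f^0)}d\Pi\geq e^{-c_2T\varepsilon_T^2}$ with probability tending to one. On the other hand, $\Pi(J>J_T)\lesssim e^{-c_1J_T\log J_T}$ and $\Pi(\Vert\bth\Vert_\infty> M_T)\lesssim J_Te^{-U(M_T)}$ are both $\ll e^{-c_4T\varepsilon_T^2}$ by the choice of $J_0$, $c_4$ and (\ref{P1_rate}). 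Hence these sets have vanishing posterior mass. The same argument, using (\ref{P2_h+}), lets us also assume $\Vert\boh\Vert_\infty\leq r_T$ when $\varphi\neq \mathrm{id}$.

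The core step converts $L_1$ into $L_2$ on the sieve. In the identity case, $h_{l,k}\in\mathcal{\tilde H}(J)$ with $J\leq J_T$. We would like an inverse inequality of the form $\Vert h\Vert_2\lesssim\sqrt{J}\Vert h\Vert_1$, but (\ref{condi_sup}) only gives $\Vert h\Vert_\infty\lesssim\sqrt J\Vert h\Vert_2$. Combining it with $\Vert h\Vert_2^2\leq \Vert h\Vert_\infty\Vert h\Vert_1$ gives $\Vert h\Vert_2\lesssim\sqrt J\Vert h\Vert_1$ for $h\in\mathcal{\tilde H}(J)$. The obstacle is that $h-h^0$ is not in the sieve. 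We therefore split $h-h^0=(h-P_2^{J}h^0)+(P_2^{J}h^0-h^0)$, using $J\in[\bar J_T,J_T]$ (small $J$ being excluded by the posterior as above), together with (P1), which forces $h^0$ to be approximable at level $\bar\varepsilon_T$ in sup norm by elements of $\mathcal{\tilde H}(j)$. The result is $\Vert f-f^0\Vert_2\lesssim\sqrt{J_T}\varepsilon_T$, which is not yet $\log(T)\varepsilon_T$. So the inverse inequality alone is too crude, and the real argument must instead be a direct testing one, as follows.

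The main route is to redo the general posterior contraction argument of \cite{Sulem_concentration} with an $L_2$-type loss. On the event $\Vert f-f^0\Vert_1\leq C_1\varepsilon_T$, $J\le J_T$ and $\Vert\boh\Vert_\infty\lesssim \sqrt{J_T}$ (or $r_T$), we build tests based on the log-likelihood ratio, or on the quantity $\int_0^T(\lambda_t^k(f)-\lambda^k_t(f^0))^2dt$. We then use the excursion/renewal decomposition (Lemma in Appendix \ref{appendix_omega}) to show that, with high $\P_0$-probability and uniformly over the sieve, $T^{-1}\sum_k\int_0^T(\lambda^k_t(f)-\lambda^k_t(f^0))^2dt\gtrsim \Vert f-f^0\Vert_2^2$. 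This is an empirical analogue of the norm equivalence of Lemma \ref{eq}, and we expect the lower bound to be the main obstacle. The uniformity over the sieve would follow from a covering of $\mathcal{W}^1_T$ of log-cardinality $\lesssim J_T\log T$ together with concentration on excursions, with a deviation term of order $\sqrt{J_T\log T/T}\,\Vert\boh\Vert_\infty$. Here the condition $\varepsilon_T\sqrt{J_T\log T}\to0$ ensures that cross terms and the sup-norm growth are negligible, and the extra $\log T$ factor absorbs the covering entropy. The Kullback–Leibler type bound $\E_0[L_T(f^0)-L_T(f)]\gtrsim T\,\E_0[(\lambda-\lambda^0)^2/\lambda^0]$ on the sieve, together with the evidence lower bound, then yields $\Pi(\Vert f-f^0\Vert_2\geq\log(T)\varepsilon_T\vert N)\to0$.

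When $\varphi$ is not the identity, $h=\varphi(\bth^TB_J)$ need not lie in the span, but the previous step did not use the span structure except through the sup-norm bound $r_T$ from (\ref{P2_h+}). To conclude, we use (\ref{P2_G}) to discard the parameters with $\Vert\boh-\boh^0\Vert_\infty\geq G$ on the $L_2$ ball $\{\Vert\boh-\boh^0\Vert_2\le\log(T)r_T\varepsilon_T\}$. On the remaining set the intensities stay uniformly controlled, the lower bound $\lambda^k_t(f)\geq \nu_k-\max_l\Vert h^-_{l,k}\Vert_\infty$ holds thanks to $\mathcal F_R^1$, and the quadratic comparison above applies.
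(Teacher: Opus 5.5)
\textbf{Overall.} Your outer skeleton matches the paper: restrict to $\mathcal{W}^1_T$ via Proposition \ref{th_concentration_L1}, use the evidence lower bound of \cite{Sulem_concentration}, and remove the sieve complement with (P1) and (\ref{P2_h+})--(\ref{P2_G}). The bound $\Vert f-f^0\Vert_2\lesssim\sqrt{J_T}\varepsilon_T$ from the inverse inequality is also exactly what the paper derives, but it uses it only to cap the number of $L_2$ slices. The gap is in the step that actually produces the rate.

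\textbf{No tests, and the KL step does not close.} You never specify a test or bound its errors. The substitute you propose does not work. A lower bound on the \emph{expected} log-likelihood ratio $\E_0[L_T(f^0)-L_T(f)]$, combined with $\int e^{L_T(f)-L_T(f^0)}d\Pi\geq e^{-c_2T\varepsilon_T^2}$, says nothing about $\int_{\mathcal{W}^1_T\cap\mathcal{C}_T^c}e^{L_T(f)-L_T(f^0)}d\Pi(f)$ with $\P_0$-probability tending to one. For that you need one of two things:
\begin{itemize}
\item exponentially consistent tests on each slice, with type I error and type II error $\sup_f\E_0[\E_f[(1-\phi)\cdots\vert\mathcal{G}_{0-}]]$ small enough to beat both $e^{(c_4-1)T\varepsilon_T^2}$ and the covering numbers; or
\item a uniform high-probability deviation bound for $L_T(f)-L_T(f^0)$ over the sieve.
\end{itemize}
Neither follows from the expectation. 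Moreover, the type II error is computed under $\P_f$. So your lower bound on the empirical intensity distance, which holds with high $\P_0$-probability, is usable only as an event measurable with respect to the observations that you intersect into the decomposition; this is the paper's $\Xi_T$. It is needed only at the finitely many net points $f^{r,i}$ (Lemma \ref{sets_XiT}), not uniformly over the sieve.

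\textbf{The ReLU nonlinearity.} This is present even for $\varphi=\mathrm{id}$, since the prior charges $\mathcal{F}_R^1$. Your claim that $\lambda^k_t(f)\geq\nu_k-\max_l\Vert h^-_{l,k}\Vert_\infty$ on $\mathcal{F}_R^1$ is false once two or more points lie in $[t-A,t)$: the intensity can then be $0$. The constraint defining $\mathcal{F}_R$ only guarantees positivity of $\tilde\lambda^k_t(f)$ when at most one point is in memory. Consequently $\lambda(f)-\lambda(f^0)$ is not $\tilde\lambda(f-f^0)$, and your quadratic comparison in $\Vert f-f^0\Vert_2$ has no basis.

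\textbf{How the paper handles both issues.} It restricts both the test statistic and the stochastic distance to $A_2(T)=\bigcup_n[\tau_n,\chi_n]$, where $\chi_n=\min(U^{(2)}_n,\tau_{n+1})$.
\begin{itemize}
\item On this window, $\lambda^k_t(f)=\nu_k+h_{l,k}(t-U^{(1)}_n)=\tilde\lambda^k_t(f)$ is linear in $f$ and positive for every $f\in\mathcal{F}_R$. The contributions over excursions are i.i.d.\ under both $\P_0$ and $\P_f$.
\item The test is the linear statistic $\phi_{f^{r,i}}$ of Lemma \ref{test_l2}.
\item Its type I error follows from Bernstein's inequality for i.i.d.\ excursion sums on $\Omega_{\tau,T}$. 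The excursion-length bound contributes a factor $\log(T)\Vert f^{r,i}-f^0\Vert_\infty$, which is controlled by Lemma \ref{prem_1} with $\log(T)\sqrt{J_T}\varepsilon_T\to0$ in the identity case, or by (P2) otherwise.
\item The type II error uses $\Xi_T$, Cauchy--Schwarz, and $d_T^2\leq\eta\Vert\cdot\Vert_2^2$ on $\Omega_{\tau,T}$ to pass from $f^{r,i}$ to its neighbours.
\item Restricting to slices $r\geq\lfloor\log T\rfloor$ absorbs that $\log T$ factor and yields the rate $\log(T)\varepsilon_T$. The factor is not there to absorb covering entropy.
\end{itemize}

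\textbf{Minor point.} Nothing in the posterior excludes $J<\bar J_T$. Small $J$ is handled through the nesting $\mathcal{\tilde H}(j)\subset\mathcal{\tilde H}(Rj)$ in Lemma \ref{prem_1}.
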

The proof of Theorem \ref{th_concentration_L2} is provided in Section \ref{sec:pr:concentration}. When $\varphi$ is the identity function, the assumptions in Theorem \ref{th_concentration_L2} are similar to those used in \cite{donnet_concentration} and \cite{Sulem_concentration} for the weaker $\L_1$-norm with  the additional condition $\varepsilon_T\sqrt{J_T\log(T)}\rightarrow 0$.  We will see in Section \ref{app_priors}, that under $\beta$-Hölder regularity, the latter condition is equivalent to $\beta >1/2$

 Let us briefly comment on assumption \hyperref[P2]{(P2)} which is required when $\varphi$ is not the identity function. First, with the preceding $L_1$ posterior result we have $ \Pi(\mathcal{W}^1_T\vert N)\rightarrow 1$ under $\P_0$, and for this reason the assumption only concerns $f\in \mathcal{W}^1_T$. Secondly, this assumption is used for the "$L_2$" tests (with type 1 and 2 errors that decrease exponentially) needed for the $L_2$ posterior contraction, see Section \ref{sec_tests}. More precisely, the study of the $L_2$ tests involves a lower bound on  ratios of the type $\Vert \boh - \boh^0\Vert_2/\Vert \boh - \boh^0\Vert_\infty$ and \hyperref[P2]{(P2)} allows to deal with these ratios. Note that there is a trade off between (\ref{P2_h+}) and (\ref{P2_G}): the smaller $r_T$, the easier it is to verify (\ref{P2_G}), but conversely, the harder it is to verify (\ref{P2_h+}). In Lemma \ref{verif_P2_A_wavelets}, for priors constructed on a wavelet basis in Section \ref{app_priors}, we verify \hyperref[P2]{(P2)} for a certain class of link functions $\varphi$.

\subsection{The BvM property}

The BvM property we derive rely on the previous $L_2$ posterior but, when $\varphi$ is not the identity function, we also need that the posterior on $\tilde \boh$ concentrates around $\tilde \boh^0$ at a rate of order $\varepsilon_T$ up to $\log(T)$ terms. For this, we strengthen assumption \hyperref[as_A]{(A)}. To do so, we denote by  $I_0$ the convex hull of the set $\big(\cup_{(l,k)\in \K^2}  h_{l,k}^0([0,A])\big)$ and for $\epsilon>0$, $I_0(\epsilon)$ denotes the $\epsilon$-neighborhood of $I_0$. 

\begin{assumption}[A']\label{as_A'}  We assume that for some $\epsilon>0$, the inverse of the link function $\varphi^{-1}$ is well defined on  $I_0(\epsilon)$ and that $\varphi$ is three times continuously differentiable on $\varphi^{-1}(I_{0}(\epsilon))$ with a positive first derivative. Moreover, we assume that 
\begin{align}
    &\Pi\big(\mathcal{W}^1_T\cap \{ \Vert \boh - \boh^0\Vert_2\leq  \log(T) \varepsilon_T; \exists (l,k)\in \K^2, Range( h_{l,k})\not\subseteq I_{0}(\epsilon)\}\big) \lesssim  e^{-c_4T\varepsilon_T^2}.\label{A'_eps}
\end{align}
\end{assumption}

 This assumption is made so that if $\varphi$ is not the identity function, we can "linearize" it   around the $\tilde h^0_{l,k}$ by controlling the remainder terms. Note that for the function $\varphi(x) = x\onee_{x\geq c_0}$ to verify assumption \hyperref[as_A']{(A')}, it is necessary that the functions $h^0_{l,k}$ are bounded away from $c_0$. As \hyperref[P2]{(P2)}, in Lemma \ref{verif_P2_A_wavelets}, for priors constructed on a wavelet basis in Section \ref{app_priors}, we verify \hyperref[as_A']{(A')} for a certain class of link functions $\varphi$. From now on, for $f=(\nu, \boh)= (\nu, \varphi(\tilde \boh))$, we denote $(\nu,\tilde \boh)$ by $\tilde f$. Let $\mathcal{J}_T\subseteq  \{1,..., J_T\}$ be such that $\Pi_J(\mathcal{J}_T\vert N)\rightarrow1$ under $\P_0$ and let $M>0$. We define
\begin{align}
    \A_{T}(j) = \big\{f\in \mathcal{W}^1_T(j):\Vert  \tilde f - \tilde f^0\Vert_2\leq M\log(T)\varepsilon_T\big\}\hspace{0.2cm},\hspace{0.3cm}\A_{T}:= \bigcup_{j\in \mathcal{J}_T}  \A_{T}(j).\label{def_A}
\end{align}
With assumption \hyperref[as_A']{(A')}, we show in Lemma \ref{lemma_concentration_A} that $\Pi(\A_T\vert N)\rightarrow 1$ under $\P_0$.

Now, regarding the functional $\psi$, we recall that $\psi$ satisfies (\ref{def_diff_vdv}) with least favourable direction $\tilde \psi_L^0= (\xi^0_L, \bog^0_L)$  if   and only if it satisfies (\ref{diff_l2}) with Riesz representor $\tilde \psi^0_2= (\xi^0_2, \bog^0_2)$ and  that $\tilde \psi^0_2 = \Gamma(\tilde \psi^0_L)$. For $f\in \A_T$, we write
\begin{align}\label{psi_diff_LAN} 
    \psi(f) =\psi(f^0) + \langle f-f^0, \tilde \psi_2^0\rangle_2 + r(f, f^0) .
\end{align}

When $\varphi$ is not the identity and verifies Assumption \hyperref[as_A']{(A')}, set $ \boldsymbol{\bar\varphi}^0 :=  (\varphi'(\tilde h^0_{l,k}), (l,k)\in \K^2)$. Since these functions   are bounded away from $0$ and $+\infty$, the bilinear map  $\langle (\xi,\bog), (\xi',\bog')\rangle_{L,\varphi} :=\E_0\big[\mathcal{S}_\varphi(\xi,\boldsymbol{\bar\varphi}^0.\bog)^T \mathcal{S}_\varphi(\xi',\boldsymbol{\bar\varphi}^0.\bog')\big]$ is an inner product on $\RLK$. Moreover, the induced norm $\Vert . \Vert_{L,\varphi}$ is equivalent to $\Vert.\Vert_2$ on $\RLK$ and $(\R^K\times \L_2^{K^2}, \langle\cdot{,}\cdot\rangle_{L,\varphi})$ is a Hilbert space. Note that if $\varphi$ is the identity function, then $\bar\varphi^0_{l,k}=1$ for all $(l,k)\in \K^2$ and $\langle\cdot{,}\cdot\rangle_{L,\varphi}= \langle\cdot{,}\cdot\rangle_{L}$. Let $P^{j}_{L,\varphi}$ be the orthogonal projection operator on $\R^K\times \mathcal{\tilde H}(j)^{K^2}$ in terms of inner product $\langle\cdot{,}\cdot\rangle_{L,\varphi}$ and set
\begin{align}
    \tilde f^{0,j} =  P_{L,\varphi}^j(\tilde f^0)\hspace{0.1cm},\hspace{0.2cm}\tilde \psi^0_{L,\varphi} = (\xi^0_L, \bog^0_L./\boldsymbol{\bar\varphi}^0) \hspace{0.3cm}\text{and} \hspace{0.3cm}\tilde \psi^{0,j}_{L,\varphi} = (\xi^{0,j}_{L,\varphi}, \bog^{0,j}_{L,\varphi}) = P^{j}_{L,\varphi}(\tilde \psi^0_{L,\varphi}).\nonumber
\end{align}
With these notations and definitions, we can state our general BvM theorem.
  \begin{theorem}\label{main_theorem}
    Consider a stationary multivariate Hawkes process $N$ observed on $[-A,T]$, with parameters $f^0= (\nu^0,\boh^0)\in ]0,+\infty[^K\times \mathcal{H}$ such that $\VERT \rho^0\VERT_1<1$ and $\boh^0\in \L_\infty^{K^2}$. Let $\Pi$ be a prior distribution on $f$ defined as in (\ref{subsec_prior_construct}) and that verifies  \hyperref[as_P1]{(P1)} and, in case $\varphi$ is not the identity function, \hyperref[as_A']{(A')} and \hyperref[P2]{(P2)}. Assume also that for some $\delta>0$
    \begin{align}\label{condi_rate} \sqrt{T}\varepsilon_T^{2(1-\delta)}\xrightarrow[T\rightarrow +\infty]{}0 \hspace{0.4cm}\text{and}\hspace{0.4cm}\displaystyle \log(T)^7\sqrt{J_T}\varepsilon_T \xrightarrow[T\rightarrow +\infty]{} 0.
    \end{align}
     Let $\psi$ be a functional that verifies (\ref{diff_l2}) with $\bog^0_2\in \L_\infty^{K^2}$, $\sup\{\Vert  \bog^{0,j}_{L,\varphi}\Vert_\infty, \hspace{0.1cm}j\geq 1\}<+\infty$, 
    \begin{align}\label{weak_cond_approx}
        \underset{j\in \mathcal{J}_T}{\max} \Vert \bog^0_L./\boldsymbol{\bar\varphi}^0 - P^j_{2}(\bog^0_L./\boldsymbol{\bar\varphi}^0)\Vert_{2}\rightarrow 0,
    \end{align}
    and such that $r(f,f^0)$ in  \eqref{psi_diff_LAN} verifies
    \begin{align}\label{rem_func}
        \sqrt{T}\underset{f\in \A_T}{\sup}\vert r(f,f^0)\vert \xrightarrow[T\rightarrow+\infty]{}0.
    \end{align}
    Finally, for $u\in \R$ and $f =(\nu, \varphi(\tilde \boh))$, let $f_{u,j}  = \big(\nu - u\xi^{0,j}_{L,\varphi}/\sqrt{T}, \varphi(\tilde\boh - u \bog^{0,j}_{L,\varphi}/\sqrt{T})\big)$. Assume that for all $u$ in a neighborhood of $0$,
    \begin{align}\label{change_var_cond}
         \sum_{j\in \mathcal{J}_T} \frac{\int_{\A_{T}(j) }e^{L_T(f_{u,j})}d\Pi_{f\vert j}(f)}{\int_{\A_{T}(j)}e^{L_T(f)}d\Pi_{f\vert j}(f)} \Pi_J(j\vert N)\xrightarrow[T\rightarrow +\infty]{\P_0} 1.
    \end{align}
    Then, 
    \begin{align}\label{gen_bvm}
    d_{BL}\bigg(\mathcal{L}^\Pi\big(\sqrt{T}(\psi(f) - \hat\psi_T - \mathcal{B}_{J,T})\big\vert N\big) ,  \mathcal{N}\big(0, \Vert \tilde \psi^{0}_{L}\Vert_{L}^2\big)\bigg)\xrightarrow[T\rightarrow+\infty]{\P_0} 0,
    \end{align}
with $\mathcal{B}_{J,T}= -\langle \tilde f^0 - \tilde f^{0,J} , \tilde \psi^{0}_{L,\varphi} -\tilde \psi^{0,J}_{L,\varphi} \rangle_{L,\varphi} + \frac{1}{\sqrt{T}}W_{T}\big(  (\tilde \psi^{0,J}_{L,\varphi}-\tilde \psi^0_{L,\varphi}).\boldsymbol{\bar\varphi}^0\big)$.
 \end{theorem}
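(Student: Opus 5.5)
The plan follows the Castillo--Rousseau strategy and works with the Laplace transform of the marginal posterior. By Lemma \ref{lemma_concentration_A}, $\Pi(\A_T\vert N)\to 1$ under $\P_0$. It therefore suffices to show that, for every $u$ in a neighbourhood of $0$,
\begin{align*}
I_T(u):=\E^\Pi\Big[e^{u\sqrt{T}(\psi(f)-\hat\psi_T-\mathcal{B}_{J,T})}\onee_{\A_T}(f)\,\Big\vert\, N\Big]\xrightarrow{\P_0} e^{u^2\Vert\tilde\psi^0_L\Vert_L^2/2}.
\end{align*}
Convergence of Laplace transforms on a neighbourhood of $0$, in probability, then gives the $d_{BL}$ convergence in (\ref{gen_bvm}) by a standard subsequence argument. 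I write $I_T(u)$ as the mixture over $j\in\mathcal{J}_T$ of the conditional quantities given $J=j$, weighted by $\Pi_J(j\vert N)$. The target is a single expansion, uniform in $j\in\mathcal{J}_T$ and $f\in\A_T(j)$, after which the change of variables $f\mapsto f_{u,j}$ is controlled by (\ref{change_var_cond}).

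\textbf{Step 1: linearise the functional.} On $\A_T$, use (\ref{psi_diff_LAN}) and (\ref{rem_func}) to replace $\sqrt{T}(\psi(f)-\psi(f^0))$ by $\sqrt{T}\langle f-f^0,\tilde\psi^0_2\rangle_2$ up to $o(1)$. With Assumption \hyperref[as_A']{(A')}, Taylor-expand $\varphi$ around $\tilde\boh^0$, so that $\boh-\boh^0=\boldsymbol{\bar\varphi}^0.(\tilde\boh-\tilde\boh^0)+O(\vert\tilde\boh-\tilde\boh^0\vert^2)$. Since $\bog^0_2\in L_\infty$, the quadratic term contributes at most $\sqrt{T}\log(T)^2\varepsilon_T^2\Vert\tilde f-\tilde f^0\Vert_\infty/\Vert\cdot\Vert_2$-type errors. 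These are controlled using (\ref{condi_sup}), which gives $\sqrt{J_T}$ factors, together with (\ref{condi_rate}). Because $\tilde\psi^0_2=\Gamma(\tilde\psi^0_L)$, this yields
\begin{align*}
\sqrt{T}\big(\psi(f)-\psi(f^0)\big)=\sqrt{T}\langle\tilde f-\tilde f^0,\tilde\psi^0_{L,\varphi}\rangle_{L,\varphi}+o(1).
\end{align*}
Now split $\tilde\psi^0_{L,\varphi}=\tilde\psi^{0,j}_{L,\varphi}+(\tilde\psi^0_{L,\varphi}-\tilde\psi^{0,j}_{L,\varphi})$. For $f$ in the model with $J=j$, $\tilde f-\tilde f^{0,j}$ lies in $\R^K\times\mathcal{\tilde H}(j)^{K^2}$ and is therefore $\langle\cdot,\cdot\rangle_{L,\varphi}$-orthogonal to $\tilde\psi^0_{L,\varphi}-\tilde\psi^{0,j}_{L,\varphi}$. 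The remaining cross term is exactly $\langle\tilde f^{0,j}-\tilde f^0,\tilde\psi^0_{L,\varphi}-\tilde\psi^{0,j}_{L,\varphi}\rangle_{L,\varphi}$, which is the first part of $\mathcal{B}_{J,T}$. Finally, expanding $\hat\psi_T$ through (\ref{eq_efficient_estimator}) produces $W_T(\tilde\psi^0_L)$, and the second part of $\mathcal{B}_{J,T}$ converts it into $W_T(\tilde\psi^{0,j}_{L,\varphi}.\boldsymbol{\bar\varphi}^0)$.

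\textbf{Step 2: likelihood expansion along the perturbed path.} Write
\begin{align*}
L_T(f_{u,j})-L_T(f)=-u\,W_T\big(\tilde\psi^{0,j}_{L,\varphi}.\boldsymbol{\bar\varphi}^0\big)-u\sqrt{T}\langle\tilde f-\tilde f^0,\tilde\psi^{0,j}_{L,\varphi}\rangle_{L,\varphi}+\frac{u^2}{2}\Vert\tilde\psi^{0,j}_{L,\varphi}\Vert_{L,\varphi}^2+o_{\P_0}(1),
\end{align*}
uniformly over $f\in\A_T(j)$ and $j\in\mathcal{J}_T$. This generalises the LAN computation of Lemma \ref{lemma_LAN} to a local expansion around a random centre $f$. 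The method is to expand $\log\lambda^k_t(f_{u,j})-\log\lambda^k_t(f)$ to second order, writing it as a martingale part $\int(\cdot)(dN-\lambda(f^0)dt)$ plus a compensator part. The compensator part, after $\varphi$-linearisation, equals $T\langle\cdot,\cdot\rangle_{L,\varphi}$ up to the error from replacing empirical averages by their stationary expectations. Since $\sup_j\Vert\bog^{0,j}_{L,\varphi}\Vert_\infty<\infty$, the perturbation is uniformly bounded and stays in $\mathcal{F}_R$. Also, $\Vert\tilde\psi^{0,j}_{L,\varphi}\Vert_{L,\varphi}\to\Vert\tilde\psi^0_L\Vert_L$ by (\ref{weak_cond_approx}) and the norm equivalence.

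\textbf{Step 3: combine.} Combining Steps 1--2, the exponent $u\sqrt{T}(\psi(f)-\hat\psi_T-\mathcal{B}_{J,T})$ equals $L_T(f)-L_T(f_{u,j})+u^2\Vert\tilde\psi^0_L\Vert_L^2/2+o_{\P_0}(1)$, uniformly. Consequently $I_T(u)=e^{u^2\Vert\tilde\psi^0_L\Vert_L^2/2}(1+o_{\P_0}(1))$ multiplied by the ratio sum in (\ref{change_var_cond}), and that sum tends to $1$ by hypothesis.

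\textbf{Main obstacle.} I expect the main obstacle to be the uniform control in Step 2 of the empirical-process remainders: a supremum over $f\in\A_T$ and over all $j\le J_T$ of martingale terms like $\frac{1}{\sqrt{T}}\int(\tilde\lambda(f)-\tilde\lambda(f^0))\,\tilde\lambda(\bog^{0,j})/\lambda(f^0)\,(dN-\lambda dt)$, together with the compensator deviations $\frac{1}{T}\int(\cdot)dt-\E_0[\cdot]$. I would bound these by chaining or covering on the finite-dimensional sets $\mathcal{\tilde H}(j)$, whose $L_2$ entropy is of order $j\log T$. I would use the concentration inequalities for stationary Hawkes processes (the renewal/$\Omega_T$ events of Appendix \ref{appendix_omega}), combined with (\ref{condi_sup}) to pass from $L_2$ radius $\log(T)\varepsilon_T$ to sup-norm radius $\sqrt{J_T}\log(T)\varepsilon_T$. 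This yields errors of order $\mathrm{polylog}(T)\sqrt{J_T}\varepsilon_T$, which vanish by the second condition in (\ref{condi_rate}). The quadratic remainders are of order $\sqrt{T}\varepsilon_T^2$ times logs and are handled by the first condition.
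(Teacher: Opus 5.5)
Your proposal follows the paper's proof essentially step by step. The shared steps are:
\begin{itemize}
\item the Laplace transform taken on $\A_T$;
\item linearisation of $\varphi$ with $\Vert\omega_\varphi(\tilde\boh)\Vert_1\lesssim\log(T)^2\varepsilon_T^2$, which together with $\bog^0_2\in\L_\infty^{K^2}$ linearises $\psi$;
\item orthogonality of $\tilde f-\tilde f^{0,j}$ to $\tilde\psi^0_{L,\varphi}-\tilde\psi^{0,j}_{L,\varphi}$, which generates $\mathcal{B}_{j,T}$;
\item uniform control, by Bernstein inequalities plus chaining as in Lemma~\ref{control_rem}, of $R_{T,\varphi}(f)-R_{T,\varphi}(f_{u,j})$, which is exactly your Step~2 remainder;
\item the final appeal to \eqref{change_var_cond}.
\end{itemize}

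You only need to fix sign slips in Steps~2 and~3. Since $\tilde f_{u,j}=\tilde f-u\tilde\psi^{0,j}_{L,\varphi}/\sqrt{T}$, the correct expansion is
$L_T(f_{u,j})-L_T(f)=-uW_T(\tilde\psi^{0,j}_{L,\varphi}.\boldsymbol{\bar\varphi}^0)+u\sqrt{T}\langle\tilde f-\tilde f^0,\tilde\psi^{0,j}_{L,\varphi}\rangle_{L,\varphi}-\frac{u^2}{2}\Vert\tilde\psi^{0,j}_{L,\varphi}\Vert_{L,\varphi}^2+o_{\P_0}(1)$.
Hence the exponent $u\sqrt{T}(\psi(f)-\hat\psi_T-\mathcal{B}_{j,T})$ equals $L_T(f_{u,j})-L_T(f)+\frac{u^2}{2}\Vert\tilde\psi^0_L\Vert_L^2+o_{\P_0}(1)$, not $L_T(f)-L_T(f_{u,j})+\dots$. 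Only this orientation turns $e^{u\sqrt{T}(\psi(f)-\hat\psi_T-\mathcal{B}_{j,T})+L_T(f)}$ into the numerator $e^{L_T(f_{u,j})}$ appearing in \eqref{change_var_cond}.
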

 
  This theorem is proved in Section \ref{sec:pr:main_th}. Roughly speaking,  (\ref{gen_bvm}) says that the marginal posterior distribution of $\sqrt{T}\big(\psi(f) - \hat\psi_T \big)$ can be approximated by a mixture of Gaussian distributions  as $T\rightarrow +\infty$:
\begin{align}
    \mathcal{L}^\Pi\Big(\sqrt{T}\big(\psi(f) - \hat\psi_T\big)\Big\vert N\Big) \approx \sum_{j\in \mathcal{J}_T}\mathcal{N}\big(\sqrt{T}\mathcal{B}_{j,T}, \Vert \tilde \psi^{0}_{L}\Vert_{L}^2) \Pi_J(j\vert N).\nonumber
\end{align}

 Let us comment the conditions of Theorem \ref{main_theorem}. The condition $\sqrt{T}\varepsilon_T^{2(1-\delta)}\rightarrow  0$ is standard in semiparametric statistics (see for instance \cite{castillo_rousseau}).  The second condition in (\ref{condi_rate}) is  more technical and is  used in Lemma \ref{control_rem}, but  essentially boils down to the first part of (\ref{condi_rate}). In Section \ref{app_priors}, we show that in case of $\beta$-Hölder regularity, (\ref{condi_rate}) is equivalent to $\beta >1/2$. 
 
 The functional smoothness condition (\ref{diff_l2}) is standard and (\ref{rem_func}) comes from \cite{castillo_rousseau}. Regarding the condition on $\Vert  \bog^{0,j}_{L,\varphi}\Vert_\infty$, by Lemma \ref{lemma_linf}, since $\bog^0_2\in \L_\infty^{K^2}$,  we have  $\bog^0_L\in \L_\infty^{K^2}$. Nevertheless, this condition is not trivial because $ \bog^{0,j}_{L,\varphi}$ is a projection in terms of LAN norm. We will see in Section \ref{app_priors} that $\sqrt{j}\Vert \bog^0_L./\boldsymbol{\bar\varphi}^0 - P^j_{2}(\bog^0_L./\boldsymbol{\bar\varphi}^0 )\Vert_2 = O(1)$ as $j\rightarrow +\infty$  implies this condition (see Lemma \ref{lemma_LAN_proj_bound}). The latter implies in particular that condition (\ref{weak_cond_approx}) is verified. Besides, condition (\ref{weak_cond_approx}) is weak since the rate of convergence towards $0$ can be arbitrarily slow. As $\bog^0_L./ \boldsymbol{\bar \varphi}^0\in \L_2^{K^2}$,  when $B_j$ is a truncated total family of $\L_2$, this condition is verified as soon as $\min\{j, j\in \J_T\} \rightarrow +\infty$.
 
 Note that in condition (\ref{change_var_cond}),  when $\varphi$ is the identity, contrary wise to Theorem 2.1 of \cite{castillo_rousseau}, the perturbation is not $\boh - u \bog^0_L/\sqrt{T}$  but $\boh - u \bog^{0,j}_{L} /\sqrt{T}$ which makes it much easier to verify. A consequence is that the posterior may be biased since it is centered at $\hat\psi_T + \mathcal{B}_{J,T}$ instead of $\hat\psi_T$. A similar strategy was used in \cite{castillo_rousseau} in  the density model for random histograms priors.
 
 One way to verify (\ref{change_var_cond}) is to perform for each $j\in \mathcal{J}_T$ the change of variable $f\rightarrow f_{u,j}$ at the numerator of the ratio in the $j$-th summand and then to show that $d\Pi_{\tilde \boh\vert j}(\tilde \boh + u\bog^{0,j}_{L,\varphi})/d\Pi_{\tilde \boh\vert j} (\tilde \boh) $ goes uniformly to $1$ on $\A_T(j)$. This might not be true if the prior $\Pi_{h_{l,k}}$ puts all its mass on non-negative functions while $h^0_{l,k}$ is not bounded away from $0$. The behavior of the ratios in (\ref{change_var_cond}), and consequently the behavior of  $\mathcal{L}^\Pi\big(\sqrt{T}(\psi(f) - \hat\psi_T)\vert N\big)$, are not clear in this boundary case. This is an interesting problem, both theoretically and for applications since it is not always realistic in practice to assume that the functions $h^0_{l,k}$ are bounded away from $0$. This problem has been addressed in \cite{bvm_truncated} for a broad class of parametric statistical models. They find that when the parameter is on the boundary of the prior support, the limiting distribution is (under some assumptions) a truncated Gaussian. We believe that something similar happens for the conditional distribution of the whole parameter
$f$ at least  in some specific  cases. This is left for future work.

When the $h^0_{l,k}$ are not assumed to bounded away from $0$, to circumvent this issue we can use a prior which puts some mass on functions taking negative values. This is in particular how we proceed in section \ref{app_priors} for the random histograms prior.

\begin{remark}
It is possible to replace (\ref{condi_rho}) by $\gamma_1(j)\Vert \tilde h_\theta\Vert_2\lesssim \Vert \theta\Vert_2\lesssim \gamma_2(j)\Vert \tilde h_\theta\Vert_2$ for some positive sequences $\gamma_1$ and $\gamma_2$ such that $\gamma_2/\gamma_1$ has the same properties as $\gamma$ in (\ref{condi_rho}) but then the second condition in (\ref{condi_rate}) becomes $\log(T)^7\sqrt{J_T}\varepsilon_T \max \big(\frac{\gamma_2(J_T)}{\gamma_1(J_T)}, \frac{\gamma_1(J_T)}{ \gamma_2(J_T)}\big)=o(1)$ (see the entropy computations after (\ref{first_chain}) in the proof of Lemma \ref{control_rem}). Similarly, $\sqrt{j}$ can be replaced by some positive sequence $\eta(j)\rightarrow+\infty$ (with at most polynomial growth) in (\ref{condi_sup}), the ideas remain the same but  many conditions have to be adapted.
\end{remark}

\begin{remark} Lemma \ref{lemma_rk}, provided in Section \ref{sec:lemma:bvm}, shows that under the assumption
\begin{align}\label{medium_cond}
    \log(T)^3\underset{j\in \mathcal{J}_T}{\max} \Vert \bog^0_L./\boldsymbol{\bar\varphi}^0 - P^j_{2}(\bog^0_L./\boldsymbol{\bar\varphi}^0)\Vert_{2}\rightarrow 0,
\end{align}
which is slightly stronger than (\ref{weak_cond_approx}), the bias term $\mathcal{B}_{J,T}$ in (\ref{gen_bvm}) can be replaced by
\begin{align}
    \mathcal{B}_J:=-\langle \tilde f^0 - \tilde f^{0,J} , \tilde \psi^{0}_{L,\varphi} -\tilde \psi^{0,J}_{L,\varphi} \rangle_{L,\varphi}.\nonumber
\end{align}
Moroever, $\vert \mathcal{B}_J\vert \lesssim \log(T)\varepsilon_T\Vert \bog^0_{L}. /\boldsymbol{\bar\varphi}^0 - P^{J}_2(\bog^0_{L}. /\boldsymbol{\bar\varphi}^0 )\Vert_2$.
\end{remark}

To have the BvM property, we thus need $\sqrt{T}\mathcal{B}_{j} \xrightarrow[]{} 0$ uniformly in $j\in \mathcal{J}_T$ and so we obtain the following corollary.

\begin{corollary}\label{main_coro} Under the assumptions of Theorem \ref{main_theorem}, if
\begin{align}\label{stronger_cond_bvm}
    \log(T)\sqrt{T}\varepsilon_T\underset{j\in \mathcal{J}_T}{\max}\Vert \bog^0_{L}. /\boldsymbol{\bar\varphi}^0 - P^{j}_2(\bog^0_{L}. /\boldsymbol{\bar\varphi}^0 )\Vert_2\rightarrow 0, 
\end{align}
then the posterior distribution has the BvM property (\ref{def_bvm}).
\end{corollary}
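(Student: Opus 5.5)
The proof is a short reduction to Theorem \ref{main_theorem}: we show that under (\ref{stronger_cond_bvm}) the bias $\sqrt{T}\mathcal{B}_{J,T}$ tends to $0$ in posterior probability, and then remove it from (\ref{gen_bvm}). Since $\log(T)\sqrt{T}\varepsilon_T\to+\infty$ (because $T\bar\varepsilon_T^2\gtrsim \log(T)^3$), condition (\ref{stronger_cond_bvm}) implies (\ref{medium_cond}). By the Remark following Theorem \ref{main_theorem} (Lemma \ref{lemma_rk}), $\mathcal{B}_{J,T}$ may then be replaced by $\mathcal{B}_J$. More precisely, we use that the difference
\[
\sqrt{T}(\mathcal{B}_{J,T}-\mathcal{B}_J)=W_T\big((\tilde\psi^{0,J}_{L,\varphi}-\tilde\psi^0_{L,\varphi}).\boldsymbol{\bar\varphi}^0\big)
\]
is $o_{\P_0}(1)$ uniformly in $J\in\mathcal{J}_T$, so (\ref{gen_bvm}) still holds with $\mathcal{B}_J$ in place of $\mathcal{B}_{J,T}$.

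Next we bound the remaining bias uniformly in $j$ using the estimate in the same Remark:
\[
\sqrt{T}\max_{j\in\mathcal{J}_T}|\mathcal{B}_j|\lesssim \log(T)\sqrt{T}\varepsilon_T\max_{j\in\mathcal{J}_T}\Vert \bog^0_{L}./\boldsymbol{\bar\varphi}^0 - P^{j}_2(\bog^0_{L}./\boldsymbol{\bar\varphi}^0)\Vert_2,
\]
and the right-hand side tends to $0$ by (\ref{stronger_cond_bvm}). Hence $\sqrt{T}|\mathcal{B}_J|\le\eta_T$ with a deterministic $\eta_T\to0$ on the event $\{J\in\mathcal{J}_T\}$, whose posterior probability tends to $1$ in $\P_0$-probability.

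To conclude, let $Q_1=\mathcal{L}^\Pi(\sqrt{T}(\psi(f)-\hat\psi_T-\mathcal{B}_J)\vert N)$ and $Q_2=\mathcal{L}^\Pi(\sqrt{T}(\psi(f)-\hat\psi_T)\vert N)$. For any $F$ with $\Vert F\Vert_{BL}\le1$ we have $|F(x+y)-F(x)|\le\min(|y|,2)$. Coupling both variables under the same posterior draw therefore gives
\[
d_{BL}(Q_1,Q_2)\le \eta_T+2\,\Pi_J(\mathcal{J}_T^c\vert N)\xrightarrow{\P_0}0.
\]
The triangle inequality for $d_{BL}$, combined with (\ref{gen_bvm}) (in its $\mathcal{B}_J$ version), then yields (\ref{def_bvm}).

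The main point needing care is the first step: it requires that the Remark's replacement of $\mathcal{B}_{J,T}$ by $\mathcal{B}_J$ holds uniformly over the random posterior index $J$, which is what Lemma \ref{lemma_rk} supplies. Everything after that is elementary.
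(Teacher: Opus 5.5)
Your proof is correct and follows the paper's argument: under (\ref{stronger_cond_bvm}) you replace $\mathcal{B}_{J,T}$ by $\mathcal{B}_J$ via Lemma \ref{lemma_rk}, then show $\sqrt{T}\max_{j\in\mathcal{J}_T}|\mathcal{B}_j|\to0$ using the bound $|\mathcal{B}_j|\lesssim\log(T)\varepsilon_T\Vert\cdot\Vert_2$, and you write out the $d_{BL}$ shift argument that the paper leaves implicit. One phrasing point: divergence of $\log(T)\sqrt{T}\varepsilon_T$ alone does not give (\ref{medium_cond}); what you need, and what your parenthetical actually gives via $\varepsilon_T=\log(T)\bar\varepsilon_T$, is the rate $\log(T)\sqrt{T}\varepsilon_T\gtrsim\log(T)^{7/2}\geq\log(T)^3$.
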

 Under the assumptions of Corollary \ref{main_coro}, when $\varepsilon_T$ is the optimal $L_2$ rate (up to a logarithmic factor), the Bayesian procedure has the \textit{plug-in} property as in \cite{bickel_ritov}: the posterior distribution concentrates "optimally" on $f^0$ and  simultaneously estimates a large class of smooth functionals efficiently at rate $1/\sqrt{T}$.

Condition (\ref{stronger_cond_bvm}) is the key condition for the BvM to hold in our framework. It requires that the least favourable direction can be \textit{well} approximated by functions $\bth^T B_j$ for $j\in \mathcal{J}_T$. In \cite{castillo_rousseau}, in  the density model and for specific functionals the authors prove that 
\begin{align}\label{dream_ineq}
    \Vert \bog^0_L -  P_2^j(\bog^0_L)\Vert_2\lesssim \Vert \boh^0 -  P_2^j(\boh^0)\Vert_2\hspace{0.1cm},\hspace{0.5cm} j\geq j_0,
\end{align}
which implies that (\ref{stronger_cond_bvm}) holds as soon as $\log(T)^2\sqrt{T}\varepsilon_T^2=o(1)$.  This is particularly interesting since it implies that BvM holds for hierarchical priors which lead to adaptive posterior contraction rates on $f$. For other functionals, they show that on the contrary (\ref{dream_ineq}) does not hold and the posterior may not verify BvM due to a bias term coming from (\ref{stronger_cond_bvm}).

Here the situation is more complex since $\bog^0_L$ does not have an explicit expression (see section \ref{sec_lfeq}) and it is unclear if (\ref{dream_ineq}) holds. A different proof strategy then consists in showing that if $\boh^0$ and $\bog^0_2$ belong to some functional class then so does $\bog^0_L$  and that for such class (\ref{stronger_cond_bvm}) is verified 
so that BvM holds. We show in corollary \ref{g0_palm_holder} that if the functions $\boh^0$ and $\bog^2_0$ are $\beta$-Hölder,  then the functions $\bog^0_L$ are also (almost) $\beta$-Hölder. Then under reasonable   assumptions on the prior, $\varepsilon_T \asymp T^{-\beta /(2 \beta+ 1)} (\log T)^\alpha$ for some $\alpha \geq 0$  and to ensure that (\ref{stronger_cond_bvm}) holds and  thus that the BvM property is verified, it is enough to have  $\beta >1/2$ as well as for some $\alpha '\in \mathbb R$,
$\Pi ( J \gtrsim T^{1/(2\beta+1)}(\log T)^{\alpha'} | N ) = o_{\P_0}(1)$. In particular we can either choose a priori $J$ deterministically, i.e. as the Dirac mass 
 $ \Pi_J = \delta_{(T^{1/(2\beta+1)}J_1)}$, which leads to a non adaptive posterior distribution or assume that the functions $h_{l,k}^0$ verify the polish tail condition as in \cite{rousseau_szabo_confidence}.

We apply in the following section our results to some specific random series priors.

\subsection{Application on specific prior models}\label{app_priors}

We present two types of random series priors, one constructed on histogram bases and the other on truncated wavelet bases, that satisfy the assumptions of the BvM results of Section \ref{BVM}. First, we recall some standard notations for Hölder and Besov spaces. Let $I$ be a sub-interval of $\R$, possibly equal to $R$, for $\beta>0$ we denote by $C^\beta(I)$ the usual Hölder space of order $\beta$ on $I$.  When $\beta\notin \mathbb{N}$ , $C^\beta(I)$ is equal to the Besov space $\mathcal{B}^\beta_{\infty,\infty}(I)$ and when $\beta\in \mathbb{N}$, $C^\beta(I)\subsetneq \mathcal{B}^\beta_{\infty,\infty}(I)$. The spaces $\mathcal{B}^\beta_{\infty,\infty}(I)$ are also called the Hölder-Zygmund spaces.  Throughout this section,  to simplify the presentation we consider that the following assumption on the true parameters $f^0$ and on the functional holds. 

\begin{assumption}[S]\label{as_S} 
The true parameters $f^0\in ]0,+\infty[^K\times \mathcal{H}$ are such that $\VERT \rho^0\VERT_1<1$ and the functions $\boh^0$ are in $C^\beta([0,A])$ for some $\beta >1/2$. Moreover, the functional $\psi$ verifies (\ref{diff_l2}), can be expanded as in (\ref{psi_diff_LAN}) with $r(f,f^0) = O(\Vert f-f^0\Vert_2^2)$ and is such that the functions $\bog^0_{2}$ are also in $C^\beta([0,A])$. 
\end{assumption}

The condition on the remainder of the functional is slightly more demanding than what is required in Theorem 3.2. For instance, the functionals  $\psi^b_k$, $\psi^2_{l,k}$ and $\psi^a_{l,k}$ with $a\in C^\beta([0,A])$  (all three defined in (\ref{typical_functionals}))  verify the functional conditions in  \hyperref[as_S]{(S)}. By corollary \ref{g0_palm_holder}, when $\beta\in ]1/2,1]$, the functions $\bog^0_L$ are in $\mathcal{B}^\beta_{\infty,\infty}([0,A])$ under assumption \hyperref[as_S]{(S)}. When $\beta>1$, assuming the Conjecture \ref{conj_beta1} and under assumption \hyperref[as_S]{(S)},  the functions $\bog^0_L$ are in $\C^{\beta'}([0,A])$ for any $\beta'<\beta$ by corollary \ref{g0_holder_beta1}.

For ease of presentation, we specify the prior distribution of $\bth\vert J$: $\Pi_{\bth\vert J} = \otimes_{(l,k)\in \K^2}\Pi_{\theta\vert J}$ and $\Pi_{\theta\vert J} = \otimes_{1\leq j\leq J}  \bar \Pi_{\theta,j}$  for some distributions $(\bar \Pi_{\theta,j})_j$ that have densities $(\pi_{\theta,j})_j$ with respect to Lebesgue measure. Similarly, we take $\Pi_\nu = \otimes_{1\leq k\leq K}  \bar \Pi_\nu$ with $\bar \Pi_\nu$ a probability distribution on $]0,+\infty[$ with a density denoted $\pi_\nu$ assumed to be positive and continuously differentiable on $]0,+\infty[$ and such that $\bar \Pi_\nu([x;+\infty[)\lesssim x^{-a}$ for $x$ large enough.

\subsubsection{Random histogram priors}
We assume that $\beta \in ]1/2,1]$. We take for $B_j$ the histogram basis associated to the regular partition of $[0,A]$ into $j$ bins: $B_i^j (x)= \onee\{x\in [\frac{A(i-1)}{j},\frac{Ai}{j}[\}$ for $1\leq i\leq j-1$ and $B_j^j (x)= \onee\{x\in [\frac{A(j-1)}{j},A]\}$ . This is a particular case of B-splines basis (as defined in appendix E of \cite{bnp_book}) when $q=1$. $(B_j)_j$ verifies the conditions of Section \ref{subsec_prior_construct}  with $\gamma(j)= \sqrt{j}$. We then choose $\varphi(x)=x$ and for all $j\geq 1$, $\pi_{\theta,j}= \pi_{\theta} $ where $\pi_{\theta}$  is supported on $[\kappa, +\infty[ $, for some $\kappa\in [-\infty,0]$ and is positive and continuously differentiable on $]\kappa, -\infty[$. For simplicity, we also consider that $\pi_\theta$ has sub-exponential tails. We verify the assumptions of Theorem \ref{main_theorem}. First, (\ref{sieve_theta}) is verified with $U(M)\asymp M$ and the assumption \hyperref[as_P1]{(P1)} is verified with in particular
\begin{align}\label{eps_T_beta}
    \bar \varepsilon_T(\beta) \asymp T^{\frac{-\beta}{2\beta+1}}\log(T)^{\frac{\beta}{2\beta+1}} \hspace{0.1cm}, \hspace{0.3cm}\bar J_T(\beta)\asymp T^{\frac{1}{2\beta+1}} \log(T)^{\frac{3\beta+1}{2\beta+1}} .
\end{align}
When $\beta>1/2$, (\ref{condi_rate}) is verified and, under assumption \hyperref[as_S]{(S)}, (\ref{rem_func}) is also verified. Moreover, since the functions $\bog^0_L$ are in $\mathcal{B}^\beta_{\infty,\infty}([0,A])$, we have $\Vert \bog^0_L - P_2^j(\bog^0_L )\Vert_2\lesssim j^{-\beta'}$ with $\beta'=\beta$ when $\beta<1$ and $\beta'<1$ when $\beta=1$. Thus, by Lemma \ref{lemma_LAN_proj_bound}, the condition $\sup\{\Vert \bog^{0,j}_{L,\varphi}\Vert_\infty, j\geq 1\}<+\infty$ is verified and, letting $j_T :=\min\{j, j\in \mathcal{J}_T\}$,  (\ref{weak_cond_approx}) is verified when $j_T\rightarrow+\infty$, whatever the rate.  By Lemma \ref{change_var_lemma}, if the functions $\boh^0$ are bounded away from $\kappa$,  then (\ref{change_var_cond}) is verified. Finally, condition (\ref{stronger_cond_bvm}) of Corollary \ref{main_coro} holds as soon as 
\begin{align}\label{conjT}
    \log(T)\sqrt{T}\varepsilon_T(\beta) j_T^{-\beta'}\rightarrow 0,
\end{align}
We thus have proved the following corollary.

\begin{corollary}\label{histo_coro}
Let  $f^0$ and $\psi$ that verify assumption \hyperref[as_S]{(S)} with $\beta \in ]1/2,1]$. Let $\Pi$ be a prior constructed as above on the regular histogram bases with $\kappa$ such that the functions $\boh^0$ are bounded away from $\kappa$. If $j_T\rightarrow +\infty$, then (\ref{gen_bvm}) holds. If in addition condition (\ref{conjT}) is satisfied, then the posterior distribution has the BvM property (\ref{def_bvm}).
\end{corollary}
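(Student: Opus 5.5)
The corollary follows by checking, for the regular histogram prior with $\varphi(x)=x$, each hypothesis of Theorem \ref{main_theorem} and then of Corollary \ref{main_coro}. Since $\varphi$ is the identity, assumptions (A'), (P2) and the weights $\boldsymbol{\bar\varphi}^0$ drop out, and $\langle\cdot,\cdot\rangle_{L,\varphi}=\langle\cdot,\cdot\rangle_L$. The conditions of Section \ref{subsec_prior_construct} are immediate.
\begin{itemize}
\item $(B_j)$ is a B-spline basis of order one, so $\mathcal{\tilde H}(j)\subset\mathcal{\tilde H}(2j)$.
\item Conditions (\ref{condi_rho})--(\ref{condi_sup}) hold with $\gamma(j)=\sqrt{j}$.
\item Condition (\ref{sieve_theta}) follows from a union bound over the $K^2j$ independent coordinates together with the sub-exponential tails of $\pi_\theta$, giving $U(M)\asymp M$.
\end{itemize}

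For (P1) I would use the standard histogram approximation: for $\beta\in]1/2,1]$ there is $\theta^0$ with $\Vert h^0_{l,k}-\theta^{0T}_{l,k}B_j\Vert_\infty\lesssim j^{-\beta}$. Shift it by $c\,j^{-\beta}$ so that every $\theta$ in a small box around the shifted vector satisfies $h^0_{l,k}\le\theta^TB_j\le h^0_{l,k}+\bar\varepsilon_T$. This needs $j^{-\beta}\lesssim\bar\varepsilon_T$ for $j\ge\bar J_T$.

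The box has coordinates in a bounded set, and since $h^0$ is bounded away from $\kappa$, the positive continuous density $\pi_\theta$ is bounded below there. Its prior mass is therefore at least $(c\bar\varepsilon_T)^{K^2j}$, whose logarithm is $-O(\bar J_T\log T)$. Balancing $\bar J_T\asymp T\bar\varepsilon_T^2\log T$ with $\bar J_T^{-\beta}\asymp\bar\varepsilon_T$ gives exactly (\ref{eps_T_beta}). Then $U(M_T)\asymp M_T$ polynomial handles (\ref{P1_rate}).

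Next I would verify (\ref{condi_rate}). Here $\varepsilon_T\asymp T^{-\beta/(2\beta+1)}$ up to logs, so
\[
\sqrt T\varepsilon_T^{2(1-\delta)}\to0 \iff \frac{2\beta(1-\delta)}{2\beta+1}>\frac12,
\]
which holds for small $\delta$ when $\beta>1/2$. Likewise $\sqrt{J_T}\varepsilon_T\asymp T^{(1/2-\beta)/(2\beta+1)}$ up to logs, which kills $\log(T)^7$.

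The functional conditions come from (S):
\begin{itemize}
\item $\bog^0_2\in C^\beta\subset\L_\infty$.
\item Condition (\ref{rem_func}) holds because $\sqrt T\sup_{\A_T}|r|\lesssim\sqrt T(\log T)^2\varepsilon_T^2\to0$, by the rate condition.
\item By Corollary \ref{g0_palm_holder}, $\bog^0_L\in\mathcal B^\beta_{\infty,\infty}$. Histogram projections then give $\Vert\bog^0_L-P_2^j\bog^0_L\Vert_2\lesssim j^{-\beta'}$. Since $\beta'>1/2$, Lemma \ref{lemma_LAN_proj_bound} yields $\sup_j\Vert\bog^{0,j}_L\Vert_\infty<\infty$.
\item Condition (\ref{weak_cond_approx}) holds once $j_T\to\infty$.
\end{itemize}

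Condition (\ref{change_var_cond}) is supplied by Lemma \ref{change_var_lemma}, using that $h^0$ is bounded away from $\kappa$. The mechanism is a translation of coordinates $\theta\mapsto\theta-u\,\theta(\bog^{0,j}_L)/\sqrt T$ with unit Jacobian. The translation is of size $O(\sqrt j/\sqrt T)$ per coordinate, because the histogram coefficients are the bin values of a bounded function. Log-Lipschitz control of $\pi_\theta$ on the relevant compact set then makes the density ratio tend to one uniformly on $\A_T(j)$; the $\nu$ part is handled similarly with $\pi_\nu$. This gives (\ref{gen_bvm}) via Theorem \ref{main_theorem}.

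Finally, under (\ref{conjT}), the bound $\Vert\bog^0_L-P^j_2\bog^0_L\Vert_2\lesssim j^{-\beta'}\le j_T^{-\beta'}$ gives (\ref{stronger_cond_bvm}) uniformly over $j\in\mathcal J_T$. Corollary \ref{main_coro} then yields the BvM property (\ref{def_bvm}).

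The main obstacle I expect is the prior mass bound in (P1), specifically the uniform sup-norm one-sided approximation for all $j\in[\bar J_T,RJ_0\bar J_T]$. A secondary issue is ensuring the bounded-away-from-$\kappa$ condition keeps the box inside the support. Everything else is bookkeeping of exponents.
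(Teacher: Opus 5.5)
Your proposal is correct and follows the same route as the paper, which also justifies the corollary by checking (\ref{sieve_theta}), (P1), (\ref{condi_rate}) and (\ref{rem_func}), then the sup-norm bound via Corollary \ref{g0_palm_holder} and Lemma \ref{lemma_LAN_proj_bound}, condition (\ref{change_var_cond}) via Lemma \ref{change_var_lemma}, and finally (\ref{stronger_cond_bvm}) from (\ref{conjT}). One slip in your change-of-variables heuristic: the coefficients of $\bog^{0,j}_{L}$ are its uniformly bounded bin values, so the shift is $O(1/\sqrt{T})$ per coordinate (only $O(\sqrt{j/T})$ in $\ell_2$), giving a log prior ratio of $O(J_T/\sqrt{T})$, which vanishes precisely because $\beta>1/2$; a per-coordinate shift of order $\sqrt{j/T}$, as you wrote, would instead give $O(J_T^{3/2}/\sqrt{T})$, which does not vanish for $\beta\le 1$.
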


 Examples of densities $\pi_\theta$ are the shifted exponential, truncated Laplace and truncated Gaussian densities. In particular, if the functions $\boh^0$ are bounded away from $0$, one can take $\kappa=0$ and the standard exponential distribution is a possible choice. Otherwise, one has to take $\kappa<0$ and the prior puts some mass on functions taking negative values, in other words it is a prior in the nonlinear ReLU model. A similar result holds for B-splines for any $\beta>1/2$.

\subsubsection{Wavelet priors} Let $\beta> 1/2$. Let $\big\{\Phi_{i,v} , i\in\{-1,0\}\cup\mathbb{N}, v\in \{0,...,\bar v_i-1\}\big\}$ be a boundary-corrected Daubechies wavelet basis on $[0,A]$ of regularity $S\in \mathbb{N}$ with $S>\beta$ (see section 4.3.5 of \cite{gine_nickl_book}). For all $i\geq -1$,  we have $0\leq \bar v_i  \leq C_v A2^{i}$ for some $C_v>0$. Let $ \bar B_I:= \big\{\Phi_{i,v} , i\in\{-1,0, 1, ...., I\}, v\in \{0,...,\bar v_i \}\big\}$ (the truncated basis at $I$). To be in line with our conventions of Section \ref{subsec_prior_construct}, we note that we can rewrite $\bar B_I$ as a family $B_J=(b_1^J,...b_J^J)$ with $J=c(I):=\sum_{i=-1}^{I}\bar v_i$. However, we  work with the resolution level $I$ instead of $J$ as it is commonly done with wavelets. We thus put a prior on $I$ which induces a prior on $J$. The sequences $(\bar I_T(\beta))_T$ and $(I_T(\beta))_T$ and the set $\mathcal{I}_T$ are then defined analogously to $(\bar J_T(\beta))_T$, $(J_T(\beta))_T$ and $\mathcal{J}_T$ respectively. To simplify the computations, given $\bth = (\bth_{l,k}, (l,k)\in \K^2)$ with for each $(l,k)\in \K^2$, $\bth_{l,k} = (\theta_{l,k}^i , i\in \{-1,0,...., I\})$ and for each  $i\geq -1$, $\theta^i_{l,k} = (\theta^{i,v}_{l,k}, v=0,...,\bar v_i) \in \R^{\bar v_i}$ (and thus $\bth_{l,k} \in \R^{c(I)}$ and $\bth \in \R^{K^2c(I)}$), we set
\begin{align}\label{not_wav}
    \bth^{i,T}_{l,k}\bar B_I^i := \sum_{v=0}^{\bar v_i}\theta_{l,k}^{i,v} \psi_{i,v} \hspace{0.1cm},\hspace{0.1cm} \bth_{l,k} ^T\bar B_I = \sum_{i=-1}^I\bth^{i,T}_{l,k}\bar B_I^i \hspace{0.2cm}\text{and}\hspace{0.2cm} \bth^T\bar B_I = (\bth_{l,k}^T\bar B_I, (l,k)\in \K^2) .
\end{align}
 These truncated bases verify the conditions of Section \ref{subsec_prior_construct}  with $\gamma= 1$. Contrary to histograms, there is no obvious choice of the wavelet coefficients that ensures the non negativeness of the functions. We can either consider the ReLU nonlinear Hawkes model or choose a nonlinear $\varphi\geq 0$. We study both. 

In the ReLU nonlinear model, we thus choose $\varphi(x)=x$. For the prior on the wavelets coefficients, we take for all $i\geq -1$ and $v\in \{0,...,\bar v_i\}$, $\pi_{\theta,i,v} = \pi_\theta$ where $\pi_\theta$ is a density  which is  positive and continuously differentiable on $\R$ with, for simplicity,  sub-exponential tails (for example a Laplace or a Gaussian density). Then, similarly to what we did for random histograms, one can check, assuming Conjecture \ref{conj_beta1} when $\beta>1$ and using standard approximation property of (boundary-corrected) wavelets (see again section 4.3.5 of \cite{gine_nickl_book}), that \hyperref[as_P1]{(P1)},  (\ref{condi_rate}) and (\ref{rem_func}) are verified with $\bar \varepsilon_T(\beta)$ as in (\ref{eps_T_beta}) and $\bar I_T(\beta)$ such that $2^{\bar I_T(\beta)}\asymp T^{\frac{1}{2\beta+1}} \log(T)^{\frac{3\beta+1}{2\beta+1}}$. Moreover, Lemma \ref{lemma_LAN_proj_bound} allows again to verify  that $\sup\{\Vert \bog^{0,i}_{L,\varphi}\Vert_\infty, i\geq -1\}<+\infty$,   (\ref{change_var_cond}) is verified by Lemma \ref{change_var_lemma} and, letting $i_T :=\min\{i, i\in \mathcal{I}_T\}$,  (\ref{weak_cond_approx}) is verified when $i_T\rightarrow+\infty$.  We obtain the following corollary.

\begin{corollary}\label{wavelets_coro}
Let  $f^0$ and $\psi$ that verify assumption \hyperref[as_S]{(S)} with $\beta >1/2$. If  $\beta>1$, assume in addition the Conjecture \ref{conj_beta1}. Let $\Pi$ be a prior constructed as above on the truncated wavelet bases in the ReLU model. If $i_T\rightarrow +\infty$, then (\ref{gen_bvm}) holds. If in addition we have $\log(T)\sqrt{T}\varepsilon_T(\beta) 2^{-i_T\beta'}\rightarrow 0$ for some $\beta'<\beta$, then the posterior distribution has the BvM property (\ref{def_bvm}).
\end{corollary}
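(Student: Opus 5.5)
The plan is to reduce Corollary \ref{wavelets_coro} to Theorem \ref{main_theorem} and Corollary \ref{main_coro}, taking $\varphi$ equal to the identity. Then $\boldsymbol{\bar\varphi}^0=1$, $\langle\cdot{,}\cdot\rangle_{L,\varphi}=\langle\cdot{,}\cdot\rangle_L$ and $\tilde\psi^0_{L,\varphi}=\tilde\psi^0_L$. Because $\varphi$ is the identity, assumptions \hyperref[as_A']{(A')} and \hyperref[P2]{(P2)} need not be checked. The hypotheses to verify are:
\begin{itemize}
\item the structural conditions of Section \ref{subsec_prior_construct};
\item \hyperref[as_P1]{(P1)};
\item the rate conditions (\ref{condi_rate});
\item $\bog^0_2\in\L_\infty^{K^2}$ together with $\sup_i\Vert\bog^{0,i}_{L}\Vert_\infty<\infty$;
\item (\ref{weak_cond_approx}), (\ref{rem_func}) and (\ref{change_var_cond});
\item for the second claim, (\ref{stronger_cond_bvm}).
\end{itemize}

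\textbf{Structural conditions.} For truncated boundary-corrected Daubechies bases, nestedness holds with $R=2$, and $\gamma=1$ by orthonormality. Condition (\ref{condi_sup}) follows from $\Vert\sum_{v}\Phi_{i,v}^2\Vert_\infty\lesssim 2^i$ (localization), after summing over $i\le I$. A prior on $I$ with $\Pi(I=i)\asymp e^{-c 2^i i}$ induces (\ref{cond_piJ}) on $J=c(I)\asymp 2^I$. Sub-exponential tails of $\pi_\theta$ give (\ref{sieve_theta}) with $U(M)\asymp M$.

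\textbf{Condition (P1).} Since $\boh^0\in C^\beta$ with $S>\beta$, the wavelet approximation bound $\Vert\boh^0-P_2^{I}\boh^0\Vert_\infty\lesssim 2^{-I\beta}$ holds. I would take the coefficients $\theta^0$ of $P^I_2\boh^0$ shifted upward by a constant of order $\bar\varepsilon_T/2$; the shift is representable through the scaling coefficients, with controlled sup norm. Then the one-sided box in (\ref{approx_prior_B}) contains a sup-norm ball of radius $\asymp\bar\varepsilon_T/\sqrt{J}$ in coefficients. Its prior mass is at least $e^{-CJ\log T}$, because $\pi_\theta$ is positive and continuous on compacts and the $\theta^0$ are bounded. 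Balancing $2^{-\bar I_T\beta}\asymp\bar\varepsilon_T$ against $\bar J_T\asymp T\bar\varepsilon_T^2\log T$ gives (\ref{eps_T_beta}).

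\textbf{Rate conditions and remainder.} With $\varepsilon_T=\log(T)\bar\varepsilon_T$ and $J_T\asymp T^{1/(2\beta+1)}$ up to logs, we get $\sqrt{J_T}\varepsilon_T\asymp T^{(1-2\beta)/(2(2\beta+1))}$ times a log factor, which tends to $0$ because $\beta>1/2$. Likewise $\sqrt T\varepsilon_T^{2(1-\delta)}\to0$ for small $\delta$. For (\ref{rem_func}), assumption \hyperref[as_S]{(S)} gives $\sqrt T\sup_{\A_T}|r|\lesssim\sqrt T\log(T)^2\varepsilon_T^2\to0$.

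\textbf{Regularity of the least favourable direction.} For $\beta\le1$, Corollary \ref{g0_palm_holder} gives $\bog^0_L\in\mathcal B^\beta_{\infty,\infty}$. For $\beta>1$, Conjecture \ref{conj_beta1} with Corollary \ref{g0_holder_beta1} gives $\bog^0_L\in C^{\beta'}$ for every $\beta'<\beta$. Either way,
\[
\Vert\bog^0_L-P^i_2\bog^0_L\Vert_2\lesssim2^{-i\beta'}
\]
with $\beta'>1/2$. Hence $2^{i/2}\Vert\bog^0_L-P^i_2\bog^0_L\Vert_2=O(1)$, and Lemma \ref{lemma_LAN_proj_bound} yields $\sup_i\Vert\bog^{0,i}_L\Vert_\infty<\infty$. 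The same bound gives (\ref{weak_cond_approx}) once $i_T\to\infty$. For (\ref{change_var_cond}) I would invoke Lemma \ref{change_var_lemma}. The prior density is positive and $C^1$ on all of $\R$, so there is no boundary, and the shift $u\bog^{0,i}_L/\sqrt T$ has coefficients of $\ell_2$-norm $O(1/\sqrt T)$. The log-ratio of products of $\pi_\theta$ is then uniformly $o(1)$ on $\A_T(i)$, using $\Vert\bth\Vert_\infty\le M_T$ and $\sqrt{J_T}/\sqrt T\to0$. Theorem \ref{main_theorem} then gives (\ref{gen_bvm}).

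\textbf{BvM property.} Condition (\ref{stronger_cond_bvm}) reduces to $\log(T)\sqrt T\varepsilon_T2^{-i_T\beta'}\to0$, which is exactly the added hypothesis, and Corollary \ref{main_coro} concludes.

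\textbf{Main obstacle.} I expect the hardest step to be the regularity transfer to $\bog^0_L$, together with the uniform sup-norm bound on its LAN projections, since $\bog^0_L$ is only implicitly defined. This is delegated to Corollaries \ref{g0_palm_holder} and \ref{g0_holder_beta1} and Lemma \ref{lemma_LAN_proj_bound}. Among the remaining steps, the most delicate is the one-sided prior mass bound in \hyperref[as_P1]{(P1)} for non-positive wavelet expansions.
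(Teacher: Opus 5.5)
Your proposal is correct and follows essentially the same route as the paper: take $\varphi$ to be the identity and check the hypotheses of Theorem \ref{main_theorem} and Corollary \ref{main_coro}. The ingredients match: \hyperref[as_P1]{(P1)} and the rate conditions for $\beta>1/2$, Corollaries \ref{g0_palm_holder} and \ref{g0_holder_beta1} for the regularity of $\bog^0_L$, Lemma \ref{lemma_LAN_proj_bound} for the uniform sup-norm bound on its LAN projections, and Lemma \ref{change_var_lemma} for (\ref{change_var_cond}).

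One caveat on your side remark about that last step. The bound $\Vert\bth\Vert_\infty\le M_T$ with $M_T\to\infty$ does not by itself control $(\log\pi_\theta)'$ for a general positive $C^1$ density. The paper instead uses orthonormality: on $\A_T(i)$ the coefficients are $O(\log(T)\varepsilon_T)$-close in $\ell_2$ to the bounded wavelet coefficients of $\boh^0$, so they stay in a fixed compact interval where $\vert\pi_\theta'/\pi_\theta\vert$ is bounded.
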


Now, we present  wavelets priors with nonlinear $\varphi\geq 0$ and in order to verify assumption \hyperref[as_A']{(A')}, we assume that the functions $\boh^0$ are bounded away from $0$. We also assume that $\beta>\beta_m$ for some $\beta_m\geq 1/2$ defined in Corollary \ref{wavelets_coro_varphi}. We choose $\varphi\geq 0$, globally non-decreasing and Lipschitz as well as  infinitely differentiable with a positive first derivative on $\varphi^{-1}(I_0(\epsilon))$ for some $\epsilon >0$ that can be taken arbitrarily small. For example possible choices are $\varphi(x)=x_+$ or $\varphi(x)=\log(1+e^x)$ (the \textit{softplus} function). Then , recall that since the functions $\tilde \boh^0$ are in $\mathcal{B}^\beta_{\infty,\infty}([0,A])$ with $\beta >\beta_m$, for $i\geq 0$ the order of magnitude of their wavelets coefficients is at most $2^{-i( \beta_m +1/2)}$. For the prior on the wavelets coefficients, we consider two cases. In case (i),  we choose $\bar \Pi_{-1,r}=\mathcal{N}(0, 1)$ and for $i\geq 0$, $\bar \Pi_{i,r}=\mathcal{N}(0, 2^{-i(2 \beta_m +1)})$. In case (ii),  we take $\bar \Pi_{-1,r}=\mathcal{U}([-C_0, C_0])$ (with $C_0>0$ chosen large enough and that depends only on $\boh^0$, $\varphi$ and on the Daubechies wavelet basis) and for $i\geq 0$, $\bar \Pi_{i,r}=\mathcal{U}([-\log(T)^2 2^{-i/2}, \log(T)^2 2^{-i/2}])$   Then, as for the wavelet prior in the nonlinear ReLU model, we verify easily with the same $\bar \varepsilon_T(\beta)$ and $\bar I_T(\beta)$ the conditions required in theorem \ref{main_theorem} when $\varphi$ is the identity function. Moreover, we have to verify here the assumptions \hyperref[P2]{(P2)} and \hyperref[as_A']{(A')} since $\varphi$ is not the identity function; we do it in Lemma \ref{verif_P2_A_wavelets}.  We thus have the following corollary.

\begin{corollary}\label{wavelets_coro_varphi}
Let  $f^0$ and $\psi$ that verify assumption \hyperref[as_S]{(S}) with $\beta>\beta_m$ for some $\beta_m\geq 1/2$. If $\beta>1$, assume in addition the Conjecture \ref{conj_beta1}. Assume also that the functions $\boh^0$ are bounded away from $0$. Let $\Pi$ be a prior constructed as above on the truncated wavelet bases with $\varphi$ that is infinitely differentiable and with a positive first derivative on $\varphi^{-1}(I_0(\epsilon))$ for some $\epsilon >0$ that can be taken arbitrarily small. Take $\beta_m= 5/6$ in case (i) and $\beta_m= 1/2$ in case (ii). If $i_T\rightarrow +\infty$, then (\ref{gen_bvm}) holds. If in addition we have $\log(T)\sqrt{T}\varepsilon_T(\beta)2^{-i_T \beta'}\rightarrow 0$ for some $\beta'<\beta$, then the posterior distribution has the BvM property (\ref{def_bvm}).
\end{corollary}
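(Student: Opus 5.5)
The corollary follows from Theorem \ref{main_theorem} and Corollary \ref{main_coro}, exactly as Corollaries \ref{histo_coro} and \ref{wavelets_coro} do. The new ingredients, needed because $\varphi$ is not the identity, are Assumptions \hyperref[as_A']{(A')} and \hyperref[P2]{(P2)}, which I take from Lemma \ref{verif_P2_A_wavelets}. The plan is to verify each hypothesis of Theorem \ref{main_theorem} in turn for the two Gaussian/uniform wavelet priors (i) and (ii). I would then read off (\ref{gen_bvm}), and finally show that the extra rate condition gives (\ref{stronger_cond_bvm}).

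First, the prior-level conditions. Since $\boh^0$ is bounded away from $0$ and $\varphi'>0$ on $\varphi^{-1}(I_0(\epsilon))$, the function $\tilde \boh^0=\varphi^{-1}(\boh^0)$ is well defined and bounded, so (A) holds. Because $\varphi$ is $C^\infty$ there, $\tilde\boh^0$ inherits the $\mathcal{B}^\beta_{\infty,\infty}$ regularity of $\boh^0$. Its wavelet coefficients at level $i$ are therefore $O(2^{-i(\beta+1/2)})$, which is $O(2^{-i(\beta_m+1/2)})$.

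For (P1) I take $j\in[\bar J_T,RJ_0\bar J_T]$ with $2^{\bar I_T}\asymp T^{1/(2\beta+1)}\log(T)^{(3\beta+1)/(2\beta+1)}$. The one-sided sup-norm neighbourhood in (\ref{approx_prior_B}) is contained in a product of coordinate boxes of width about $\bar\varepsilon_T 2^{-i/2}/ (L\sqrt{I})$ around the true coefficients, after accounting for the truncation bias $2^{-\bar I_T\beta}\lesssim \bar\varepsilon_T$. For the Gaussian prior (i), the log-mass of each box is bounded below by $-\log(T)-\theta_0^2 2^{i(2\beta_m+1)}$, and the second term is bounded since $\beta\ge\beta_m$. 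For the uniform prior (ii), the true coefficients lie in the support for $C_0$ large, and the log-mass is $\gtrsim -\log(T)$. Summing over the $j$ coordinates gives $-c_2\bar J_T\log T/2$. Condition (\ref{sieve_theta}) holds with $U(M)\asymp M^2$ in case (i), and with $U=+\infty$ beyond $\log(T)^2$ in case (ii). Either way (\ref{P1_rate}) holds with $M_T$ polynomial.

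Next the rate and functional conditions. Condition (\ref{condi_rate}) reduces to $\beta>1/2$, via $\varepsilon_T\sqrt{J_T}\asymp T^{(1/2-\beta)/(2\beta+1)}$ times logarithms. Condition (\ref{rem_func}) follows from $r=O(\Vert f-f^0\Vert_2^2)$ in (S) together with $\sqrt T\log(T)^2\varepsilon_T^2\to0$. For the least favourable direction, Corollary \ref{g0_palm_holder} (or Corollary \ref{g0_holder_beta1} under Conjecture \ref{conj_beta1}) puts $\bog^0_L$ in $C^{\beta'}$ for every $\beta'<\beta$. Dividing by the smooth, bounded-away-from-zero $\boldsymbol{\bar\varphi}^0$ preserves this. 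Wavelet approximation then gives $\Vert \bog^0_L./\boldsymbol{\bar\varphi}^0-P^i_2(\cdot)\Vert_2\lesssim 2^{-i\beta'}$. Lemma \ref{lemma_LAN_proj_bound} yields $\sup_i\Vert\bog^{0,i}_{L,\varphi}\Vert_\infty<\infty$, and (\ref{weak_cond_approx}) holds once $i_T\to\infty$.

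The main obstacle is (\ref{change_var_cond}) for nonlinear $\varphi$. I would invoke Lemma \ref{change_var_lemma}: the shift acts on $\tilde\boh$, so it is a translation of the coefficient vector by $u\bog^{0,j}_{L,\varphi}$-coefficients over $\sqrt T$. The Gaussian density ratio is then controlled by $\exp(O(\Vert\theta\Vert\, 2^{i(2\beta_m+1)}/\sqrt T))$ on $\A_T(j)$. This is the step that forces $\beta_m=5/6$ in case (i): one needs $2^{I_T(2\beta_m+1)}\varepsilon_T/\sqrt T\to0$, or a similar balance. In case (ii) the uniform density ratio equals $1$ unless the shifted point exits the box, which has vanishing posterior mass since $\A_T$ sits well inside it. With these checks, Theorem \ref{main_theorem} gives (\ref{gen_bvm}). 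Under $\log(T)\sqrt T\varepsilon_T2^{-i_T\beta'}\to0$, (\ref{stronger_cond_bvm}) holds, and Corollary \ref{main_coro} gives the BvM property.
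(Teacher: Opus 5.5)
Your overall route is the paper's. You apply Theorem \ref{main_theorem} and Corollary \ref{main_coro}, take (A') and (P2) from Lemma \ref{verif_P2_A_wavelets}, and take (\ref{change_var_cond}) from Lemma \ref{change_var_lemma}. The remaining conditions you check as in the histogram and ReLU-wavelet cases: (P1), (\ref{condi_rate}), (\ref{rem_func}), the sup-norm bound via Lemma \ref{lemma_LAN_proj_bound}, and (\ref{weak_cond_approx}). Those checks are fine.

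What is wrong is your account of the change-of-variable step. It is not where $\beta_m=5/6$ comes from, and the bound you sketch would not prove it in the stated range. On $\A_T(j)$, $\Vert\theta\Vert$ is of order one, so a log-ratio of order $\Vert\theta\Vert\,2^{I_T(2\beta_m+1)}/\sqrt T$ tends to $0$ only if $\beta>13/6$ (with $\beta_m=5/6$). Your alternative, $2^{I_T(2\beta_m+1)}\varepsilon_T/\sqrt T\to0$, needs $\beta>13/12$. Neither covers $\beta\in(5/6,13/12]$.

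The missing idea, used in Lemma \ref{change_var_lemma}, is level-by-level decay of both factors in the cross term $\sum_j 2^{j(2\beta_m+1)}\,\theta^j\cdot\bar\theta^j_{0,i}/\sqrt T$:
\begin{itemize}
\item on $\Theta_T(i)$ one has $\Vert\theta^j_{l,k}\Vert_2\lesssim\log(T)2^{-\max(j,0)\beta}$, from $\Vert\tilde\boh-\tilde\boh^0\Vert_2\le M\log(T)\varepsilon_T$ and the Besov regularity of $\tilde\boh^0$;
\item the coefficients of $\bog^{0,i}_{L,\varphi}$ satisfy $\Vert\bar\theta^j_{0,i,l,k}\Vert_2\lesssim 2^{-\max(j,0)\beta}$.
\end{itemize}
Using $|2xy|\le x^2+y^2$, the weights $2^{j(2\beta_m+1)}$ are then compensated level by level. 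The log-ratio is bounded by $\log(T)^2\sum_{j\le I_T}2^{j(2\beta_m+1-2\beta)}/\sqrt T\lesssim\log(T)^2 2^{I_T}/\sqrt T\to0$, which only needs $\beta\ge\beta_m$ and $\beta>1/2$.

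The threshold $5/6$ actually arises in Lemma \ref{verif_P2_A_wavelets}, in verifying (\ref{P2_G}) and (A'). There one must show that a Gaussian draw $\tilde h$ whose range leaves the region where $\varphi$ is smooth and strictly increasing is $L_2$-far from $h^0_{l,k}$. This is done through the Lipschitz bound $\sum_i 2^{i(1-\beta_m)}\Vert X^{(i)}\Vert_\infty=\sum_i 2^{i/6}\Vert X^{(i)}\Vert_\infty$ combined with Gaussian concentration. The resulting tail is below $e^{-\log(T)T\varepsilon_T^2}$ only when $\beta>5/6$.

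Since you cite both lemmas rather than reprove them, your argument still goes through. But the diagnosis of the ``main obstacle'' should be corrected, and the crude density-ratio bound should not be used in place of Lemma \ref{change_var_lemma}.
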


In case (i), the condition $\beta>\beta_m=5/6$ comes from the verification of  \hyperref[P2]{(P2)} and \hyperref[as_A']{(A')} (see Lemma \ref{verif_P2_A_wavelets}). In case (ii), it would be natural to take for i $\geq 0$, $\bar \Pi_{i,r}=\mathcal{U}([-C_02^{-i}, C_02^{-i}])$ to "match" the decrease of the wavelet coefficients (as we did in case (i) with the Gaussian distributions), however with these uniform distributions our proof of Lemma \ref{change_var_lemma}  fails and it is not clear whether the change of variable condition (\ref{change_var_cond}) is verified.

\section{Regularity of the least favourable direction}\label{sec_lfeq}
In this section, we study the regularity of the least favourable direction $(\xi^0_L,\bog^0_L)$ which is only known through the linear equation (\ref{eq_least_fav}): $\Gamma(\xi^0_L, \bog^0_L) = (\xi_2^0, \bog_2^0)$ where $\Gamma = \mathcal{S}^*\mathcal{S}$ is the \textit{information operator}. We use the notations $C^\beta$ and $\mathcal{B}^\beta_{\infty,\infty}$, recalled at start of section \ref{app_priors}, for Hölder and Besov (or Hölder-Zygmund) spaces respectively. We set $C^\beta_b(\R):= C^\beta(\R)\cap L_\infty(\R)$ and $\mathcal{B}^\beta_{\infty,\infty,b}(\R):= \mathcal{B}^\beta_{\infty,\infty}\cap L_\infty(\R)$.  

First, by Lemma \ref{eq} and with standard properties of the adjoint operator,  we know that the information operator $\Gamma$ is a bounded, bijective, linear operator of $\R^K\times \L_2^{K^2}$. By Banach-Schauder theorem, its inverse is also bounded. We recall also that Lemma \ref{explicit_gamma} gives an explicit expression for the operator $\Gamma$ involving the Palm distribution of the process and  it implies that if $(\xi,\bog)=\Gamma^{-1}(\xi',\bog')$ for some $(\xi',\bog') \in \R^K\times L_2^{K^2}$, then $(\xi,\bog)$ verifies the  fixed point equation (\ref{converse_expr}). Starting from this, we study  $\Gamma$ as an operator on $\R^K \times L_\infty^{K^2}= \R^K \times L_\infty([0,A])^{K^2}$ and for technical reasons we extend it on $\R^K \times L_\infty(\R)^{K^2}$, we denote by $\Gamma^e$ its extension.  Next, using these results, we show that when the functions $\boh^0$ are in $C^\beta([0,A])$ for some $\beta \in ]0,1]$, the operator $(\Gamma^e)^{-1}$ maps $\R^K \times C^\beta([0,A])^{K^2}$ to $\R^K\times \mathcal{B}^\beta_{\infty,\infty}([0,A])^{K^2}$ (Lemma \ref{final_lemma_operator}). Hence, it shows that when the interaction functions $\boh^0$ and the $L_2$ Riesz representor $\bog^0_2$ are in $C^\beta([0,A])$ for some $\beta \in ]0,1]$,  the functions $\bog^0_L$ are in  $\mathcal{B}^\beta_{\infty,\infty}([0,A])$. An extension of this result for $\beta>1$ is presented in Section \ref{sec:pr:beta>1} under a certain regularity assumption on the second order Palm distribution (see conjecture (\ref{conj_beta1})). Palm calculus is briefly presented in the Appendix \ref{appendix_palm} where we state the results that we use,  giving each time references for the proofs and further details.

To extend $\Gamma$ on $\R^K \times L_\infty(\R)^{K^2}$, we first extend on $\R$ the functions $\boh^0$, the functions $(p_{l,k}, (l,k)\in \K^2)$ and the operators $(\zeta_{l,j,k}, (l,j,k)\in \K^3)$ (the last two being defined by (\ref{def_plk}) and (\ref{def_zeta}) respectively). We consider bounded extensions $\boh^{0,e}$ on $\R$ of the functions $\boh^0$ such that for each $(l,k)\in \K^2$, $h^{0,e}_{l,k}\geq -\nu_k^0/2$. For $t,u\in \R$, let
\begin{align}\label{def_lambda_e}
    &\lambda_t^{k,e}(f^0_k,(u,l)):= \lambda_t^k(f^0_k) + h^{0,e}_{l,k}(t-u)\onee_{u\notin [t-A,t[} \geq \frac{\nu^0_k}{2}.
\end{align}
Then, recalling that $\E_0^{(u,l)}$ is an expectation under the Palm distribution $\P^{(u,l)}_0$ of $N$ (see the Appendix \ref{appendix_palm}), we define the function $p_{l,k}^e: \R\rightarrow \R$ by 
\begin{align}
    p^e_{l,k}(A-u):=\E_0^{(u,l)}\Big[\frac{1}{ \lambda_A^{k,e}(f^0_k,(u,l))}\Big],\label{def_plk_e}
\end{align}
and for $x\in [0,A]$, $p_{l,k}(x) =p^{e}_{l,k}(x)$. The operator $\zeta^e_{l,j,k}: \L_2(\R)\rightarrow \L_2(\R)$ is defined by
\begin{align}
    \zeta_{l,j,k}^e (g)(A-u) := \displaystyle \E^{(u,l)}_0\bigg[ \int_0^A \onee_{(s,j)\neq (u,l) }g(A-s) \frac{dN_s^j}{ \lambda_A^{k,e}(f^0_k,(u,l))}\bigg],\label{def_zeta_e}
\end{align}
and the restriction of $\zeta_{l,j,k}^e$ to $L_2= L_2([0,A])$ (so a linear operator of $L_2$ which is well defined because for $g\in \L_2(\R) $, $\zeta_{l,j,k}^e(g)$ depends only on $g(x)$ for $x\in[0,A]$) is equal to the operator $\zeta_{l,j,k}$. With these extensions and notations, we can now state the following lemma.

\begin{lemma}\label{lemma_linf}
The information operator $\Gamma$ is a bounded, bijective linear operator of $\R^K\times \L_\infty^{K^2}=\R^K\times \L_\infty([0,A])^{K^2}$ and by Banach-Schauder theorem, its inverse is also a bounded operator on $\R^K\times \L_\infty^{K^2}$. 

Moreover, let $\Gamma^e:\R^K \times L^{K^2}_\infty(\R)\rightarrow \R^K\times L^{K^2}_\infty(\R)$ be defined by 
\begin{align}
    \Gamma^e (\xi, \bog) &=\begin{bmatrix}
           \xi_{k}\E_0\Big[\frac{1}{\lambda_A^k(f^0_k)}\Big]+ \E_0\Big[\frac{\tilde \lambda_A^k(0,\bog_{k})}{\lambda_A^k(f^0_k)}\Big], \hspace{0.15cm}k\in \K \\
           \mu_l^0p^e_{l,k}g_{l,k} + \mu_l^0p^e_{l,k}\xi_k + \mu_l^0 \sum_{j=1}^K \zeta^e_{l,j,k}(g_{j,k}), \hspace{0.15cm}(l,k)\in \K^2
         \end{bmatrix}.\nonumber
\end{align}
$\Gamma^e$ is  equal to $\Gamma$ on $\R^K\times \L_\infty^{K^2}([0,A])$ in the sense that $\Gamma^e(\xi,\bog)_{\vert [0,A]} = \Gamma(\xi,\bog_{\vert [0,A]})$. In addition, $\Gamma^e$ is a bounded, bijective linear operator of $\R^K\times \L_\infty^{K^2}(\R)$ with a bounded inverse.
\end{lemma}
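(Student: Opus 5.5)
The plan is a Fredholm-type argument: write $\Gamma$ on $\R^K\times\L_\infty^{K^2}$ as an invertible multiplication operator plus a compact-like part, then get injectivity from $\L_2$ and surjectivity from the Fredholm alternative.

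\emph{Step 1: boundedness.} I would start from the explicit formula of Lemma \ref{explicit_gamma}. Since $\lambda_A^k(f^0_k)\geq \nu^0_k$ and $\lambda_A^{k,e}\geq \nu_k^0/2$, we have $p_{l,k},p^e_{l,k}\leq 2/\nu^0_k$. Writing $\zeta^e_{l,j,k}(g)(A-u)\leq \Vert g\Vert_\infty (2/\nu_k^0)\,\E_0^{(u,l)}[N^j([0,A])]$, boundedness on $L_\infty$ follows once this Palm expectation is bounded uniformly in $u$. That bound follows from the second-order moment measure having a bounded density, which holds for stationary linear Hawkes processes with $r(\rho^0)<1$ and bounded $\boh^0$. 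The scalar part $\E_0[\tilde\lambda_A^k(0,\bog_k)/\lambda^k_A]$ is bounded by $\Vert\bog\Vert_\infty\E_0[N([0,A])]/\nu^0_k$. So $\Gamma$ and $\Gamma^e$ map $L_\infty$ boundedly to $L_\infty$.

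\emph{Step 2: decomposition and injectivity.} I would write $\Gamma^e=D+\mathcal{K}$, where $D(\xi,\bog)=(c\xi,(\mu^0_lp^e_{l,k}g_{l,k}))$ with $c_k=\E_0[1/\lambda_A^k]$. The operator $D$ is invertible on $L_\infty$ because $p^e_{l,k}\geq c_{l,k}>0$; I would use Lemma \ref{lemma_lb_p}, and for its extended version the same proof applies since $\lambda^{k,e}$ is bounded above in $L_1$ under the Palm measure. The remaining part $\mathcal{K}$ collects the $\xi_k p^e$ term, the $\zeta^e$ terms and the scalar $\bog$-term. Injectivity on $\R^K\times\L_\infty^{K^2}$ is immediate: $L_\infty([0,A])\subset L_2$, and $\Gamma$ is injective on $L_2$ by Lemma \ref{eq}. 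For $\Gamma^e$, suppose $\Gamma^e(\xi,\bog)=0$. The restriction to $[0,A]$ gives $\Gamma(\xi,\bog_{|[0,A]})=0$, hence $\xi=0$ and $\bog=0$ on $[0,A]$. Then $\zeta^e(\bog)=0$, since it only depends on $\bog$ on $[0,A]$. The equation then reduces to $\mu^0_lp^e_{l,k}g_{l,k}=0$ on $\R$, so $\bog=0$.

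\emph{Step 3: surjectivity, the main obstacle.} On $[0,A]$, given $(\xi',\bog')\in\L_\infty$, let $(\xi,\bog)=\Gamma^{-1}(\xi',\bog')\in\L_2$. I need to show $\bog\in\L_\infty$, and I would do this through the fixed-point equation (\ref{converse_expr}): $\bog=(\bog'-\mu^0\zeta(\bog))/(\mu^0p)-\xi$. It then suffices to show that $\zeta_{l,j,k}$ maps $L_2$ into $L_\infty$. For this I would write $\zeta(g)(A-u)=\int_0^A g(A-s)\,\rho_{u,l}^{j,k}(s)\,ds$, using the second-order Palm (reduced) moment density weighted by $1/\lambda^k_A$. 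This is bounded by $\frac{2}{\nu^0_k}\times$ the density of the reduced second moment measure, which is bounded. Cauchy–Schwarz then gives $\Vert\zeta(g)\Vert_\infty\lesssim\Vert g\Vert_2$. Justifying the bounded density of the weighted Palm intensity rigorously, for instance via the Hawkes cluster representation or the integral-equation formula (\ref{int_eq_palm}), is the step I expect to be most delicate.

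For $\Gamma^e$ on $\R$, I would first solve on $[0,A]$ as above. I would then define $\bog$ outside $[0,A]$ explicitly by $g_{l,k}=(g'_{l,k}-\mu^0_l\xi_kp^e_{l,k}-\mu^0_l\sum_j\zeta^e_{l,j,k}(g_{j,k}))/(\mu^0_lp^e_{l,k})$. The right-hand side only involves $\bog$ on $[0,A]$, so this is well defined and bounded. Finally, the bounded inverse follows from Banach–Schauder (open mapping) in both cases.
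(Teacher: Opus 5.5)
Your proof is correct and is essentially the paper's argument: boundedness from $p^e_{l,k}\le 2/\nu^0_k$ and the bounded Palm moment density, injectivity inherited from $\L_2$, and surjectivity by taking $\Gamma^{-1}$ in $\L_2$ and noting that $\zeta$ maps $\L_2$ (indeed $\L_1$) into $\L_\infty$ in the fixed-point equation (\ref{converse_expr}). The extension of $\bog$ off $[0,A]$ by the explicit formula and the final Banach--Schauder step also match the paper. Two remarks: the Fredholm framing and the decomposition $D+\mathcal{K}$ are never actually used (and would need a compactness argument if they were), and the step you flag as delicate is immediate, since bounding $1/\lambda^{k,e}_A\le 2/\nu^0_k$ inside the Palm expectation first lets Lemma \ref{moment_measure_palm} give $\Vert\zeta^e_{l,j,k}(g)\Vert_\infty\le 2\Vert m_{l,j}\Vert_\infty\Vert g_{\vert[0,A]}\Vert_1/\nu^0_k$ with no second-order Palm density needed.
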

Lemma \ref{lemma_linf} is proved in Section \ref{sec:proof:gamma_inf}.  When $\bog^0_2\in \L_\infty^{K^2}$, this lemma shows in particular that $\bog^0_L\in \L_\infty^{K^2}$ too. Note that the equality $(\xi,\bog)= (\Gamma^e)^{-1}(\xi', \bog')$ for bounded functions $\bog'$ on $\R$ can be written similarly to (\ref{converse_expr}) as a fixed point equation on $\R$.

Now, with these results, we are ready to study the smoothness of $\bog^0_L$. We assume that the functions $\boh^0$ are in $C^\beta([0,A])$ for some $\beta\in ]0;1]$ and we extend these functions on $\R$ by functions $\boh^{0,e}$ that are in $C^\beta_b(\R)$ (this is possible by Whitney extension theorem). Moreover, we choose the functions $h^{0,e}_{l,k}$ such that again $h^{0,e}_{l,k}\geq -\nu^0_k/2$. We want to show that $(\Gamma^e)^{-1}(\R^K\times C^\beta_b(\R)^{K^2})\subseteq \R^K\times \mathcal{B}_{\infty, \infty,b}^\beta(\R)^{K^2}$ which would imply the announced result: $\Gamma^{-1}\big(\R^K \times C^\beta([0,A])^{K^2}\big)\subseteq\R^K\times \mathcal{B}^\beta_{\infty,\infty}([0,A])^{K^2}$ . First, Lemma \ref{lemma_lb_p} and Lemma \ref{smooth_palm} show that the functions $(p^e_{l,k}, (l,k)\in \K^2)$ are in $C^\beta_b(\R)$ and bounded away from $0$, the proofs are based on some basic properties of the first order Palm distribution of the process recalled in Appendix \ref{appendix_palm}. Then, let $ (\xi,\bog)= (\Gamma^e)^{-1}(\xi', \bog')$ with $\bog'\in C^\beta_b(\R)^{K^2}$, given the fixed point equation verified by $\bog$, if we could show that $\zeta^e_{l,j,k}(g_{j,k})\in C^\beta(\R)$ whatever $(l,j,k)\in \K^3$, it would prove the desired result. The difficulty here is that we do not have an explicit expression for $\bog$ in terms of $(\xi',\bog')$ and also that what lies behind the operators $\zeta^e_{l,j,k}$ is the second order Palm distribution of the Hawkes process (see (\ref{explicit_zeta})) which is much harder to study than the first order Palm distribution (see the Appendix \ref{appendix_palm}  for an explanation on this terminology). To circumvent this difficulty, our strategy consists in proving that the functions in $(\Gamma^e)^{-1}(\R^K\times C^\beta_b(\R)^{K^2})$ can be approximated at a "$\beta$-Hölder" rate in sup-norm by a convolution kernel adapted to Hölder smoothness. Then, by Littlewood–Paley characterization of Besov spaces on $\mathbb{R}$, we deduce the result. There is no analogous Littlewood–Paley characterization on $[0,A]$ and this notably why we have extended to $\R$ the functions $\boh^{0}$ and and the operator $\Gamma$. With these, we obtain the following Lemma \ref{final_lemma_operator}.

For the convolution kernel, let $\mathcal{K}: \R \rightarrow \R$ be a bounded function, supported on $[-1,1]$, symmetric about $0$, with a bounded (weak) derivative on $\R$ and such that  $\int \mathcal{K}(x)dx=1$. For $n\geq 1$, let $\mathcal{K}_n(x) = n\mathcal{K}(nx)$  and for $g:\R\rightarrow \R$, $\mathcal{K}_n g (x)= \int \mathcal{K}_n(y)g(x-y) dy$. For $(\xi,\bog)\in \R^K\times \L_\infty^{K^2}(\R)$,  let $\mathcal{K}_n\bog:= (\mathcal{K}_ng_{l,k}, (l,k)\in \K^2)$.

\begin{lemma}\label{final_lemma_operator}
 Assume that the functions $\boh^0$ are in $C^\beta([0,A])$ for some $\beta \in ]0,1]$, and let $(\xi,\bog)= (\Gamma^e)^{-1}(\xi', \bog')$ for some functions $\bog'$ that are in $C^\beta_b(\R)$. Then, let $\mathcal{K}$ be a kernel as above, we have $ \Vert \bog -\mathcal{K}_n\bog \Vert_\infty \leq C(\bog) n^{-\beta}$ for some constant $C(\bog)>0$ that depends only on $\bog$.  It implies that the functions $\bog$ belong to the Besov space $\mathcal{B}^\beta_{\infty,\infty}(\R)$ and thus $(\Gamma^e)^{-1}\big(\R^K\times C_b^\beta(\R)^{K^2}\big)\subseteq \R^K\times \mathcal{B}_{\infty, \infty,b}^\beta(\R)^{K^2}$.
\end{lemma}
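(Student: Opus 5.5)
The idea is to run the fixed point equation through the smoothing operator $\mathcal{K}_n$ and use the bounded invertibility of $\Gamma^e$ on $\R^K\times\L_\infty^{K^2}(\R)$ from Lemma \ref{lemma_linf}. We compare $(\xi,\bog)$ with $(\xi,\mathcal{K}_n\bog)$ by computing $\Gamma^e(\xi,\mathcal{K}_n\bog)-\mathcal{K}_n\Gamma^e(\xi,\bog)$, with $\mathcal{K}_n$ acting only on the function components, and showing it is $O(n^{-\beta})$ in sup norm. Set $(\xi',\bog')=\Gamma^e(\xi,\bog)$. The function components of $\Gamma^e(\xi,\bog)-\Gamma^e(\xi,\mathcal{K}_n\bog)$ equal $\bog'-\mathcal{K}_n\bog'$ plus a commutator. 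The scalar components change by $\E_0[\tilde\lambda^k_A(0,\bog_k-\mathcal{K}_n\bog_k)/\lambda^k_A]$, which we treat in the same way. Hence
\begin{align*}
\Vert \bog-\mathcal{K}_n\bog\Vert_\infty\leq \Vert (\Gamma^e)^{-1}\Vert\big(\Vert \bog'-\mathcal{K}_n\bog'\Vert_\infty + \Vert \mathcal{C}_n(\xi,\bog)\Vert_\infty\big),
\end{align*}
where $\mathcal{C}_n$ gathers all commutators. Since $\bog'\in C^\beta_b(\R)$ and $\mathcal{K}$ is supported on $[-1,1]$ with unit mass, $\Vert\bog'-\mathcal{K}_n\bog'\Vert_\infty\lesssim n^{-\beta}$.

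The commutators come in three types. First, multiplication by $\mu_l^0p^e_{l,k}$, applied to $g_{l,k}$ and to the constant $\xi_k$. Lemmas \ref{lemma_lb_p} and \ref{smooth_palm} give $p^e_{l,k}\in C^\beta_b(\R)$, so
\[
\vert (p\,\mathcal{K}_ng-\mathcal{K}_n(pg))(x)\vert\le\int\vert\mathcal{K}_n(y)\vert\,\vert p(x)-p(x-y)\vert\,\vert g(x-y)\vert dy\lesssim n^{-\beta}\Vert g\Vert_\infty.
\]
For the constant part, $\Vert p-\mathcal{K}_np\Vert_\infty\lesssim n^{-\beta}$. Second, the scalar term, which is bounded by $\Vert\bog-\mathcal{K}_n\bog\Vert_\infty$ times a constant. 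This is not small a priori, so I would rather write the scalar equation first and substitute. Alternatively, I would note that $\E_0[\tilde\lambda(0,\bog-\mathcal{K}_n\bog)/\lambda]$ is an integral of $g-\mathcal{K}_ng$ against $p_{l,k}$ via the Campbell/Palm formula, and moving $\mathcal{K}_n$ onto the smooth weight $p$ gives an $O(n^{-\beta})$ bound. Third, and hardest, the terms $\zeta^e_{l,j,k}(\mathcal{K}_ng)-\mathcal{K}_n\zeta^e_{l,j,k}(g)$.

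For the $\zeta$ commutator I would use the explicit representation (\ref{explicit_zeta}) through the second order Palm distribution. It gives $\zeta^e_{l,j,k}(g)(x)=\int g(y)\,q_{l,j,k}(x,y)\,dy$ with a kernel $q$ bounded by a second order density (reduced second moment), which is bounded for linear Hawkes processes with bounded $\boh^0$. The commutator then equals $\int g(y)\int\mathcal{K}_n(z)[q(x,y+z)-q(x-z,y)]dz\,dy$. Since $\mathcal{K}_n$ is symmetric, this equals $\int\!\int g(y-z)\mathcal{K}_n(z)[q(x,y)-q(x-z,y-z)]$. So we need a $\beta$-Hölder bound for $q$ along the diagonal shift $(x,y)\mapsto(x-z,y-z)$, in $L_1(dy)$ uniformly in $x$. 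This is the natural quantity: translating both the Palm point $u$ and the integration point $s$ corresponds to shifting the observation time $A$. By stationarity, this reduces to how $1/\lambda^{k,e}_{A}$ depends on shifting $A$ relative to the configuration, i.e.\ to the regularity of $\boh^{0,e}$ entering the intensity. Concretely, I would couple $\P^{(u,l)}_0$ with $\P^{(u-z,l)}_0$ via stationarity (Palm distributions are shift-equivariant). Then $\E^{(u,l)}_0[\int g(A-s)dN^j_s/\lambda^{k,e}_A]$ and its shifted version differ only through $\lambda^{k,e}_A$ versus $\lambda^{k,e}_{A+z}$ evaluated on the same shifted configuration, together with the window endpoints $[0,A]$ versus $[z,A+z]$. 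The intensity difference involves $\sum\int\vert h^{0,e}(A-v)-h^{0,e}(A+z-v)\vert dN_v\lesssim \vert z\vert^\beta N([-A-1,A+1])$ plus boundary terms of size $N$ of an interval of length $\vert z\vert$. Since $\lambda\geq\nu_k^0/2$, these are controlled by Palm moments of counts, which are finite for Hawkes processes with $\VERT\rho^0\VERT_1<1$. The window-endpoint terms give $\Vert g\Vert_\infty\E^{(u,l)}[N^j$ of an interval of length $\vert z\vert]\lesssim\vert z\vert$. Integrating against $\vert\mathcal{K}_n(z)\vert$ yields $O(n^{-\beta}\Vert\bog\Vert_\infty)$, and $\Vert\bog\Vert_\infty<\infty$ by Lemma \ref{lemma_linf}. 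This stationarity-coupling step, avoiding any direct regularity of the second order Palm measure, is where I expect the real difficulty. The self-excited point $(u,l)$ and the indicator $\onee_{(s,j)\neq(u,l)}$ must be handled carefully, since the Palm point is shifted together with everything else.

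Collecting the bounds gives $\Vert\bog-\mathcal{K}_n\bog\Vert_\infty\le C(\bog)n^{-\beta}$ for all $n\ge1$. The Besov conclusion then follows from the standard fact that approximation at rate $n^{-\beta}$ in sup norm by such a kernel family characterizes $\mathcal{B}^\beta_{\infty,\infty}(\R)$. To see this, take $n=2^m$ and telescope $\mathcal{K}_{2^{m+1}}\bog-\mathcal{K}_{2^m}\bog$. Littlewood–Paley blocks $\Delta_m\bog$ satisfy $\Delta_m\bog=\Delta_m(\bog-\mathcal{K}_{2^{m-c}}\bog)+\Delta_m\mathcal{K}_{2^{m-c}}\bog$. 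The first term is $O(2^{-m\beta})$. For the second, the Fourier transform of $\mathcal{K}$ is $1+O(\vert\omega\vert^2)$ near $0$, with $\mathcal{K}$ symmetric and Lipschitz, so an equivalent Jackson/Bernstein-type argument applies. Alternatively, I would quote the kernel characterization of Hölder–Zygmund spaces for $\beta\le1$ directly, which is why $\beta\in]0,1]$ is assumed. Boundedness is already given by Lemma \ref{lemma_linf}.
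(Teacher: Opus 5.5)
Your proposal is correct and is essentially the paper's proof. The structure matches point by point:
\begin{itemize}
\item Bounded invertibility of $\Gamma^e$ on $\R^K\times L_\infty(\R)^{K^2}$ (Lemma~\ref{lemma_linf}) reduces everything to $\Vert \mathbf{g}'-\mathcal{K}_n\mathbf{g}'\Vert_\infty$ plus commutators.
\item The scalar commutator is handled through the Campbell--Palm formula, by moving $\mathcal{K}_n$ onto the truncated weight $p_{l,k}\onee_{[0,A]}$; the truncation costs $n^{-\beta}+n^{-1}$.
\item The multiplication commutator is controlled by $p^e_{l,k}\in C^\beta_b(\R)$. You rightly keep the term $\xi_k(\mathcal{K}_np^e_{l,k}-p^e_{l,k})$, which the paper's display (\ref{decomp_palm}) leaves out.
\item The $\zeta^e$ commutator is treated by exactly the shift-equivariance coupling (\ref{shift_palm}) you describe. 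The window-endpoint terms are bounded through the first Palm moment measure. The intensity differences are bounded through the H\"older continuity of $h^{0,e}$ together with the second Palm moment bound (\ref{bound_mom_palm_second}). Mere finiteness of Palm moments, as you phrase it, would not give the $O(n^{-1})$ contribution of the boundary counts.
\end{itemize}
Your reformulation via a second-order Palm kernel is therefore an unnecessary detour.

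For the Besov step, the paper simply quotes Proposition~2.3 of \cite{ker_picard}, which is the better option. Your Littlewood--Paley sketch appeals to the wrong property of $\mathcal{K}$. To absorb $\Delta_m\mathcal{K}_{2^{m-c}}\mathbf{g}$, you need the high-frequency part of $\mathcal{K}_{2^{m-c}}$ to be small as an operator on $L_\infty$ for large $c$. That comes from the Lipschitz regularity of $\mathcal{K}$, not from the expansion of $\widehat{\mathcal{K}}$ near $0$. Also, the restriction $\beta\le 1$ comes from the $O(n^{-1})$ boundary terms and Lemma~\ref{smooth_palm}, not from the kernel characterization.
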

This lemma is proved in Section \ref{sec:pr:main_operator}. Since any function $\bog$ in $C^{\beta}([0,A])$ can be extended on $\R$ by functions  in $C^{\beta}_b(\R)$ (again by Whitney extension theorem) and since $ (\Gamma^e)^{-1}(\xi,\bog)_{\vert [0,A]} = \Gamma^{-1}(\xi,\bog_{\vert [0,A]})$, we obtain the following corollary.
 
 \begin{corollary}\label{g0_palm_holder}
If the functions $\boh^0$ are in $C^\beta([0,A])$ for some $\beta \in ]0,1]$, then $\Gamma^{-1}\big(\R^K\times C^\beta([0,A])^{K^2}\big)\subseteq \R^K\times \mathcal{B}^\beta_{\infty,\infty}([0,A])^{K^2}$.
 \end{corollary}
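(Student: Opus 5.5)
The corollary follows from Lemma \ref{final_lemma_operator} through an extension–restriction argument. Fix $(\xi',\bog')\in \R^K\times C^\beta([0,A])^{K^2}$ and set $(\xi,\bog)=\Gamma^{-1}(\xi',\bog')$. This is well defined by Lemma \ref{lemma_linf}, since $C^\beta([0,A])\subset \L_\infty$, and $\Gamma$ is a bijection of $\R^K\times\L_\infty^{K^2}$. The aim is to show that each $g_{l,k}$ lies in $\mathcal{B}^\beta_{\infty,\infty}([0,A])$.

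\textbf{Step 1: extend the data.} By the Whitney extension theorem, extend each $g'_{l,k}$ to a function $g'^{,e}_{l,k}\in C^\beta_b(\R)$. A bounded extension is obtained by multiplying a $C^\beta(\R)$ extension by a smooth compactly supported cutoff equal to $1$ on $[0,A]$. In the same way, extend $\boh^0$ to $\boh^{0,e}\in C^\beta_b(\R)$ with $h^{0,e}_{l,k}\geq -\nu^0_k/2$. This lower bound is achievable: $h^0_{l,k}\geq 0$ on $[0,A]$, so we can take $\max(\text{extension},-\nu^0_k/2)$, which keeps $C^\beta$ regularity because a maximum of Hölder functions is Hölder. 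These are exactly the standing choices under which $\Gamma^e$ and Lemma \ref{final_lemma_operator} were set up.

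\textbf{Step 2: apply Lemma \ref{final_lemma_operator}.} Let $(\xi^e,\bog^e)=(\Gamma^e)^{-1}(\xi',\bog'^{,e})$, which exists by Lemma \ref{lemma_linf}. Lemma \ref{final_lemma_operator} gives $\bog^e\in\mathcal{B}^\beta_{\infty,\infty,b}(\R)^{K^2}$.

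\textbf{Step 3: identify the restriction.} By Lemma \ref{lemma_linf}, $\Gamma^e(\xi^e,\bog^e)_{\vert[0,A]}=\Gamma(\xi^e,\bog^e_{\vert[0,A]})$, and the left-hand side equals $(\xi',\bog')$. Injectivity of $\Gamma$ on $\R^K\times\L_\infty^{K^2}$ then forces $(\xi,\bog)=(\xi^e,\bog^e_{\vert[0,A]})$.

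\textbf{Step 4: restriction preserves Besov regularity.} It remains to check that restricting a function in $\mathcal{B}^\beta_{\infty,\infty}(\R)$ to $[0,A]$ gives an element of $\mathcal{B}^\beta_{\infty,\infty}([0,A])$. If the Besov space on an interval is defined by restriction, this holds by definition. If it is defined intrinsically, it follows from the Zygmund difference characterization: bounds on $|g(x+h)-2g(x)+g(x-h)|$ (or first differences when $\beta<1$) survive restriction to a subinterval.

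The only delicate points are the compatibility of the chosen extension $\boh^{0,e}$ with the hypotheses of Lemma \ref{final_lemma_operator}, and the independence of the result from that choice. Step 3 settles the latter, because the restriction $\bog^e_{\vert[0,A]}$ is determined uniquely by $\Gamma^{-1}$.
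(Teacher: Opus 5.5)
Your proposal is correct and is the paper's argument: Whitney-extend $\bog'$ (and $\boh^0$, keeping the lower bound $-\nu^0_k/2$) to $C^\beta_b(\R)$, apply Lemma \ref{final_lemma_operator} to $(\Gamma^e)^{-1}$, and restrict to $[0,A]$ using $(\Gamma^e)^{-1}(\xi',\bog')_{\vert[0,A]}=\Gamma^{-1}(\xi',\bog'_{\vert[0,A]})$. Your Steps 3 and 4 make explicit the injectivity and Besov-restriction details that the paper leaves implicit. One minor caveat: for $\beta=1$, taking the maximum with $-\nu^0_k/2$ only guarantees Lipschitz regularity, but that is all the proof of Lemma \ref{final_lemma_operator} uses.
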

Thus, we conclude that when the interaction functions $\boh^0$ and the $L_2$ Riesz representor $\bog^0_2$ are in $C^\beta([0,A])$ for some $\beta \in ]0,1]$,  the functions $\bog^0_L$ are in  $\mathcal{B}^\beta_{\infty,\infty}([0,A])$ which is equal to  $C^\beta([0,A])$ for $\beta\notin \mathbb{N}$.
 
A natural question is how to extend this result to  $\beta>1$. When $\beta>1$ our previous proof strategy involves higher order Palm distributions (and not only the second order), about which we are not able to say as much in terms of regularity as for the first order Palm distribution. For this reason, we change our proof strategy and we come back to the fixed point equation (\ref{converse_expr}) verified by $\bog^0_L$. Starting from this, we show in Section \ref{sec:pr:beta>1}, assuming some regularity on the second order Palm distribution only (namely Conjecture \ref{conj_beta1}), that when the functions $\boh^0$ and $\bog^0_2$ are in $C^\beta([0,A])$ for some $\beta>1$, then the functions $\bog^0_L$ are also in $C^\beta([0,A])$ (see corollary \ref{g0_holder_beta1}).

\section{Proofs of Theorem \ref{main_theorem} and Lemma \ref{final_lemma_operator}}\label{sec_proofs}

\subsection{Proof of Theorem \ref{main_theorem}}\label{sec:pr:main_th}

We adapt the approach of \cite{castillo_rousseau}, especially the proof of their theorem 2.1 . Recall (\ref{def_A}) that defines $\A_T$, by Lemma \ref{lemma_concentration_A} $\Pi\big(\A_T\vert N \big) = 1 + o_{\P_0}(1)$ for some $M>0$). As explained in  \cite{castillo_rousseau}, to prove (\ref{gen_bvm}) it is enough to prove that the Laplace transform of $\mathcal{L}^\Pi\big(\sqrt{T}(\psi(f) - \hat\psi_T -\mathcal{B}_{J,T})\vert N\big)$ conditionally on  $\A_T$ converges to the Laplace transform of a $\mathcal{N}(0, \Vert \tilde \psi_L^0\Vert_L^2)$. Let $u$ in a neighborhood of $0$, we set
\begin{align}
    I_T :=  \E\Big[e^{u\sqrt{T}\big(\psi(f) - \hat\psi_T  -\mathcal{B}_{J,T}\big)}\Big\vert N,\A_T\Big] .\nonumber
\end{align}
We have
\begin{align}
    I_T &= \frac{\displaystyle\int_{\A_T}e^{u\sqrt{T}(\psi(f) - \hat\psi_T- \mathcal{B}_{J,T})+ L_T(f)-L_T(f^0)}d\Pi_{f}(f)}{\displaystyle\int_{\A_T}e^{ L_T(f)-L_T(f^0)}d\Pi_{f}(f)} \nonumber\\
    &=\sum_{j\in \mathcal{J}_T}\frac{\int_{\A_{T}(j) }e^{u\sqrt{T}(\psi(f) - \hat\psi_T - \mathcal{B}_{j,T})+ L_T(f)-L_T(f^0)}d\Pi_{f\vert j}(f)}{\int_{\A_{T}(j)}e^{L_T(f)-L_T(f^0)}d\Pi_{f\vert j}(f)}\Pi_J(j\vert N).\nonumber
\end{align}
Let $j\in \mathcal{J}_T$ and $f =(\nu,\boh)=(\nu, \varphi(\tilde \boh))\in \A_T(j)$, we recall that $\tilde f= (\nu, \tilde \boh)$. We have
\begin{align}
    \boh - \boh^0 = (\tilde \boh-\tilde \boh^0).\boldsymbol{\bar\varphi}^0 + \omega_\varphi(\tilde \boh),\nonumber
\end{align}
and Lemma \ref{lemma_linearization} gives  $\Vert \omega_\varphi(\tilde \boh)\Vert_1\lesssim\log(T)^2\varepsilon_T^2$. We rewrite the LAN expansion in terms of $\tilde f-\tilde f^0$ using this linearization of $\varphi$:
\begin{align}\label{LAN_lin}
    L_T(f) -L_T(f^0) = \sqrt{T} W_{T}((\tilde f - \tilde f^0).\boldsymbol{\bar\varphi}^0) -\frac{T}{2}\Vert \tilde f -\tilde f^0\Vert_{L,\varphi}^2 + R_{T, \varphi}(f),
\end{align}
with $ R_{T,\varphi}(f)= R_T(f) +  \sqrt{T}W_{T}\big(0, \omega_\varphi(\tilde \boh) \big)-\frac{T}{2}\Vert \omega_\varphi(\tilde \boh)  \Vert_{L}^2 - T\langle \tilde f -\tilde f^{0}, (0, \omega_\varphi(\tilde \boh) )\rangle_{L}$. Similarly,
\begin{align}\label{func_lin}
    \psi(f) - \psi(f^0) = \langle \tilde f - \tilde f^0, \tilde \psi^0_{L,\varphi}\rangle_{L,\varphi} + r_{T, \varphi} (f,f^0),
\end{align}
with $r_{T, \varphi} (f,f^0) = r_{T} (f,f^0) + \langle (0, \omega_\varphi(\tilde \boh)), \tilde \psi^0_{L}\rangle_L$. Now, as in the proof of Theorem 2.1 of \cite{castillo_rousseau}, using expansions (\ref{LAN_lin}) and (\ref{func_lin}), we obtain
\begin{align}
    &u\sqrt{T}\big(\psi(f) - \hat\psi_T - \mathcal{B}_{j,T}\big)+ L_T(f)-L_T(f^0) \nonumber \\
    &= -u\sqrt{T} \mathcal{B}_{j,T} + \frac{u^2}{2}\Vert \tilde \psi^0_{L,\varphi}\Vert_{L,\varphi}^2+\sqrt{T}W_{T}((\tilde f_u -\tilde f^0).\boldsymbol{\bar\varphi}^0) -\frac{T}{2}\Vert \tilde f_u - \tilde f^0\Vert_{L,\varphi}^2\nonumber\\
    &\hspace{2cm}+R_{T,\varphi}(f) + r_{T, \varphi} (f,f^0) + o_{\P_0}(1) \nonumber \\
    &= -u\sqrt{T} \mathcal{B}_{j,T}+ \frac{u^2}{2}\Vert \tilde \psi^0_{L}\Vert_{L}^2+ L_T(f_u) - L_T(f^0) + (R_{T,\varphi}(f) - R_{T,\varphi}(f_u)) +r_{T, \varphi} (f,f^0) + o_{\P_0}(1).\nonumber
\end{align}
Then, using again the LAN expansion as in (\ref{LAN_lin}), we further have,
\begin{align}
    &L_T(f_u) + R_{T,\varphi}(f) - R_{T,\varphi}(f_u)\nonumber\\
    &= L_T(f_{u,j}) + (L_T(f_u) -L_T(f^0)) - (L_T(f_{u,j})-L_T(f^0)) + R_{T,\varphi}(f) - R_{T,\varphi}(f_u)\nonumber \\
    &=L_T(f_{u,j}) + uW_{T}\big(  (\tilde \psi^{0,j}_{L,\varphi}-\tilde \psi^0_{L,\varphi}).\boldsymbol{\bar\varphi}^0\big) - \frac{u^2}{2}\Vert \tilde \psi^0_{L,\varphi} -\tilde \psi^{0,j}_{L,\varphi}\Vert_{L,\varphi}^2 \nonumber \\
    &\hspace{2cm}+u\sqrt{T}\langle\tilde \psi^{0,j}_{L,\varphi} -\tilde \psi^0_{L,\varphi}, \tilde f -u\tilde \psi^{0,j}_{L,\varphi}/\sqrt{T}- \tilde f^0\rangle_{L,\varphi}+  R_{T,\varphi}(f) - R_{T,\varphi}(f_{u,j}) .\nonumber 
\end{align}
In addition, by orthogonality,
\begin{align}
    &\sqrt{T}\langle \tilde \psi^{0,j}_{L,\varphi} -\tilde \psi^0_{L,\varphi}, \tilde f - u\tilde \psi^{0,j}_{L,\varphi}/\sqrt{T} - \tilde f^0\rangle_{L,\varphi}  + W_{T}\big(  (\tilde \psi^{0,j}_{L,\varphi}-\tilde \psi^0_{L,\varphi}).\boldsymbol{\bar\varphi}^0\big)\nonumber\\
    &= -\sqrt{T}\langle \tilde \psi^0_{L,\varphi} -\tilde \psi^{0,j}_{L,\varphi}, \tilde f^0 - \tilde f^{0,j}\rangle_{L,\varphi} + W_{T}\big(  (\tilde \psi^{0,j}_{L,\varphi}-\tilde \psi^0_{L,\varphi}).\boldsymbol{\bar\varphi}^0\big)= \sqrt{T}\mathcal{B}_{j,T}.\nonumber
\end{align}
Therefore,
\begin{align}
    &u\sqrt{T}\big(\psi(f) - \hat\psi_T -  \mathcal{B}_{j,T}\big)+ L_T(f)-L_T(f^0)\nonumber\\
    &= \frac{u^2}{2}\Vert \tilde \psi^{0}_{L}\Vert_{L}^2 +L_T(f_{u,j}) - L_T(f^0)\nonumber\\
    &\hspace{1cm}+ \big(R_{T,\varphi}(f) - R_{T,\varphi}(f_{u,j})\big) + u\sqrt{T}r_T(f,f^0)+ \frac{u^2}{2}\Vert \tilde \psi^0_{L,\varphi} -\tilde \psi^{0,j}_{L,\varphi}\Vert_{L,\varphi}^2 +o_{\P_0}(1).\nonumber
\end{align}
 By Lemma \ref{control_rem}, we have $\underset{j\in \mathcal{J}_T}{\max}\underset{f\in \A_{T}(j)}{\sup}\vert R_{T,\varphi}(f) - R_{T,\varphi}(f_{u,j})\vert =o_{\P_0}(1)$, and with similar computations to (\ref{proj_norm_charac}), one can show that
\begin{align}
    \underset{j\in \mathcal{J}_T}{\max}\Vert \tilde \psi^0_{L,\varphi} -\tilde \psi^{0,j}_{L,\varphi}\Vert_{L,\varphi}^2 \lesssim  \underset{j\in \mathcal{J}_T}{\max}\Vert \bog^0_L./\boldsymbol{\bar\varphi}^0 - P^j_{2}(\bog^0_L./\boldsymbol{\bar\varphi}^0)\Vert_{2}^2, \nonumber
\end{align}
the last term being a $o(1)$ by assumption. Moreover, since $\tilde \psi^0_{L}$ is the least favourable direction and by Lemma \ref{lemma_linearization}, we have
\begin{align}
    \vert\langle (0, \omega_\varphi(\tilde \boh)), \tilde \psi^0_{L}\rangle_L\vert = \vert\langle \omega_\varphi(\tilde \boh),  \bog^0_2\rangle_2\vert \leq \Vert \bog^0_2\Vert_\infty \Vert \omega_\varphi(\tilde \boh)\Vert_1 \lesssim \log(T)^2\varepsilon_T^2, \nonumber
\end{align}
and since $\log(T)^2\sqrt{T}\varepsilon_T^2\rightarrow 0$,  we find that $\sup\big\{\sqrt{T}r_{T,\varphi}(f,f^0), f\in \A_T\big\}= o(1)$. Combining these results together, we obtain
\begin{align}
    I_T=e^{o_{\P_0}(1) + \frac{u^2}{2}\Vert \tilde \psi_L^0\Vert_L^2}\sum_{j\in \mathcal{J}_T} \frac{\int_{\A_{T}(j) }e^{L_T(f_{u,j})}d\Pi_{f\vert j}(f)}{\int_{\A_{T}(j)}e^{L_T(f)}d\Pi_{f\vert j}(f)}\Pi_J(j\vert N),\nonumber
\end{align}
and with condition (\ref{change_var_cond}) we finally have  $I_T =e^{o_{\P_0}(1) + \frac{u^2}{2}\Vert \tilde \psi_L^0\Vert_L^2}$,  which concludes the proof.

\subsection{Proof of Lemma \ref{final_lemma_operator}} \label{sec:pr:main_operator}

Let $(\xi,\bog)= (\Gamma^e)^{-1}(\xi', \bog')$ with $(\xi',\bog')\in \R^K\times C^\beta_b(\R)^{K^2}$, we first show that $\Vert \bog - \mathcal{K}_n\bog\Vert_{\infty} \lesssim C(\bog)n^{-\beta}$ for some $C(\bog)>0$ (that depends only on $\bog$). Note that since $C^\beta_b(\R)\subset L_\infty(\R)$, we have $\bog\in \L_\infty(\R)^{K^2}$ by Lemma \ref{lemma_linf}. Recall that $\mathcal{K}_n\tilde \bog:= (\mathcal{K}_n\tilde g_{l,k}, (l,k)\in \K^2)$ and we set $\mathcal{K}_n(\tilde \xi,\tilde \bog) := (\tilde \xi, \mathcal{K}_n\tilde \bog)$.  By Lemma \ref{lemma_linf}, $(\Gamma^e)^{-1}$ is a bounded operator on $\R^K\times \L_\infty^{K^2}(\R)$ and thus we have
\begin{align}
    \Vert \bog - \mathcal{K}_n\bog\Vert_{\infty}&= \Vert (\xi,\bog) - (\xi,\mathcal{K}_n\bog)\Vert_{\infty}= \Vert (\Gamma^e)^{-1}\Gamma^e(\xi,\bog) - (\Gamma^e)^{-1}\Gamma^e(\xi,\mathcal{K}_n\bog)\Vert_{\infty}\nonumber\\
    &\lesssim \Vert \Gamma^e(\xi,\bog) - \Gamma^e(\xi,\mathcal{K}_n\bog)\Vert_{\infty}\nonumber\\
    &= \Vert (\xi',\bog') - \mathcal{K}_n(\xi',\bog')+ \mathcal{K}_n(\xi',\bog')- \Gamma^e(\xi,\mathcal{K}_n\bog)\Vert_{\infty}\nonumber\\
    &\leq  \Vert \bog' - \mathcal{K}_n\bog'\Vert_{\infty} + \Vert \mathcal{K}_n\Gamma^e(\xi,\bog) - \Gamma^e(\xi,\mathcal{K}_n\bog)\Vert_{\infty} \nonumber\\
    &\lesssim n^{-\beta} + \Vert \mathcal{K}_n\Gamma^e(\xi,\bog) - \Gamma^e(\xi,\mathcal{K}_n\bog)\Vert_{\infty},\nonumber
\end{align}
where the last inequality  comes from the $\beta$-Hölder assumption on $\bog'$. Now, using the expression of $\Gamma^e$ given in Lemma \ref{lemma_linf} , we obtain that
\begin{equation}\label{decomp_palm}
\begin{aligned}
   & \Vert \mathcal{K}_n\Gamma^e(\xi,\bog) - \Gamma^e(\xi,\mathcal{K}_n\bog)\Vert_{\infty}
    \leq \underset{k\in \K}{\max} \bigg \vert \E_0\bigg[\frac{\lambda_A^k(0, \bog_k - \mathcal{K}_n\bog_{k})}{\lambda_A^k(f^0_k)}\bigg]\bigg\vert\\
    & \quad + \underset{(l,k)\in \K^2}{\max } \mu_l^0\big \Vert \mathcal{K}_n(p_{l,k}^eg_{l,k})- p_{l,k}^e\mathcal{K}_ng_{l,k} \big \Vert_{\infty}
    +  \underset{(l,k)\in \K^2}{\max } \mu_l^0\sum_{j=1}^K \big\Vert \mathcal{K}_n\zeta^e_{l,j,k}(g_{j,k}) - \zeta_{l,j,k}^e(\mathcal{K}_n g_{j,k})\big\Vert_{\infty}.
\end{aligned}
\end{equation}
For the first term on the right-hand side of (\ref{decomp_palm}), for all $(l,k)\in \K^2$, let  $\bar p_{l,k}(x) := p^e_{l,k}(x)\onee_{x\in [0,A]}$.  Using formula (\ref{int_eq_palm}) in the appendix, we have
\begin{align}
    \E_0\bigg[\frac{\lambda_A^k(0, \bog_{k} - \mathcal{K}_n\bog_k)}{\lambda_A^k(f^0_k)}\bigg]&= \sum_{l=1}^K \E_0\bigg[\int_0^A (g_{l,k} - \mathcal{K}_ng_{l,k})(A-s)\frac{dN_s^l}{\lambda_A^k(f^0_k)} \bigg]\nonumber\\
    &= \sum_{l=1}^K\mu^0_l \int_0^A   g_{l,k}(s)p_{l,k}(s)ds - \mu^0_l\int_0^A  (\mathcal{K}_ng_{l,k})(u)p_{l,k}(s)ds \nonumber\\
    &= \sum_{l=1}^K \mu^0_l\int_\R   g_{l,k}(s)\bar p_{l,k}(s)ds - \mu^0_l\int_\R  (\mathcal{K}_n g_{l,k})(s)\bar p_{l,k} (s) ds\nonumber\\
    &= \sum_{l=1}^K \mu^0_l\int_\R   g_{l,k}(s)\bar p_{l,k}(s)ds - \mu^0_l\int_\R g_{l,k} (s)\mathcal{K}_n \bar p_{l,k}(s)ds,\nonumber
\end{align}
where for the last equality we have used that $\mathcal{K}$ is symmetric about $0$. As the functions $\bar p_{l,k}$ are in $C^\beta([0,A])$, as shown in Lemma \ref{smooth_palm}, in Section \ref{sec:smooth:p} and equal to $0$ elsewhere and since $\mathcal{K}$ is supported on $[-1,1]$, we obtain 
\begin{align}
    \underset{k\in \K}{\max} \bigg \vert \E_0\bigg[\frac{\lambda_A^k(0, \bog_{k} - \mathcal{K}_n\bog_{k})}{\lambda_A^k(f^0_k)}\bigg]\bigg\vert \lesssim \underset{k\in \K}{\max} \sum_{l=1}^K\Vert g_{l,k}\Vert_\infty\Vert \bar p_{l,k} - \mathcal{K}_n\bar p_{l,k}\Vert_1 \lesssim \Vert \bog\Vert_\infty\big(n^{-\beta} +  n^{-1}\big).\nonumber
\end{align}
For second term on the right-hand side of (\ref{decomp_palm}), since $\mathcal{K}$ is supported on $[-1,1]$ and since the functions $ (p_{l,k}^e, (l,k)\in \K^2)$ are in $C^\beta(\R)$, we have for any $(l,k)\in \K^2$ and $x\in \R$,
\begin{align}
    \vert \mathcal{K}_n(p_{l,k}^eg_{l,k})(x)- p_{l,k}^e(x)\mathcal{K}_ng_{l,k}(x)\vert  &= \bigg \vert \int_{-1}^{1} \mathcal{K}(u)g_{l,k}(x-u/n)\big( p_{l,k}^e(x-u/n) - p_{l,k}^e(x) \big)du\bigg \vert\nonumber\\
    &\lesssim n^{-\beta} \Vert \mathcal{K}\Vert _1\Vert g_{l,k}\Vert_{\infty}\nonumber
\end{align}
It proves that $\underset{(l,k)\in \K^2}{\max } \mu_l^0\big \Vert \mathcal{K}_n(p_{l,k}^eg_{l,k})- p_{l,k}^e\mathcal{K}_ng_{l,k} \big \Vert_{\infty}\lesssim n^{-\beta} \Vert \mathcal{K}\Vert_1\Vert \bog\Vert_\infty$ .
It remains to study the third term on the right-hand side of (\ref{decomp_palm}). For this, recall the definition of $\lambda_t^{k,e}$ in (\ref{def_lambda_e}) and we set $\Delta_{u/n}(k,x,l)=\lambda_{A}^{k,e}(f^0_k,(x,l)) - \lambda_{A-u/n}^{k,e}(f^0_k,(x,l))$ ( as in (\ref{def_delta}) in the proof of Lemma \ref{smooth_palm}).  Let $x\in \R$ and $l\in \K$. To shorten computations, we set for $u\in [-1,1]$, 
\begin{align}
     &v_{u/n}^{(x,l)}(j,k):=  \E_0^{(x,l)}\bigg[\int \big( \onee_{s\in[-\frac{u}{n},A-\frac{u}{n}]} - \onee_{s\in[0,A]} \big) \onee_{(s,j)\neq (x,l)} g_{j,k}(A-s-\frac{u}{n}) \frac{dN_s^j}{ \lambda_A^{k,e}(f^0_k,(x,l))}\bigg],\nonumber\\
     &w_{u/n}^{(x,l)}(j,k):=\E_0^{(x,l)}\bigg[\int_{-\frac{u}{n}}^{A-\frac{u}{n}}\onee_{(s,j)\neq (x,l)} g_{j,k}(A-s-\frac{u}{n}) \bar \Delta_{u/n}(k,x,l)dN_s^j\bigg], \nonumber\\
     &\bar\Delta_{u/n}(k,x,l):= \frac{\Delta_{u/n}(k,x,l)}{ \big(\lambda_{A-\frac{u}{n}}^k(f_0^k)+h^{0,e}_{l,k}(A- x-\frac{u}{n})\onee_{x+u/n\notin [0,A[}\big) \big(\lambda_A^k(f^0_k) + h^{0,e}_{l,k}(A-x)\onee_{x\notin [0,A[}\big)}.\nonumber
\end{align}
With Fubini theorem and formula (\ref{shift_palm}), we have:
\begin{align}
    &\mathcal{K}_n\zeta_{l,j,k}^e(g_{j,k})(x) - \zeta_{l,j,k}^e(\mathcal{K}_ng_{j,k})(x)\nonumber\\
    &= \int \mathcal{K}(u) \E_0^{(x+\frac{u}{n},l)}\bigg[\int_0^A\onee_{(s,j)\neq (x+\frac{u}{n},l)}g_{j,k}(A-s)\frac{dN_s^j}{\lambda_A^k(f_0^k) +  h^{0,e}_{l,k}(A- x-\frac{u}{n})\onee_{x+u/n\notin[0,A[}}\bigg]du \nonumber\\
    &\hspace{1cm}-  \E_0^{(x,l)}\bigg[\int_0^A\onee_{(s,j)\neq (x,l)}\int \mathcal{K}(u)g_{j,k}(A-s-\frac{u}{n})du \frac{dN_s^j}{\lambda_A^k(f^0_k) +h^{0,e}_{l,k}(A-x)\onee_{x\notin [0,A[}}\bigg]\nonumber\\
    &= \int \mathcal{K}(u) \E_0^{(x,l)}\bigg[\int_{-\frac{u}{n}}^{A-\frac{u}{n}}\onee_{(s,j)\neq (x,l)} g_{j,k}(A-s-\frac{u}{n}) \frac{dN_s^j}{\lambda_{A-\frac{u}{n}}^k(f_0^k) +  h^{0,e}_{l,k}(A- x-\frac{u}{n})\onee_{x+u/n\notin[0,A[}}\bigg]du \nonumber\\
    &\hspace{1cm}- \int \mathcal{K}(u)\E_0^{(x,l)}\bigg[\int_0^A  \onee_{(s,j)\neq (x,l)} g_{j,k}(A-s-\frac{u}{n}) \frac{dN_s^j}{\lambda_A^k(f^0_k) +h^{0,e}_{l,k}(A-x)\onee_{x\notin [0,A[}} \bigg]du \nonumber\\
    &= \int \mathcal{K}(u)\big(v_{u/n}^{(x,l)}(j,k)+w_{u/n}^{(x,l)}(j,k)\big)du.\nonumber
\end{align}
Let $u\in [0,1]$, we have
\begin{align}
    \vert v_{u/n}^{(x,l)}(j,k)\vert \lesssim \Vert g_{j,k}\Vert_\infty  \E_0^{(x,l)}\big[N^j([-u/n,0]\backslash\{x\}) + N^j([A-u/n,A]\backslash\{x\})\big] \lesssim  n^{-1}\Vert g_{j,k}\Vert_\infty,\nonumber
\end{align}
where for the last inequality inequality we have used Lemma \ref{moment_measure_palm}. We turn to $w$. Since the functions $\boh^{0,e}$ are $\beta$ Holder, proceeding as in the proof of Lemma \ref{smooth_palm}, for $u\in [0,1]$ we find that under $\P^{(x,l)}$:
\begin{align}
    \big\vert \bar \Delta_{u/n}(k,x,l)\big\vert &\lesssim \big\vert  \Delta_{u/n}(k,x,l)\big\vert \lesssim  n^{-\beta}+ n^{-\beta}\underbrace{\Big(\sum_{i\neq l} N^i([0,A-u/n]) + N^l([0,A-u/n]\backslash\{x\}) \Big)}_{C^1_n}\nonumber\\
    &\hspace{0.6cm}+ \underbrace{\sum_{i\neq l}N^i\big([-u/n,0]\cup [A-u/n,A]\big) + N^l \big(([-u/n,0]\cup[A-u/n,A])\backslash\{x\}\big)}_{C^2_n}.\nonumber
\end{align}
Thus, using this time the bound (\ref{bound_mom_palm_second}) on the first Palm second moment measure, we obtain
\begin{align}
    \vert w_{u/n}^{(x,l)}(j,k) \vert &\lesssim \Vert \bog_{j,k}\Vert_\infty \E_0^{(x,l)}\bigg[N^j\big([-u/n, A-u/n]\backslash\{x\}\big)\Big(n^{-\beta}+ n^{-\beta}C^1_n+ C^2_n \Big)\bigg]\nonumber\\
    &\lesssim  \Vert \bog_{j,k}\Vert_\infty(n^{-\beta}+n^{-1}).\nonumber
\end{align}
By doing the same for $u\in [-1,0]$, we obtain: 
\begin{align}
    \underset{(l,k)\in \K^2}{\max } \mu_l^0\sum_{j=1}^K \big\Vert \mathcal{K}_n\zeta^e_{l,j,k}(g_{j,k}) - \zeta_{l,j,k}^e(\mathcal{K}_ng_{j,k})\big\Vert_{\infty}\lesssim  n^{-\beta} \Vert \bog\Vert_\infty\Vert \mathcal{K}\Vert_\infty.\nonumber
\end{align} 
and it allows to conclude that $\Vert \bog -\mathcal{K}_n\bog \Vert_\infty \leq C(\bog)n^{-\beta}$ for some constant $C(\bog)>0$ that depends only on $\bog$.

Now, we prove that it implies that the functions $\bog$ belong to $\mathcal{B}^\beta_{\infty,\infty,b}(\R)$ (recall that we already now that the functions $\bog$ are in $L_\infty(\R)$ ) . To do so, because $\Vert \bog -\mathcal{K}_n\bog \Vert_\infty \lesssim C(\bog)n^{-\beta}$, we can apply the proposition 2.3 of \cite{ker_picard} (taking $p=+\infty$) to the functions $\bog$ and it gives that these functions are in $\mathcal{B}^\beta_{\infty,\infty,b}(\R)$ (condition (b) of this proposition is verified because we work with a kernel of the form $\mathcal{K}(x-y)$ with $\mathcal{K}$ continuous). It concludes the proof of the lemma.

\begin{appendix}
\section{Palm distributions}\label{appendix_palm}

In this appendix, we briefly introduce Palm calculus and we state the  results used in Section \ref{sec_semi_param_eff} and Section \ref{sec_lfeq}, giving each time references for the proofs and for further details. Palm theory is formally presented in the general case by, among others, \cite{DVJ2} (chapter 13), \cite{kal} (chapter 7) and \cite{bremaud_book} (chapters 7 and 8), we mainly use the terminology of \cite{DVJ2}.  For simplicity and because it is sufficient for our purpose, we introduce Palm distributions for a $K$-marked and boundedly finite point process $N$ on $\R$. In this section, to avoid confusion, we will write $N(dk\times dx)$ instead of $dN^k_x$ which is used elsewhere in the paper. Let's denote by $\mathcal{N}_{\R\times \K}^{\#}$ the set of counting measures on $\R$ with marks in $\K$ (equipped with its Borel $\sigma$-field induced by the "weak hash" topology, see section 9.1 of \cite{DVJ2} and appendix A2.6 of \cite{DVJ1}).  Assume that the point process $N$ has a $\sigma$-finite first moment measure denoted $M_N$. The Campbell measure $C_N$ is a measure on $\R\times\K\times  \mathcal{N}_{\R\times \K}^{\#}$ (equipped with its product $\sigma$-field ) defined 
by
\begin{align}
    C_N(B\times k\times U)= \E\big[N^k(B) \onee_{N\in U}\big].\nonumber
\end{align}
For any measurable function $\Phi:\R\times\K\times \mathcal{N}_{\R\times \K}^{\#} \rightarrow\R^+$ we have
\begin{align}
    C_N \Phi:= \E\bigg[\int \Phi(x,k,N) N(dk\times dx)\bigg] = \int \Phi(x,k,\bar N)C_N( dk\times dx\times d\bar N).\nonumber
\end{align}
The Palm kernel $(\P^{(x,k)}, (x,k)\in \R\times \K)$ is then defined as the kernel from $\R\times \K$ to $\mathcal{N}_{\R\times \K}^{\#}$ obtained by disintegrating the Campbell measure with respect to the first moment measure $M_N$ and we have
\begin{align}\label{int_eq_palm}
    C_N \Phi = \int \E^{(x,k)}[\Phi(x,k,N)] M_N(dk\times dx),
\end{align}
$\E^{(x,k)}$ being an expectation under the Palm distribution $\P^{(x,k)}$; see proposition 13.1.IV and the preceding discussion in \cite{DVJ2}. When $N$ is simple, which we consider it the case from now on, $\P^{(x,k)}$ can be interpreted as the conditional distribution of $N$ given that there is a point at $x$ with mark $k$ (the latter event being of probability $0$ in general, this only holds approximately, see \cite{kal} section 6.6 for further details). In particular,  $\P^{(x,k)} \big(N^k(\{x\})=1\big)=1$. By disintegrating the second order Campbell measure $C^2_N$ defined  on $\R^2\times\K^2\times  \mathcal{N}_{\R\times \K}^{\#}$  by
\begin{align}
    C^2_N(B_1\times B_2\times k_1\times k_2\times U)= \E[N^{k_1}(B_1)N^{k_2}(B_2) \onee_{N\in U}],\nonumber
\end{align}
with respect to the second moment measure (when it is $\sigma$-finite), we obtain the second order Palm kernel $(\P^{(x_1,x_2,k_1,k_2)}, (x_1,x_2,k_1,k_2)\in \R^2\times \K^2)$. Because of this, the kernel $(\P^{(x,k)}, (x,k)\in \R\times \K)$ is sometimes called the first order Palm kernel. Then, for any measurable function $\Phi:\R^2\times\K^2\times \mathcal{N}_{\R\times \K}^{\#} \rightarrow\R^+$ we have
\begin{align}
    C_N^2 \Phi = \int \E^{(x_1,x_2,k_1,k_2)}[\Phi(x_1,x_2, k_1,k_2,N)] M_N(dk_1\times dk_2\times dx_1\times dx_2).\nonumber
\end{align}
The second order Palm distributions are symmetric in the sense that $\P^{(x_1,x_2,k_1,k_2)}= \P^{(x_2,x_1,k_2,k_1)}$. Moreover, $\P^{(x_1,k_1,x_2,k_2)} \big(N^{k_1}(\{x_1\})=1, N^{k_2}(\{x_2\})=1\big)=1$  (see Lemma 6.2 of \cite{kal}). 
By the \textit{iteration principles} (see corollary 6.24 of \cite{kal} or alternatively \cite{kal_article}), we have
\begin{align}\label{iter_palm}
    \E^{(x_1,k_1)}\bigg[\int \Phi(x_2,k_2,N) N(dk_2\times dx_2)\bigg] =\int \E^{(x_1,x_2,k_1,k_2)}\big[\Phi(x_2,k_2,N)\big] M_N^{(x_1, k_1)}(dk_2\times dx_2),
\end{align}
where $M_N^{(x_1, k_1)}$ is the Palm first moment measure: $M_N^{(x_1, k_1)}(B\times l) = \E^{(x_1, k_1)}[N^l(B)]$.

Assume now that the point process $N$ is stationary and denote by $S_xN$ the point process $N$ whose points  are all shifted by $x$. In this case, the Palm distributions are all equal up to a shift (\cite{DVJ2}- Section 13.4):
\begin{align}\label{shift_palm}
    \P^{(x,k)}(N\in .) = \P^{(0,k)}(S_xN\in .).
\end{align}

If $N$ has  a $(n+1)$-th moment measure, the  $n$-th moment measure of the Palm distribution is well defined and is linked to the reduced $(n+1)$-th moment measure $\bar M^{(n+1)}$ of $N$, see proposition 13.2.VI of \cite{DVJ2}. Specializing to the case of  a linear multivariate stationary Hawkes process $N$ with parameters $f^0\in ]0,+\infty[^K\times \mathcal{H}$, Lemma \ref{moment_measure_palm} shows that for the Palm first moment measure we have:
\begin{align}
    \E_0^{(0,l)}[N^k(B)]=  \onee_{0\in B, l=k} + \int_B m_{l,k}(u) du.\nonumber
\end{align}
Then, let $B_1$ and $B_2$ be two bounded measurable sets and  $(l,k_1,k_2)\in \K^3$ such that either $0\notin B_1$ or $l\neq k_1$, then we have:
\begin{align}\label{bound_mom_palm_second}
    \E_0^{(0,l)}\big[N^{k_1}(B_1)N^{k_2}(B_2)\big] \lesssim \vert B_1\vert\onee_{l=k_2, 0\in B_2} +\vert B_1 \cap B_2\vert \onee_{ k_1=k_2} + \vert B_1 \vert   \vert B_2 \vert.
\end{align}
Similarly, let $B$ be a bounded measurable set and $(x,l)\neq (s,j)$, if either $x\notin B$ or $l\neq k$ and if either $s\notin B$ or $j\neq k$, then
\begin{align}\label{second_palm_first_bound}
    \E_0^{(x,s,l,j)}[N^k(B)]\lesssim \vert B\vert .
\end{align}

Finally, even if we do not use it in this paper, we point out that for a stationary Hawkes process, by Theorem 8.4.18 of \cite{bremaud_book}, the Palm distribution $\P^{(0,k)}$ is equal to the convolution between the stationary distribution of the process and a "reweighted and shifted" version of the distribution of  a cluster rooted in $(0,k)$. This  result is stated in more general terms in Lemma 6.16 in \cite{kal} and has a generalisation to higher order Palm distributions, see Theorem 6.30 in \cite{kal} or Section 5 in \cite{kal_article}.

\section{Proofs of section \ref{sec_semi_param_eff} }\label{proof_section_semiparam}

 In this section, we prove the results of Section \ref{sec_semi_param_eff} on semiparametric efficiency. We first prove Lemma \ref{eq} on the equivalence between the norms $\Vert. \Vert_L$ and $\Vert .\Vert_{(2,\boh^0)}$. Then, we prove the results on the LAN expansion (Lemma \ref{lemma_LAN} and Lemma \ref{lemma_LAN_convol_RELU}).

\subsection{Proof of Lemma \ref{eq}}\label{sec:pr:LAN_1}

It is clear that $\langle\cdot{,}\cdot\rangle_L$  is a bilinear map, symmetric, positive semi-definite on $\RLKO$ and $\Vert (\xi,\bog)\Vert_L^2 = \langle (\xi,\bog), (\xi,\bog)\rangle_L$. We first show that there exist two positive constants $C_2>C_1$ such that for all $(\xi,\bog)\in \RLKO$,
\begin{align}
    &\Vert (\xi,\bog)\Vert_L \leq C_2 \Vert (\xi,\bog)\Vert_{(2,\boh^0)} , \label{(b)} \\
    &C_1 \Vert \bog\Vert_{(2,\boh^0)} \leq \Vert \bog\Vert_L. \label{(a)} 
\end{align}
We recall the definition of these norms:
\begin{align}
    \Vert (\xi,\bog) \Vert_{(2,\boh^0)}^2 =  \sum_{k=1}^K \xi_k^2 + \sum_{l=1}^K\int_0^A \frac{g^2_{l,k}(x)}{\nu_k^0 +h_{l,k}^0(x)} dx \hspace{0.25cm} \text{and} \hspace{0.25cm} \Vert (\xi,\bog) \Vert_L^2 =\sum_{k=1}^K\E_0\Big[\frac{\tilde\lambda_A^k(\xi_k,\bog_k)^2}{\lambda_A^k(f^0_k)} \Big] .\nonumber
\end{align}
We begin by (\ref{(b)}), fix $ k\in \K$, 
\begin{align}
    \E_0\Big[\frac{\tilde\lambda_A^k(\xi_k,\bog_k)^2}{\lambda_A^k(f^0_k)} \Big] \leq 2 \E_0\bigg[\frac{1}{\lambda_A^k(f^0_k)}\bigg]\xi_k^2 + 2\E_0\bigg[\frac{\tilde\lambda_A^k(0,\bog_k)^2}{\lambda_A^k(f^0_k)} \bigg]. \nonumber
\end{align}
For the second therm on the right-hand side, we have:
\begin{align}
    &\E_0\Big[\frac{\tilde\lambda_A^k(0,\bog_k)^2}{\lambda_A^k(f^0_k)} \Big]= \E_0\Bigg[ \frac{\big(\sum_{l=1}^K\int_0^A g_{l,k}(A-t) dN_t^l\big)^2}{\nu^{0}_k + \sum_{j=1}^K\int_0^A h_{j,k}^0(A-u) dN_u^l} \Bigg] \nonumber \\
     &\leq 2^{K-1}\sum_{l=1}^K\E_0\Bigg[ \bigg(\int_0^A \frac{\vert g_{l,k}(A-t) \vert }{\sqrt{\nu^{0}_k + h_{l,k}^0(A-t)}}\times \frac{\sqrt{\nu^{0}_k + h_{l,k}^0(A-t)}}{\sqrt{\nu^{0}_k + \sum_{j=1}^K\int_0^A h_{j,k}^0(A-u) dN_u^l}}dN_t^l\bigg)^2 \Bigg]  \nonumber \\
     &\leq 2^{K-1}\sum_{l=1}^K\E_0\Bigg[ \bigg(\int_0^A \frac{\vert g_{l,k}(A-t) \vert }{\sqrt{\nu^{0}_k + h_{l,k}^0(A-t)}}dN_t^l\bigg)^2 \Bigg] \nonumber \\
     &\leq 2^{K}\sum_{l=1}^K\E_0\Bigg[ \bigg(\int_0^A \frac{\vert g_{l,k}(A-t) \vert }{\sqrt{\nu^{0}_k + h_{l,k}^0(A-t)}}\big(dN_t^l -\lambda_t^l(f^0_l) dt\big)\bigg)^2 \Bigg] \nonumber\\
     &\hspace{3cm}+ \E_0\Bigg[ \bigg(\int_0^A \frac{\vert g_{l,k}(A-t) \vert }{\sqrt{\nu^{0}_k + h_{l,k}^0(A-t)}}\lambda_t^l(f^0_l) dt \bigg)^2 \Bigg] \nonumber \\
    &\leq 2^{K}\sum_{l=1}^K\E_0\Bigg[ \int_0^A \frac{g_{l,k}(A-t)^2}{\nu^{0}_k + h_{l,k}^0(A-t)}\lambda_t^l(f^0_l)  dt \Bigg]  \nonumber\\
    &\hspace{3cm}+\E_0\Bigg[ A\int_0^A \frac{ g_{l,k}(A-t)^2}{\nu^{0}_k + h_{l,k}^0(A-t)}\lambda_t^l(f^0_l)^2 dt  \Bigg] \nonumber \\
    &\leq 2^{K}\underset{k\in \K}{\max}\bigg(\E_0[\lambda_A^k(f^0_k)] + A\E_0\big[\lambda_A^k(f^0_k)^2\big]\bigg)\sum_{l=1}^K \int_0^A \frac{g_{l,k}(A-t)^2}{\nu_k^0 + h^0_{l,k}(A-t)} dt \nonumber \\
    &= 2^{K}\underset{k\in \K}{\max}\bigg(\E_0[\lambda_A^k(f^0_k)] + A\E_0\big[\lambda_A^k(f^0_k)^2\big]\bigg) \Vert \bog_k\Vert_{(2,\boh^0)}^2,\nonumber
\end{align}
where for the penultimate inequality we have used Jensen inequality for the second term. So we have shown (\ref{(b)}) with
\begin{align}
    C_2^2 = 2\max\Big(\underset{k\in \K}{\max} \E_0\Big[1/\lambda_A^k(f^0_k)\big]\hspace{0.1cm}; \hspace{0.1cm}2^K\underset{k\in \K}{\max}\big(\E_0[\lambda_A^k(f^0_k)] + A\E_0\big[\lambda_A^k(f^0_k)^2\big]\big)\Big).\nonumber
\end{align}
For the inequality (\ref{(a)}), we adapt the proof of Lemma 4 from the supplementary material of \cite{donnet_concentration} to our purpose. We define for all $l\in\K$ the event 
\begin{align}
    \mathcal{A}_l = \{N ([-A,0])=0 \text{   and   }N(]0,A])=N^l(]0,\A]) =1\} .\nonumber
\end{align}
The events $(\mathcal{A}_l)_{l\in\K}$ are disjoints. On an event $\mathcal{A}_l$, we denote by  $U_l$ the unique point of the process of type $l$ between $[0,A]$. We have
 \begin{align}
     \E_0\bigg[\frac{\tilde\lambda_A^k(0,\bog_k)^2}{\lambda_A^k(f^0_k)}\bigg]&= \E_0\Bigg[ \frac{\Big( \sum_{j=1}^K\int_0^A g_{j,k}(A-t) dN_t^j\Big)^2}{\nu_k^0 +\sum_{j=1}^K\int_0^A h_{j,k}^0(A-t) dN_t^j}\Bigg] \nonumber \\
    &\geq \E_0\Bigg[\onee\Big\{\bigcup_{l=1}^K \mathcal{A}_l\Big\}\times\frac{\Big( \sum_{j=1}^K\int_0^A g_{j,k}(A-t) dN_t^j\Big)^2}{\nu_k^0 +\sum_{j=1}^K\int_0^A h_{j,k}^0(A-t) dN_t^j}\Bigg] \nonumber \\
     &= \sum_{l=1}^K  \E_0\Bigg[ \onee(\mathcal{A}_l)\frac{\Big( \sum_{j=1}^K\int_0^A g_{j,k}(A-t) dN_t^j\Big)^2}{\nu_k^0 +\sum_{j=1}^K\int_0^A h_{j,k}^0(A-t) dN_t^j}\Bigg] \nonumber \\
     &= \sum_{l=1}^K  \E_0\Bigg[\onee(\mathcal{A}_l) \frac{ g_{l,k}(A-U_l)^2 }{\nu_k^0 +h_{l,k}^0(A-U_l) }\Bigg].\nonumber
 \end{align}
 Furthermore, as in proposition 7.3.III of \cite{DVJ1}, let $\mathds{Q}$ be the $K$-marked process consisting in $K$ independent component each being a Poisson process on $[0;+\infty[$ with constant intensity equal to $1$, the likelihood ratio of the Hawkes process relative to $\mathds{Q}$ on $[-A,A]$ is
 \begin{align}
     \mathcal{L}_A = \exp\bigg(2AK+ \sum_{k=1}^K \int_{-A}^A \log(\lambda_t^k(f^0_k)) dN_t^k - \int_{-A}^A\lambda_t^k(f^0_k) dt \bigg). \nonumber
 \end{align}
 On the event $\A_l$,
 \begin{align}
 \mathcal{L}_A &= \exp(2KA)\times\lambda_{U_l}^l(f^0)\times\exp\bigg(-\sum_{k=1}^K\Big(2A\nu^0_k + \int_{U_l}^A h_{l,k}^0(t-U_l) dt\Big)\bigg) \nonumber \\
 &\geq \nu_l^0\exp\Big( -\sum_{k=1}^K \big(2A\nu_0^k + \Vert h^0_{l,k}\Vert_1\big)\Big):= r_l ,\nonumber
\end{align}
and $r_l>0$ because the constraint $\rho(\boh^0)<1$ implies that the functions $(h^0_{l,k})_{l,k}$ are integrable. Moreover, under the distribution $\mathds{Q}$ and conditionally on $\mathcal{A}_l$, $U_l$ is distributed uniformly over $[0,A]$ so we have
 \begin{align}
     \E_0\bigg[\frac{\tilde\lambda_A^k(0,\bog_k)^2}{\lambda_A^k(f^0_k)}\bigg]&= \sum_{l=1}^K  \E_{\mathds{Q}} \Bigg[\mathcal{L}_A\onee(\mathcal{A}_l) \frac{ g_{l,k}(A-U_l)^2 }{\nu_k^0 +h_{l,k}^0(A-U_l) }\Bigg]\nonumber\\
     &\geq \sum_{l=1}^K  r_l\E_{\mathds{Q}} \Bigg[\onee(\mathcal{A}_l) \frac{ g_{l,k}(A-U_l)^2 }{\nu_k^0 +h_{l,k}^0(A-U_l) }\Bigg]\nonumber \\
     &\geq \sum_{l=1}^K  r_l\mathds{Q} (\mathcal{A}_l) \int_0^A \frac{ g_{l,k}(A-u)^2 }{\nu_k^0 +h_{l,k}^0(A-u) }du \geq 
     \underset{l\in\K}{\min}r_l\mathds{Q} (\mathcal{A}_l)\times   \Vert \bog_{k}\Vert^2_{(2,\boh^0)}.\nonumber
 \end{align}
 Finally, choosing $C_1^2 = \min\{r_l \mathds{Q}(\mathcal{A}_l), l\in \K\}>0$, we have proved (\ref{(a)}). This shows that $\Vert . \Vert_L$ is a norm on $\L_{2,\boh^0}$ equivalent to $\Vert .\Vert_{2, \boh^0}$. For $\Vert . \Vert_L$ to be a norm on the whole space $\RLKO$,  it remains to prove the positive definiteness. Suppose that $\Vert (\xi,\bog)\Vert_L = 0$, 
 \begin{align}
     \Vert (\xi,\bog) \Vert_L^2= 0 
     &\Longleftrightarrow\E_0\Bigg[\sum_{k=1}^K\frac{\tilde\lambda_A^k(\xi_k,\bog_k)^2}{\lambda_A^k(f^0_k)} \Bigg] =0 \nonumber \\
     &\implies \E_0\Bigg[\onee\Big \{N([0,A])=0\Big\}\times\sum_{k=1}^K \frac{\tilde\lambda_A^k(\xi_k,\bog_k)^2}{\lambda_A^k(f^0_k)} \Bigg] =0 \nonumber \\
     &\implies \E_0\Bigg[\onee\Big \{N([0,A])=0\Big\}\times\sum_{k=1}^K \frac{\xi_k^2}{\nu^{0}_k} \Bigg] =0 \nonumber \\
     &\implies \P_0\big(N([0,A])=0\big) \times\sum_{k=1}^K \xi_k^2  =0 \implies \xi =0, \nonumber
 \end{align}
 because $\P_0\big(N([0,A]=0\big) >0$. Whence,  $ \Vert \bog \Vert _L= \Vert (0,\bog)\Vert_L = 0 $ and since $\Vert .\Vert_L$ is  a norm on $\L_{2,\boh^0}$, $\bog=0$ necessarily. The positive definiteness over $\RLKO$ is thus proved.

So far, we have shown that $\Vert . \Vert_L$ is a norm on $\RLKO$ (and $\langle\cdot{,}\cdot\rangle_L$ an inner product) and that this norm is equivalent to $\Vert  . \Vert_{(2,\boh^0)}$ on $\L_{2,\boh^0}$. To conclude, we extend the equivalence of norms on $\RLKO$. By a direct corollary of Banach-Schauder theorem, if $(\RLKO, \Vert . \Vert_L)$ is complete, then the domination (\ref{(b)}) implies the equivalence of norms over $\RLKO$. But, by definition of the LAN norm, $(\R^K\times \L_{2,\boh^0}, \Vert . \Vert_L)$ is complete if and only if $(\mathcal{S}(\RLKO),  \Vert . \Vert_{\L_2(\P_0)})$ is complete. In addition, the positive definiteness of the LAN norm shows that 
\begin{align}
    \mathcal{S}\big( \RLKO\big)=\mathcal{S}\big( \R^K\times\{0\}\big)\oplus \mathcal{S}\big(\{0\}\times\L_{2,\boh^0}\big), \nonumber
\end{align} and the equivalence of norms on $\L_{2,\boh^0}$ implies that $\mathcal{S} \big(\{0\}\times\L_{2,\boh^0}\big)$ is a close subspace of $\L_2(\P_0)$.  Consequently, as a sum of a close subspace and a subspace of finite dimension, $\mathcal{S}\big(\RLKO\big)$ is a close subspace of $\L_2(\P_0)$ and so it is a complete space, which in turns prove the equivalence of the norms on $\RLK0$.

In conclusion, it comes directly with the previous results that $\mathcal{S}$ is a bijective, bounded, linear operator between the Hilbert spaces $(\RLKO, \Vert.\Vert_{(2,\boh^0)})$ and $(\mathcal{S}(\RLKO),  \Vert . \Vert_{\L_2(\P_0)})$.

\subsection{Proofs of the LAN expansions}\label{sec:pr:LAN_2}
In this section, we prove the results on the LAN expansion, we begin with the expansion in model $\mathcal{P}$ (Lemma \ref{lemma_LAN})  and then we extend it to the model $\mathcal{P}_R$ (Lemma  \ref{lemma_LAN_convol_RELU}).

\begin{proof}[Proof of Lemma \ref{lemma_LAN}. ]  We will use use in the proof the expansion (\ref{likelihood_expansion}) that we first detail here.  Let $(\nu^0,\boh^0)\in ]0,+\infty[^K\times \mathcal{H}$ and $(\xi,\bog)\in \RLKO$ such that for $T$ large enough, $\boh^0 + \bog/\sqrt{T}\in \mathcal{H}$, we have
\begin{equation} \label{likelihood_expansion_proof}
\begin{aligned}
    &L_T\Big(\nu^0+ \frac{\xi}{\sqrt{T}}, \boh^0+ \frac{\bog}{\sqrt{T}}\Big) - L_T\Big(\nu^0, \boh^0\big)  \\
    &=\sum_{k=1}^K\int_0^T\log\bigg( 1 + \frac{1}{\sqrt{T}}\frac{\tilde \lambda_t^k(\xi_k, \bog_k)}{\lambda_t^k(f^0_k)}\bigg) dN_t^k - \frac{1}{\sqrt{T}}\int_0^T \frac{\tilde\lambda_t^k(\xi_k,\bog_k)}{\lambda_t^k(f^0_k)}\lambda_t^k(f^0_k) dt  \\
    &=\frac{1}{\sqrt{T}}\hspace{-0.1cm} \sum_{k=1}^K \int_0^T \frac{\tilde \lambda_t^k(\xi_k, \bog_k)}{\lambda_t^k(f^0_k)} \big(dN_t^k\hspace{-0.05cm} -\hspace{-0.05cm}  \lambda_t^k(f^0_k) dt\big) - \frac{1}{2T}\hspace{-0.1cm}  \sum_{k=1}^K \int_0^T \frac{\tilde \lambda_t^k(\xi_k, \bog_k)^2}{\lambda_t^k(f^0_k)} \frac{dN_t^k}{\lambda_t^k(f^0_k)} \\
    &\hspace{2cm}+\frac{1}{T}\hspace{-0.1cm}\sum_{k=1}^K \int_0^T \frac{\tilde \lambda_t^k(\xi_k, \bog_k)^2}{\lambda_t^k(f^0_k)} R\Big(\frac{1}{\sqrt{T}}\frac{\tilde \lambda_t^k(\xi_k, \bog_k)}{\lambda_t^k(f^0_k)} \Big)\frac{dN_t^k}{\lambda_t^k(f^0_k)}\\
    &=W_T(\xi,\bog) - \frac{1}{2} \Vert (\xi,\bog)\Vert_L^2 + R_T\big(\frac{\xi}{\sqrt{T}},\frac{\bog}{\sqrt{T}}\big),
\end{aligned}
\end{equation}
with
\begin{align}\label{remainderLAN_def}
    &R_T\big(\frac{\xi}{\sqrt{T}},\frac{\bog}{\sqrt{T}}\big) = \Delta Q_T(\xi,\bog)+\frac{1}{T} \sum_{k=1}^K \int_0^T \frac{\tilde \lambda_t^k(\xi_k, \bog_k)^2}{\lambda_t^k(f^0_k)} R\Big(\frac{1}{\sqrt{T}}\frac{\tilde \lambda_t^k(\xi_k, \bog_k)}{\lambda_t^k(f^0_k)} \Big)\frac{dN_t^k}{\lambda_t^k(f^0_k)}, \\
    &\Delta Q_T(\xi,\bog)=\frac{1}{2}\Vert (\xi,\bog)\Vert_L^2  - \frac{1}{2T} \sum_{k=1}^K \int_0^T \frac{\tilde \lambda_t^k(\xi_k, \bog_k)^2}{\lambda_t^k(f^0_k)} \frac{dN_t^k}{\lambda_t^k(f^0_k)}, \nonumber
\end{align}
and $R$ defined for $x>-1$ by $\log(1+x) = x - \frac{x^2}{2} + x^2R(x)$. With (\ref{likelihood_expansion_proof}), we begin by proving  the LAN property when the functions $h_{l,k}^0$ are bounded away from $0$. Then, we show that this proof can be quite directly extended to the general case.

 When the functions $h_{l,k}^0$ are bounded away from $0$, $L_{2,\boh^0}^+=L_{2,\boh^0}$ and let $(\xi,\bog)\in \RLKO$. We study the behavior of the log-likelihood along the nonlinear path $(\xi(T), \bog(T))_{\mathcal{P}}$  defined by (\ref{nonlinear_path_model_lin}).  Because $a_T^{-1}\arctan(g_{l,k}a_T/\sqrt{T})$ is bounded by $\pi/2a_T \rightarrow 0$, the functions $h^0_{l,k} + a_T^{-1}\arctan(g_{l,k}a_T/\sqrt{T})$ are non-negative for $T$ large enough and this path is equal to (\ref{gen_non_linear_path_relu}):
\begin{align}
   \bigg(\big(\nu_k^0 + \frac{\xi_k}{\sqrt{T}}\big)_{k\in \K} \hspace{0.05cm}, \hspace{0.05cm}\Big(h_{l,k}^0 + \frac{1}{a_T}\arctan\big(\frac{a_T g_{l,k}}{\sqrt{T}}\big)\Big)_{(l,k)\in \K^2}\bigg). \nonumber
\end{align}

\textit{Likelihood expansion.} The difference of the log-likelihoods involves the difference of the intensities and by linearity of the intensity we have:
\begin{equation}\label{intensity_difference}
\begin{aligned}
    &\lambda_t^k\Big(\nu^0_k +\frac{\xi_k}{\sqrt{T}} ,\boh^0_k+\frac{1}{a_T}\arctan\big(\frac{a_T \bog_k}{\sqrt{T}}\big)\Big) - \lambda_t^k(\nu^0_k,\boh^0_k)\\
    &= \lambda_t^k\Big(\nu^0_k +\frac{\xi_k}{\sqrt{T}} ,\boh^0_k+\frac{1}{a_T}\arctan\big(\frac{a_T \bog_k}{\sqrt{T}}\big)\Big) - \tilde \lambda_t^k\Big(\nu^0_k+\frac{\xi_k}{\sqrt{T}},\boh^0_k + \frac{\bog_k}{\sqrt{T}})\Big)\\
    &\hspace{1cm}+\tilde\lambda_t^k\Big(\nu^0_k+\frac{\xi_k}{\sqrt{T}},\boh^0_k +\frac{\bog_k}{\sqrt{T}}\Big) - \lambda_t^k(\nu_k^0, \boh_k^0)  \\
    &=  \frac{1}{\sqrt{T}} \tilde \lambda_t^k\Big(0,\bog_k.\delta\big(\frac{a_T\bog_k}{\sqrt{T}}\big)\Big) + \frac{1}{\sqrt{T}}\tilde \lambda_t^k\Big(\xi_k,\bog_k\Big) .
\end{aligned}
\end{equation}
with for $x\neq 0, \delta(x)= (\arctan(x)-x)/x$ and it is continuously extended by $\delta(0) =0$. With this expression, we can expand the log-likelihood as in (\ref{likelihood_expansion_proof}) but with  some additional remainder terms coming from the nonlinearity of the path.
\begin{align}\label{expansion_arctan}
    L_T\Big((\xi(T), \bog(T))_{\mathcal{P}}\Big) - L_T(\nu^0,\boh^0) = W_T(\xi,\bog) -\frac{1}{2}\Vert (\xi,\bog)\Vert_L^2 +\bar R_T\big(\frac{\xi}{\sqrt{T}},\frac{\bog}{\sqrt{T}}\big) ,
\end{align}
with
\begin{align}
    &\bar R_T\big(\frac{\xi}{\sqrt{T}},\frac{\bog}{\sqrt{T}}\big)= \bigg(\frac{1}{2}\Vert (\xi,\bog)\Vert_L^2  - \frac{1}{2T} \sum_{k=1}^K \int_0^T \frac{\tilde \lambda_t^k(\xi_k, \bog_k)^2}{\lambda_t^k(f^0_k)} \frac{dN_t^k}{\lambda_t^k(f^0_k)}\bigg) \nonumber\\
    &+ \frac{1}{\sqrt{T}} \sum_{k=1}^K\int_0^T \tilde \lambda_t^k\Big( 0,\bog_k.\delta(\frac{a_T\bog_k}{\sqrt{T}})\Big) \big( \frac{dN_t^k}{\lambda_t^k(f^0_k)} -dt \big)  - \frac{1}{T} \sum_{k=1}^K\int_0^T \frac{\tilde\lambda_t^k\Big( 0,\bog_k.\delta(\frac{a_T\bog_k}{\sqrt{T}})\Big)^2}{\lambda_t^k(f^0_k)} \frac{dN_t^k}{\lambda_t^k(f^0_k)}\nonumber \\
    &\hspace{0.5cm}- \frac{2}{T} \sum_{k=1}^K\int_0^T \tilde\lambda_t^k\Big( 0,\bog_k.\delta(\frac{a_T\bog_k}{\sqrt{T}})\Big)\frac{\tilde\lambda_t^k( \xi_k, \bog_k)}{\lambda_t^k(f^0_k)} \frac{dN_t^k}{\lambda_t^k(f^0_k)} \label{non_linear_remainder}\\
    &\hspace{0.5cm} + \frac{1}{T}\sum_{k=1}^K \int_0^T \bigg(\frac{\tilde\lambda_t^k(\ \xi_k,\bog_k) + \tilde \lambda_t^k\big( 0,\bog_k.\delta(\frac{a_T\bog_k}{\sqrt{T}})\big)}{\lambda_t^k(f^0_k)}\bigg)^2 R\bigg(\frac{1}{\sqrt{T}} \frac{\tilde\lambda_t^k(\xi_k,a_T^{-1}\arctan\big(\frac{a_T\bog_k}{\sqrt{T}}\big)}{\lambda_t^k (f^0)}\bigg) \hspace{0.1cm}dN_t^k  \nonumber\\
    &:=  \Delta Q_T(\xi,\bog) + \bar R_{T,1}(\xi,\bog) + \bar R_{T,2}(\xi,\bog) +\bar R_{T,3}(\xi,\bog) +\bar R_{T,4}(\xi,\bog) .\nonumber
\end{align}
To prove the LAN property, we first show that $\Delta Q_T(\xi,\bog)$ goes in probability to $0$, then that $W_T(\xi,\bog)$ goes in distribution to a $\mathcal{N}\big(0,\Vert (\xi,\bog)\Vert_L^2\big)$ and finally that the other terms $ \bar R_{T,i}(\xi,\bog)$, $i\in [4]$ , go in probability to $0$ too.

\textit{Term} $  \mathit{\Delta Q_T(\xi,\bog)}$. Let $(\varepsilon_t^k)_{t,k}$ be a marked, stationary and predictable (with respect to the history $\mathcal{G}$) process with a first moment. Recall that the Hawkes process $N$ we consider is ergodic (see proposition 12.3.IX of \cite{DVJ2}).  Fix $k\in\K$, we know by Birkhoff ergodic theorem that
\begin{align}\label{ergodic}
    \frac{1}{T} \int_0^T \varepsilon_t^k dt \xrightarrow[T\rightarrow +\infty]{a.s.}\E_0[\varepsilon_0^k] .
\end{align}
Lemma 2 of \cite{Ogata_78} shows that if the integrand process has a second moment then the convergence (\ref{ergodic}) holds with respect to $dN_t^k/\lambda_t^k(f^0_k)$ instead of $dt$. Moreover, the convergence $\Delta Q_T(\xi,\bog) \rightarrow 0$ is implied by  
\begin{align}\label{cv_norme_LAN_k}
    \frac{1}{2}\E_0\Big[ \frac{\tilde\lambda_A^k(\xi_k,\bog_k)^2}{\lambda_A^k(f^0_k)}\Big]  - \frac{1}{2T}  \int_0^T \frac{\tilde \lambda_t^k(\xi_k, \bog_k)^2}{\lambda_t^k(f^0_k)} \frac{dN_t^k}{\lambda_t^k(f^0_k)} \xrightarrow[T\rightarrow +\infty]{\P_0} 0 ,
\end{align}
for all $k\in \K$. Set $\varepsilon_t^k = \tilde \lambda_t^k(\xi_k, \bog_k)^2/\lambda_t^k(f^0_k) $, this process has a first moment, not necessarily a second moment, but we can  still obtain the desired convergence in probability (instead of almost surely) which is sufficient for our purpose. Indeed, define for $M\geq 0$, $\varepsilon_{t,M}^k =\varepsilon_t^k \onee_{\varepsilon_t^k \leq M}$, this process is bounded so has a second moment and $\underset{M\rightarrow +\infty}{\lim}\varepsilon_{t,M}^k  =\varepsilon_{t}^k$ almost surely.  Let $\eta >0$, by dominated convergence theorem,  there exists $M_0>0$ such that for $M\geq M_0$, $ \E_0\big[\vert\varepsilon_0^k - \varepsilon_{0,M}^k\vert\big]\leq\eta^2/4$. Let $M\geq M_0$,
\begin{align}
    &\P_0\bigg(\Big \vert \frac{1}{T}\int_0^T \varepsilon_t^k \frac{dN_t^k}{\lambda_t^k(f^0_k)} - \E_0[\varepsilon_0^k]\Big\vert >\eta \bigg) \nonumber\\
    &\leq \P_0\bigg(\Big \vert \frac{1}{T}\int_0^T \varepsilon_t^k - \varepsilon_{t,M}^k \frac{dN_t^k}{\lambda_t^k(f^0_k)} \Big\vert >\frac{\eta}{2} \bigg)  + \P_0\bigg(\Big \vert \frac{1}{T}\int_0^T \varepsilon_{t,M}^k \frac{dN_t^k}{\lambda_t^k(f^0_k)} - \E_0[\varepsilon_{0,M}^k]\Big\vert >\frac{\eta}{4} \bigg).\nonumber
\end{align}
By Markov inequality  and stationarity,
\begin{align}
    \P_0\bigg(\Big \vert \frac{1}{T}\int_0^T \varepsilon_t^k - \varepsilon_{t,M}^k \frac{dN_t^k}{\lambda_t^k(f^0_k)} \Big\vert >\frac{\eta}{2} \bigg) &\leq \frac{2\E_0 \big[\vert \varepsilon_0^k - \varepsilon_{0,M}^k\vert\big]}{\eta}  \leq \frac{\eta}{2}. \nonumber
\end{align}
Then, because we can apply Lemma 2 of \cite{Ogata_78} to $\varepsilon^k_{t,M}$, there exists $T$ large enough such that
\begin{align}
    \P_0\bigg(\Big \vert \frac{1}{T}\int_0^T \varepsilon_{t,M}^k \frac{dN_t^k}{\lambda_t^k(f^0_k)} - \E_0[\varepsilon_{0,M}^k]\Big\vert >\frac{\eta}{4} \bigg) \leq \frac{\eta}{2}.\nonumber
\end{align}
Hence, we have proved that for all $\eta >0$, there exists $T$ large enough such that
\begin{align}
    \P_0\bigg(\Big \vert \frac{1}{T}\int_0^T \varepsilon_t^k \frac{dN_t^k}{\lambda_t^k(f^0_k)} - \E_0[\varepsilon_0^k]\Big\vert >\eta \bigg)  \leq \eta.\nonumber
\end{align}
Therefore, we have obtained (\ref{cv_norme_LAN_k}) and  $\Delta Q_T(\xi,\bog)$ goes to $0$ in probability.

\textit{First order term $\mathit{W_T(\xi,\bog)}$}. To prove the asymptotic normality of this term, we prove that the vector 
\begin{align}\label{vector_CLT}
    \bigg(\frac{1}{\sqrt{T}}\int_0^T \frac{\tilde \lambda_t^k(\xi_k, \bog_k)}{\lambda_t^k(f^0_k)} \big(dN_t^k - \lambda_t^k(f^0_k) dt\big)\bigg)_{k\in\K}
\end{align}
converges in distribution to a centered gaussian vector with diagonal covariance matrix. To do so, we apply Theorem 2.1 of \cite{Nishiyama}. This is a slight generalisation of corollary 4.5.1 in \cite{Kutoyants}. To be consistent with \cite{Nishiyama}, for $i\in \K$, let $H_i(t,k)$ be the marked process defined by 
\begin{equation}
     H_i(t,k)=
    \begin{cases}
      \frac{\tilde \lambda_t^i(\xi_i, \bog_i)}{\lambda_t^i(f^0_i)}  & \text{if}\ k=i \\
      0 & \text{otherwise}
    \end{cases}. \nonumber
  \end{equation}
We can rewrite the $i$-th element in (\ref{vector_CLT}) as an integral of this marked process over $[0,T]\times \K$:
\begin{align}
    \frac{1}{\sqrt{T}}\int_0^T \frac{\tilde \lambda_t^i(\xi_i, \bog_i)}{\lambda_t^i(f^0_i)} \big(dN_t^i - \lambda_t^i(f^0_i) dt\big) = \frac{1}{\sqrt{T}} \int_0^T \int_{\K}  H_i(t,k)\big(N(dt\times dk)  - \lambda_t^k(f^0_k)dk \hspace{0.05 cm}dt\big).\nonumber
\end{align}
If there is convergence, the coefficient $(i,j)$ of the asymptotic covariance matrix is given by the limit of
\begin{align}
     &\frac{1}{T} \int_0^T \int_{\K} H_i(t,k)H_j(t,k)\lambda_t^k(f^0_k)dkdt, \nonumber
\end{align}
as $T\rightarrow +\infty$. Obviously, for $i\neq j$, there is convergence towards $0$. By again Lemma 2 of \cite{Ogata_78}, the diagonal terms also converge and the covariance matrix is thus a diagonal matrix whose diagonal is given by 
\begin{align}
    \bigg(\E_0\Big[ \frac{\tilde \lambda_A^1(\xi_1,\bog_1)^2}{\lambda_A^1(f^0_1)}\Big],\hspace{0.05cm}...\hspace{0.05cm}, \E_0\Big[ \frac{\tilde\lambda_A^K(\xi_k,\bog_K)^2}{\lambda_A^K(f^0_K)}\Big]\bigg).\nonumber
\end{align}
Finally we have to verify a Lindeberg-type condition: $\forall \epsilon>0$,
\begin{align}
    \E_0\Bigg[\frac{1}{T}\int_0^T \sum_{k=1}^K  \frac{\tilde\lambda_t^k(\xi_k,\bog_k)^2}{\lambda_t^k(f^0_k)} \onee\Big\{ \sum_{k=1}^K \frac{\tilde\lambda_t^k(\xi_k,\bog_k)^2}{\lambda_t^k(f^0_k)} > T\epsilon\Big\} dt \Bigg ] \xrightarrow[T\rightarrow +\infty]{} 0. \nonumber
\end{align}
 By Fubini theorem, stationarity and dominated convergence theorem, this last condition is verified. Consequently, (\ref{vector_CLT}) tends to the announced gaussian vector and $W_T(\xi,\bog)$ goes in distribution to a $\mathcal{N}\big(0,\Vert (\xi,\bog)\Vert_L^2\big)$.

\textit{Remainder terms.} We finally show that the last four terms in (\ref{non_linear_remainder}) ($\bar R_{T,i}(\xi,\bog), \hspace{0.05cm}i\in [4]$)  go in probability to 0. These terms involve the function $\delta$.  This function is continuous, $\delta(0)= 0$ and  since $g_{l,k}<+\infty$ a.s.,  we have that $\delta(a_Tg_{l,k}/\sqrt{T}) \rightarrow 0$ a.s. . In addition, $\delta$ is bounded and $\int_0^A \vert g_{l,k}(A-u)\vert dN_u^l <+\infty$ a.s., for any $(l,k)\in\K^2$. So, by dominated convergence theorem we have:
\begin{align}\label{as_conv}
    \tilde \lambda_A^k\Big(0,\bog_k.\delta\big(\frac{a_T\bog_k}{\sqrt{T}}\big)\Big) = \sum_{l=1}^K \int_0^A g_{l,k}(A-u)\delta\big(\frac{a_Tg_{l,k}(A-u)}{\sqrt{T}}\big) dN_u^l\xrightarrow[T\rightarrow +\infty]{\textit{a.s.}}0 .
\end{align} 
With (\ref{as_conv}), it is clear that $\bar R_{T,1}(\xi,\bog)$, $\bar R_{T,2}(\xi,\bog)$ and $\bar R_{T,3}(\xi,\bog)$ go to $0$. For instance, consider $\bar R_{T,1}(\xi,\bog)$ that is
\begin{align}
    \frac{1}{\sqrt{T}} \sum_{k=1}^K\int_0^T \tilde \lambda_t^k\Big( 0,\bog_k.\delta(\frac{a_T\bog_k}{\sqrt{T}})\Big) \big( \frac{dN_t^k}{\lambda_t^k(f^0_k)} -dt \big) .\nonumber
\end{align}
Fix $k\in\K$, let $\epsilon>0$, by Markov inequality and stationarity we get:
\begin{align}
    &\P_0\bigg( \Big\vert \frac{1}{\sqrt{T}} \int_0^T \tilde \lambda_t^k\Big( 0,\bog_k.\delta(\frac{a_T\bog_k}{\sqrt{T}})\Big) \big( \frac{dN_t^k}{\lambda_t^k(f^0_k)} -dt \big )\Big\vert\geq \epsilon \bigg)\nonumber\\
    &= \P_0\Bigg( \bigg( \int_0^T \tilde \lambda_t^k\Big( 0,\bog_k.\delta(\frac{a_T\bog_k}{\sqrt{T}})\Big) \big( \frac{dN_t^k}{\lambda_t^k(f^0_k)} -dt \big )\bigg)^2\geq T\epsilon^2\Bigg) \leq \frac{1}{\epsilon^2} \E_0\Bigg[\frac{\tilde \lambda_A^k\big(0,\bog_k.\delta(\frac{a_T\bog_k}{\sqrt{T}})\big)^2}{\lambda_A^k(f^0_k)}\Bigg],\nonumber
\end{align}
and by (\ref{as_conv}) and dominated convergence theorem, this last bound goes 0 as $T$ goes to $+\infty $. Markov inequality and dominated convergence theorem also lead to the result for the following two terms ($\bar R_{T,2}(\xi,\bog)$ and $\bar R_{T,3}(\xi,\bog)$).
Finally, we treat the last term $\bar R_{T,4}(\xi,\bog)$ by  reproducing some computations of the proof of Theorem 7.2 of \cite{VDV_AS}. This term is
\begin{align}
     \sum_{k=1}^K \frac{1}{T}\int_0^T \bigg(\tilde \lambda_t^k(\ \xi_k,\bog_k) + \tilde \lambda_t^k\big( 0,\bog_k.\delta(\frac{a_T\bog_k}{\sqrt{T}})\big)\bigg)^2 R\bigg(\frac{ \tilde\lambda_t^k\big(\xi_k,a_T^{-1}\arctan\big(\frac{a_T\bog_k}{\sqrt{T}}\big)\big)}{\sqrt{T}\lambda_t^k(f^0_k)}\bigg)\frac{dN_t^k}{\lambda_t^k(f^0_k)} \nonumber
\end{align}
Fix $k\in \K$, since $\delta$ is bonded by $1$, the $k$-th term of the sum is bounded by
\begin{align}
      &\frac{4}{T}\int_0^T\hspace{-0.2cm}\tilde \lambda_t^k( \xi_k,\vert \bog_k \vert)^2 \bigg\vert R\bigg(\frac{\tilde\lambda_t^k\big(\xi_k,a_T^{-1}\arctan\big(\frac{a_T\bog_k}{\sqrt{T}}\big)\big)}{\sqrt{T}\lambda_t^k(f^0_k)}\bigg)\bigg\vert  \frac{dN_t^k}{\lambda_t^k(f^0_k)},\nonumber
\end{align}
which can be further bounded by
\begin{align}
    \underset{t_i \in N^k\vert_{[0,T]}}{\max}\bigg\vert R\bigg(\frac{ \tilde\lambda_{t_i}^k\big(\xi_k,a_T^{-1}\arctan\big(\frac{a_T\bog_k}{\sqrt{T}}\big)\big)}{\sqrt{T}\lambda_{t_i}^k(f^0_k)}\bigg)\bigg\vert\frac{4}{T}\int_0^T  \frac{\tilde \lambda^k_t(\xi_k,\vert \bog_k\vert)^2}{\lambda_t^k(f^0_k)} \frac{dN_t^k}{\lambda_t^k(f^0_k)}. \nonumber
\end{align}
Thanks to (\ref{cv_norme_LAN_k}) and because $R(x)\xrightarrow[x\rightarrow 0]{} 0$, it is enough to show that
\begin{align}
   \underset{t_i\in N^k\vert_{[0,T]}}{\max}\bigg\vert \frac{ \tilde\lambda_{t_i}^k\big(\xi_k,a_T^{-1}\arctan\big(\frac{a_T\bog_k}{\sqrt{T}}\big)\big)}{\sqrt{T}\lambda_{t_i}^k(f^0_k)}\bigg\vert\xrightarrow[T\rightarrow +\infty]{\P} 0. \nonumber
\end{align}
Let $\epsilon >0$, again by Markov inequality and stationarity,
\begin{align}
    &\P_0\Bigg(\underset{t_i\in N^k\vert_{[0,T]}}{\max}\bigg| \frac{\tilde\lambda_{t_i}^k\big(\xi_k,a_T^{-1}\arctan\big(\frac{a_T\bog_k}{\sqrt{T}}\big)\big)}{\sqrt{T}\lambda_{t_i}^k(f^0)}\bigg| > \epsilon\Bigg) \nonumber \\
    &=\P_0\Bigg(\underset{t_i\in N^k\vert_{[0,T]}}{\max}  \frac{\tilde\lambda_{t_i}^k\big(\xi_k,a_T^{-1}\arctan\big(\frac{a_T\bog_k}{\sqrt{T}}\big)\big)^2}{\lambda_{t_i}^k(f^0)^2}\onee\Big\{\bigg|\frac{\tilde\lambda_{t_i}^k\big(\xi_k,a_T^{-1}\arctan\big(\frac{a_T\bog_k}{\sqrt{T}}\big)\big)}{\lambda_{t_i}^k(f^0)}\bigg| > \sqrt{T}\epsilon \Big\}> T\epsilon^2\Bigg) \nonumber \\
    &\leq \P_0\Bigg(\int_0^T \frac{\Big(\tilde\lambda_{t}^k\big(\xi_k,a_T^{-1}\arctan\big(\frac{a_T\bog_k}{\sqrt{T}}\big)\big)^2}{\lambda_{t}^k(f^0_k)}\onee\Big\{\ \bigg|\frac{\tilde\lambda_{t}^k\big(\xi_k,a_T^{-1}\arctan\big(\frac{a_T\bog_k}{\sqrt{T}}\big)\big)}{\lambda_{t}^k(f^0_k)}\bigg| > \sqrt{T}\epsilon \Big\} \frac{dN_t^k}{\lambda_t^k(f^0_k)} > T\epsilon^2 \Bigg) \nonumber \\
    &= \frac{1}{\epsilon^2}\E_0\Bigg[ \frac{\tilde\lambda_{A}^k\big(\xi_k,a_T^{-1}\arctan\big(\frac{a_T\bog_k}{\sqrt{T}}\big)\big)^2}{\lambda_{A}^k(f^0_k)}\onee\big\{\ \bigg|\frac{\tilde\lambda_{A}^k\big(\xi_k,a_T^{-1}\arctan\big(\frac{a_T\bog_k}{\sqrt{T}}\big)\big)}{\lambda_{A}^k(f^0_k)}\Big| > \sqrt{T}\epsilon \bigg\}\Bigg]  ,\nonumber 
\end{align}
and the last term goes to $0$ as $T\rightarrow+\infty$ by dominated convergence theorem which terminates to prove that  $\bar R_{T,4}(\xi,\bog) \xrightarrow[]{\P}0$. Hence, we have shown that the model $\mathcal{P}$ has the LAN property at $f^0$ when the functions $h^0_{l,k}$ are bounded away from $0$.

\textit{Extension to the general case. } We no longer assume that the functions $\boh^0$ are bounded away from $0$. Previously, this assumption has only been used to show that the path (\ref{nonlinear_path_model_lin}) is in fact equal to the path (\ref{gen_non_linear_path_relu}) for $T$ large enough, and from that we have obtained the expansion (\ref{expansion_arctan}). This assumption was not used to study the asymptotic behavior of the terms of the expansion (\ref{expansion_arctan}), namely $W_T(\xi,\bog)$, $\Delta Q_T(\xi,\bog)$ and $\bar R_{T,i}(\xi,\bog)$ ($i\in [4])$. Lemma \ref{technical_lemma_LAN} shows that, without assuming the the functions $\boh^0$ are bounded away from $0$,  we still have for $(\xi,\bog)\in \RLKOp$
\begin{align}
    &L_T\Big(\big(\xi(T), \bog(T)\big)_{\mathcal{P}}\Big) - L_T(\nu^0, \boh^0) \nonumber\\
    &=  W_T(\xi,\bog) -\frac{1}{2}\Vert (\xi,\bog)\Vert_L^2 + \Delta Q_T(\xi,\bog) +  \bar R_{T,1}(\xi,\bog)+ \bar R_{T,2}(\xi,\bog) + \bar R_{T,3}(\xi,\bog)+o_{\P_0}(1),\nonumber
\end{align}
and therefore the LAN expansion is proved in the general case for model $\mathcal{P}_l$.
\end{proof}

The following lemma is a technical result used in the previous proof when it is not assumed that the functions $\boh^0$ are bounded away from $0$.

\begin{lemma}\label{technical_lemma_LAN}
Let $(\xi,\bog)\in\RLKOp$,
    \begin{align}
     L_T\Big(\big(\xi(T),\bog(T)\big)_{\mathcal{P}}\Big) -  L_T(\nu^0,\boh^0) &= W_T(\xi,\bog) - \frac{1}{2}  \Vert (\xi,\bog)\Vert_L^2 + \Delta Q_T(\xi,\bog) \nonumber\\
     &\hspace{0.8cm}+\bar R_{T,1}(\xi, \bog) + \bar R_{T,2}(\xi, \bog) + \bar R_{T,3}(\xi, \bog) +o_{\P_0}(1)\nonumber
\end{align}
\end{lemma}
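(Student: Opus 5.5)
The plan is to compare the path $(\xi(T),\bog(T))_{\mathcal{P}}$, which contains the positive part, with the ReLU path $(\xi(T),\bog(T))_{\mathcal{P}_R}$ of (\ref{gen_non_linear_path_relu}). Set $\bar h_{l,k}^T := h^0_{l,k} + a_T^{-1}\arctan(a_T g_{l,k}/\sqrt{T})$ and $D^T_{l,k} := (\bar h^T_{l,k})_+ - \bar h^T_{l,k} = (\bar h^T_{l,k})_-\geq 0$. Since $\bog\in\L_{2,\boh^0}^+$, on $\{h^0_{l,k}=0\}$ we have $g_{l,k}\geq 0$ and hence $\bar h^T_{l,k}\geq 0$. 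So $D^T_{l,k}$ is supported on $\{h^0_{l,k}>0,\ g_{l,k}<0\}$, where $0\leq D^T_{l,k}\leq a_T^{-1}|\arctan(a_Tg_{l,k}/\sqrt{T})| - h^0_{l,k}$. This gives the pointwise bound
\begin{align}
0\leq D^T_{l,k}\leq \min\big(|g_{l,k}|/\sqrt{T},\ \pi/(2a_T)\big)\onee\{0<h^0_{l,k}\leq |g_{l,k}|/\sqrt{T}\}.\nonumber
\end{align}
Because $D^T_{l,k}(x)\neq 0$ forces $h^0_{l,k}(x)\leq |g_{l,k}(x)|/\sqrt{T}$, the rescaled quantity $\sqrt{T}D^T_{l,k}$ is dominated by $|g_{l,k}|$. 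It also tends to $0$ pointwise a.e.: for fixed $x$ with $h^0_{l,k}(x)>0$, the indicator eventually vanishes.

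The intensity along the $\mathcal{P}$ path then equals the intensity along the ReLU path plus $T^{-1/2}\tilde\lambda^k_t(0,\sqrt{T}\mathbf{D}^T_k)$. This is the same structure as (\ref{intensity_difference}), with $\bog_k.\delta(a_T\bog_k/\sqrt{T})$ replaced by $\bog_k.\delta(a_T\bog_k/\sqrt{T}) + \sqrt{T}\mathbf{D}^T_k$. Note that the ReLU path intensity is positive and in fact linear here, since $\bar h^T_{l,k}\geq -\pi/(2a_T)$ and $\nu^0_k>0$.

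I would then redo the expansion (\ref{expansion_arctan}) verbatim with the perturbation $\boldsymbol{\eta}^T_k := \bog_k.\delta(a_T\bog_k/\sqrt{T}) + \sqrt{T}\mathbf{D}^T_k$ in place of $\bog_k.\delta(\cdot)$. The terms $W_T$, $\Delta Q_T$ and $\bar R_{T,1}$–$\bar R_{T,3}$ appear, together with analogous cross terms in $\sqrt{T}\mathbf{D}^T$ that I group into the $o_{\P_0}(1)$. Each new term is handled exactly as $\bar R_{T,1},\bar R_{T,2},\bar R_{T,3}$ were. For the martingale-type term I use Markov's inequality and stationarity, which reduce it to $\E_0[\tilde\lambda_A^k(0,\sqrt{T}\mathbf{D}^T_k)^2/\lambda^k_A(f^0_k)]$. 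This expectation tends to $0$ by dominated convergence, since $\sqrt{T}D^T_{l,k}\leq |g_{l,k}|$ and $\sqrt{T}D^T_{l,k}\to0$ a.e. on $[0,A]$, with the dominating variable $\tilde\lambda^k_A(0,|\bog_k|)^2/\lambda_A^k(f^0_k)$ integrable by Lemma \ref{eq}. The quadratic and cross terms are handled the same way, using $L_1(\P_0)$ bounds via stationarity.

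The main obstacle is the cubic remainder $\bar R_{T,4}$. Its argument is now the full $\mathcal{P}$-path increment $T^{-1/2}\tilde\lambda^k_t(\xi_k,\,(a_T^{-1}\arctan(a_T\bog_k/\sqrt{T}))+\mathbf{D}^T_k)/\lambda^k_t(f^0_k)$. I need this to be small uniformly over the points $t_i$, and I need the prefactor to be controlled by $\frac{1}{T}\int\tilde\lambda_t^k(\xi_k,|\bog_k|)^2\lambda_t^{-2}dN_t^k$. Since $0\leq D^T\leq |g|/\sqrt{T}$, the increment is bounded by $T^{-1/2}\tilde\lambda_t^k(|\xi_k|,2|\bog_k|)/\lambda_t^k$. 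The same Markov/Lindeberg-type argument as in the proof of Lemma \ref{lemma_LAN} then gives the uniform smallness of the max, and $\bar R_{T,4}\to0$ in probability follows as before. Collecting the vanishing extra terms into $o_{\P_0}(1)$ yields the stated expansion.
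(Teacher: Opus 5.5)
Your proposal is correct and follows essentially the paper's own proof. The paper also writes the $\mathcal{P}$-path intensity as the linear (arctan) one minus $\tilde\lambda^k_t\big(0,(\boh^0_k+\bog_{T,k})\onee_{\boh^0_k<-\bog_{T,k}}\big)$, which is your $+\tilde\lambda^k_t(0,\mathbf{D}^T_k)$. It bounds this correction by $|\bog_k|/\sqrt{T}$ on a set whose indicator vanishes pointwise, because $g_{l,k}\geq 0$ where $h^0_{l,k}=0$. It then kills each new term, including the new cubic remainder, by Markov's inequality, stationarity and dominated convergence, exactly as for $\bar R_{T,1},\dots,\bar R_{T,4}$.

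The only slip is your side remark that the linear ReLU-path intensity is positive because $\bar h^T_{l,k}\geq -\pi/(2a_T)$. That fails without a bound on $N([t-A,t))$. Nothing in your argument uses it, however. The decomposition is an algebraic identity, and the $\log(1+x)$ expansion only needs positivity of the $\mathcal{P}$-path intensity, which is at least $\nu^0_k+\xi_k/\sqrt{T}>0$.
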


\begin{proof}[Proof of Lemma \ref{technical_lemma_LAN}]  To shorten the notations in this proof, we write $\xi_T$ for $\xi/\sqrt{T}$ and $\bog_T$ for $a_T^{-1}\arctan(a_T \bog/\sqrt{T})$ and we define the vector of $K$ functions $(\boh^0_k+\bog_{T,k})\onee_{\boh^0_k<-\bog_{T,k}}$ by
\begin{align}
    \big((\boh^0_k+\bog_{T,k})\onee_{\boh^0_k<-\bog_{T,k}}\big)_l (x) = (h^0_{l,k}(x)+g_{T,l,k}(x))\onee_{h^0_{l,k}(x)<-g_{T,l,k}(x)}.\nonumber
\end{align}
 First, note that $$\lambda_t^k\big(\big(\xi(T),\bog(T)\big)_{\mathcal{P}}\big) =  \tilde \lambda_t^k (\nu^0_k +\xi_{T,k}, \boh^0_k+\bog_{T,k}) - \tilde \lambda_t^k\big(0,(\boh^0_K+\bog_{T,k})\onee_{\boh^0_k<-\bog_{T,k}}\big).$$
Then, reproducing the computations that lead to the expansion (\ref{expansion_arctan}), we obtain
\begin{align}
    &L_T\Big(\big(\xi(T),\bog(T)\big)_{\mathcal{P}}\Big) - L_T(\nu^0,\boh^0)
    = W_T(\xi,\bog) - \frac{1}{2}  \Vert (\xi,\bog)\Vert_L^2 + \Delta Q_T(\xi,\bog) \nonumber\\
    &\hspace{0.5cm}+\bar R_{T,1}(\xi, \bog) + \bar R_{T,2}(\xi, \bog) + \bar R_{T,3}(\xi, \bog)+\bar R_{T,1}'(\xi, \bog) + \bar R_{T,2}'(\xi, \bog) + \bar R_{T,3}'(\xi, \bog)  + \bar R_{T,4}'(\xi, \bog), \nonumber
\end{align}
with 
\begin{align}
    &\bar R_{T,1}'(\xi, \bog) =\sum_{k=1}^K\int_0^T \frac{\tilde \lambda_t^k(0, (\boh^0_k+ \bog_{T,k})\onee_{\boh^0_k < -\bog_{T,k}})}{\lambda_t^k(f^0_k)} \big( dN_t^k - \lambda_t^k(f^0_k)\big) \nonumber \\
    &\bar R_{T,2}'(\xi, \bog) = -\frac{1}{2}\sum_{k=1}^K\int_0^T \frac{\tilde\lambda_t^k(0, (\boh^0_k+ \bog_{T,k})\onee_{\boh^0_k < -\bog_{T,k}})^2}{\lambda_t^k(f^0_k)} \frac{dN_t^k}{\lambda_t^k(f^0_k)} \nonumber \\
    &\bar R_{T,3}'(\xi,\bog) = \sum_{k=1}^K\int_0^T \frac{\tilde\lambda_t^k(\xi_{T,k},\bog_{T,k})\tilde\lambda_t^k(0, (\boh^0_k+ \bog_{T,k})\onee_{\boh^0_k < -\bog_{T,k}})}{\lambda_t^k(f^0_k)}\frac{dN_t^k}{\lambda_t^k(f^0_k)} \nonumber \\
    &\bar R_{T,4}'(\xi,\bog)= \sum_{k=1}^K\int_0^T \frac{\tilde\lambda_t^k(\xi_T, \bog_{T,k} - (\boh^0_k+ \bog_{T,k})\onee_{\boh^0_k <- \bog_{T,k}})^2}{\lambda_t(f^0_k)^2}R\bigg(\frac{\tilde\lambda_t^k(\xi_T, \bog_{T,k} - (\boh^0_k+ \bog_{T,k})\onee_{\boh^0_k < -\bog_{T,k}})}{\lambda_t^k(f^0_k)}\bigg)dN_t^k.\nonumber
\end{align}
Now, observe that for all $(l,k)\in\K^2$, since $h^0_{l,k}\geq0$, 
\begin{align}
    \vert h^0_{l,k} + g_{T,l,k}\vert\onee_{h^0_{l,k} <-g_{T,l,k}} \leq \vert g_{T,l,k}\vert \onee_{h^0_{l,k} <-g_{T,l,k}} .\nonumber
\end{align}
Moreover, $\arctan(x) = x\kappa(x)$  with $\kappa(0)=1$ and $\kappa$ is in $[0,1]$. The arguments used to prove that $\bar R_{T,i}(\xi,\bog)=o_{\P_0}(1)$ ($i\in [4]$) also show, with the two previous remarks,  that $\bar R_{T,i}'(\xi,\bog)=o_{\P_0}(1)$ ($i\in [4]$). For the sake of completeness, we present in detail the computations for $\bar R_{T,1}'(\xi,\bog)$, one can proceed similarly for the other terms. Let $k\in \K$ and $\epsilon>0$,
\begin{align}
    &\P_0\bigg(\Big\vert \int_0^T \frac{\tilde\lambda_t^k(0, (\boh^0_k+ \bog_{T,k})\onee_{\boh^0_k < -\bog_{T,k}})}{\lambda_t^k(f^0_k)} \big( dN_t^k - \lambda_t^k(f^0_k)\big)\Big\vert >\epsilon\bigg) \nonumber\\
    &\leq \frac{T}{\epsilon^2}\E_0\bigg[\frac{\tilde\lambda_A^k(0, (\boh^0_k+ \bog_{T,k})\onee_{\boh^0_k < -\bog_{T,k}})^2}{\lambda_A^k(f^0_k)} \bigg] \nonumber \\
    &\leq \frac{T}{\epsilon^2}\E_0\bigg[\frac{\tilde\lambda_A^k\big(0, \vert \bog_k.\kappa(a_T\bog_k/\sqrt{T})\vert\onee_{\boh^0_{k} <-\bog_{T,k}}/\sqrt{T}\big)^2}{\lambda_A^k(f^0_k)} \bigg] \nonumber \\
    &\leq \frac{T}{\epsilon^2}\E_0\bigg[\frac{\tilde\lambda_A^k\big(0, \vert \bog_k\vert\onee_{\boh^0_{k} <-\bog_{T,k}}/\sqrt{T}\big)^2}{\lambda_A^k(f^0_k)} \bigg] = \frac{1}{\epsilon^2}\E_0\bigg[\frac{\tilde\lambda_A^k\big(0, \vert \bog\vert\onee_{\boh^0_{k} <-\bog_{T,k}}\big)^2}{\lambda_A^k(f^0_k)} \bigg]. \nonumber 
\end{align}
Furthermore, $arctan(0) =0$, $sgn(\arctan(x))= sgn(x)$, $\bog_{T,l,k}$ is bounded by $\pi/2a_T\rightarrow 0$ and $g_{l,k}(x)\geq 0$ when $h^0_{l,k}(x)= 0$. Whence,$\onee_{h^0_{l,k} <-g_{T,l,k}}  \xrightarrow[]{a.s.} 0$, as $T\rightarrow +\infty$. Thus, by dominated convergence theorem, $\E_0\bigg[\frac{\tilde\lambda_A^k\big(0, \vert \bog_k\vert\onee_{\boh^0_{k} <-\bog_{T,k}}\big)^2}{\lambda_A^k(f^0_k)} \bigg] \xrightarrow[]{}0$ as $T\rightarrow +\infty$ and it proves that $R_{T,1}'(\xi,\bog)=o_{\P_0}(1)$.
\end{proof}

Now,  we prove Lemma \ref{lemma_LAN_convol_RELU} on the LAN expansion in model $\mathcal{P}_R$, using what we did before for the LAN expansion in model $\mathcal{P}$ (Lemma \ref{lemma_LAN}).

\begin{proof}[Proof of Lemma \ref{lemma_LAN_convol_RELU}]. We fix $\alpha >0$ and we consider $\Omega_T$ the event defined in  Lemma \ref{omega_T}. As $\P_0(\Omega_T) \rightarrow 1$ it is enough to prove the LAN expansion on the event $\Omega_T$. Moreover, let $(\xi,\bog) \in \RLKO$, on the event $\Omega_T$ we have
\begin{align}
    &\underset{k\in \K}{\min} \underset{ t\in [0,T]}{\inf} \lambda_t^k(\nu^0_k + \xi_{T,k},  h^0_k +a_T^{-1}\arctan(a_T \bog_k/\sqrt{T}))\nonumber\\
    &\geq  \underset{k\in \K}{\min} \frac{\nu^0_k}{2} -\frac{K\pi}{2a_T}\times \underset{l\in\K}{\max}\hspace{0.05cm}\underset{t\in [0,T]}{\sup}N^l([t-A,t)] \geq \underset{k\in \K}{\min} \frac{\nu^0_k}{2} -\frac{ K\pi C_\alpha \log(T)}{2a_T}\geq0, \nonumber
\end{align}
for $T$ large enough (recall that $\log(T)/a_T\rightarrow0$). In this model, the intensity  along the path (\ref{gen_non_linear_path_relu}) directed by $(\xi,\bog)$ is given by $\lambda_t^k\big( (\xi(T), \bog(T))_{\mathcal{P}_R}\big) = \tilde \lambda_t^k\big( (\xi(T), \bog(T))_{\mathcal{P}_R}\big)_+$. But with the previous inequality, on $\Omega_T$ for $T$ large enough, $ \tilde \lambda_t^k\big( (\xi(T), \bog(T))_{\mathcal{P}_R}\big)_+ = \tilde \lambda_t^k\big( (\xi(T), \bog(T))_{\mathcal{P}_R}\big)$. Whence, we can decompose the difference of intensities as in (\ref{intensity_difference}). Then, all the arguments that follow in the proof of Lemma \ref{lemma_LAN} for the linear model $\mathcal{P}$  are still valid and the LAN property is thus proved for the model $\mathcal{P}_R$.

Then, as in the model $\mathcal{P}$, the convolution theorem comes directly with the LAN property and the differentiability of the functional by applying Theorem  2.4  of \cite{mcneney}.
\end{proof}

\section{Proofs of Section \ref{BVM}}\label{proof_subsection_main_result}

 \subsection{Proofs of the posterior contraction results} \label{sec:pr:concentration}
 
 In this section, we prove Proposition \ref{th_concentration_L1} on $L_1$ posterior contraction and Theorem \ref{th_concentration_L2} on $L_2$ posterior contraction. The results on the $L_2$ tests used in the proof of Theorem \ref{th_concentration_L2} are presented in the following Section \ref{sec_tests}. Recall the definition of $\varepsilon_T$ and $J_T$ in (\ref{def:JTepsT}).

\begin{proof}[Proof of Proposition \ref{th_concentration_L1}]
We verify hypotheses of proposition 3.5 of \cite{Sulem_concentration} (case 1). First, even though our prior is not supported on the sets of parameters $f$ satisfying condition (C1bis) of \cite{Sulem_concentration},  but on the set $\mathcal{F}_R^1$; proposition 3.5 of \cite{Sulem_concentration} can still be applied, see \cite{rousseau_highdim} for a justification.  Next, the condition given by equation (8) in \cite{Sulem_concentration} is true in our case since the functions $h^0_{l,k}$ are non-negative. Then we have to verify the usual type of assumptions for posterior contraction, the so-called prior mass, sieves and entropy conditions.

We begin with the prior mass condition, we have to verify that $\Pi(B_\infty(\bar \varepsilon_T))\geq e^{-c_2T \varepsilon_T^2}$ with $c_2$ defined in \hyperref[as_P1]{(P1)} and
\begin{align}
    B_\infty(\bar \varepsilon_T) := \big\{f: \forall(l,k)\in \K^2, \nu^0_k\leq \nu_k\leq \nu^0_k +\bar\varepsilon_T, h^0_{l,k} \leq h_{l,k}\leq h^{0}_{l,k}+\bar\varepsilon_T\big\}.\nonumber
\end{align}
Since $\Pi_\nu$ has a positive continuous density on $[0,+\infty[^K$, there exists a constant $r>0$ such that $\Pi_\nu\big(\nu: \Vert \nu -\nu^0\Vert_\infty \leq  \bar \varepsilon_T\big) \geq r(2 \bar \varepsilon_T)^K$, this lower bound is greater than $\exp(-c_2T \varepsilon_T^2/4)$ for $T$ large enough. Moreover, by assumption \hyperref[as_P1]{(P1)}, (\ref{cond_piJ}) and (\ref{P1_rate}) we have for $T$ large enough:
\begin{align}
    &\Pi_{\tilde \boh} \big(\tilde \boh: \forall (l,k)\in \K^2,  \tilde h^0_{l,k} \leq \tilde h_{l,k}\leq \tilde h^{0}_{l,k}+\bar\varepsilon_T/L\big)\nonumber\\
    &\geq \Pi_{\tilde \boh\vert \bar J_T} \big(\tilde \boh: \forall (l,k)\in \K^2,  \tilde h^0_{l,k} \leq \tilde h_{l,k}\leq \tilde h^{0}_{l,k}+\bar\varepsilon_T/L\big)\Pi_J(J=\bar J_T)\nonumber\\
    &\geq e^{- c_2T\varepsilon_T^2/2}e^{-2c_1\bar J_T\log(\bar J_T)}\geq  e^{- c_2T\varepsilon_T^2/2}e^{-4c_1c_3T\varepsilon_T^2 }\geq  e^{- 3c_2T\varepsilon_T^2/4}\nonumber
\end{align}
As $\varphi$ is $L$-Lipschitz and non decreasing, it proves that $(\Pi_\nu \otimes \Pi_\boh)(B_\infty(\bar \varepsilon_T)) \geq e^{-3c_2T\varepsilon_T^2/4} $. Consequently, because for $T$ large enough, $B_\infty(\bar \varepsilon_T)\subseteq \mathcal{F}_R^1$,  we have for $T$ large enough $\Pi(B_\infty(\bar \varepsilon_T))\geq e^{-c_2 T\varepsilon_T^2}$.

Next, we study the sieve condition.  We define  $\mathcal{\tilde H}_{T}:=\big\{\tilde \boh = \bth^TB_j,\hspace{0.1cm} \bth\in \R^{K^2j} ,\hspace{0.1cm} j\leq J_T,\hspace{0.1cm}\Vert \bth\Vert_\infty \leq M_T\big\}$ with $M_T$ the sequence of assumption \hyperref[as_P1]{(P1)}, $\mathcal{H}_T:= \big\{\varphi(\tilde \boh), \tilde \boh\in \mathcal{\tilde H}_{T} \big\}$ and $\Upsilon_T := \big\{\nu: \Vert \nu \Vert_\infty \leq e^{(c_4-1)T\varepsilon_T^2/2}\big\}$.  We have to verify that $\Pi_\nu(\Upsilon_T^c)+ \Pi_\boh (\mathcal{H}_{T}^c) =o(e^{-(c_4-1)T\varepsilon_T^2/2})$. For $\Pi_\nu$, by assumption we have for some $a>1$,   $\Pi_\nu(\Upsilon_T^c) \lesssim e^{-a(c_4-1)T\varepsilon_T^2/2} = o (e^{-(c_4-1)T\varepsilon_T^2/2})$.   Then,  with direct computations and conditions  (\ref{cond_piJ}) and (\ref{sieve_theta}), we obtain that for $T$ large enough $\Pi_\boh (\mathcal{H}_{T}^c)\leq e^{-U(M_T)} +e^{- c_1J_T\log(J_T)/2}$.  But, by assumption \hyperref[as_P1]{(P1)}, $U(M_T)\geq c_4T\varepsilon_T^2$ and $c_1J_T\log(J_T)/2\geq c_1J_0\bar J_T\log(T)/4 \geq c_4T\varepsilon_T^2  $ and therefore sieve condition is verified.

Finally, there is an entropy condition to be verified: $\log(\mathcal{N}(\zeta_0\varepsilon_T, \mathcal{H}_T, \Vert.\Vert_1))\leq x_0T\varepsilon_T^2$ for some $\zeta_0>0$ and $x_0>0$. As for any $h_1, h_2 \in \mathcal{H}_T$, $\Vert h_1 - h_2\Vert_1\leq L \Vert \tilde h_1 - \tilde h_2\Vert_1\leq L\sqrt{A}\Vert \tilde h_1 - \tilde h_2\Vert_2$
it is enough to show that $\log(\mathcal{N}(\zeta_0'\varepsilon_T, \mathcal{\tilde H}_T, \Vert.\Vert_2))\leq x_0T\varepsilon_T^2$ for some $\zeta_0'>0$. Then, using (\ref{condi_rho}) and the usual formula for the entropy of a ball in $\R^{K^2j}$, we obtain that for some $c>0$ 
\begin{align}
     \mathcal{N}(\zeta_0'\varepsilon_T, \mathcal{\tilde H}_T, \Vert.\Vert_2) &\leq \sum_{j=1}^{J_T} \mathcal{N}\big(\zeta_0'\varepsilon_T, \{\tilde \boh = \bth^TB_j, \bth\in \R^{K^2j}, \Vert \bth\Vert_\infty\leq M_T\},  \Vert.\Vert_2\big)\nonumber \\
     &\leq\sum_{j=1}^{J_T} \mathcal{N}(c\zeta_0'\varepsilon_T\gamma(j)/\sqrt{j}, \{\bth\in \R^{K^2j}: \Vert \bth\Vert_\infty\leq M_T \}, \Vert.\Vert_\infty)\nonumber \\
    &\leq \sum_{j=1}^{J_T} \Big(\frac{M_T\sqrt{j}}{c\zeta_0'\gamma(j)\varepsilon_T}\Big)^{K^2j}\leq \sum_{j=1}^{J_T} \Big(\max (\gamma(j), 1/\gamma(j))\frac{M_T\sqrt{j}}{c\zeta_0'\varepsilon_T}\Big)^{K^2j}\nonumber.
\end{align}
By (\ref{condi_rho}), $\gamma(j)$ is a monotone sequence. Assume that $\gamma(j)$ is non decreasing. Then, either for some $j\geq 1$, $\gamma(j)\geq 1/\gamma(1)$ ($1/\gamma$ is non increasing) and for $T$ large enough and $j\leq J_T$ we have $\max(\gamma(j), 1/\gamma(j))\leq \gamma(J_T)$, or for all $j$, $\gamma(j)\leq 1/\gamma(1)$ and $\max(\gamma(j), 1/\gamma(j))\leq 1/\gamma(1)$. In both cases we have $\max (\gamma(j), 1/\gamma(j))\leq \gamma(J_T) + 1/\gamma(1)$. If $\gamma(j)$ is non increasing we obtain similarly that  $\max (\gamma(j), 1/\gamma(j))\leq \gamma(1) + 1/\gamma(J_T)$. Whence, we have $\max (\gamma(j), 1/\gamma(j)) \leq \gamma(1) + 1/\gamma(J_T) + \gamma(J_T) + 1/\gamma(1):=w_T$ and by assumption $w_T$ has at most polynomial growth. Thus, for some $C>0$
\begin{align}
    \mathcal{N}(\zeta_0'\varepsilon_T, \mathcal{\tilde H}_T, \Vert.\Vert_2)&\leq J_T\Big(w_T\frac{M_T\sqrt{J_T}}{c\zeta_0' \varepsilon_T}\Big)^{K^2J_T}\leq e^{CJ_T\log(T)}.\nonumber
\end{align}
By condition (\ref{P1_rate}), we have that $J_T\log(T)\lesssim T \varepsilon_T^2$  so for some $x_0>0$, $CJ_T\log(T)\leq x_0 T\varepsilon_T^2$ and the entropy condition is verified. It concludes the proof of Lemma \ref{th_concentration_L1}.
 \end{proof}

\begin{proof}[ Proof of Theorem \ref{th_concentration_L2} ]

Let $\C_T= \big \{f:\Vert f-f^0\Vert_2\leq \log(T)\varepsilon_T\big\}$, we have to prove that $\Pi(\C_T^c\vert N)\rightarrow 0$ under $\P_0$. By proposition \ref{th_concentration_L1}, we have under $\P_0$,  $\Pi(\mathcal{W}_T^1\vert N)\rightarrow 1$  and thus it is enough to prove that $\Pi(\C_T^c\cap \mathcal{W}_T^1\vert N)\rightarrow  0$ under $\P_0$.  We prove this following the well known strategy of \cite{GGV_2000}  for posterior contraction. Before applying this strategy, we make some preliminary definitions and remarks.

 When $\varphi$ is the identity function, we  set $\mathcal{F}_T = \mathcal{W}_T^1$. Let $\bar r_T:=\lceil 2DC_1J_T\varepsilon_T\rceil$, where $D$ is defined in Lemma  \ref{prem_1} and $C_1$ in Proposition \ref{th_concentration_L1}, we then have $\mathcal{F}_T \subset \big\{f: \Vert \boh - \boh^0\Vert_2 \leq \bar r_T\big\}$. Indeed, it holds trivially if $ \Vert \boh-\boh^0\Vert_2 \leq \varepsilon_T$  and  otherwise, by Lemma \ref{prem_1} we have
\begin{align}
    \Vert \boh -\boh^0\Vert_2 \leq \sqrt{\Vert \boh-\boh^0\Vert_1\Vert \boh-\boh^0\Vert_\infty} &\leq \sqrt{C_1\varepsilon_T\Vert \boh-\boh^0\Vert_\infty}\nonumber\\
    &\leq \sqrt{DC_1\varepsilon_T\sqrt{J_T} \big(\Vert \boh-\boh^0\Vert_2 + \varepsilon_T\big)}\nonumber\\
    &\leq \sqrt{2DC_1\varepsilon_T\sqrt{J_T} \Vert \boh-\boh^0\Vert_2}.\nonumber
\end{align}
Dividing both sides by $\sqrt{\Vert \boh -\boh^0\Vert_2}$, we find that $\Vert \boh -\boh^0\Vert_2 \leq 2DC_1\varepsilon_T\sqrt{J_T}$ and the inclusion is verified.

When $\varphi$ is not the identity function, we define $\mathcal{F}_T$ by
\begin{align}
    \mathcal{F}_T =\big\{f\in \mathcal{W}^1_T, \Vert \boh\Vert_\infty \leq r_T\big\}\cap \big\{f: \Vert \boh -\boh^0\Vert_2\geq \log(T)r_T\varepsilon_T\hspace{0.15cm}\text{or}\hspace{0.15cm} \Vert \boh -\boh^0\Vert_\infty\leq G\big\}.\nonumber
\end{align}
and we set in this case $\bar r_T:=\lceil 2Ar_T\rceil$, for $T$ large enough $\mathcal{F}_T\subseteq \{f: \Vert f-f^0\Vert_2\leq \bar r_T\}$.

Now, whatever $\varphi$, for $r\geq \underline r_T := \lti$, we let $\mathcal{F}_T(r) :=\big\{f\in \mathcal{F}_T, \Vert f-f^0\Vert_2\in [r\varepsilon_T, (r+1)\varepsilon_T[ \big\}$.  Let $(f^{r,i})_{i\leq \mathcal{N}_r}$ be the centering points of a covering of $\mathcal{F}_T(r)$ in terms of metric $\Vert.\Vert_2$ and with radius $\varepsilon_T (r+1) \Delta/ (4\eta)$ ($\Delta$ and $\eta$ are universal positive constants  defined in Section \ref{sec_tests}). With the entropy computations done in the proof of proposition \ref{th_concentration_L1}, we find that for some $x_0'>0$, $\mathcal{N}_r\leq \exp(x_0'T\varepsilon_T^2)$. Finally, let
\begin{align}
    D_T = \int_{\mathcal{\bar F}} e^{L_T(f) - L_T(f^0)}d\Pi(f),\nonumber
\end{align} 
and recall the events $\Omega_{\tau,T}$ and $\Xi_T$ defined in Section \ref{sec_tests} and both observable. We are now ready to apply the strategy of \cite{GGV_2000}. First, we have the classical decomposition (see also equation (24) in \cite{Sulem_concentration}):
\begin{equation}\label{classical_decomp}
\begin{aligned}
    &\E_0[\Pi(\mathcal{C}_T^c\cap \mathcal{W}^1_T\vert N)] \\
    &\leq \P_0\big( \tilde \Omega_T\cap\{D_T\leq e^{-(c_4-1)T \varepsilon_T^2}\}\big) + e^{(c_4-1)T \varepsilon_T^2}\Pi\big(\mathcal{F}_T^c\cap \mathcal{W}^1_T\big)+\P_0(\tilde \Omega_{T}^{c}\cup \Xi_T^c) \\
    &\hspace{0.4cm}+\E_0[\phi\onee_{\Omega_{\tau,T}}] + e^{(c_4-1)T\varepsilon_T^2}\sum_{r=\underline r_T}^{\bar r_T}\int_{\mathcal{F}_T(r)} \E_0\Big[\E_f\Big[ (1-\phi)\onee_{ \Xi_T\cap \Omega_{\tau,T}}\Big \vert \mathcal{G}_{0-}\Big]\Big]d\Pi(f).
\end{aligned}
\end{equation}
with $\tilde \Omega_T\subset \Omega_{\tau,T}$ defined by (25) in \cite{Sulem_concentration} and that verifies $ \P_0(\tilde \Omega_T^c)\rightarrow 0$ and with
\begin{align}
    \phi =  \underset{\underline r_T\leq r\leq \bar r_T}{\max} \hspace{0.1cm}\underset{i\leq \mathcal{N}_r}{\max}\hspace{0.1cm}\phi_{f^{r,i}},\nonumber
\end{align} 
where the tests $\phi_{f^{r,i}}$ are defined in lemma \ref{test_l2}. The first term on the right hand side of (\ref{classical_decomp}) goes to $0$ as $T$ goes to $+\infty$ by equation (26) in \cite{Sulem_concentration}. The second term is equal to $0$ when $\varphi$ is the identity function and when $\varphi$ is not the identity function, we have by assumption \hyperref[P2]{(P2)} that this term is term goes to $0$ as $T$ goes to $+\infty$. Then, for the third term on the right hand side of (\ref{classical_decomp}), we can apply Lemma \ref{sets_XiT} that gives that $\P_0(\Xi_T^c)=o(1)$ and thus $\P_0(\tilde \Omega_T^c\cup\Xi_T^c)=o(1)$.   It remains to deal with the two terms involving the test $\phi$. For the type I error term, by Lemma \ref{test_l2} , we have:
\begin{align}
    \E_0[\phi \onee_{\Omega_{\tau,T}}] &\leq \sum_{r=\underline r_T}^{+\infty}\E_0\Big[\underset{i\leq \mathcal{N}_l}{\max}\hspace{0.1cm}\phi_{f^{r,i}}\onee_{\Omega_{\tau,T}}\Big]
    \leq e^{x_0'T\varepsilon_T^2}\sum_{r=\underline r_T}^{+\infty}e^{-c T\varepsilon_T^2r}\lesssim e^{\big(x_0'-c\log(T)\big)T\varepsilon_T^2}=o(1).\nonumber
\end{align}
 For the type II error term, Lemma \ref{test_l2} gives that
\begin{align}
    e^{(c_4-1)T\varepsilon_T^2}\sum_{r=\underline r_T}^{\bar r_T}\int_{\mathcal{F}_T(r)} \E_0\Big[\E_f\Big[ (1-\phi)\onee_{ \Xi_T\cap \Omega_{\tau,T}}\Big \vert \mathcal{G}_{0-}\Big]\Big]d\Pi(f) \lesssim e^{\big((c_4-1)- c \log(T)\big)T\varepsilon_T^2} =o(1).\nonumber
\end{align}
This terminates the proof of Theorem \ref{th_concentration_L2}.
\end{proof}

\subsection{On the tests for the $L_2$ posterior contraction}\label{sec_tests}
We study in this section the "$L_2$" tests which are the main new elements to prove the $L_2$ posterior contraction (see the proof of Theorem \ref{th_concentration_L2}). These tests are based on renewal properties of the Hawkes process coming from \cite{Sulem_concentration} and \cite{Costa_renewal}. In particular, the tests statistics are built on a restricted window $A_2(T) \subset[0,T]$, introduced in \cite{Sulem_concentration}, on which the process can be decomposed as a sum of i.i.d. point processes.

Before presenting the tests, we first recall some renewal properties of the Hawkes process and make some definitions.  The sequence of random times $(\tau_n)_{n\geq 0}$ (called renewal times) is defined by
\begin{equation}
    \tau_n = \left\{
    \begin{array}{ll}
        0 & \mbox{  if  } n=0 \\
        \inf\big\{ t>\tau_{n-1}: N([t-A,t[\neq 0, N(]t-A,t])=0\big\} & \mbox{  if  } n\geq 1
    \end{array}
\right. .\nonumber
\end{equation}
By Lemma 5.1 of \cite{Sulem_concentration}, the variables $(\tau_n)_n$ are stopping times and the point processes $(N_{\vert[\tau_n,\tau_{n+1}[})_{n\geq 1}$ are i.i.d.. Let $\bar \tau := \E_0[\tau_2 - \tau_1]$. Denote by $U^{(1)}_n$ and $U^{(2)}_n$ the the two first events after $\tau_n$. By Lemma 5.1 of \cite{Sulem_concentration}, there exists $v>0$ such that for all $n\geq 1$ , $\E[e^{(U^{(1)}_n-\tau_n)v}]<+\infty$. For $T>0$,  let $n_T:=\max \{ n\geq 0, \tau_n\leq T\}$ and set 
\begin{align}\label{def_omega_tau}
    \Omega_{\tau,T}:=\Big\{ n_T-1\in \Big[\frac{T}{2\bar\tau},\frac{2T}{\bar \tau}\Big] \hspace{0.2cm}\text{and}\hspace{0.2cm} U^{(1)}_n-\tau_n\geq 2v^{-1}\log(T),\hspace{0.1cm} 1\leq n\leq n_T-1\Big\}.
\end{align}
With Lemma 5.1 and Lemma 5.4 of \cite{Sulem_concentration}, we have that $\P_0(\Omega_{\tau,T})\rightarrow 0$. Next, set $\chi_n := \min(U^{(2)}_n , \tau_{n+1})$; it is a stopping time as the minimum of two stopping times. The announced restricted window $A_2(T)$ is defined by
\begin{align}
    A_2(T) :=\bigcup_{n=1}^{n_T-1} [\tau_n,\chi_n].\nonumber
\end{align}
We introduce the stochastic distance  $d_T$:
\begin{align}
    d_T^2(f,f') :=\frac{1}{T}\sum_{k=1}^K\int_0^T\onee_{A_2(T)}(t)\tilde \lambda_t^k(f_k-f'_k)^2dt = \frac{1}{T}\sum_{k=1}^K\sum_{n=1}^{n_T-1}\int_{\tau_n}^{\chi_n}\tilde \lambda_t^k(f_k-f'_k)^2dt.\nonumber
\end{align}
Note that similarly to the equation (29) from \cite{Sulem_concentration}, we have for some universal $C>0$
\begin{align}
    d_T^2(f,f') &\leq \Vert f-f'\Vert_2^2\frac{C}{T}\sum_{n=1}^{n_T-1} (A+1+\tau_{n+1}-\tau_n )\nonumber\\
    &\leq \Vert f-f'\Vert_2^2 \frac{C}{T}\big((n_T-1)(A+1) + \tau_{n_T}-\tau_1\big).\nonumber
\end{align}
and thus on the event $\Omega_{\tau,T}$ we have for some universal $\eta>0$,
\begin{align}\label{dom_dT}
    d_T^2(f,f') \leq \eta \Vert f-f'\Vert_2^2.
\end{align}

Now, as in the proof of Theorem \ref{th_concentration_L2}, we define "sieves" $\mathcal{F}_T$ depending on $\varphi$.  When $\varphi$ is the identity function, we  set $\mathcal{F}_T = \mathcal{W}_T^1$ and we divide it into into
slices $\mathcal{F}_T(r) :=\big\{f\in \mathcal{F}_T, \Vert f-f^0\Vert_2\in [r\varepsilon_T, (r+1)\varepsilon_T[ \big\}$, $r\geq 1$. It is shown in the proof of Theorem \ref{th_concentration_L2} that it is enough to take the slices with $r\leq \bar r_T:= \lceil 2DC_1\sqrt{T}\varepsilon_T\rceil$ ($D$ and $C_1$ being a universal positive constants) to cover $\mathcal{F}_T$.  When $\varphi$ is not the identity function, we define $\mathcal{F}_T$ by
\begin{align}
    \mathcal{F}_T =\big\{f\in \mathcal{W}^1_T, \Vert \boh\Vert_\infty \leq r_T\big\}\cap \big\{f: \Vert \boh -\boh^0\Vert_2 \geq  \log(T)r_T\varepsilon_T\hspace{0.15cm}\text{or}\hspace{0.15cm} \Vert \boh -\boh^0\Vert_\infty\leq G\big\}.\nonumber
\end{align}
The slices $(\mathcal{F}_T(r))_{r\geq 1}$ are then defined in the same way and it is enough to take the slices with $r\leq \bar r_T:=\lceil 2Ar_T\rceil$ to cover $\mathcal{F}_T$. Next, whatever $\varphi$,  let $(f^{r,i})_{i\leq \mathcal{N}_r}$ be the centering points of a covering of $\mathcal{F}_T(r)$ in terms of metric $\Vert.\Vert_2$ and with radius $\varepsilon_T (r+1) \Delta/ (4\eta)$ for some $\Delta>0$. With the entropy computations done in the proof of proposition \ref{th_concentration_L1}, we have that  $\mathcal{N}_r\leq e^{x_0'T\varepsilon_T^2}$ for some $x_0'>0$. Then, we define the sequence of events $(\Xi_T(r))_{r\geq 1}$ by
\begin{align}
   \Xi_T(r) = \big\{d_T^2(f^0, f^{r,i})\geq \Delta \Vert f^0 -f^{r,i}\Vert_2^2, \hspace{0.1cm}\forall i\leq \mathcal{N}_r\big\}, \quad \displaystyle \Xi_T := \bigcap_{r=1}^{\bar r_T}\Xi_T(r).\nonumber
\end{align}
 and we have the following lemma.
\begin{lemma}\label{sets_XiT}  Under assumptions of Theorem \ref{th_concentration_L2}, there exists $\Delta>0$ such that $ \P_0(\Xi_T^c)=o(1)$.
\end{lemma}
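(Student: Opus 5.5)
The plan is to exploit the renewal structure: on $A_2(T)$ the stochastic distance splits into a sum of i.i.d.\ nonnegative block contributions. I would show that each block has mean of order $\Vert f-f^0\Vert_2^2$ and then use a lower-tail concentration bound for sums of nonnegative i.i.d.\ variables, followed by a union bound over the at most $\bar r_T e^{x_0'T\varepsilon_T^2}$ centres $f^{r,i}$.

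\textbf{Block decomposition.} For $f$ fixed, write
\begin{align*}
Z_n(f):=\sum_{k}\int_{\tau_n}^{\chi_n}\tilde\lambda_t^k(f_k-f^0_k)^2dt,
\end{align*}
so that $d_T^2(f,f^0)=T^{-1}\sum_{n=1}^{n_T-1}Z_n(f)$. By Lemma 5.1 of \cite{Sulem_concentration}, the variables $(Z_n(f))_{n\geq1}$ are i.i.d. Since the summands are nonnegative, on $\Omega_{\tau,T}$ we have $d_T^2\geq T^{-1}\sum_{n=1}^{m_T}Z_n$ with the deterministic count $m_T=\lfloor T/(2\bar\tau)\rfloor$. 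This removes the randomness of $n_T$, at the cost of the term $\P_0(\Omega_{\tau,T}^c)=o(1)$.

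\textbf{Mean lower bound.} On $]\tau_n,\chi_n[$ there is no point in the last $A$ time units other than possibly $U^{(1)}_n$. Writing $f-f^0=(\xi,\bog)$, we therefore get $\tilde\lambda^k_t=\xi_k$ on $[\tau_n,U^{(1)}_n[$ and $\tilde\lambda^k_t=\xi_k+g_{l,k}(t-U^{(1)}_n)$ on $[U^{(1)}_n,\chi_n[$, where $l$ is the mark of $U^{(1)}_n$. Consider the event that $U^{(1)}_n-\tau_n\geq c$, that $U^{(1)}_n$ has mark $l$, and that no further point occurs within $A$ after $U^{(1)}_n$. I would show this event has probability bounded below by a constant depending only on $f^0$, via a likelihood-ratio comparison with a unit Poisson process exactly as in the proof of (\ref{(a)}) in Lemma \ref{eq}. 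On it,
\begin{align*}
Z_n\geq \sum_k\Big(c\,\xi_k^2+\int_0^A(\xi_k+g_{l,k}(x))^2dx\Big)\gtrsim \sum_k\Big(\xi_k^2+\Vert g_{l,k}\Vert_2^2\Big).
\end{align*}
Summing over $l$ gives $\E_0[Z_n]\geq c_0\Vert f-f^0\Vert_2^2$.

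\textbf{Second moment and concentration.} By the same structure, $\sup_{t\in[\tau_n,\chi_n]}|\tilde\lambda^k_t|\leq 2K\Vert f-f^0\Vert_\infty$. Hence $Z_n^2\lesssim \Vert f-f^0\Vert_\infty^2 (\chi_n-\tau_n)Z_n$. Conditioning on the position of $U^{(1)}_n$ and using the exponential moments of $U^{(1)}_n-\tau_n$ and of the interarrival gaps, I expect
\begin{align*}
\E_0[Z_n^2]\lesssim \Vert f-f^0\Vert_\infty^2\Vert f-f^0\Vert_2^2.
\end{align*}
For nonnegative i.i.d.\ variables, the lower-tail bound $\P(\sum_{n\leq m}Z_n\leq m\E Z/2)\leq \exp(-m(\E Z)^2/(8\E Z^2))$ then gives, with $\Delta:=c_0/(4\bar\tau)$,
\begin{align*}
\P_0\big(d_T^2(f,f^0)<\Delta\Vert f-f^0\Vert_2^2,\ \Omega_{\tau,T}\big)\leq \exp\Big(-cT\,\Vert f-f^0\Vert_2^2/\Vert f-f^0\Vert_\infty^2\Big).
\end{align*}

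\textbf{Ratio of norms and union bound.} When $\varphi$ is the identity, the centres lie in $\mathcal{W}^1_T$, and (\ref{condi_sup}) together with Lemma \ref{prem_1} gives $\Vert f-f^0\Vert_\infty\lesssim\sqrt{J_T}(\Vert f-f^0\Vert_2+\varepsilon_T)\lesssim\sqrt{J_T}\Vert f-f^0\Vert_2$, since $r\geq\log T$. The exponent is then $\gtrsim T/J_T$, and the assumption $\varepsilon_T\sqrt{J_T\log T}\to0$ gives $T/J_T\gg T\varepsilon_T^2\log T$. This beats the union-bound factor $\bar r_T e^{x_0'T\varepsilon_T^2}$. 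When $\varphi$ is not the identity, the definition of $\mathcal{F}_T$ (via (P2)) gives one of two cases. Either $\Vert f-f^0\Vert_\infty\lesssim G$ while $\Vert f-f^0\Vert_2\geq\log(T)\varepsilon_T$; or $\Vert f-f^0\Vert_2\geq \log(T)r_T\varepsilon_T$ with $\Vert f-f^0\Vert_\infty\lesssim r_T$. In both cases the ratio of norms is $\gtrsim\log(T)\varepsilon_T$, so the exponent is $\gtrsim T\log(T)^2\varepsilon_T^2$, which again dominates the union-bound factor. I expect the second-moment bound to be the main technical point, in particular handling the random block length jointly with $Z_n$; the rest follows the scheme above.
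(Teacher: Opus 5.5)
Your argument is correct, but it controls the block variables differently from the paper. The paper uses the same i.i.d.\ block integrals (its $X_n^k$, your $Z_n$). It exploits exactly the structure you describe, namely that only $U^{(1)}_n$ lies in the memory window and $\chi_n-U^{(1)}_n\le A$, to get the pointwise bound $X_n^k\lesssim (A+1+U^{(1)}_n-\tau_n)\Vert f^{r,i}-f^0\Vert_2^2$. This is in $L_2$ norm rather than sup norm. Plugging it into the martingale Bernstein inequality in maximal form over $\bar n\le 2T/\bar\tau$, with $\hat b\lesssim\log(T)\Vert f^{r,i}-f^0\Vert_2^2$ on $\Omega_{\tau,T}$ and $\hat v_{\bar n}\lesssim \bar n\Vert f^{r,i}-f^0\Vert_2^4$, gives an exponent of order $T/\log T$ uniformly in the centre. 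So the paper needs no $L_\infty/L_2$ comparison, and this lemma uses neither (P2) nor $\varepsilon_T\sqrt{J_T\log T}\to 0$. For the mean, the paper drops the pre-$U^{(1)}_n$ part. It then absorbs the cross term $|\nu_k-\nu^0_k|\,\Vert\boh_k-\boh^0_k\Vert_1$ using $\Vert f-f^0\Vert_1\le C_1\varepsilon_T$ and $r\ge\lfloor\log T\rfloor$.

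Your version has genuine simplifications:
\begin{itemize}
\item Nonnegativity plus the deterministic count $m_T$ removes the random $n_T$ without any maximal inequality.
\item The one-sided lower-tail bound for nonnegative summands needs only a second moment, so you need no almost-sure control of $U^{(1)}_n-\tau_n$.
\item Keeping the waiting time makes $(c+A)\xi_k^2+2\xi_k\int g_{l,k}+\Vert g_{l,k}\Vert_2^2$ positive definite, so your mean bound needs no $L_1$-closeness.
\end{itemize}

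The price is the crude bound $|\tilde\lambda^k_t|\le 2\Vert f-f^0\Vert_\infty$ in the second moment. It only yields the exponent $T\Vert f-f^0\Vert_2^2/\Vert f-f^0\Vert_\infty^2$, which forces your case analysis via Lemma \ref{prem_1} or the structure of $\mathcal{F}_T$. That analysis is correct, and the assumptions it needs hold under Theorem \ref{th_concentration_L2}.

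The step you flagged as the main difficulty is in fact easy and can be sharpened. Your own block description gives $Z_n\le (U^{(1)}_n-\tau_n+2A)\Vert\xi\Vert_2^2+2\Vert\bog\Vert_2^2$. Hence $\E_0[Z_n^2]\lesssim\Vert f-f^0\Vert_2^4$ by the exponential moments of $U^{(1)}_n-\tau_n$. This makes your exponent of order $T$ and the norm-ratio step unnecessary.
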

 This lemma is proved at the end of this section. The announced $L_2$ tests are built in the following lemma with the desired exponential decay for the type II error on the event $\Xi_T$. As in the proof of Theorem \ref{th_concentration_L2}, we set $\underline r_T =\lfloor \log(T)\rfloor$.

\begin{lemma}\label{test_l2}
     Under assumptions of Theorem \ref{th_concentration_L2}, let $\underline r_T \leq r \leq \bar r_T$, $i\leq \mathcal{N}_r$ and  $f^{r,i}$ as previously. Let also $\Delta>0$ as in Lemma \ref{sets_XiT}. Define the test $\phi_{f^{r,i}}$ by
    \begin{align}
        \phi_{f^{r,i}} = \onee\Big\{\displaystyle\sum_{k=1}^K\int_0^T \onee_{A_2(T)}\tilde \lambda_t^k(f^{r,i}_k - f^0_k)\big(dN_t^k - \lambda_t^k(f^0_k)dt\big) \geq \Delta T\Vert f^{r,i}-f^0\Vert_2^2/4\Big\}. \nonumber
    \end{align}
    Then,  whatever the link function $\varphi$, there exists $T_0>0$, $c>0$, both independent of $f^{r,i}$,  such that for $T\geq T_0$  we have:
    \begin{align}
        &\E_0[\onee_{\Omega_{\tau,T}}\phi_{f^{r,i}}] \lesssim e^{-c T\varepsilon_T^2r}, \nonumber \\
        &\underset{f:\Vert f - f^{r,i}\Vert_2\leq \frac{\Delta \Vert f^{r,i} -f^0\Vert_2}{4\eta}}{\sup}\E_0\big[\E_f\big[\onee_{\Omega_{\tau,T} \cap  \Xi_T(r)}(1-\phi_{f^{r,i}})\big \vert \mathcal{G}_{0-}\big]\big] \lesssim e^{-c T\varepsilon_T^2\log(T)}.\nonumber
    \end{align}
\end{lemma}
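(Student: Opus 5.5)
The two bounds of Lemma \ref{test_l2} will be proved separately: an exponential martingale inequality under $\P_0$ for the type I error, and a change of measure to $\P_f$, combined with the lower bound on $d_T$ available on $\Xi_T(r)$, for the type II error. Set $\delta := f^{r,i}-f^0$ and $X_t^k := \onee_{A_2(T)}(t)\,\tilde \lambda_t^k(\delta_k)$. This process is predictable because $A_2(T)$ is a union of stochastic intervals $[\tau_n,\chi_n]$ whose endpoints are stopping times. On each $[\tau_n,\chi_n]$ at most one point of $N$ lies in the window $[t-A,t)$, so $X_t^k$ is bounded by $\max_{l}\Vert \delta_{l,k}\Vert_\infty + \vert\delta_k\vert$.

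\textbf{Type I error.} The statistic $M_T=\sum_k\int_0^T X_t^k(dN_t^k-\lambda_t^k(f^0_k)dt)$ is a martingale with bounded jumps. I will apply a Bernstein-type inequality for point-process martingales, for instance the exponential supermartingale $\exp(\theta M_t - \sum_k\int_0^t (e^{\theta X^k_s}-1-\theta X^k_s)\lambda_s^k ds)$. The predictable variation is $\int_0^T\sum_k (X_t^k)^2\lambda_t^k(f^0_k)dt$. On $\Omega_{\tau,T}$ there is at most one point in each window, so $\lambda^k_t$ is bounded there, and the variation is $\lesssim T d_T^2(f^0,f^{r,i}) \le \eta T\Vert\delta\Vert_2^2$ by (\ref{dom_dT}). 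Since $\Vert\delta\Vert_2\asymp r\varepsilon_T$, Bernstein gives $\exp\big(-c\,T^2\Vert\delta\Vert_2^4/(T\Vert\delta\Vert_2^2 + b\,T\Vert\delta\Vert_2^2)\big)$, where $b$ bounds the jumps. In the identity case $b\lesssim \sqrt{J_T}\Vert\delta\Vert_2$ by (\ref{condi_sup}) and Lemma \ref{prem_1}. In the non-identity case $b\lesssim G$ or $b \lesssim r_T$ on $\mathcal{F}_T$, depending on the branch. In either case this yields $e^{-cT\Vert\delta\Vert_2^2}\le e^{-cTr^2\varepsilon_T^2}$, which is more than the claimed $e^{-cT\varepsilon_T^2 r}$.

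\textbf{Type II error.} For $f$ with $\Vert f-f^{r,i}\Vert_2\le\Delta\Vert\delta\Vert_2/(4\eta)$ I will work under $\P_f(\cdot\vert\mathcal{G}_{0-})$ and decompose
$$M_T=\sum_k\int X_t^k(dN_t^k-\lambda_t^k(f_k)dt)+\sum_k\int X_t^k\,(\lambda_t^k(f_k)-\lambda_t^k(f^0_k))dt .$$
On $\Omega_{\tau,T}$, intensities inside $A_2(T)$ stay positive for $f\in\mathcal{F}_R$, so the ReLU is inactive and $\lambda_t^k(f_k)-\lambda_t^k(f^0_k)=\tilde\lambda_t^k(f_k-f^0_k)$. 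The drift term then equals $T d_T^2(f^{r,i},f^0)$ plus a cross term. By Cauchy--Schwarz in the $d_T$ inner product and (\ref{dom_dT}), the cross term is at most $T d_T(f^{r,i},f^0)\sqrt{\eta}\,\Vert f-f^{r,i}\Vert_2\le T d_T(f^{r,i},f^0)\,\Delta\Vert\delta\Vert_2/(4\sqrt\eta)$. On $\Xi_T(r)$ we have $d_T^2\ge\Delta\Vert\delta\Vert_2^2$, so for suitable constants the drift is at least $\Delta T\Vert\delta\Vert_2^2/2$. Hence $1-\phi$ forces the $\P_f$-martingale part below $-\Delta T\Vert\delta\Vert_2^2/4$, and the same Bernstein bound under $\P_f$ gives $e^{-cT\Vert\delta\Vert_2^2}$. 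Using $r\ge\lfloor\log T\rfloor$, this is $\le e^{-cT\varepsilon_T^2\log T}$.

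\textbf{Main obstacle.} The delicate step is keeping the Bernstein exponent of order $T\Vert\delta\Vert_2^2$. This requires (i) bounding the sup-norm jump size $b$ by a constant times $\Vert\delta\Vert_2$ over the relevant range, and (ii) controlling the predictable variation through $d_T$ only on $\Omega_{\tau,T}$. For (i) I would use (\ref{condi_sup}) together with $\varepsilon_T\sqrt{J_T\log T}\to 0$ in the identity case, and the restrictions built into $\mathcal{F}_T$ (the sup-norm bound $\Vert\boh\Vert_\infty\le r_T$ and the $G$-versus-$\log(T)r_T\varepsilon_T$ dichotomy coming from \hyperref[P2]{(P2)}) otherwise. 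Checking that these give $b\lesssim 1$ uniformly, while also handling the fact that $\Omega_{\tau,T}$ is not predictable (I would stop the martingale at $\tau_{n_T}$ and use the i.i.d. block structure), is where I expect most of the work.
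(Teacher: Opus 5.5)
Your route differs from the paper's, and the architecture is sound. The paper never treats the continuous-time integral directly. It cuts the statistic into block increments $W_n^k$ over $[\tau_n,\chi_n]$, which are i.i.d.\ by the renewal structure, and proves they are centered by optional sampling (Lemma \ref{W_centered}). It then bounds $\E_0[(W_1^k)^2]$ through the cluster representation and applies a discrete Bernstein inequality to the partial sums. There the increments are only bounded by $\log(T)\Vert\delta\Vert_\infty$, because the compensator runs over blocks of length $O(\log T)$.

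You instead use a continuous-time Bernstein inequality of the form $\P(M_t\ge x,\ \langle M\rangle_t\le v \text{ for some } t)\le \exp(-x^2/(2(v+bx)))$, with $b\lesssim\Vert\delta\Vert_\infty$ and $\langle M\rangle$ bounded pathwise through (\ref{dom_dT}). This avoids Lemma \ref{W_centered}, the cluster computation and the $\log T$ in the jump bound. Your worry about $\Omega_{\tau,T}$ not being predictable is unfounded in this route, since it only enters through $\Omega_{\tau,T}\subset\{\langle M\rangle\le v\}$. What is not predictable is the cutoff $n\le n_T-1$ in $A_2(T)$. You handle it by noting $M_T=M'_{\tau_{n_T}}$, where $M'$ integrates over all blocks $(\tau_n,\chi_n]$, and using the ``for some $t$'' form of the inequality.

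The genuine problem is your rates, and the obstacle you set yourself. The claim that $b\lesssim1$ uniformly, giving an exponent of order $T\Vert\delta\Vert_2^2$, is false, and it is not what the lemma needs.

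\begin{itemize}
\item In the identity case, Lemma \ref{prem_1} only gives $\Vert\delta\Vert_\infty\lesssim\sqrt{J_T}\Vert\delta\Vert_2$. This can be of order $J_T\varepsilon_T$, which diverges in general: a one-bin spike of $L_1$-norm $C_1\varepsilon_T$ in the histogram prior with $\beta<1$ is an example.
\item In the non-identity case, on slices with $r\ge\log(T)r_T$ you only have $\Vert\delta\Vert_\infty\lesssim r_T$, and $r_T$ may diverge (it does in Lemma \ref{verif_P2_A_wavelets}).
\item In the type II step you also dropped that the predictable variation under $\P_f$ is $\int\sum_k (X_t^k)^2\lambda_t^k(f_k)\,dt\lesssim(1+\Vert f\Vert_\infty)T\Vert\delta\Vert_2^2$, and $\Vert f\Vert_\infty$ is not bounded either. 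Controlling it requires restricting the supremum to $f\in\mathcal{F}_T(r)$, as the paper's proof does.
\end{itemize}

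So the exponents are only $T\Vert\delta\Vert_2^2/(1+\Vert\delta\Vert_\infty)$ and $T\Vert\delta\Vert_2^2/(1+\Vert\delta\Vert_\infty+\Vert f\Vert_\infty)$, and your $e^{-cTr^2\varepsilon_T^2}$ does not follow.

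What rescues the argument is the case split the paper performs.

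\begin{itemize}
\item Identity case: either $\sqrt{J_T}\Vert\delta\Vert_2\le 1$ and the exponent is $\gtrsim Tr^2\varepsilon_T^2$, or it is $\gtrsim T\Vert\delta\Vert_2/\sqrt{J_T}\ge Tr\varepsilon_T^2/(\sqrt{J_T}\varepsilon_T)\ge Tr\varepsilon_T^2$.
\item Non-identity case: for $r+1\le\log(T)r_T$, the $G$-branch in the definition of $\mathcal{F}_T$ makes the denominator $O(1+G)$. For larger $r$ the denominator is $O(r_T)$, and $Tr^2\varepsilon_T^2/r_T\ge Tr\varepsilon_T^2\log T$.
\end{itemize}

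This is exactly why the type I bound carries a factor $r$ rather than $r^2$. The type II bound is stated with $\log T$ because you must finally use $r\ge\lfloor\log T\rfloor$. With this replacing your final step, your proof goes through.
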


\begin{proof}[Proof of Lemma \ref{test_l2}]
We first bound the type I error.
For $1\leq n\leq n_T -1$, we set
\begin{align}
    W_n^k &: =\int_{\tau_n}^{\chi_n} \tilde \lambda^k_t(f^{r,i}_k-f^0_k)(dN_t^k -\lambda_t^k(f^0_k)dt)\nonumber\\
    &=(\nu^{r,i}_k-\nu^0_k) + \onee_{\{\tau_{n+1}> U^{(2)}_n, N^k(\{U^{(2)}_n\})=1\}}\Big((\nu^1_k-\nu^0_k) + \sum_{l=1}^K\onee_{\{N^l(\{U^{(1)}_n\})=1\}}(h^{r,i}_{l,k} -h^0_{l,k})(U^{(2)}_n -U^{(1)}_n)\Big)\nonumber\\
    &\hspace{1cm}-\int_{{\tau_n}}^{\chi_n}\tilde \lambda^k_t(f^{r,i}_k-f^0_k)\lambda_t^k(f^0_k)dt.\nonumber
\end{align}
The variables $ (\sum_{k=1}^K W_n^k)_{n\geq 1}$ are i.i.d. and centered by Lemma \ref{W_centered}. The process $(\sum_{n=1}^{\bar n} \sum_{k=1}^K W_n^k)_{\bar n\geq 1}$  is a centered martingale with respect to the filtration generated by the $(W_n^1,...,W_n^K)_{n\geq 1}$.  Moreover, we have
\begin{align}
    \P_0\big(\Omega_{\tau,T}\cap \{\phi_{f^{r,i}}=1\} \big)&= \P_0\bigg(\Omega_{\tau,T}\cap \Big\{\sum_{n=1}^{n_T-1}\sum_{k=1}^K W_n^k \geq T\Delta \Vert f^{r,i} -f^0\Vert_2^2\Big/4\}\bigg)\nonumber\\
    &\leq \P_0\bigg(\Omega_{\tau,T}\cap \Big\{\underset{\bar n\leq 2T/\bar\tau}{\max }\sum_{n=1}^{\bar n}\sum_{k=1}^K W_n^k \geq  T\Delta \Vert f^{r,i} -f^0\Vert_2^2/4\Big\}\bigg).\nonumber
\end{align}
To obtain the exponential decay of the type I error, we apply the Bernstein inequality for martingales of \cite{van_zanten} to the right hand side of last inequality. To do so, we have to bound on $\Omega_{\tau,T}$ the random variables $(\sum_{k=1}^K W_n^k)_{n}$ and also, since these variables are i.i.d. and centered,  to upper bound the variance of their sum which is $\bar n\E_0\big[\big(\sum_{k=1}^K W_1^k\big)^2\big]$.  First, with similar computations as in equation (29) of \cite{Sulem_concentration}, for $k\in \K$ we have that
\begin{equation}\label{absolute_bound_W}
\begin{aligned}
    \vert W_1^k\vert &\leq 2\Vert f^{r,i}-f^0\Vert_\infty + \int_{\tau_1}^{\chi_1}\vert \tilde \lambda_t^k(f^{r,i}_k-f^0_k)\vert \lambda_t^k(f^0_k)dt\\
    &\leq 2\Vert f^{r,i}-f^0\Vert_\infty  + 2(A+1+U^{(1)}_1-\tau_1) \Vert f^0\Vert_\infty\Vert f^{r,i}-f^0\Vert_\infty.
\end{aligned}
\end{equation}
In particular, we have on the event $\Omega_{\tau,T}$, for all $k\in \K$ and $n\leq n_T-1$, $\big\vert \sum_{k=1}^K W_n^k\big \vert\lesssim\log(T)\Vert f^0\Vert_\infty\Vert f^{r,i}-f^0\Vert_\infty$.  Secondly, for the variance, we have
\begin{align}
    \E_0[(W_1^k)^2]&\lesssim (\nu^1_k -\nu^0_k)^2 + \sum_{l=1}^K\E_0\Big[(h^{r,i}_{l,k} - h^0_{l,k})^2(U^{(2)}_1 -U^{(1)}_1)\onee_{\{\tau_{2}> U^{(2)}_1,N^l(\{U^{(1)}_1\})=1, N^k(\{U^{(2)}_1\})=1\}}\Big]\nonumber\\
    &\hspace{1cm}+\E_0\bigg[\Big(\int_{\tau_n}^{\chi_n}\tilde \lambda_t^k(f^{r,i}_k-f^0_k)\lambda_t^k(f^0_k)dt\Big)^2\bigg].\nonumber
\end{align}
With equation (30) of \cite{Sulem_concentration}, we further have 
\begin{align}
    E_0\bigg[\Big(\int_{\tau_n}^{\chi_n}\tilde \lambda_t^k(f^{r,i}_k-f^0_k)\lambda_t^k(f^0_k)dt\Big)^2\bigg] \lesssim \Vert f^0\Vert_\infty^2\Vert f^{r,i}-f^0\Vert_2^2.\nonumber
\end{align}
Then, by the cluster representation of the Hawkes process (see \cite{hawkes_oakes}), $U^{(2)}_1$ is either an immigrant point and, conditionally on the event "$U^{(2)}_1$ is marked by $k$", $U^{(2)}_1- U^{(1)}_1$ follows an exponential distribution with parameter $\nu^0_k$; or $U^{(2)}_1$ is an offspring of $U^{(1)}_1$ and conditionally and on the event "$\tau_{2}> U^{(2)}_1$, $U^{(1)}_1$ is marked by $l$ and $U^{(2)}_1$ is marked by $k$", $U^{(2)}_1- U^{(1)}_1$ follows a distribution with a density proportional to $x\mapsto h^0_{l,k}(x)e^{-\int_0^x h_{l,k}^0(u)du}$. Whence, for some finite $c>0$ we have
\begin{equation}
     \sum_{l=1}^K\E_0\Big[(h^{r,i}_{l,k} - h^0_{l,k})^2(U^{(2)}_j -U^{(1)}_j)\onee_{\{\tau_{2}> U^{(2)}_1,N^l(\{U^{(1)}_1\})=1, N^k(\{U^{(2)}_1\})=1\}}\Big]  \leq c\Vert f^0\Vert_1\Vert \boh^{r,i} - \boh^0\Vert_2^2,\nonumber
\end{equation}
and therefore  $\E_0[(W_1^k)^2]\leq(1+ c\Vert f^0\Vert_1+ \Vert f^0\Vert_\infty^2)\Vert f^{r,i} -f^0\Vert_2^2$. We can now apply Theorem 3.3 of \cite{van_zanten} which gives that for some $c'>0$ that depends only on $K$ and $\bar\tau $:
\begin{align}
    \P_0\big(\Omega_{\tau,T}\cap \{\phi_{f^{r,i}}=1\} \big)&\leq 2\exp\bigg(\frac{-c'\Delta^2T\Vert f^{r,i}-f^0\Vert_2^2 }{(1+ c\Vert f^0\Vert_1+\Vert f^0\Vert_\infty^2) +\Delta\log(T)\Vert f^0\Vert_\infty\Vert f^{r,i}-f^0\Vert_\infty} \bigg).\nonumber
\end{align}
If $\Vert f^{r,i}-f^0\Vert_\infty \leq (1+c\Vert f^0\Vert_1+ \Vert f^0\Vert_\infty^2)/(\Delta\Vert f^0\Vert_\infty \log(T))$, as $l\geq \log(T)$, then we have for $T$ large enough
\begin{align}
    \P_0\big(\Omega_{\tau,T}\cap\{\phi_{f^{r,i}}=1\} \big)\leq 2\exp\bigg(-\frac{c'\Delta^2 T\varepsilon_T^2r^2}{2(1+c\Vert f^0\Vert_1+ \Vert f^0\Vert_\infty^2)}\bigg)\leq \exp\bigg(-\Delta^2T\varepsilon_T^2r\bigg).\nonumber
\end{align}
Now, we consider that $\Vert f^{r,i}-f^0\Vert_\infty \geq (1+ c\Vert f^0\Vert_1+\Vert f^0\Vert_\infty^2)/(\Delta\Vert f^0\Vert_\infty \log(T))$ (and in particular for $T$ large enough, $\Vert f^{r,i}-f^0\Vert_\infty \geq \varepsilon_T$). In this case we have
\begin{align}
    \P_0\big(\Omega_{\tau,T}\cap \{\phi_{f^{r,i}}=1\} \big)&\leq 2\exp\bigg(\frac{-c'\Delta T\Vert f^{r,i}-f^0\Vert_2^2 }{2\log(T)\Vert f^0\Vert_\infty\Vert f^{r,i}-f^0\Vert_\infty} \bigg).\nonumber
\end{align}
When $\varphi$ is  the identity function, with Lemma \ref{prem_1} we find
\begin{align}
    \P_0\big(\Omega_{\tau,T}\cap\{\phi_{f^{r,i}}=1\} \big)&\leq 2\exp\bigg(-\frac{c'\Delta T\Vert f^{r,i}-f^0\Vert_2^2}{4D\Vert f^0\Vert_\infty\log(T)\sqrt{J_T}\Vert f^{r,i}-f^0\Vert_2}\bigg) \nonumber \\
    &\leq 2\exp\bigg(-\frac{c'\Delta T\varepsilon_T r }{4D\Vert f^0\Vert_\infty\log(T)\sqrt{J_T}}\bigg)\nonumber\\
    &\leq 2\exp\bigg(-\frac{c'\Delta T\varepsilon_T^2 r }{4D\Vert f^0\Vert_\infty\log(T)\sqrt{J_T}\varepsilon_T}\bigg) \leq 2\exp\big(-c'\Delta T\varepsilon_T^2 r \big), \nonumber
\end{align}
for $T$ large enough since $\log(T)\sqrt{J_T}\varepsilon_T\rightarrow 0$. When  $\varphi$ is not the identity function, we recall that by definition of $\mathcal{F}_T(r)$, we have $\Vert  \boh^{r,i}\Vert_\infty \leq r_T$ and if $\Vert \boh^{r,i} - \boh^0\Vert_2\leq \log(T)r_T\varepsilon_T$ then $\Vert \boh^{r,i} -\boh^0\Vert_\infty \leq G$. So, if $l\geq \log(T) r_T$ we have
\begin{align}
    \P_0\big(\Omega_{\tau,T}\cap\{\phi_{f^{r,i}}=1\} \big)\leq 2\exp\bigg(-\frac{c'\Delta T\varepsilon_T^2r^2}{2\Vert f^0\Vert_\infty\log(T)r_T}\bigg)  \leq  2\exp\bigg(-\frac{c'\Delta^2 T\varepsilon_T^2r}{2\Vert f^0\Vert_\infty}\bigg) ,\nonumber 
\end{align}
and if $\lfloor\log(T)\rfloor\leq r\leq  \log(T)r_T$,
\begin{align}
    \P_0\big(\Omega_{\tau,T}\cap\{\phi_{f^{r,i}}=1\} \big)\leq 2\exp\Big(-\frac{c'\Delta T\varepsilon_T^2r^2}{2G\Vert f^0\Vert_\infty\log(T)}\Big)\leq 2\exp\bigg(-\frac{c'\Delta^2 T\varepsilon_T^2r}{2G\Vert f^0\Vert_\infty}\bigg).\nonumber 
\end{align}
It terminates the proof for the type I error.

For the type II error, let $f\in \mathcal{F}_T(r)$ be such that $\Vert f^{r,i}-f\Vert_2\leq \Vert f^{r,i}-f^0\Vert_2\Delta/(4\eta)$. Let $\bar \Omega_T = \Xi_T(r)\cap\Omega_{\tau,T}$ (with $ \Xi_T(r)$ and $\Omega_{\tau,T}$ defined at the start of section \ref{sec_tests}), we have
\begin{align}
    &\P_f\big(\bar \Omega_T\cap\{\phi_{f^{r,i}}=0\}\big \vert \mathcal{G}_{0-}\big)\nonumber\\
    &=\P_f\bigg(\bar \Omega_T \cap\Big\{\sum_{k=1}^K\int_0^T\onee_{A_2(T)}\tilde \lambda_t^k(f^{r,i}_k-f^0_k)(dN_t^k-\lambda_t^k(f^0_k)dt)\leq \frac{\Delta}{4} T\Vert f^{r,i}-f^0\Vert_2^2\Big\}\Big \vert \mathcal{G}_{0-}\bigg) .\nonumber 
\end{align}
Recall that $\P_f$ is the distribution of a stationary ReLu Hawkes process whose intensity is given by $\lambda_t(f_k) = \tilde \lambda_t(f_k)_+$. Recall also that by definition of $\mathcal{F}_R$ we have  for all $k\in \K$, $\nu_k -\underset{l\in \K}{\max} \Vert h^-_{l,k}\Vert_\infty>0$. Let $n\geq 1$ and $t\in [\tau_n, \chi_n]$. Assume without loss of generality that $ U^{(1)}_N$ is marked by $l$, then we obtain:
\begin{align}
    \lambda^k_t(f_k) = \Big(\nu_k + \onee_{t>U^{(1)}_n}h_{l,k}(t-U^{(1)}_n)\Big)_+ = \nu_k + \onee_{t>U^{(1)}_n}h_{l,k}(t-U^{(1)}_n = \tilde \lambda_t^k(f_k)\nonumber
\end{align}
On the event $\Xi_T(r)$, with the previous remark, Cauchy-Schwarz inequality and  with the inequality (\ref{dom_dT}) we have:
\begin{align}
    &\frac{\Delta}{4}T\Vert f^{r,i}-f^0\Vert_2^2 - \sum_{k=1}^K\int_0^T\onee_{A_2(T)}\tilde \lambda_t^k(f^{r,i}_k-f^0_k)( \lambda_t^k(f_k)-\lambda_t^k(f^0_k))dt\nonumber\\
    &=\frac{\Delta}{4}T\Vert f^{r,i}-f^0\Vert_2^2 - \sum_{k=1}^K\int_0^T\onee_{A_2(T)}\tilde \lambda_t^k(f^{r,i}_k-f^0_k)\tilde  \lambda_t^k(f_k-f^0_k)dt\nonumber\\
    &=\frac{\Delta}{4} T\Vert f^{r,i}-f^0\Vert_2^2 -Td_T^2(f^{r,i},f^0) - \sum_{k=1}^K\int_0^T\onee_{A_2(T)}\tilde \lambda_t^k(f^{r,i}_k-f^0_k) \tilde \lambda_t^k(f_k-f^{r,i}_k)dt\nonumber\\
    &\leq \frac{\Delta}{4} T\Vert f^{r,i}-f^0\Vert_2^2 -  T\Delta \Vert f^{r,i}-f^0\Vert_2^2 + T\sqrt{d_T^2(f^{r,i},f^0)d_T^2(f^{r,i},f)}\nonumber\\
    &\leq -\frac{3\Delta}{4} T\Vert f^{r,i}-f^0\Vert_2^2 + T\eta \Vert f^{r,i}-f^0\Vert_2\Vert f-f^{r,i}\Vert_2
    \leq -\frac{\Delta}{2}T\Vert f^{r,i}-f^0\Vert_2^2,\nonumber
\end{align}
as soon as $\Vert f-f^{r,i}\Vert_2\leq \Delta \Vert f^{r,i}-f^0\Vert_2/(4\eta)$.
Let
\begin{align}
    Y^k_n :=-\int_{\tau_n}^{\chi_n}\tilde \lambda_t^k(f^{r,i}_k-f^0_k) \big(dN_t^k -\lambda_t^k(f_k)dt\big).\nonumber
\end{align}
Then the above inequality, together with the inclusion
\begin{equation}
    \Omega_{\tau,T}\subset \big\{U^{(1)}_n-\tau_n\geq 2v^{-1}\log(T),\hspace{0.1cm} 1\leq n\leq 2T/\bar \tau\big\},\nonumber
\end{equation} leads to
\begin{align}
    &\P_f\big(\bar \Omega_T \cap\{\phi_{f^{r,i}}=0\}\big \vert \mathcal{G}_{0-}\big)\nonumber\\
    &\leq\P_f\bigg(\bar \Omega_T \cap \Big\{\sum_{k=1}^K\int_0^T\onee_{A_2(T)}\lambda_t^k(f^{r,i}_k-f^0_k)(dN_t^k-\lambda_t^k(f_k)dt)\leq -\frac{\Delta}{2} T\Vert f^{r,i}-f^0\Vert_2^2\Big\}\Big \vert \mathcal{G}_{0-}\bigg) \nonumber \\
    &\leq\P_f\bigg(\Omega_{\tau,T}\cap \Big\{\sum_{k=1}^K\sum_{n=1}^{n_T-1} Y^k_n\geq \frac{\Delta}{2} T\Vert f^{r,i}-f^0\Vert_2^2\Big\}\Big \vert \mathcal{G}_{0-}\bigg) \nonumber \\
    &\leq \sum_{k=1}^K\P_f\bigg(\big\{U^{(1)}_n-\tau_n\geq 2v^{-1}\log(T),\hspace{0.1cm} 1\leq n\leq 2T/\bar \tau\big\}\cap \Big\{\underset{\bar n\leq 2T/\bar\tau}{\max}\sum_{n=1}^{\bar n} Y^k_n\geq \frac{\Delta}{2K} T\Vert f^{r,i}-f^0\Vert_2^2\Big\}\Big \vert \mathcal{G}_{0-}\bigg)\nonumber\\
    &= \sum_{k=1}^K\P_f\bigg(\big\{U^{(1)}_n-\tau_n\geq 2v^{-1}\log(T),\hspace{0.1cm} 1\leq n\leq 2T/\bar \tau\big\}\cap \Big\{\underset{\bar n\leq 2T/\bar\tau}{\max}\sum_{n=1}^{\bar n} Y^k_n\geq \frac{\Delta}{2K} T\Vert f^{r,i}-f^0\Vert_2^2\Big\}\bigg),\nonumber 
\end{align}
and for the last equality we have used that for $n\geq 1$, the variables $Y_n^k$ and $U^{(1)}_n-\tau_n$ are independent of $\mathcal{G}_{0-}$. Now, we can proceed for the $(Y^k_n)_n$ under $\P_f$ exactly as for the $(W^k_n)_n$ under $\P_0$ and with Theorem 3.3 of \cite{van_zanten} we find that some $c''$ that depends only on $K$ and $\bar\tau$ we have:
\begin{align}
    &\P_f\big(\bar \Omega_T \cap\{\phi_{f^{r,i}}=0\}\big \vert \mathcal{G}_{0-}\big)\nonumber\\
    &\leq 2\exp\bigg(\frac{-c''\Delta^2T\Vert f^{r,i}-f^0\Vert_2^2 }{(1+ c\Vert f\Vert_1+ \Vert f\Vert_\infty^2) +\Delta\log(T)\Vert f\Vert_\infty\Vert f^{r,i}-f^0\Vert_\infty} \bigg)\nonumber\\
    &\leq 2\exp\bigg(\frac{-c''\Delta^2T\Vert f^{r,i}-f^0\Vert_2^2 }{(1+ c\Vert f\Vert_1+ 2\Vert f-f^0\Vert_\infty^2 + 2\Vert f^0\Vert_\infty^2) +\Delta\log(T)( \Vert f-f^0\Vert_\infty + \Vert f^0\Vert_\infty)\Vert f^{r,i}-f^0\Vert_\infty} \bigg).\nonumber
\end{align}
As $\Vert f-f^0\Vert_1\leq C_1\varepsilon_T$ by definition of $\mathcal{F}_T(r)$, we have for $T$ large enough $c\Vert f\Vert_1\leq 2c\Vert f^0\Vert_1$. Then, we distinguish cases and follow the same steps as for type I error.  In particular, when $\varphi$ is not the identity function, we bound $\Vert f-f^0\Vert_\infty$ and $\Vert f^{r,i}-f^0\Vert_\infty$ by $G$ if $r\leq \log(T)r_T$ and by $r_T$ otherwise, and we obtain the result in this case. When $\varphi$ is the identity function, we use Lemma \ref{prem_1} to upper bound $\Vert f -f^0\Vert_\infty$ and $\Vert f^{r,i}-f^0\Vert_\infty$ in terms of their $L_2$ norms and then, using that $\Vert f-f^{r,i}\Vert_2\leq \Delta \Vert f^{r,i}-f^0\Vert_2/(4\eta)$ and that $\varepsilon^2_T J_T\log(T)\rightarrow 0$,  we obtain also the result. It concludes the proof for the type II error.
\end{proof}

The two following lemmas are technical lemmas used in the proof of Theorem \ref{th_concentration_L2}.

\begin{lemma}\label{prem_1}
    Let $f=(\nu,\varphi(\tilde \boh))$ with $\tilde \boh \in \mathcal{\tilde H}(j)^{K^2}$ for some $j\in \mathcal{J}_T$ and such that $\Vert \nu -\nu^0\Vert_1\leq C_1\varepsilon_T$. There exists some $D>0$ independent of $f$ such that for $T$ large enough,
\begin{align}
    \Vert \tilde \boh-\tilde \boh^0\Vert_\infty \leq D\sqrt{J_T}\big( \Vert \tilde \boh-\tilde \boh^0\Vert_2 +\varepsilon_T\big)\hspace{0.2cm},\hspace{0.3cm}\Vert  f- f^0\Vert_\infty \leq D\sqrt{J_T}\big( \Vert f-\tilde f^0\Vert_2 + \varepsilon_T\big).\nonumber
\end{align}
\end{lemma}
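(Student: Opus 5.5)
The plan is to reduce the sup-norm bound to the inverse inequality (\ref{condi_sup}) on the finite-dimensional space $\mathcal{\tilde H}(j)$, using an approximation of $\tilde \boh^0$ inside that space. Fix $(l,k)$ and let $j\in\mathcal{J}_T$, so $j\leq J_T$. Pick $j'\in[\bar J_T,RJ_0\bar J_T]$ with $\mathcal{\tilde H}(j)\subseteq\mathcal{\tilde H}(j')$. Since $\mathcal{\tilde H}(j)\subseteq\mathcal{\tilde H}(Rj)$, we can iterate this inclusion, so such a $j'$ with $j'\lesssim J_T$ exists. If the iteration overshoots the interval, we use instead a common refinement level of order $J_T$; all we need is $j'\lesssim J_T$.

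By assumption \hyperref[as_P1]{(P1)}, the set in (\ref{approx_prior_B}) has positive prior mass at level $j'$, so it is nonempty. Hence there is $\theta^*$ with $\tilde h^0_{l,k}\leq (\theta^*)^TB_{j'}\leq \tilde h^0_{l,k}+\bar\varepsilon_T/L$. Set $\tilde h^*:=(\theta^*)^TB_{j'}\in\mathcal{\tilde H}(j')$. Then $\Vert\tilde h^*-\tilde h^0_{l,k}\Vert_\infty\leq\bar\varepsilon_T/L\leq\varepsilon_T$, and also $\Vert\tilde h^*-\tilde h^0_{l,k}\Vert_2\leq\sqrt A\,\varepsilon_T$.

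Next, write $\tilde h_{l,k}-\tilde h^0_{l,k}=(\tilde h_{l,k}-\tilde h^*)+(\tilde h^*-\tilde h^0_{l,k})$. The first term lies in $\mathcal{\tilde H}(j')$, so (\ref{condi_sup}) gives
$\Vert\tilde h_{l,k}-\tilde h^*\Vert_\infty\lesssim\sqrt{j'}\Vert\tilde h_{l,k}-\tilde h^*\Vert_2\lesssim\sqrt{J_T}\big(\Vert\tilde h_{l,k}-\tilde h^0_{l,k}\Vert_2+\sqrt A\varepsilon_T\big)$.
Adding the $\varepsilon_T$ sup-norm error of the second term and using $\sqrt{J_T}\geq1$ yields the first inequality. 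Summing or maximizing over $(l,k)$ only changes the constant $D$.

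For the second inequality, the $\nu$-part is handled directly: $\Vert\nu-\nu^0\Vert_\infty\leq\Vert\nu-\nu^0\Vert_1\leq C_1\varepsilon_T$. For the $\boh$-part, since $\varphi$ is $L$-Lipschitz, $\Vert\boh-\boh^0\Vert_\infty\leq L\Vert\tilde\boh-\tilde\boh^0\Vert_\infty$, and we apply the first bound. The right-hand side then involves $\Vert\tilde \boh-\tilde\boh^0\Vert_2$, which is what the statement's $\Vert f-\tilde f^0\Vert_2$ should be read as (for $\varphi$ the identity the two coincide). If an $\boh$-norm is intended when $\varphi$ is not the identity, we would rely on the concentration sets, where $\varphi$ is locally invertible with derivative bounded below by (A'). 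That is not needed for the identity case used in Theorem \ref{th_concentration_L2}.

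The main obstacle is the embedding step: showing that $\mathcal{\tilde H}(j)$ sits inside some $\mathcal{\tilde H}(j')$ for which (P1) provides a uniform approximant, with $j'\lesssim J_T$. The iteration by factor $R$ should land within a factor $R$ of any target, and the interval $[\bar J_T,RJ_0\bar J_T]$ has ratio at least $R$, so we expect this to work.
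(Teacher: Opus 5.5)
Your proof is correct and is essentially the paper's argument: you embed $\mathcal{\tilde H}(j)$ into a level $j'\lesssim J_T$ where (P1) supplies $\tilde\boh^*$ with $\Vert\tilde\boh^*-\tilde\boh^0\Vert_\infty\le\varepsilon_T$, apply the triangle inequality and (\ref{condi_sup}) to $\tilde\boh-\tilde\boh^*$, and get the second bound from the Lipschitz property of $\varphi$ together with $\Vert\nu-\nu^0\Vert_\infty\le C_1\varepsilon_T$. Your handling of the embedding is cleaner than the paper's, and no ``common refinement'' fallback is needed: for $j<\bar J_T$, the smallest $n$ with $R^nj\ge\bar J_T$ satisfies $R^nj<R\bar J_T\le RJ_0\bar J_T$. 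Your reading of the right-hand side of the second inequality as $\Vert\tilde f-\tilde f^0\Vert_2$ is also the one the Lipschitz argument actually supports; the paper's final line, written with $\Vert f-f^0\Vert_2$, would additionally need a lower bound on $\varphi'$.
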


\begin{proof}[Proof of Lemma \ref{prem_1}] 

Since for all $j$, $\mathcal{\tilde H}(j)\subset \mathcal{\tilde H}(Rj)$ (see section \ref{subsec_prior_construct}), then for all $\tilde \boh \in \mathcal{\tilde H}(j)^{K^2}$ for some $j\in \mathcal{J}_T$, we have   $\tilde \boh\in \mathcal{\tilde H}(m J_T)^{K^2}$, for some $m\in [R]$. Then, by assumption \hyperref[as_P1]{(P1)}, there exists $\tilde \boh^*\in \mathcal{\tilde H}(m J_T)^{K^2}$ such that for $T$ large enough $\Vert \tilde \boh^* -\tilde \boh^0\Vert_\infty \leq  \varepsilon_T$.  With in addition (\ref{condi_sup}), we obtain
\begin{align}
    \Vert \tilde \boh-\tilde \boh^0\Vert_\infty \leq \Vert \tilde \boh - \tilde \boh^*\Vert_\infty + \Vert\tilde \boh^* - \tilde \boh^0\Vert_\infty &\leq \sqrt{m}\sqrt{J_T}\Vert \tilde \boh- \tilde \boh^*\Vert_2 + \varepsilon_T\nonumber \\
    &\leq \sqrt{m}\sqrt{J_T}\Vert \tilde \boh- \tilde \boh^0\Vert_2 + \sqrt{m}\sqrt{J_T}\Vert \tilde \boh^*-\tilde \boh_0\Vert_2 + \varepsilon_T\nonumber  \\
    &\leq  \sqrt{m}\sqrt{J_T}\Vert \tilde \boh- \tilde \boh^0\Vert_2 +   (\sqrt{Am}\sqrt{J_T}+ 1)\varepsilon_T.\nonumber 
\end{align}
Thus, there exists $ \underline{D}>0$  independent of $\tilde \boh$ such that $\Vert  \tilde \boh - \tilde \boh^0\Vert_\infty \leq \underline D\sqrt{J_T}\big( \Vert \tilde \boh-\tilde \boh^0\Vert_2 + \varepsilon_T\big)$. Since $\varphi$ is Lipschitz and  since $\Vert \nu -\nu^0\Vert_1 \leq C_1\varepsilon_T$, there exists $D\geq \underline D$ such that $ \Vert  f- f^0\Vert_\infty \leq D\sqrt{J_T}\big( \Vert f- f^0\Vert_2 + \varepsilon_T\big)$.
\end{proof}

\begin{lemma}\label{W_centered}
 The variables $(W_n^k)_{k\in \K, n\geq 1}$ introduced in the proof of Lemma \ref{test_l2} are centered in expectation.
\end{lemma}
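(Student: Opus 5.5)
The plan is to view $W_n^k$ as the increment, between two stopping times, of a stochastic integral against the compensated counting process. Then $\E_0[W_n^k]=0$ follows from the defining property of the $\mathcal{G}$-compensator $\lambda_t^k(f^0_k)dt$ of $N^k$, provided we check integrability. Concretely, set $H_t:=\tilde\lambda_t^k(f^{r,i}_k-f^0_k)$, which is $\mathcal{G}$-predictable because it depends only on points strictly before $t$. Then
\begin{align}
W_n^k=\int_0^{\infty}\onee_{]\tau_n,\chi_n]}(t)\,H_t\,\big(dN_t^k-\lambda_t^k(f^0_k)dt\big).\nonumber
\end{align}
Whether the left endpoint $\tau_n$ is included is irrelevant: by definition of $\tau_n$ there is no point in $]\tau_n-A,\tau_n]$, so $N^k(\{\tau_n\})=0$ almost surely. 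By Lemma 5.1 of \cite{Sulem_concentration}, $\tau_n$ is a stopping time, and $\chi_n=\min(U^{(2)}_n,\tau_{n+1})$ is one as well. Hence the stochastic interval $]\tau_n,\chi_n]$ has a predictable indicator, and so does the product $\onee_{]\tau_n,\chi_n]}H$.

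The key step is to use the projection theorem for point process martingales: for any nonnegative predictable $Z$, $\E_0[\int Z_t\,dN_t^k]=\E_0[\int Z_t\lambda_t^k(f^0_k)dt]$. We apply it separately to $Z=(\onee_{]\tau_n,\chi_n]}H)^{+}$ and to $Z=(\onee_{]\tau_n,\chi_n]}H)^{-}$, and subtract. This subtraction is legitimate as soon as
\begin{align}
\E_0\Big[\int_{\tau_n}^{\chi_n}\vert H_t\vert\lambda_t^k(f^0_k)dt\Big]<+\infty,\nonumber
\end{align}
because the same quantity then also controls $\E_0[\int_{\tau_n}^{\chi_n}\vert H_t\vert dN_t^k]$. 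So everything reduces to this integrability bound.

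To prove the bound, use the structure of the window. On $]\tau_n,\chi_n]$ the interval $[t-A,t[$ contains no point before $\tau_n$ (since $N(]\tau_n-A,\tau_n])=0$), and it contains at most the point $U^{(1)}_n$ after $\tau_n$ (since $t\le\chi_n\le U^{(2)}_n$). Hence
\begin{align}
\vert H_t\vert\le 2\Vert f^{r,i}-f^0\Vert_\infty,\qquad \lambda_t^k(f^0_k)\le \nu^0_k+\max_l\Vert h^0_{l,k}\Vert_\infty\nonumber
\end{align}
on this window. Both bounds are finite constants: $\boh^0\in\L_\infty^{K^2}$, and $f^{r,i}$ is bounded because it is a finite basis expansion with $B_J\in\L_\infty^J$ and $\varphi$ is Lipschitz. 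The integral is therefore bounded by a constant times $\E_0[\chi_n-\tau_n]\le\E_0[\tau_{n+1}-\tau_n]$. This is finite since $\tau_{n+1}-\tau_n$ has exponential moments by Lemma 5.1 of \cite{Sulem_concentration}, and for $n\ge1$ the increments are i.i.d.\ with mean $\bar\tau<\infty$.

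I expect the only delicate point to be the justification that the compensator identity applies across random times, i.e.\ optional stopping. Rather than invoking optional sampling for $M_t=\int_0^tH_s(dN_s^k-\lambda_s^kds)$ (which is not uniformly integrable on $[0,\infty)$), I would work directly with the predictable integrand $\onee_{]\tau_n,\chi_n]}H$ over $[0,\infty)$ and apply the projection theorem with monotone convergence (truncating at $t\le m$ and letting $m\to\infty$), which only needs the integrability established above. Finally, $\sum_k W_n^k$ is centered by linearity, and the same argument under $\P_f$ gives the analogous statement for the $Y_n^k$.
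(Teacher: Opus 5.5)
Your proof is correct, but it takes a different route from the paper.

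\textbf{The paper's route.} It reduces to $n=1$ using the i.i.d.\ property. It then introduces an auxiliary Hawkes process $\mathcal{N}$ with parameters $f^0$, started empty at time $0$, and uses the renewal property to identify the law of $W_1^k$ with that of $\bar W^k_{\chi'}(\mathcal{N})$. Here $\bar W^k_a$ is the compensated integral up to time $a$, and $\chi'$ is the minimum of the second point and the first renewal time of $\mathcal{N}$. It concludes with the optional sampling theorem for the martingale $(\bar W^k_a)_a$. The tail condition $\E[\vert \bar W^k_a\vert \onee_{\chi'>a}]\to 0$ needed there comes from the bound (\ref{absolute_bound_W}) together with exponential moments of $\chi'$.

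\textbf{Your route.} You stay in the stationary process and write $W_n^k$ as the integral of the predictable process $\onee_{]\tau_n,\chi_n]}H$ against $dN^k-\lambda^k(f^0_k)dt$. You then apply the defining property of the $\mathcal{G}$-intensity for nonnegative predictable integrands to the positive and negative parts. This requires only the first-moment bound $\E_0[\int_{\tau_n}^{\chi_n}\vert H_t\vert \lambda_t^k(f^0_k)dt]<\infty$. Your window argument (at most the single point $U^{(1)}_n$ lies in $[t-A,t[$) delivers it from $\E_0[\chi_n-\tau_n]<\infty$ alone.

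\textbf{Comparison.} Your argument is shorter. It avoids both the distributional identification with a process born at $0$ and the tail control needed for optional sampling on an unbounded horizon, and it needs only a finite mean rather than exponential moments. It also handles every $n\geq 1$ directly, and it transfers verbatim to the $Y_n^k$ under $\P_f$, since on the window the ReLU intensity equals $\tilde\lambda_t^k(f_k)$ and is bounded. The paper's route has the merit of exhibiting the cycle-by-cycle renewal structure that the subsequent martingale Bernstein argument relies on. For the centering itself, however, it is heavier than necessary.
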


\begin{proof}[Proof of Lemma \ref{W_centered}].  The variables $(W_n^k)_{n\geq 1}$ are i.i.d. so it is enough to show that $\E_0[W_1^k]=0$. To do so, let $\mathcal{N}$ be a Hawkes process with parameters $f^0$ and born in $0$,  that is a $\K$-marked process with $\mathcal{N}(]-\infty,0[)=0$ and for $t\geq 0$ the intensity is given by,
\begin{equation}\label{intensity_M}
    \lambda_t^k(f^0,\mathcal{N}) = \nu^0_k + \sum_{l=1}^K\int_0^{t-} h_{l,k}^0(t-u)d\mathcal{N}^l_u.
\end{equation}
For $a\geq 0$, we introduce
\begin{align}
    \bar W_a^k(\mathcal{N})  = \int_{0}^{a} \tilde \lambda^k_t(f^{r,i}_k-f^0_k, \mathcal{N})(d\mathcal{N}_t^k -\lambda_t^k(f^0_k,\mathcal{N})dt),\nonumber
\end{align}
where $ \tilde \lambda_t^k(f^{r,i}_k-f^0_k, \mathcal{N})$ is given by (\ref{intensity_M}) but with $f^0$ replace by $f^{r,i}-f^0$. $(\bar W_a^k(\mathcal{N}))_{a\geq 0} $ is a cadlag martingale with respect to the filtration generated by $\mathcal{N}$. Moreover, by the renewal property, $W_1^k$ is distributed as $W_{\chi'}^k(\mathcal{N})$ where $\chi'$ is the stopping time equal to the minimum between the time of the second point of $M$ and the first renewal time of $\mathcal{N}$.  So it is equivalent to prove that $\E[W_{\chi'}^k(\mathcal{N})]=0$. By the inequality (\ref{absolute_bound_W}), $\E[\vert  W_{\chi'}^k(\mathcal{N}) \vert ] <+\infty$. Moreover, in the same way as $\chi_1$ has exponential moments by Lemma 5.2 of \cite{Sulem_concentration}, $\chi'$ has exponential moments in the sense that there exists $\delta>0$ such that $\E[e^{\chi'\delta}]<+\infty$. As a consequence
\begin{align}
    \E\big[\vert \bar W_a^k(\mathcal{N})\vert \onee_{\chi'>a}\big] &\lesssim \Vert f^{r,i}_k-f^0_k\Vert_\infty(1+a\Vert f^0_k\Vert_\infty) \P(\chi'>a)\nonumber\\
    &\lesssim \Vert f^{r,i}_k-f^0_k\Vert_\infty(1+a\Vert f^0_k\Vert_\infty) e^{-a\delta}\xrightarrow[a\rightarrow +\infty]{} 0.\nonumber
\end{align}
Hence, with these remarks we can apply the martingale optional sampling Theorem (see Theorem 2.13 of \cite{markov_pro} for instance) and we have that $\E[W_{\chi'}^k(\mathcal{N})] = E[W_0^k(\mathcal{N})]=0$. It concludes the proof of Lemma \ref{W_centered}.
\end{proof}

Finally, we prove Lemma \ref{sets_XiT} on the stochastic distance $d_T$.

\begin{proof}[Proof of Lemma \ref{sets_XiT}]
Recall $\Omega_{\tau, T}$  defined in (\ref{def_omega_tau}). We recall that for some $x_0'>0$, $\mathcal{N}_r\leq \exp(x_0'T\varepsilon_T^2)$ (see the proof of Theorem \ref{th_concentration_L2}). First, we have
\begin{equation}\label{init_xil}
\begin{aligned}
    &\P_0(\Xi_T^c)\leq \P_0(\Omega_{\tau,T}\cap \Xi_T^c) + P_0(\Omega_{\tau,T}^{c}) \leq \sum_{r=\underline r_T}^{\bar r_T}\P_0(\Omega_{\tau,T}\cap\Xi_T(r)^c) + o(1)  \\
    &\hspace{0.4cm}\leq \exp(x_0'T\varepsilon_T^2)\sum_{r=\underline r_T}^{\bar r_T}\max_{i\leq \mathcal N_r}\P_0\Big( \Omega_{\tau,T}\cap \{d_T^2(f^0, f^{r,i}\big) \leq \Delta\Vert f^0-f^{r,i}\Vert_2\} \Big) + o(1).
\end{aligned}
\end{equation}
For $\underline r_T \leq r\leq \bar r_T$, $i\leq \mathcal N_r$,  $k\in \K$ and $1\leq n\leq n_T-1$, we set 
\begin{align}
    X_n^k = \int_{\tau_n}^{\chi_n}\lambda_t^k(f^0_k-f^{r,i}_k)^2dt,\nonumber
\end{align}
with the variables $\tau_n$ and $\chi_n$ that are defined at the start of section \ref{sec_tests}. Let $\Delta>0$ that will be fixed later, the variables $(X_n^1,....,X_n^K)_n$ are i.i.d. and
\begin{align}
    &\P_0\big(\Omega_{\tau,T} \cap \big\{d_T^2(f^0, f^{r,i} )
    \leq \Delta\Vert f^0-f^{r,i_*}\Vert_2 \}\big) \nonumber\\
    &= \P_0\bigg(\Omega_{\tau,T}\cap \Big\{\frac{1}{T}\sum_{n=1}^{n_T-1}\sum_{k=1}^K X_n^k \leq \Delta\Vert f^0-f^{r,i}\Vert_2^2\Big\}\bigg) \nonumber\\
    &\leq \P_0\bigg(\Omega_{\tau,T} \cap \Big\{\frac{1}{T}\sum_{n=1}^{n_T-1}\sum_{k=1}^K X_n^k -\E_0[X_n^k ]\leq \Delta\Vert f^0-f^{r,i_*}\Vert_2^2 - \frac{n_T-1}{T}\sum_{k=1}^K \E_0[X_1^k]\Big\}\bigg). \nonumber
\end{align}
Now, we lower bound $\E_0[X^k_1]$. For this, let $\mathbb{Q}$  be the $K$-marked process consisting in $K$ independent component each being a Poisson process on $[0;+\infty[$ with constant intensity equal to $1$. For each $l\in \K$ set
\begin{align}
    \Omega_l &= \Big\{\underset{l'\neq l}{\max} N^{l'}([\tau_1,\tau_2[) =0, N^l([\tau_1,\tau_1+x[)=0, N^l([\tau_1+x,\tau_1+x+A])=1,\nonumber\\
    &\hspace{1cm} N^l([\tau_1+x+A,\tau_2[)=0 \Big\},\nonumber
\end{align}
with $x\in ]0,A[$.  $\mathbb{Q}(\Omega_l)>0$ , see page 46 of the supplementary  of \cite{Sulem_concentration}. Mimicking the proof of lemma A.4 of \cite{Sulem_concentration}, we find that for all $k\in \K$, there exists $C_k>0$ such that
\begin{align*}
    \E_0[X_1^k] &\geq C_k\sum_{l=1}^K\E_{\mathbb{Q}}\bigg[\onee_{\Omega_l}\int_{U_1^{(1)}}^{ U_1^{(1)}+A} \Big(\nu_k^{r,i}-\nu_0^k + (h_{l,k}^{r,i} - h^0_{l,k})(t- U_1^{(1)})\Big)^2dt\bigg]\\
    &= (\nu_k^{r,i}-\nu_0^k)^2AC_k\sum_{l=1}^K\mathbb{Q}(\Omega_l)+C_k\sum_{l=1}^K \E_{\mathbb{Q}}\bigg[\onee_{\Omega_l}\int_{U_1^{(1)}}^{ U_1^{(1)}+A} (h_{l,k}^{r,i} - h^0_{l,k})^2(t- U_1^{(1)})dt\bigg]\\
    &\hspace{1cm}+ 2(\nu_k^{r,i}-\nu_0^k)C_k\sum_{l=1}^K E_{\mathbb{Q}}\bigg[\onee_{\Omega_l}\int_{U_1^{(1)}}^{ U_1^{(1)}+A} (h_{l,k}^{r,i} - h^0_{l,k})(t- U_1^{(1)})dt\bigg].
\end{align*}
Then, with the same computations as at the end of the page 46 of the supplementary of \cite{Sulem_concentration}, we find that there exists a universal constant $c>0$ such that
\begin{align}
    \E_0[X_1^k]\gtrsim (\nu_k^{r,i}-\nu_0^k)^2 + \Vert \boh_k^{r,i}-\boh^0_k\Vert_2^2 - c\vert \nu_k^{r,i}-\nu_0^k\vert\times  \Vert \boh_k^{r,i}-\boh^0_k\Vert_1.\nonumber
\end{align}
Moreover, since $\mathcal{F}_T(r)\subset\mathcal{W}^1_T$, we have $\Vert f^{r,i}-f^0\Vert_2 \geq r \varepsilon_T$, $\Vert f^{r,i} - f^0\Vert_1\leq C_1\varepsilon_T$ and thus
\begin{align}
    \vert \nu_k^{r,i}-\nu_0^k\vert\times  \Vert \boh_l^{r,i}-\boh^0_k\Vert_1 \leq C_1^2\varepsilon_T^2 \leq \frac{C_1^2}{r^2}\Vert f^{r,i}-f^0\Vert_2^2 \leq \frac{C_1^2}{\lfloor\log(T)\rfloor^2}\Vert f^{r,i}-f^0\Vert_2^2,\nonumber
\end{align}
 where for the last inequality we have used that  $r\geq \lfloor \log(T) \rfloor$. So, for some universal $d>0$ and $T$ large enough
\begin{align}\label{constant_v}
    \sum_{k=1}^K \E_0[X_1^k] \geq d\Vert f^{r,i} - f^0\Vert_2^2.
\end{align}
Letting $\Delta = d/\bar\tau$ and using the definition of $\Omega_{\tau,T}$, we obtain that
\begin{align}
    &\P_0\big( \Omega_{\tau,T}\cap \big\{d_T^2(f^0, f^{r,i} )
    \leq \Delta\Vert f^0-f^{r,i}\Vert_2 \}\big) \nonumber\\
    &\leq\P_0\bigg( \Omega_{\tau,T}\cap \Big\{\frac{1}{T}\sum_{n=1}^{n_T-1}\sum_{k=1}^K X_n^k -\E_0[X_n^k ]\leq -\frac{\Delta}{2}\Vert f^0-f^{r,i}\Vert_2^2\Big\}\bigg) \nonumber\\
    &\leq\P_0\bigg( \Omega_{\tau,T} \cap \Big\{\underset{\bar n\leq 2T/\bar\tau}{\max}\sum_{n=1}^{\bar n}\sum_{k=1}^K -X_n^k -\E_0[-X_n^k ]\geq \frac{\Delta}{2}T\Vert f^0-f^{r,i}\Vert_2^2\Big\}\bigg). \nonumber
\end{align}
Since the variables $(\sum_{k=1}^K -X_n^k -\E_0[-X_n^k ])_n$ are i.i.d. and centered,  the process $(S_{\bar n})_{\bar n\geq 0}$ defined  for $\bar n\geq1 $ by $S_{\bar n}:=\sum_{n=1}^{\bar n}\sum_{k=1}^K -X_n^k -\E_0[-X_n^k ]$ and $S_0=0$,  is a martingale with respect to its natural filtration. By Theorem 3.3 of \cite{van_zanten}, we have for any $x>0$
\begin{align}
    \P_0\bigg( \underset{\bar n\leq\lceil 2T/\bar\tau \rceil }{\max} \vert S_{\bar n}\vert >x , \hspace{0.1cm}\hat v_{\lceil 2T/\bar\tau \rceil }\leq v \hspace{0.1cm}\text{and}\hspace{0.1cm} \hat b_{\lceil 2T/\bar\tau \rceil }\leq b\bigg)\leq  2\exp\Big(-\frac{x^2}{v +xb/3}\Big),\nonumber
\end{align}
with 
\begin{align}
    &\hat v_{\bar n} := \sum_{n=1}^{\bar n} \E_0\Big[\Big(\sum_{k=1}^K X_n^k -\E_0[X_n^k ] \Big)^2\Big]\hspace{0.2cm}\text{and}\hspace{0.2cm}\hat b_{\bar n} := \underset{n\leq \bar n}{\max} \Big\vert \sum_{k=1}^K -X_n^k -\E_0[-X_n^k ]\Big\vert.\nonumber
\end{align}
Similarly to the equation (29) of \cite{Sulem_concentration}, we have $0\leq X_n^k \lesssim (A+1+U^{(1)}_n -\tau_n)\Vert f^{r,i}_k-f^0_k\Vert_2^2$. As a consequence, on the event $\Omega_{\tau,T}$, $\hat b_{{\lceil 2T/\bar\tau \rceil }} \lesssim\log(T) \Vert f^{r,i}-f^0\Vert_2^2$. Moreover, $\hat v_{\bar n}\lesssim \bar n  \Vert f^{r,i}-f^0\Vert_2^4$. Therefore, we obtain that for some $C>0$,
\begin{align}
    &\P_0\big(\Omega_{\tau,T} \cap \big\{d_T^2(f^0, f^{r,i} )
    \leq \Delta\Vert f^0-f^{r,i}\Vert_2 \}\big) \nonumber\\
    &\leq 2\exp \bigg( -\frac{C\Delta^2T^2\Vert f^{r,i}-f^0\Vert_2^4}{T\Vert f^{r,i}-f^0\Vert_2^4 + \Delta \log(T)T\Vert f^{r,i}-f^0\Vert_2^4}\bigg)\leq 2\exp \bigg( -\frac{C\Delta T}{2\log(T)}\bigg),\nonumber
\end{align}
for $T$ large enough. To conclude, we come back to (\ref{init_xil}) and with the previous bound we now have
\begin{align}
    \P_0(\Xi_T^c) &\leq 2\bar r_T\exp\bigg(T\Big(x_0'\varepsilon_T^2- \frac{C\Delta}{2\log(T)}\Big)\bigg) + o(1) \xrightarrow[T\rightarrow+\infty]{} 0,\nonumber
\end{align}
which terminates the proof of Lemma \ref{sets_XiT}.
\end{proof}

\subsection{Lemmas related to the BvM property}\label{sec:lemma:bvm}
In this section, we state and prove lemmas related to the BvM property. We first prove  the following Lemma \ref{lemma_rk} on the control of the bias term in Theorem \ref{main_theorem} and Corollary \ref{main_coro}. Then, we also prove Lemma \ref{lemma_concentration_A} which concerns the contraction of the $\tilde \boh$ on $\tilde \boh^0$ when $\varphi$ is not the identity function, under assumption \hyperref[as_A']{(A')}. Finally,  we prove Lemma \ref{lemma_linearization} on the linearization of $\varphi$ around $\tilde \boh^0$ under \hyperref[as_A']{(A')} which is in particular used in the proof of Theorem \ref{main_theorem}.

\begin{lemma}\label{lemma_rk}
Under the assumptions of Theorem \ref{main_theorem}, if $\log(T)^3\Vert  \bog^0_L./\boldsymbol{\bar\varphi}^0 - P^j_{2}(\bog^0_L./\boldsymbol{\bar\varphi}^0)\Vert_{2}=o(1)$, then we have $\underset{j\in \mathcal{J}_T}{\max}W_{T}\big(  (\tilde \psi^{0,j}_{L,\varphi}-\tilde \psi^0_{L,\varphi}).\boldsymbol{\bar\varphi}^0\big) =o_{\P_0}(1)$. In particular, for $j\in \mathcal{J}_T$ $\mathcal{B}_{j,T} = \mathcal{B}_j + o_{\P_0}(T^{-1/2})$ with $o_{\P_0}(T^{-1/2})$ independent of $j$ and $ \mathcal{B}_j= -\langle \tilde f^0 - \tilde f^{0,J} , \tilde \psi^{0}_{L,\varphi} -\tilde \psi^{0,J}_{L,\varphi} \rangle_{L,\varphi}$. Moreover, we have
 \begin{align}\label{proj_norm_charac}
    \vert \mathcal{B}_j\vert \lesssim \log(T)\varepsilon_T \Vert \tilde \psi^0_{L,\varphi} -\tilde\psi^{0,j}_{L,\varphi}\Vert_{L,\varphi}\lesssim \log(T)\varepsilon_T\Vert  \bog^0_L./\boldsymbol{\bar\varphi}^0 - P^j_{2}(\bog^0_L./\boldsymbol{\bar\varphi}^0)\Vert_{2} 
\end{align}

\end{lemma}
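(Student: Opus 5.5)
We need three things: a uniform-in-$j$ control of the stochastic term $W_T$, the resulting identity $\mathcal{B}_{j,T}=\mathcal{B}_j+o_{\P_0}(T^{-1/2})$, and the bound (\ref{proj_norm_charac}).

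We start with (\ref{proj_norm_charac}), since it also feeds the first part. By Cauchy--Schwarz in the Hilbert space $(\R^K\times L_2^{K^2},\langle\cdot,\cdot\rangle_{L,\varphi})$,
\begin{align*}
\vert\mathcal{B}_j\vert\le \Vert \tilde f^0-\tilde f^{0,j}\Vert_{L,\varphi}\,\Vert\tilde\psi^0_{L,\varphi}-\tilde\psi^{0,j}_{L,\varphi}\Vert_{L,\varphi}.
\end{align*}
Since $\Pi(\A_T(j)\vert N)>0$ for $j\in\mathcal{J}_T$, there is some $\tilde f\in\R^K\times\mathcal{\tilde H}(j)^{K^2}$ with $\Vert\tilde f-\tilde f^0\Vert_2\le M\log(T)\varepsilon_T$. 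Because $\tilde f^{0,j}$ is the $\langle\cdot,\cdot\rangle_{L,\varphi}$-projection, $\Vert\tilde f^0-\tilde f^{0,j}\Vert_{L,\varphi}\le\Vert\tilde f^0-\tilde f\Vert_{L,\varphi}\lesssim\log(T)\varepsilon_T$, using the equivalence of $\Vert\cdot\Vert_{L,\varphi}$ and $\Vert\cdot\Vert_2$. (Alternatively one can use directly the approximation property coming from (P1).)

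For the second inequality, the $\xi$-component causes no loss because $P^j_{L,\varphi}$ projects onto a space containing all of $\R^K$. Again by minimality of the projection and norm equivalence,
\begin{align*}
\Vert\tilde\psi^0_{L,\varphi}-\tilde\psi^{0,j}_{L,\varphi}\Vert_{L,\varphi}\le\Vert\tilde\psi^0_{L,\varphi}-(\xi^0_L,P^j_2(\bog^0_L./\boldsymbol{\bar\varphi}^0))\Vert_{L,\varphi}\lesssim\Vert\bog^0_L./\boldsymbol{\bar\varphi}^0-P^j_2(\bog^0_L./\boldsymbol{\bar\varphi}^0)\Vert_2 .
\end{align*}

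For the $W_T$ term, set $\Delta_j:=(\tilde\psi^{0,j}_{L,\varphi}-\tilde\psi^0_{L,\varphi}).\boldsymbol{\bar\varphi}^0$. Then $\E_0[W_T(\Delta_j)^2]=\Vert\Delta_j\Vert_L^2\lesssim\delta_j^2$, where $\delta_j$ is the projection error above, since $\boldsymbol{\bar\varphi}^0$ is bounded and by Lemma \ref{eq}. This gives a fixed-$j$ bound, but we need the maximum over $j\in\mathcal{J}_T\subseteq\{1,\dots,J_T\}$. A union bound with Chebyshev alone would cost a factor $J_T$, which is too much. Instead we use exponential concentration of the martingale $\sqrt T W_T(\Delta_j)$: Bernstein's inequality for point-process martingales, as in the proof of Lemma \ref{test_l2} via \cite{van_zanten}, on the event $\Omega_T$ of Lemma \ref{omega_T} where the counts $N([t-A,t))$ are $O(\log T)$. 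On that event the jumps are bounded by $\lesssim\log(T)\Vert\Delta_j\Vert_\infty\lesssim\log T$, using $\sup_j\Vert\bog^{0,j}_{L,\varphi}\Vert_\infty<\infty$ and $\bog^0_L\in L_\infty$ (Lemma \ref{lemma_linf}). The predictable variation is $\lesssim T\log(T)^2\Vert\Delta_j\Vert_2^2$. Applying Bernstein with threshold $x\sqrt T$ and $x=\epsilon$ gives
\begin{align*}
\P_0(\vert W_T(\Delta_j)\vert>\epsilon,\Omega_T)\le 2\exp\big(-c\,\epsilon^2/(\log(T)^2\delta_j^2+\epsilon\log(T)/\sqrt T)\big).
\end{align*}
Under the hypothesis $\log(T)^3\delta_j=o(1)$, the exponent is of order $\epsilon^2/(\log(T)^2\delta_j^2)$, and this dominates the $\log J_T\lesssim\log T$ cost of the union bound. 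Hence $\max_{j\in\mathcal{J}_T}\vert W_T(\Delta_j)\vert=o_{\P_0}(1)$. The uniform $\mathcal{B}_{j,T}=\mathcal{B}_j+o_{\P_0}(T^{-1/2})$ then follows immediately from the definition of $\mathcal{B}_{j,T}$.

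The main obstacle is this uniform control of the martingale: getting correct bounds on the quadratic variation and on the jumps for a process that is not truncated to $A_2(T)$. We would handle this through $\Omega_T$, and if needed by splitting $[0,T]$ into blocks using the renewal structure of Section \ref{sec_tests}.
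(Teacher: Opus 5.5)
Your proposal is correct and follows essentially the paper's own proof. For (\ref{proj_norm_charac}) you use Cauchy--Schwarz in $\langle\cdot,\cdot\rangle_{L,\varphi}$ together with minimality of the projection and norm equivalence. For the $W_T$ term you use a Bernstein inequality for the point-process martingale on $\Omega_T$ plus a union bound over $j\le J_T$, which costs only $\log J_T\lesssim\log T$. Your direct variance bound $T\log(T)^2\Vert\Delta_j\Vert_2^2$ is even a bit sharper than the $\log(T)^5$ factor the paper imports from Lemma \ref{Bernstein}.

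The only thing to drop is the parenthetical appeal to (P1). It concerns only $j\in[\bar J_T,RJ_0\bar J_T]$, so it cannot give $\Vert\tilde f^0-\tilde f^{0,j}\Vert_{L,\varphi}\lesssim\log(T)\varepsilon_T$ for every $j\in\mathcal{J}_T$. That bound should rest, as in the paper, on $\A_T(j)$ being nonempty for the relevant $j$. Note also that your stated justification, $\Pi(\A_T(j)\vert N)>0$ for all $j\in\mathcal{J}_T$, is not literally implied by the hypotheses. The paper makes the same implicit restriction of $\mathcal{J}_T$.
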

\begin{proof}[Proof of Lemma \ref{lemma_rk} ] By assumption, there exists $R>0$ such that $\Vert \psi^0_L \Vert_\infty\leq R/2$ and $\sup\{\Vert \boldsymbol{\bar\varphi}^0.\tilde\psi^{0,j}_{L,\varphi}\Vert_\infty, j\geq 1\}\leq R/2$ so that
\begin{align}
    \Vert (\tilde \psi^{0,j}_{L,\varphi}-\tilde \psi^0_{L,\varphi}).\boldsymbol{\bar\varphi}^0\Vert_\infty = \Vert \tilde\psi^{0,j}_{L,\varphi}. \boldsymbol{\bar\varphi}^0-\tilde\psi^0_L\Vert_\infty\leq R .\nonumber
\end{align}
With this and the first inequality of lemma \ref{Bernstein} we find that for any $x>0$
\begin{align}
    &\P_0\Big(\Omega_T\cap \big\{\underset{j\in \mathcal{J}_T}{\max}\big \vert W_{T}\big(  (\tilde \psi^{0,j}_{L,\varphi}-\tilde \psi^0_{L,\varphi}).\boldsymbol{\bar\varphi}^0\big) \big \vert >x\big\}\Big)\nonumber\\
    &\leq \sum_{j\in \mathcal{J}_T}\P_0\Big(\Omega_T\cap \big\{\big \vert W_{T}\big(  (\tilde \psi^{0,j}_{L,\varphi}-\tilde \psi^0_{L,\varphi}).\boldsymbol{\bar\varphi}^0\big)\big \vert >x\big\}\Big) \leq \sum_{j\in \mathcal{J}_T}\exp\bigg(\frac{-\log(T)^{-5}Cx^2}{\Vert (\tilde \psi^{0,j}_{L,\varphi}-\tilde \psi^0_{L,\varphi}).\boldsymbol{\bar\varphi}^0\Vert_2^2 + \frac{Rx}{\sqrt{T}}}\bigg).\nonumber
\end{align}
Let $j\in \mathcal{J}_T$. Using the equivalence of the norms $\Vert .\Vert_2$, $\Vert .\Vert_L$ and $\Vert . \Vert_{L,\varphi}$ and the characterisation of an orthogonal projection, we find:
\begin{align}
    \Vert (\tilde \psi^{0,j}_{L,\varphi}-\tilde \psi^0_{L,\varphi}).\boldsymbol{\bar\varphi}^0\Vert_2^2 \lesssim 
    \Vert(\tilde \psi^{0,j}_{L,\varphi}-\tilde \psi^0_{L,\varphi}).\boldsymbol{\bar\varphi}^0\Vert_{L}^2
    &= \Vert   P^{j}_{L,\varphi}(\tilde \psi^0_{L,\varphi})-\tilde \psi^0_{L,\varphi} \Vert_{L,\varphi}^2\nonumber\\
    &\leq \Vert  P^j_{2}(\tilde \psi^0_{L,\varphi})- \tilde \psi^0_{L,\varphi} \Vert_{L,\varphi}^2\nonumber\\
    &\lesssim \Vert  P^j_{2}(\tilde \psi^0_{L,\varphi})- \tilde \psi^0_{L,\varphi} \Vert_{2}^2\nonumber\\
    &= \Vert  P^j_{2}(\bog^0_L./\boldsymbol{\bar\varphi}^0)- \bog^0_L./\boldsymbol{\bar\varphi}^0 \Vert_{2}^2;\nonumber
\end{align}
 Whence, since $\max \{j, j\in \mathcal{J}_T\}\leq J_T,$ we have for some $c,d>0$,
\begin{align}
    &\P_0\Big(\Omega_T\cap \big\{\underset{j\in \mathcal{J}_T}{\max}\big \vert W_{T}\big(  (\tilde \psi^{0,j}_{L,\varphi}-\tilde \psi^0_{L,\varphi}).\boldsymbol{\bar\varphi}^0\big) \big \vert >x\big\}\Big)\nonumber\\
    &\leq J_T\exp\bigg(\frac{-\log(T)^{-5}Cx^2}{c\underset{j\in \mathcal{J}_T}{\max} \Vert \bog^0_L./\boldsymbol{\bar\varphi}^0 - P^j_{2}(\bog^0_L./\boldsymbol{\bar\varphi}^0)\Vert_{2}^2 + \frac{Rx}{\sqrt{T}}}\bigg)\nonumber\\
    &\leq \exp\bigg(d\log(T)\bigg(1-\frac{\log(T)^{-6}x^2}{c\underset{j\in \mathcal{J}_T}{\max} \Vert \bog^0_L./\boldsymbol{\bar\varphi}^0 - P^j_{2}(\bog^0_L./\boldsymbol{\bar\varphi}^0)\Vert_{2}^2 + \frac{Rx}{\sqrt{T}}}\bigg)\bigg)\xrightarrow[T\rightarrow +\infty]{}0,\nonumber
\end{align}
by assumption, which ends the proof of the first assertion. 

For the second assertion, let $j\in\mathcal{J}_T$, we have by Cauchy-Schwarz inequality
\begin{align}
    \vert \mathcal{B}_j\vert \leq \Vert \tilde f^0- \tilde f^{0,j}\Vert_{L,\varphi} \Vert \tilde \psi^{0}_{L,\varphi}- \tilde \psi^{0,j}_{L,\varphi}\Vert_{L,\varphi}.\nonumber
\end{align}
Then, using the same arguments as for the first assertion (namely the equivalence of norms and the characterisation of an orthogonal projection), one can show similarly that $\Vert \tilde f^0- \tilde f^{0,j}\Vert_{L,\varphi}\lesssim\Vert \tilde f^0 - P^j_2(\tilde f^0)\Vert_2$ and $\Vert \tilde \psi^{0}_{L,\varphi}- \tilde \psi^{0,j}_{L,\varphi}\Vert_{L,\varphi}\lesssim \Vert  \bog^0_L./\boldsymbol{\bar\varphi}^0 - P^j_{2}(\bog^0_L./\boldsymbol{\bar\varphi}^0)\Vert_{2} $. With the $L_2$ posterior contraction, we must have $\Vert \tilde f^0 - P^j_2(\tilde f^0)\Vert_2\lesssim \log(T)\varepsilon_T$ and it proves the second assertion.
\end{proof}

Next, we state and prove Lemma \ref{lemma_concentration_A}  on the contraction of the $\tilde \boh$ on $\tilde\boh^0$ when $\varphi$ is not the identity function.

\begin{lemma}\label{lemma_concentration_A}   Under assumptions of theorem \ref{main_theorem}, we have $\Pi( \A_T^c\vert N)= o_{\P_0}(1)$ when $\varphi$ is not the identity function; where $\mathcal A_T$ is defined in \eqref{def_A}. 
\end{lemma}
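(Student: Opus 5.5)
The plan is to combine Theorem \ref{th_concentration_L2}, the $L_1$ contraction of Proposition \ref{th_concentration_L1}, and assumption \hyperref[as_A']{(A')}. The key idea is that on the event where the $h_{l,k}$ take values in $I_0(\epsilon)$, the map $\varphi^{-1}$ is Lipschitz, so the $L_2$ contraction transfers from $\boh$ to $\tilde\boh$.

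\emph{Step 1: reduction to a good set.} Recall $\A_T=\bigcup_{j\in\mathcal J_T}\A_T(j)$. By Proposition \ref{th_concentration_L1}, the sieve part of its proof ($\Pi(\Vert\bth\Vert_\infty>M_T \text{ or } J>J_T)\lesssim e^{-c_4T\varepsilon_T^2}$, combined with the usual lower bound on the denominator $D_T\geq e^{-(c_4-1)T\varepsilon_T^2}$ with probability tending to one), and the assumption $\Pi_J(\mathcal J_T\vert N)\to1$, we get $\Pi(\bigcup_{j\in\mathcal J_T}\mathcal W^1_T(j)\vert N)=1+o_{\P_0}(1)$. Theorem \ref{th_concentration_L2} then gives $\Pi(\Vert f-f^0\Vert_2\le\log(T)\varepsilon_T\vert N)\to1$. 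So it suffices to show that the posterior mass of
\[
\mathcal E_T:=\mathcal W^1_T\cap\{\Vert \boh-\boh^0\Vert_2\le\log(T)\varepsilon_T\}\cap\{\Vert\tilde f-\tilde f^0\Vert_2>M\log(T)\varepsilon_T\}
\]
vanishes.

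\emph{Step 2: removing the bad range event.} Split $\mathcal E_T$ according to whether some $h_{l,k}$ has range outside $I_0(\epsilon)$. The bad part has prior mass $\lesssim e^{-c_4T\varepsilon_T^2}$ by (\ref{A'_eps}). With the standard argument (the first term of the decomposition (\ref{classical_decomp}), i.e.\ $D_T\ge e^{-(c_4-1)T\varepsilon_T^2}$ on an event of probability tending to one, which follows from the prior mass condition in \hyperref[as_P1]{(P1)}), its posterior mass is $O_{\P_0}(e^{-T\varepsilon_T^2})=o_{\P_0}(1)$.

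\emph{Step 3: Lipschitz transfer on the good part.} On the good part, all values of $h_{l,k}$ and of $h^0_{l,k}$ lie in $I_0(\epsilon)$. There $\varphi^{-1}$ is well defined and $C^1$, since $\varphi'>0$ is continuous on $\varphi^{-1}(I_0(\epsilon))$. The main obstacle is that $I_0(\epsilon)$ should be compact so that $\varphi'$ is bounded below, giving a uniform Lipschitz constant $L'$ for $\varphi^{-1}$. This holds because $\boh^0\in L_\infty^{K^2}$ makes $I_0$ bounded; shrinking $\epsilon$ if necessary, continuity of $\varphi'$ on the closed neighbourhood gives a positive minimum. One must also check that $\tilde h_{l,k}=\varphi^{-1}(h_{l,k})$ pointwise, i.e.\ that $\varphi$ is injective on the relevant preimage. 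I would handle this by noting that $\varphi$ is non-decreasing with positive derivative on $\varphi^{-1}(I_0(\epsilon))$, so $\varphi^{-1}$ is single-valued there. Then
\[
\Vert\tilde\boh-\tilde\boh^0\Vert_2\le L'\Vert\boh-\boh^0\Vert_2\le L'\log(T)\varepsilon_T,
\]
and hence $\Vert\tilde f-\tilde f^0\Vert_2\le(1+L')\log(T)\varepsilon_T$. Choosing $M>1+L'$ makes the good part of $\mathcal E_T$ empty, which concludes the proof.
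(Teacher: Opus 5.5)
Your proposal is correct and follows the paper's argument. You use Theorem~\ref{th_concentration_L2} together with (\ref{A'_eps}) to restrict to parameters whose $h_{l,k}$ take values in $I_0(\epsilon)$, then use that $\varphi^{-1}$ is Lipschitz there to get $\Vert\tilde{\boh}-\tilde{\boh}^0\Vert_2\lesssim\Vert\boh-\boh^0\Vert_2\le\log(T)\varepsilon_T$, and take $M$ large. Your Step~1 is actually more careful than the paper, which does not spell out the intersection with $\bigcup_{j\in\mathcal{J}_T}\mathcal{W}^1_T(j)$.

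The one thing to fix is the phrase ``shrinking $\epsilon$ if necessary''. The bound (\ref{A'_eps}) is only assumed for the given $\epsilon$ and does not carry over to a smaller one, so a Lipschitz constant obtained on a smaller neighbourhood would not cover the values your good event allows. Instead, read $I_0(\epsilon)$ as the closed $\epsilon$-neighbourhood: its preimage under $\varphi$ is then a compact interval on which the continuous $\varphi'>0$ has a positive minimum. That is exactly the bounded derivative of $\varphi^{-1}$ that the paper takes directly from (A').
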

\begin{proof}[Proof of Lemma \ref{lemma_concentration_A}]
By theorem \ref{th_concentration_L2} and assumption \hyperref[as_A']{(A')}, we obtain that $\Pi(\A'_{T}\vert N)\rightarrow 1$ under $\P_0$ with
\begin{align}
    \A'_{T} = \big \{f: \Vert f-f^0\Vert_2\leq \log(T)\varepsilon_T, \hspace{0.1cm}\forall (l,k)\in \K^2\hspace{0.1cm}Range( h_{l,k})\subset I_0(\epsilon)\big\}, \nonumber
\end{align}
where $I_0(\epsilon)$ is the $\epsilon$-neighbourhood of $I_0$, the convex hull of $\cup_{l,k} h_{,lk}^0([0,A])$. 
Let $f= (\nu , \varphi(\tilde \boh))\in \A_T'$. By again assumption \hyperref[as_A']{(A')}, we know that $\varphi^{-1}$ has a bounded derivative on $\varphi^{-1}(I_0(\epsilon))$ and as a consequence:
\begin{align}
    \Vert \tilde \boh -\tilde \boh^0 \Vert _2 \lesssim  \Vert  \boh -\boh^0 \Vert _2 \leq\log(T)\varepsilon_T. \nonumber
\end{align}
Whence, choosing the constant $M$ large enough in the definition of $\mathcal{A}_{T}$ , we have  $\mathcal{A}_{T}'\subset \mathcal{A}_{T}$ and we conclude to the result:
 $\Pi(\mathcal A_{T}\vert N)\xrightarrow[T\rightarrow +\infty]{\P_0}1$.
 \end{proof}
Note that by definition of $\A_T$ and by Lemma \ref{prem_1}, we have for $T$ large enough
\begin{align}\label{A_inf}
    \A_T\subset \big\{f: \Vert \tilde f -\tilde f^0\Vert_\infty \leq D'\log(T)\sqrt{J_T}\varepsilon_T\big\}
\end{align} 
for some $D'>0$ and $\log(T)\sqrt{J_T}\varepsilon_T \rightarrow 0$ by assumption. Whence, for $T$ large enough, $\A_T$ contains only elements $f$ such that for all $l(,k)\in \K^2$, 
\begin{align}\label{A_range}
    Range(h_{l,k})\subset I_0(\epsilon)\hspace{0.3cm}\text{and}\hspace{0.3cm}Range(\tilde h_{l,k})\subset \varphi^{-1}(I_0(\epsilon)).
\end{align}

Finally, we turn to the linearization of $\varphi$ around the $\tilde \boh^0$. Let $f=(\nu,\boh)\in \A_T$, $\boh = \varphi(\tilde \boh)$ for some $\tilde \boh$ and for all $(l,k)\in \K^2$ write $h_{l,k} - h^0_{l,k} = (\tilde h_{l,k} - \tilde h^0_{l,k})\bar\varphi^0_{l,k} + \omega_{\varphi}(\tilde h_{l,k})$. By defining $\omega_\varphi(\tilde \boh) = (\omega_\varphi(\tilde h_{l,k}), (l,k)\in \K^2)$, the previous inequalities can be rewritten in vector form as
\begin{align}\label{linearization}
    \boh - \boh^0 = (\tilde \boh-\tilde \boh^0).\boldsymbol{\bar\varphi}^0 + \omega_\varphi(\tilde \boh)
\end{align}
and we have the following control on the remainder term $\omega_\varphi(\tilde \boh)$.

\begin{lemma}\label{lemma_linearization}
Under the assumptions of theorem \ref{main_theorem}, for $T$ large enough, 
let $f=(\nu,\boh)\in \A_T$, then  equality (\ref{linearization}) holds with $\Vert \omega_\varphi(\tilde \boh)\Vert_1\lesssim \log(T)^2\varepsilon_T^2.$
\end{lemma}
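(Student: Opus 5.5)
We plan a pointwise second-order Taylor expansion of $\varphi$ around $\tilde h^0_{l,k}(x)$, followed by integration. Fix $T$ large enough that (\ref{A_range}) holds for every $f\in\A_T$. Then for each $(l,k)\in\K^2$ and $x\in[0,A]$, both $\tilde h_{l,k}(x)$ and $\tilde h^0_{l,k}(x)$ lie in $\varphi^{-1}(I_0(\epsilon))$.

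The first step is to make sure the segment joining $\tilde h^0_{l,k}(x)$ and $\tilde h_{l,k}(x)$ stays in a region where $\varphi$ is $C^3$. Note that $\varphi^{-1}(I_0(\epsilon))$ is an interval, since $I_0(\epsilon)$ is an interval and $\varphi$ is non-decreasing and continuous with positive derivative there. It therefore contains the whole segment. We would prefer to work on a compact subinterval where $\varphi''$ is bounded, so we argue as follows. By (\ref{A_inf}), $\Vert\tilde h_{l,k}-\tilde h^0_{l,k}\Vert_\infty\le D'\log(T)\sqrt{J_T}\varepsilon_T\to0$. The functions $\tilde h^0_{l,k}$ are bounded, and their range lies in $\varphi^{-1}(I_0)$, which is compact when $\varphi$ is strictly increasing on $\varphi^{-1}(I_0(\epsilon))$. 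Hence, for $T$ large, the segment lies in the compact set $\varphi^{-1}(I_0(\epsilon/2))$. On that set, $C:=\sup|\varphi''|<\infty$ by the $C^3$ assumption of (A').

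Taylor's theorem with Lagrange remainder then gives, pointwise,
\[
\omega_\varphi(\tilde h_{l,k})(x)=\varphi(\tilde h_{l,k}(x))-\varphi(\tilde h^0_{l,k}(x))-\varphi'(\tilde h^0_{l,k}(x))(\tilde h_{l,k}-\tilde h^0_{l,k})(x),
\]
and hence
\[
|\omega_\varphi(\tilde h_{l,k})(x)|\le \tfrac{C}{2}(\tilde h_{l,k}-\tilde h^0_{l,k})(x)^2 .
\]
Integrating over $[0,A]$ and summing over $(l,k)$ yields
\[
\Vert\omega_\varphi(\tilde\boh)\Vert_1\le \tfrac C2\Vert\tilde\boh-\tilde\boh^0\Vert_2^2\le \tfrac C2\Vert\tilde f-\tilde f^0\Vert_2^2\le \tfrac C2 M^2\log(T)^2\varepsilon_T^2,
\]
using the definition (\ref{def_A}) of $\A_T$. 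This gives the claim, with a constant uniform over $f\in\A_T$.

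The only delicate point is the first step: justifying that the Lagrange intermediate point stays in a set where $\varphi''$ is uniformly bounded, uniformly in $x$ and in $f$. The plan is to get this from the sup-norm control (\ref{A_inf}), which relies on Lemma \ref{prem_1} and the rate condition $\log(T)\sqrt{J_T}\varepsilon_T\to0$, together with the boundedness of $\tilde\boh^0$ from Assumption (A). If $\varphi^{-1}(I_0(\epsilon))$ is not compact, we would instead use boundedness of $\tilde h^0$ directly: shrinking $\epsilon$ confines the segment to a compact neighbourhood of the closure of the range of $\tilde\boh^0$, on which $\varphi''$ is bounded by continuity.
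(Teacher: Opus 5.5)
Your proof is correct and follows the paper's argument: a pointwise second-order Taylor--Lagrange expansion of $\varphi$ around $\tilde h^0_{l,k}$, a uniform bound on $\varphi''$, and then $\Vert\omega_\varphi(\tilde\boh)\Vert_1\lesssim\Vert\tilde\boh-\tilde\boh^0\Vert_2^2\lesssim\log(T)^2\varepsilon_T^2$ from the definition of $\A_T$. The only difference is how the uniform bound on $\varphi''$ is obtained. You derive it from the sup-norm control (\ref{A_inf}) and a compactness argument; there you should use the closure of $\varphi^{-1}(I_0(\epsilon/2))$, since that set need not itself be closed. The paper instead reads boundedness of $\varphi''$ on $\varphi^{-1}(I_0(\epsilon))$ directly off Assumption (A').
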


\begin{proof}[Proof of Lemma \ref{lemma_linearization}] Let $f= (\nu ,\boh)=(\nu ,\varphi(\tilde \boh))\in \A_T$, by (\ref{A_range}) we know that for $T$ large enough (independently of $f$), for all $(l,k)\in \K^2$ and for all $x\in [0,A]$, $ h_{l,k}(x)\in I_0(\epsilon)$. By assumption \hyperref[as_A']{(A')} ,  we can do a Taylor-Lagrange expansion, for all $(l,k)\in\K^2$ there exists a function $\gamma(\tilde h_{l,k}, \tilde h^0_{l,k})$ in the bracket $[\tilde h_{l,k};\tilde h^{0}_{l,k}]$ such that
\begin{align}
    h_{l,k} - h^{0}_{l,k} =  \varphi(\tilde h_{l,k})  -\varphi(\tilde h^0_{l,k}) =\big(\tilde h_{l,k}-\tilde h^{0}_{l,k}\big)\bar\varphi^0_{l,k}+ \frac{\big(\tilde h_{l,k} - \tilde h^0_{l,k}\big)^2 }{2}\varphi''\big(\gamma(\tilde h_{l,k}, \tilde h^{0}_{l,k})\big)\nonumber
\end{align}
Let $\omega_\varphi(\tilde h_{l,k}) = \frac{\big(\tilde h_{l,k} - \tilde h^0_{l,k}\big)^2 }{2}\varphi''\big(\gamma(\tilde h_{l,k}, \tilde h^{0}_{l,k})\big)$. By assumption, $\varphi''$ is bounded on $\varphi^{-1}(I_0(\epsilon))$ so we have $\Vert \omega_\varphi(\tilde h_{l,k})\Vert_1 \lesssim \Vert \tilde h_{l,k}-\tilde h_{l,k}^0\Vert_2^2\lesssim \log(T)^2\varepsilon_T^2$ and this shows the result.
\end{proof}

\subsection{Lemmas for the application on specific priors} \label{sec:pr:priors}
In this section, we prove three  lemmas used to verify conditions of Theorem \ref{main_theorem} when we apply it to specific prior distributions (random histograms and priors based on wavelet bases) in Section \ref{app_priors}. First, the following Lemma \ref{lemma_LAN_proj_bound} is used to verify the condition " $\sup\{\Vert  \bog^{0,j}_{L,\varphi}\Vert_\infty, \hspace{0.1cm}j\geq 1\}<+\infty$" of Theorem \ref{main_theorem}. Then, Lemma \ref{change_var_lemma} shows that the change of variable condition (\ref{change_var_cond}) is verified for the priors considered in Section \ref{app_priors}. Finally, we prove in Lemma  \ref{verif_P2_A_wavelets} that the assumptions \hyperref[P2]{(P2)} and \hyperref[as_A']{(A')} are satisfied for the priors based on wavelet bases with $\varphi\geq 0$.

\begin{lemma}\label{lemma_LAN_proj_bound} Let $(\xi,\bog)\in \L_\infty^{K^2}$ and  for $j\geq 1$, let $B_j$ be a family of $\L_\infty$ linearly independent in $\L_2$ and that verifies (\ref{condi_sup}). As before, let $(\xi^j, \bog^j) = P^j_L(\xi,\bog)$ . Assume that there exists some finite $C(\bog)$ such that for all $j\geq 1$
\begin{align}
    \Vert P_2^j(\bog)\Vert_\infty+ j^\beta \Vert P_2^j(\bog) - \bog\Vert_2\leq C(\bog).\nonumber
\end{align}
Then, $\underset{j\geq 1}{\sup}\Vert \bog^j\Vert_\infty<+\infty$.

\end{lemma}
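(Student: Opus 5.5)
We decompose $\bog^j$ against the $L_2$ projection and use the inverse inequality (\ref{condi_sup}) on the difference, which lies in a finite-dimensional space. Write $(\xi^j,\bog^j)=P^j_L(\xi,\bog)$ and compare it with $P^j_2(\xi,\bog)=(\xi,P^j_2(\bog))$; the latter also lies in $\R^K\times\mathcal{\tilde H}(j)^{K^2}$ because the $\R^K$ component is unconstrained. By the triangle inequality,
\begin{align}
\Vert \bog^j\Vert_\infty\leq \Vert P^j_2(\bog)\Vert_\infty + \Vert \bog^j - P^j_2(\bog)\Vert_\infty \leq C(\bog) + \Vert \bog^j - P^j_2(\bog)\Vert_\infty .\nonumber
\end{align}
Since $\bog^j - P^j_2(\bog)\in \mathcal{\tilde H}(j)^{K^2}$, (\ref{condi_sup}) gives $\Vert \bog^j - P^j_2(\bog)\Vert_\infty\lesssim \sqrt{j}\,\Vert \bog^j - P^j_2(\bog)\Vert_2$. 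So it suffices to show $\Vert \bog^j - P^j_2(\bog)\Vert_2\lesssim j^{-1/2}$, uniformly in $j$.

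For this we use Lemma \ref{eq}, which gives the equivalence of $\Vert\cdot\Vert_L$ and $\Vert\cdot\Vert_2$ on $\R^K\times\L_2^{K^2}$ (since $\boh^0\in\L_\infty^{K^2}$), together with the Pythagorean identity for the $L$-projection. Because $(\xi,P^j_2\bog)$ lies in the range of $P^j_L$,
\begin{align}
\Vert (\xi^j,\bog^j)-(\xi,P^j_2\bog)\Vert_2 &\lesssim \Vert P^j_L\big((\xi,\bog)-(\xi,P^j_2\bog)\big)\Vert_L \leq \Vert (0,\bog-P^j_2\bog)\Vert_L \nonumber\\
&\lesssim \Vert \bog - P^j_2(\bog)\Vert_2\leq C(\bog)j^{-\beta}.\nonumber
\end{align}
In particular $\Vert\bog^j-P^j_2(\bog)\Vert_2\lesssim C(\bog)j^{-\beta}$, and also $\vert \xi^j-\xi\vert\lesssim C(\bog)j^{-\beta}$, so the finite-dimensional part stays bounded. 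Combining the two steps,
\begin{align}
\Vert\bog^j\Vert_\infty\leq C(\bog)+ c\,C(\bog)\,j^{1/2-\beta},\nonumber
\end{align}
which is bounded uniformly in $j$ provided $\beta\geq 1/2$.

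The main obstacle is the exponent: the argument needs $\beta\geq 1/2$. In every application in the paper $\beta=1/2$ or larger is used (the authors invoke it with $\sqrt{j}\Vert\bog-P^j_2\bog\Vert_2=O(1)$, i.e.\ $\beta=1/2$), and I would state and use the lemma in that regime. The only other point to check is that the constant in the norm equivalence of Lemma \ref{eq} does not depend on $j$. It does not, since the equivalence holds on the whole space. There is also a degenerate case: if the $\xi$-component is absent or $\xi=0$, the argument is identical with $\xi=0$.
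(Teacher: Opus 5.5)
Your proof is correct and follows the paper's argument. It uses the triangle inequality with $P^j_2(\mathbf{g})$, the inverse inequality (\ref{condi_sup}) on the finite-dimensional difference, and quasi-optimality of the $L$-projection via the norm equivalence of Lemma \ref{eq}. The only cosmetic difference is that you bound $\mathbf{g}^j-P^j_2(\mathbf{g})$ directly as the $L$-projection of $(0,\mathbf{g}-P^j_2(\mathbf{g}))$, whereas the paper uses the best-approximation property plus an extra triangle inequality through $\mathbf{g}$. Your remark that $\beta\geq 1/2$ already suffices is also right; the paper states $\beta>1/2$.
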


\begin{proof}[Proof of Lemma \ref{lemma_LAN_proj_bound}]
Let $j\geq 1$. First,  with the equivalence between the LAN norm and the $\L_2$ norm and  by characterization of an orthogonal projection, we have
\begin{align}
    \Vert \bog^j -\bog \Vert_2 \leq \Vert (\xi^j,\bog^j) -(\xi,\bog) \Vert_2\lesssim \Vert (\xi^j,\bog^j) -(\xi,\bog) \Vert_L&\leq  \Vert P_2^j(\xi,\bog) -(\xi,\bog) \Vert_L\nonumber\\
    &\lesssim \Vert P_2^j(\xi,\bog) -(\xi,\bog) \Vert_2 \nonumber\\
    &= \Vert P^2_j(\bog) -\bog \Vert_2.\nonumber
\end{align}
Then, with this inequality and the conditions of the Lemma we obtain:
\begin{align}
    \Vert \bog^j\Vert_\infty \leq \Vert \bog^j - P^{j}_2(\bog)\Vert_\infty + \Vert P^{j}_2(\bog)\Vert_\infty&\leq \sqrt{j}\Vert \bog^j - P^{j}_2(\bog)\Vert_2 + C(\bog)\nonumber\\
    &\leq  \sqrt{j}\Vert \bog^j - \bog\Vert_2 + \sqrt{j}\Vert \bog - P^{j}_2(\bog)\Vert_2+  C(\bog)\nonumber\\
    &\lesssim   2\sqrt{j}\Vert \bog - P^{j}_2(\bog)\Vert_2+  C(\bog)\nonumber\\
    &\leq 2j^{1/2-\beta}C(\bog)  +  C(\bog) \nonumber
\end{align}
Since $\beta >1/2$, we can conclude to the result.
\end{proof}

The following lemma shows that the change of variable condition (\ref{change_var_cond}) is verified for the priors considered.

\begin{lemma}\label{change_var_lemma}
    Consider either the setting of corollary \ref{histo_coro} (random histograms bases), corollary \ref{wavelets_coro} (truncated wavelets bases in ReLU model) or corollary \ref{wavelets_coro_varphi} (truncated wavelets bases with $\varphi\geq 0$). In all three cases,  condition (\ref{change_var_cond}) of Theorem \ref{main_theorem} is satisfied:
    \begin{align}
        \sum_{j\in \mathcal{J}_T}\frac{\int_{\A_{T}(j) }e^{L_T(f_{u,j})}d\Pi_{f\vert j}(f)}{\int_{\A_{T}(j)}e^{L_T(f)}d\Pi_{f\vert j}(f)} \Pi_J(j\vert N)\xrightarrow[T\rightarrow +\infty]{\P_0} 1.\nonumber
\end{align}
\end{lemma}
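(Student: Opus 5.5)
The natural route is a change of variables in the numerator of each summand, followed by a prior-ratio argument and a uniform control of the set-difference term. Fix $u$ in a neighborhood of $0$ and $j\in\mathcal{J}_T$. Write $\tilde f=(\nu,\bth^TB_j)$ and let $a^j\in\R^{K^2j}$ be the coefficients of $\bog^{0,j}_{L,\varphi}=a^{j,T}B_j$. The map $\tilde f\mapsto \tilde f-u\tilde\psi^{0,j}_{L,\varphi}/\sqrt{T}$ is then a translation $(\nu,\bth)\mapsto(\nu-u\xi^{0,j}_{L,\varphi}/\sqrt{T},\bth-ua^j/\sqrt{T})$ with unit Jacobian, so
\[
\int_{\A_T(j)}e^{L_T(f_{u,j})}d\Pi_{f\vert j}(f)=\int_{\A_T(j)-u\tilde\psi^{0,j}_{L,\varphi}/\sqrt{T}}e^{L_T(f)}\frac{\pi_{j}(\tilde f+u\tilde\psi^{0,j}_{L,\varphi}/\sqrt{T})}{\pi_j(\tilde f)}d\Pi_{f\vert j}(f).
\]
It then suffices to prove two facts, uniformly in $j\in\mathcal{J}_T$ and $f\in\A_T(j)$: (a) the prior density ratio equals $1+o(1)$; (b) the shifted set can be replaced by $\A_T(j)$ at the cost of a relative $o_{\P_0}(1)$ error.

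For (a), the product structure gives $\log\frac{\pi_j(\tilde f+\cdot)}{\pi_j(\tilde f)}=\sum_k\log\frac{\pi_\nu(\nu_k+u\xi^{0,j}_k/\sqrt T)}{\pi_\nu(\nu_k)}+\sum_{l,k,i}\log\frac{\pi_{\theta,i}(\theta^i_{l,k}+ua^{j,i}_{l,k}/\sqrt T)}{\pi_{\theta,i}(\theta^i_{l,k})}$. The $\nu$ terms are harmless: $\nu$ lies in a compact subset of $]0,\infty[$ near $\nu^0$, $\pi_\nu$ is $C^1$ and positive there, and $\sup_j|\xi^{0,j}_{L,\varphi}|<\infty$. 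The $\theta$ terms need more care, and I would treat each prior separately.

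\textbf{Histograms.} Here $\gamma(j)=\sqrt j$, and on histograms $a^{j,i}$ is the value of $\bog^{0,j}_{L,\varphi}$ on bin $i$, so $|a^{j,i}|\le\sup_j\|\bog^{0,j}_{L,\varphi}\|_\infty<\infty$. The coefficients $\theta^i$ are the bin values of $h$, which by (\ref{A_inf}) stay in a compact set bounded away from $\kappa$, because $\boh^0$ is bounded away from $\kappa$. On that set $(\log\pi_\theta)'$ is bounded, so the sum of $K^2j$ terms is $O(j/\sqrt T)=O(J_T/\sqrt T)=o(1)$, since $J_T\lesssim T^{1/(2\beta+1)}$ polylog and $\beta>1/2$.

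\textbf{Wavelets with $\pi_\theta$ of sub-exponential type, $\gamma=1$.} Here $\|a^j\|_2\asymp\|\bog^{0,j}_{L,\varphi}\|_2=O(1)$. If $\log\pi_\theta$ is Lipschitz (Laplace), the sum is bounded by $\|a^j\|_1/\sqrt T\le\sqrt{J_T}\|a^j\|_2/\sqrt T=o(1)$. For a Gaussian density I would expand to second order: the error is $\langle\bth,a^j\rangle/\sqrt T+O(\|a^j\|^2/T)$, and $|\langle\bth,a^j\rangle|\le\|\bth\|_2\|a^j\|_2=O(1)$ on $\A_T$.

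\textbf{Wavelets with scaled Gaussian variances (case (i)).} The ratio involves $\sum_i 2^{i(2\beta_m+1)}\theta^i a^{j,i}/\sqrt T$. I would bound it using the Besov-type decay of the coefficients of $\tilde\boh^0$ and $\bog^0_L$ together with $\|\tilde\boh-\tilde\boh^0\|_2\lesssim\log T\,\varepsilon_T$.

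\textbf{Uniform priors (case (ii)).} The density ratio is exactly $1$ unless the shift pushes a coefficient outside the support. The support radius $\log(T)^22^{-i/2}$ dominates both $|\tilde\theta^0|$ and the shift, so this does not happen on $\A_T(j)$.

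For (b), I would show that $\A_T(j)\,\triangle\,(\A_T(j)-u\tilde\psi^{0,j}_{L,\varphi}/\sqrt T)$ carries negligible posterior mass. The shift has $L_2$ norm $O(T^{-1/2})$, while the constraints defining $\A_T$ have radii $M\log T\varepsilon_T$, $C_1\varepsilon_T$ and $M_T$. Enlarging the constants slightly (for instance $M$ to $M+1$) gives $\A_T(j)\subset\A^{+}_T(j)$ and $\A_T(j)-u\tilde\psi^{0,j}_{L,\varphi}/\sqrt T\subset\A^{+}_T(j)$, where $\A^{+}_T(j)$ is the enlarged set; conversely the shifted set contains a shrunk set $\A^{-}_T(j)$. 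Posterior contraction, namely Lemma \ref{lemma_concentration_A} applied with both constants, shows that $\A^{\pm}_T$ carry posterior mass $1-o_{\P_0}(1)$. Combined with the sandwich this yields ratios tending to $1$ uniformly. Averaging against $\Pi_J(j\vert N)$ then gives (\ref{change_var_cond}).

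\textbf{Main obstacle.} I expect step (a) for the scaled wavelet priors to be the hard part. With level-dependent scales the naive bound on the density ratio blows up with the resolution, and closing it will require the Hölder regularity of $\bog^0_L$ (Corollary \ref{g0_palm_holder}) together with the restriction $\beta>\beta_m$.
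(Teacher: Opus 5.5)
Your overall route matches the paper's. You translate the coefficients by $u(\xi^{0,j}_{L,\varphi},\bth_{0,j})/\sqrt T$, show the prior-density ratio is $1+o(1)$ uniformly in $j\in\J_T$ on the relevant sets, and then show the shifted set keeps essentially all the posterior mass. Your histogram, ReLU-wavelet and uniform-prior computations agree with the paper's. For a general positive $C^1$ density $\pi_\theta$ in the ReLU case, use $\Vert\bth\Vert_\infty\le\Vert\bth\Vert_2=O(1)$ on $\A_T$, so $\log\pi_\theta$ is Lipschitz on the relevant compact set.

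\textbf{The gap is at the end of (b).} The facts $\Pi(\A_T^{\pm}\mid N)=1-o_{\P_0}(1)$ are averages over $j$ and say nothing about $\Pi(\A_T(j)\mid N,j)$ for an individual $j$, so they cannot give ratios tending to $1$ uniformly in $j$. For a $j$ with small posterior weight, $\Pi(\cdot\mid N,j)$ need not concentrate on $\A_T(j)$. Then $\Pi(\A_T^{+}(j)\mid N,j)/\Pi(\A_T(j)\mid N,j)$ can blow up faster than $\Pi_J(j\mid N)$ decays. Only the lower half of your sandwich survives averaging against $\Pi_J(j\mid N)$, because each ratio is at least $\Pi(\A_T^{-}(j)\mid N,j)$.

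The paper instead aggregates over $j$ before bounding. Use the weights $\Pi(J=j\mid N,\A_T)$, which are the ones the Laplace-transform computation in the proof of Theorem \ref{main_theorem} actually produces. After the change of variables the sum becomes $(1+o(1))\,\Pi(\A_T^S\mid N)/\Pi(\A_T\mid N)$, with $\A_T^S=\bigcup_j(\A_T(j)-u\tilde\psi^{0,j}_{L,\varphi}/\sqrt T)$. The upper bound $1/\Pi(\A_T\mid N)=1+o_{\P_0}(1)$ is then immediate. Only $\Pi(\A_T^S\mid N)\to1$ needs proof, and your shrunk set gives it. This requires the radii defining $\A_T$, including the $L_1$ radius fixed by $C_1$, to be chosen with some room above those at which contraction is known.

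\textbf{Case (i) is only sketched, and the ingredient you name does not apply directly.} The $a^j$ are coefficients of the $\langle\cdot,\cdot\rangle_{L,\varphi}$-projection $\bog^{0,j}_{L,\varphi}$, not wavelet coefficients of $\bog^0_{L,\varphi}:=\bog^0_L./\boldsymbol{\bar\varphi}^0$. With truncation level $I$, first use norm equivalence and the best-approximation property to get $\Vert\bog^0_{L,\varphi}-\bog^{0,j}_{L,\varphi}\Vert_2\lesssim\Vert\bog^0_{L,\varphi}-P^j_2(\bog^0_{L,\varphi})\Vert_2\lesssim 2^{-I\beta}$. Combined with the Besov decay of $\bog^0_{L,\varphi}$, this gives the level-wise bound $\Vert a^{j,\ell}\Vert_2\lesssim 2^{-\ell\beta}$ for $\ell\le I$. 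Similarly, $\Vert\theta^\ell\Vert_2\lesssim\log(T)^c2^{-\ell\beta}$ on $\A_T(j)$. Applying Cauchy--Schwarz level by level, the log-ratio is $\lesssim\log(T)^{2c}T^{-1/2}\sum_{\ell\le I_T}2^{\ell(2\beta_m+1-2\beta)}\lesssim\log(T)^{2c}2^{I_T}/\sqrt T\to0$. The condition $\beta>\beta_m$ makes the exponent smaller than $1$, and $\beta>1/2$ gives $2^{I_T}\ll\sqrt T$ up to logarithms.
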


\begin{proof}[Proof of Lemma \ref{change_var_lemma}] For the moment we do not fix the  prior, it can be either the histogram prior, the wavelet prior in the Relu model or the wavelet prior with $\varphi\geq 0$. First, note that by definition of $\A_T$ and by Lemma \ref{prem_1}, we have 
\begin{align}
    \A_T\subset \big\{f: \Vert \tilde f -\tilde f^0\Vert_\infty \leq D\log(T)\sqrt{J_T}\varepsilon_T\big\},\nonumber
\end{align} 
for some universal $D>0$ and $\log(T)\sqrt{J_T}\varepsilon_T \rightarrow 0$ by assumption. So for $T$ large enough, $\mathcal{A}_T\subseteq \mathcal{F}_R^1$ (defined at start of Section \ref{subsec_prior_construct}). Moreover, for $\nu = (\nu_1,...,\nu_K)$, let $\pi_{K,\nu}(\nu) := \prod_{k=1}^K \pi_{\nu}(\nu_k) $ and,  given $j$, for $\bth\in \R^{K^2j}$ recall that $\pi_{\bth\vert j}(\bth)= \prod_{(k,l)\in \K^2}\prod_{i=1}^j \pi_{\theta,i}(\theta_{l,k}^i)$. Next, fix $j\in \mathcal{J}_T$ and denote $\bth_{0,j}\in \R^{K^2j}$  the vector satisfying  $\bog^{0,j}_{L,\varphi} = \bth_{0,j}^TB_j$;  we set 
\begin{align}
    \Theta_T(j):= \big\{(\nu,\bth)\in \R^K\times \R^{K^2j}:  (\nu,\varphi(\bth^TB_j)) \in \A_T(j)\big\},\nonumber
\end{align}
and $\Theta_T^S(j) = \Theta_T(j) - u(\xi^{0,j}_{L,\varphi}, \bth_{0,j})/\sqrt{T} $. With the change of variable
\begin{align}
    (\nu,\bth) \rightarrow  (\nu - u\xi^{0,j}_{L,\varphi}/\sqrt{T},\bth - u \bth_{0,j}/\sqrt{T})\nonumber
\end{align}
we have:
\begin{align}
    &\frac{\int_{\A_{T}(j) }e^{L_T(f_{u,j})}d\Pi_{f\vert j}(f)}{\int_{\A_{T}(j)}e^{L_T(f)}d\Pi_{f\vert j}(f)}\nonumber\\
    &=  \frac{\int_{\Theta_{T}(j) }\exp\Big(L_T\Big(\nu - u\xi^{0,j}_{L,\varphi}/\sqrt{T}, \varphi((\bth - u \bth_{0,j}/\sqrt{T})^TB_j)\Big)\Big) \pi_{K,\nu}(\nu)\pi_{\bth\vert j}(\bth) d\nu d\bth}{\int_{\Theta_{T}(j)}\exp\Big(L_T\big(\nu, \varphi(\bth^TB_j)\big)\Big) \pi_{K,\nu}(\nu)\pi_{\bth\vert j}(\bth) d\nu d\bth}\nonumber\\
    &=  \frac{\int_{\Theta_{T}^S(j) }\exp\Big(L_T\big(\nu, \varphi(\bth^TB_j)\big)\Big) \pi_{K,\nu}(\nu + u\xi^{0,j}_{L,\varphi}/\sqrt{T})\pi_{\bth\vert j}(\bth + u\bth_{0,j}/\sqrt{T}) d\nu d\bth}{\int_{\Theta_{T}(j)}\exp\Big(L_T\big(\nu, \varphi(\bth^TB_j)\big)\Big) \pi_{K,\nu}(\nu)\pi_{\bth\vert j}(\bth) d\nu d\bth}.\nonumber
\end{align}
We first study $\pi_{K,\nu}$. $\vert \xi^{0,j}_{L,\varphi}\vert \leq R_0$ for some $R_0>0$ independent of $j$ and $\Vert \nu -\nu^0\Vert_\infty \lesssim \varepsilon_T(\beta)$ so that for $T$ large enough, $\nu^0_k/2\leq \nu_k \leq 3\nu_k^0/2$ for all $k$ over $\Theta_T^S(j)$. Since, $\pi_\nu$ is positive and continuously differentiable on $]0; +\infty[$, $\log (\pi_\nu)$ is $C^1$ on $\prod_k[\nu_k^0/2, 3\nu_k^0/2]$ and  there exists $C_{\pi_\nu}>0$ such that  for $(\nu, \bth)\in \Theta_T^S(j) $, we have
\begin{align} 
    &\big\vert \log\big(\pi_{K,\nu}\big(\nu + u\xi^{0,j}_{L,\varphi}/\sqrt{T}\big)\big) -\log(\pi_{K,\nu}(\nu))\big\vert \nonumber\\
    &\leq \sum_{k=1}^K \big\vert \log\big(\pi_{\nu}\big(\nu_k + u\xi^{0,j}_{L,\varphi,k}/\sqrt{T}\big)\big) -\log(\pi_{\nu}(\nu_k))\big\vert \lesssim KC_{\pi_\nu}R_0/\sqrt{T}.\nonumber
\end{align}
and therefore
\begin{align} \label{bound_change_j_nu}
    \frac{\pi_{K,\nu}\big(\nu + u\xi^{0,j}_{L,\varphi}/\sqrt{T}\big)}{\pi_{K,\nu}(\nu)}  = 1+o(1)
\end{align}
with $o(1)$ independent of $\nu$ and $j$. Now, we show similarly that for $j\in \mathcal{J}_T$ and $(\nu, \bth)\in \Theta^S_T(j)$.
\begin{align}\label{bound_change_j}
    \frac{\pi_{\bth\vert j}\big(\bth +  u\bth_{0,j}/\sqrt{T}\big) }{ \pi_{\bth\vert j}(\bth) }=1+ o(1),
\end{align}
with again $o(1)$ independent of $\bth$ and $j$. For this, we treat separately the three priors considered.

For the histogram prior, first note that since $\sup \{\Vert \bog^{0,j}_{L,\varphi}\Vert_\infty , j\geq 1\}<+\infty$ , there exists some finite $G_0>0$ such that $\sup\{\Vert \bth_{0,j}\Vert_\infty , j\geq 1\}\leq G_0$. Then, recall that the coefficients are distributed independently according to a distribution with a density $\pi_\theta$ supported on $[\kappa; +\infty[$ with $\kappa$ such that for some $\delta>0 $, we have for all $(l,k)\in \K^2$, $h^0_{l,k}>\kappa +\delta $.  Moreover, given $j\in \mathcal{J}_T$, by the mean value theorem, there exists $K^2$ $j$-histogram functions whose coefficients $\bth_{*,j}$ are between $\kappa +\delta$ and $ \Vert \boh^0\Vert_\infty$ and such that $\Vert \bth_{*,j}^TB_j -\boh^0\Vert_\infty\lesssim \varepsilon_T(\beta)$. Whence, let $j\in \mathcal{J}_T$ and $(\nu,\bth) \in\Theta_T(j)$, we have
\begin{align}
    \Vert \bth- \bth_{*,j}\Vert_\infty \leq \Vert \bth - \bth_{*,j}\Vert_2 \lesssim \sqrt{j}  \Vert \bth^TB_j - \bth_{*,j}^TB_j\Vert_2 \lesssim \sqrt{J_T(\beta)}\log(T)\varepsilon_T(\beta) =o(1).\nonumber
\end{align}
 So, for $T$ large enough (independently of $j$), $\Theta_T^S(j)$ contains only elements $\bth$ with coefficients that are in some bounded interval $B'\subset [\kappa +\delta/2; +\infty[$. Furthermore, $\pi_\theta$ is positive and continuously differentiable on $]\kappa; +\infty[$. In particular, there exists $C_{\pi_\theta}>0$ such that for any $x \in B' $, $\vert \pi'_\theta(x)/\pi_\theta (x)\vert \leq C_{\pi_\theta}$. Thus, let $(\nu, \bth)\in \Theta_T^S(j) $, we have
 \begin{align}
     &\sum_{(l,k)\in \K^2}\sum_{i=1}^j \Big \vert \log(\pi_\theta(\theta_{l,k}^i + u \theta_{0,j,l,k}^i/\sqrt{T})) - \log(\pi_\theta(\theta_{l,k}^i))\Big \vert\nonumber\\
     &\lesssim\sum_{(l,k)\in \K^2}\sum_{i=1}^j  C_{\pi_\theta}\vert u \theta_{0,j,l,k}^i\vert /\sqrt{T} \lesssim C_{\pi_\theta} G_0\log(T)J_T(\beta) /\sqrt{T}= o(1),\nonumber
 \end{align}
 and as a consequence,  (\ref{bound_change_j}) holds for the histogram prior.

For the wavelet priors, we recall that $\bar B_I= \big\{\psi_{i,v}, i\in \{-1,0, ..., I\}, v\in \{0,..., \bar v_i-1\}\big\} $ and we recall the notations (\ref{not_wav}) that we will use here.  As explained in Section \ref{app_priors}, a prior on $J$ is induced by a prior on $I$ through the relation $J=c(I):=\sum_{i=-1}^I\bar v_i$ and we let $\bar \bog^{0,i}_{L,\varphi}=\bog^{0,c(i)}_{L,\varphi}= \bog^{0,j}_{L,\varphi} $ and we define similarly $\bar \bth_{0,i}$. The following facts will be used repeatedly in the sequel. If $\bog$ are $K^2$ functions belonging to $\mathcal{B}_{\infty,\infty}^\beta([0,A])$, their wavelets coefficients at resolution level $i$, denoted $\bth_{\bog,i}$ verify for all $(l,k)\in \K^2$ and for all $j\in \{-1,...,i\}$, $\Vert \bth_{\bog,i,l,k}^j\Vert_\infty \leq M(\bog)2^{-\max(j,0)(\beta+1/2)}$ where $M(\bog)>0$ depends on the $\mathcal{B}_{\infty,\infty}^\beta$ norm of  $\bog$.  Since $\bar v_j\lesssim  2^j$, it implies that
\begin{align}\label{ineq_wav_infty}
     \Vert  \bth_{\bog,i,l,k}^j \Vert_2\lesssim M(\bog)2^{-\max(j,0)\beta} .
\end{align}
Moreover, we have $\Vert \bth_{\bog,i}^T\bar B_i - \bog \Vert_\infty \lesssim 2^{-i\beta}$. Let $\tilde \bth \in \R^{K^2c(i)}$ such that  $\Vert \bog - \tilde \bth^T\bar B_i\Vert_2\leq \gamma$. First, note that 
\begin{align}
    \Vert \tilde \bth -   \bth_{\bog,i} \Vert_\infty \leq \Vert \tilde \bth -   \bth_{\bog,i} \Vert_2=\Vert \tilde \bth^{T}\bar B_i-   \bth_{\bog,i}^{T}\bar B_i \Vert_2\lesssim \gamma + 2^{-i\beta} \nonumber.
\end{align}
Using the previous inequality, we find that for all $(l,k)\in \K^2$ and $j\in \{-1,0,...,i\}$;
\begin{equation}\label{ineq_gen_wav}
\begin{aligned}
    \Vert \tilde \bth_{l,k}^j \Vert_\infty&\leq \Vert \tilde \bth_{l,k}^j -   \bth_{\bog,i,l,k}^j \Vert_\infty +  \Vert \bth_{\bog,i,l,k}^j\Vert_\infty \\
    &\leq \Vert \tilde \bth -   \bth_{\bog,i} \Vert_\infty + M(\bog)2^{-\max(j,0)(\beta+1/2)}\\
    &\lesssim  \gamma+ 2^{-i\beta} + M(\bog)2^{-\max(j,0)(\beta+1/2)},
\end{aligned}
\end{equation}
and, using  in addition (\ref{ineq_wav_infty}), we obtain similarly,
\begin{equation}\label{ineq_gen_wav_2}
\begin{aligned}
    \Vert \tilde \bth_{l,k}^j \Vert_2&\leq \Vert \tilde \bth_{l,k}^j -   \bth_{\bog,i,l,k}^j \Vert_2 +  \Vert \bth_{\bog,i,l,k}^j\Vert_2&\lesssim  \gamma+ 2^{-i\beta} + M(\bog)2^{-\max(j,0)\beta}\\
    &\lesssim \gamma + M(\bog)2^{-\max(j,0)\beta}.
\end{aligned} 
\end{equation}

For the wavelet prior in the ReLU model,  we proceed similarly to the histogram prior.  First, we have $\Vert \bar \bth_{0,i}\Vert_\infty\leq \Vert \bar \bth_{0,i}\Vert_2= \Vert \bar \bog^{0,i}_{L,\varphi}\Vert_2\lesssim \Vert \bar \bog^{0,i}_{L,\varphi}\Vert_\infty$ and since $\sup \{\Vert \Vert \bar \bog^{0,i}_{L,\varphi}\Vert_2\Vert_\infty , i\geq -1\}<+\infty$ , there exists some finite $G_0>0$ such that $\sup\{\Vert \bar \bth_{0,i}\Vert_\infty , i\geq -1\}\leq G_0$. Then, let $(\nu,\bth) \in\Theta_T(i)$ for some $i\in \mathcal{I}_T$, we have $\Vert \bth^T\bar B_i -\boh^0\Vert_2\lesssim \log(T)\varepsilon_T(\beta)$ and  by (\ref{ineq_gen_wav}) (with $\bog =\boh^0$ and $\tilde \bth = \bth$) we obtain that for some $C_0>0$ independent of $i$ and $\bth$,  $\Vert \bth \Vert_\infty \lesssim C_0$. Thus, for $T$ large enough (independently of $i$), $\Theta_T^S(i)$ contains only elements $\bth$ with coefficients that are in some bounded interval $B''$, and from that, we can  proceed as for the histogram prior and it shows that (\ref{bound_change_j}) is true for the wavelet prior in the ReLU model.
 
Now, we turn to the wavelet prior with $\varphi\geq 0$, we first treat case (i) (Gaussian priors). Since $\bog_{L,\varphi}^0\in  \mathcal{B}^\beta_{\infty,\infty}$,  $\Vert  \bog^{0}_{L,\varphi} - \bar\bth_{0,i}^T\bar B_i\Vert_2=\Vert  \bog^{0}_{L,\varphi} - \bar \bog^{0,i}_{L,\varphi}\Vert_2\lesssim 2^{-i\beta}$. Whence, by applying (\ref{ineq_gen_wav_2}) (with $\bog= \bog^{0}_{L,\varphi}$, $\tilde \bth =\bar \bth_{0,i}$ and $\gamma= 2^{-i\beta}$), we obtain that for all $(l,k)\in \K^2$, and for all $j\in \{-1,...,i\}$, $\Vert  \bar \bth_{0,i,l,k}^j\Vert_2\lesssim 2^{-\max(j,0)\beta}$. Then, let $(\nu,\bth) \in\Theta_T(i)$ for some $i\in \mathcal{I}_T$ ($i\leq I_T(\beta)$), by (\ref{ineq_gen_wav_2}) (with $\bog= \tilde \boh^0$, $\tilde \bth=\bth$ and $\gamma= \log(T)\varepsilon_T(\beta)$) , we have $\Vert \bth_{l,k}^j \Vert_2\lesssim \log(T)2^{-\max(j,0)\beta}$ for all $j\leq i$.  Moreover, we have
\begin{align}
      \frac{\pi_{\bth\vert i}\big(\bth +  u\bar \bth_{0,i}/\sqrt{T}\big)}{\pi_{\bth\vert i}(\bth)} &=  \exp\bigg(-\hspace{-0.3cm}\sum_{(l,k)\in \K^2}\sum_{ j=-1}^i 2^{\max(j,0)(2\beta_m+1)}\sum_{v=0}^{\bar v_{j}} \frac{2u \theta_{l,k}^{j,v} \bar \theta_{0,i,l,k}^{j,v}}{2\sqrt{T}}+  \frac{(u\bar \theta_{0,i,l,k}^{j,v})^2}{T}\bigg),  \nonumber
\end{align}
and with the previous remarks, and using that $\vert 2xy\vert \leq x^2 + y^2$ and that $\beta >\beta_m$, we find
\begin{align}
    & \Big \vert \sum_{ j=-1}^i 2^{\max(j,0)(2\beta_m+1)}\sum_{v=0}^{\bar v_{j}}\frac{2u \theta_{l,k}^{j,v} \bar \theta_{0,i,l,k}^{j,v}}{\sqrt{T}}+  \frac{(u\bar \theta_{0,i,l,k}^{j,v})^2}{T}\Big \vert\nonumber\\
    &\lesssim  \sum_{ j=-1}^i 2^{\max(j,0)(2\beta_m+1)}\Big(\frac{ \Vert \theta_{l,k}^{j}\Vert_2^2 + \Vert \bar \theta_{0,i,l,k}^{j}\Vert_2^2}{\sqrt{T}}+  \frac{\Vert \bar \theta_{0,i,l,k}^{j}\Vert_2^2}{T}\Big)\nonumber\\
    &\lesssim \frac{\log(T)^2}{\sqrt{T}}\sum_{ j=-1}^{i} 2^{\max(j,0)(2\beta_m+1)} 2^{-\max(j,0)2\beta} \nonumber\\
    &\leq \frac{\log(T)^2}{\sqrt{T}}\sum_{ j=-1}^{I_T(\beta)} 2^{\max(j,0)}\lesssim \frac{\log(T)^2 2^{I_T(\beta)}}{\sqrt{T}}=o(1)\nonumber, 
\end{align}
since $2^{I_T(\beta)}\asymp T^{1/(2\beta+1)}\log(T)^\alpha$ for some $\alpha\geq 0$ and since again $\beta >\beta_m>1/2$. Hence, we can conclude that (\ref{bound_change_j}) is true for the truncated wavelet prior with $\varphi \geq 0$ in case (i) (Gaussian distributions).

Finally, let us study case (ii) (uniform distributions). We recall that in this case, for $j\geq 0$, $\bar \Pi_{j,v}=\mathcal{U}([-\log(T)^2 2^{-j/2}, \log(T)^2 2^{-j/2}])$. As in case (i), we have for all $(l,k)\in \K^2$ and for all $j\in \{-1,...,i\}$, 
$\Vert  \bar \bth_{0,i,l,k}^j\Vert_\infty\leq \Vert  \bar \bth_{0,i,l,k}^j\Vert_2\lesssim 2^{-\max(j,0)\beta}$. Moreover, let $i\in \mathcal{I}_T$ and let $(\nu,\bth) \in\Theta_T(i)$,   (\ref{ineq_gen_wav}) gives that $\Vert \bth_{l,k}^j \Vert_\infty\lesssim \log(T)2^{-\max(j,0)\beta}\nonumber$.  Whence , since $\beta>\beta_m =1/2$ there exists $C>0$ and $T_C$ large enough (both independent of $i$), such that for all $T\geq T_c$, $\Theta_T^S(i)$ contains only elements $\bth$ such that for all $j\in \{0,...,i\}$, $\Vert \bth_{l,k}^j\Vert_\infty \leq C\log(T)2^{-j/2}$. As a consequence, for $T$ large enough we have for all $(\nu, \bth)\in \Theta_T^S(i)$, $\pi_{\bth\vert j}\big(\bth +  u\bth_{0,j}/\sqrt{T}\big)=\pi_{\bth\vert j}(\bth) $,
which proves (\ref{bound_change_j}) in this case.

So (\ref{bound_change_j}) holds for the three priors and with  (\ref{bound_change_j_nu}) it implies  that, whatever the prior, we have:
\begin{align}
    \frac{\int_{\A_{T}(j) }e^{L_T(f_{u,j})}d\Pi_{f\vert j}(f)}{\int_{\A_{T}(j)}e^{L_T(f)}d\Pi_{f\vert j}(f)} &= \frac{\int_{\Theta_{T}^S(j) }\exp\Big(L_T\big(\nu, \bth^TB_j\big)\Big) \pi_{K,\nu}(\nu)\pi_{\bth\vert j}(\bth) d\nu d\bth}{\int_{\Theta_{T}(j)}\exp\Big(L_T\big(\nu, \bth^TB_j\big)\Big) \pi_{K,\nu}(\nu)\pi_{\Theta\vert j}(\bth) d\nu d\bth}\times \big(1+o(1)\big) \nonumber\\
    &= \Pi\big(\A_T^S(j) \big\vert N, j, \A_T(j)\big) \times \big(1+o(1)\big),\nonumber
\end{align}
with $o(1)$ uniform in $j$ and where $\A_T^S(j) = \A_T(j) - u\tilde \psi^{0,j}_{L,\varphi}/\sqrt{T}$. Let $\A_T^S = \cup_{j\in \mathcal{J}_T} \A_T^S(j)$, as a consequence we find
\begin{align}
    \sum_{j\in \mathcal{J}_T}\frac{\int_{\A_{T}(j) }e^{L_T(f_{u,j})}d\Pi_{f\vert j}(f)}{\int_{\A_{T}(j)}e^{L_T(f)}d\Pi_{f\vert j}(f)}\Pi_J(j\vert N) &= (1 +o(1))\sum_{j\in \mathcal{J}_T}\Pi\big(\A_T^S(j) \vert N, j, \A_T(j)\big) \Pi_J(j\vert N)\nonumber\\
    &= (1 +o(1)) \Pi\big(\A_T^S \vert N,\mathcal{A}_T\big)= (1 +o_{\P_0}(1)) \Pi\big(\A_T^S \vert N\big),\nonumber
\end{align}
because by lemma \ref{lemma_concentration_A}, $\Pi(\A_T\vert N)\xrightarrow{\P_0} 1$. Since  $\sup \{\bog^{0,j}_{L,\varphi} \Vert_\infty, j\geq 1\}<+\infty $,  one can show (as in the proof of theorem 4.2 of \cite{castillo_rousseau}) that by choosing the constant $M$ large enough in the definition of $\A_T$,  $\Pi\big(\A_T^S \vert N\big) = 1 +o_{\P_0}(1)$ and it concludes the proof of Lemma \ref{change_var_lemma}.
\end{proof}

\begin{lemma}\label{verif_P2_A_wavelets}
    Consider the setting of corollary \ref{wavelets_coro_varphi} (truncated wavelet bases with $\varphi\geq 0$), the prior verifies the assumptions \hyperref[P2]{(P2)} and \hyperref[as_A']{(A')}.
\end{lemma}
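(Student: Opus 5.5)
Four conditions must be checked for the wavelet prior with $\varphi\geq 0$: the two bounds (\ref{P2_h+}) and (\ref{P2_G}) of \hyperref[P2]{(P2)}, and the regularity part and range bound (\ref{A'_eps}) of \hyperref[as_A']{(A')}. The regularity part is immediate from the choice of $\varphi$: it is infinitely differentiable with positive derivative on $\varphi^{-1}(I_0(\epsilon))$, and $\varphi^{-1}$ is well defined on $I_0(\epsilon)$ because $\boh^0$ is bounded away from $0$ and $\epsilon$ can be taken small. The remaining three are prior-mass bounds of order $e^{-c_4T\varepsilon_T^2}$ on subsets of $\mathcal{W}^1_T$. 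The common tool is a coefficientwise control. On $\mathcal{W}^1_T$ we only know $\Vert\boh-\boh^0\Vert_1\lesssim\varepsilon_T$ and $\Vert\bth\Vert_\infty\le M_T$. On the sets in (\ref{P2_G}) and (\ref{A'_eps}) we additionally know an $L_2$ bound on $\boh-\boh^0$. We convert it into a bound on $\tilde\boh-\tilde\boh^0$, and then, through (\ref{ineq_gen_wav}) and (\ref{ineq_gen_wav_2}), into coefficientwise bounds of the form $\Vert\theta^j_{l,k}\Vert_\infty\lesssim\gamma+2^{-i\beta}+2^{-j(\beta+1/2)}$.

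The plan is to show that \emph{under the prior} the coefficients are, outside an event of mass $e^{-c_4T\varepsilon_T^2}$, dominated by the decay the prior imposes. In case (ii) this is automatic: the prior gives $|\theta^{j,v}|\le \log(T)^22^{-j/2}$ a.s. and $|\theta^{-1}|\le C_0$. Since $\Vert\sum_v|\Phi_{j,v}|\Vert_\infty\lesssim2^{j/2}$, this yields $\Vert\tilde\boh\Vert_\infty\lesssim\log(T)^2 I_T\lesssim\log(T)^3$. Because $\varphi$ is Lipschitz, taking $r_T\asymp\log(T)^3$ gives (\ref{P2_h+}) with probability $0$.

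In case (i) we instead use Gaussian tails. The event $\{\exists j\le I_T,v: |\theta^{j,v}|>t_T2^{-j(\beta_m+1/2)}\}$ has probability at most $\sum_j 2^j e^{-t_T^2/2}$, so $t_T\asymp\sqrt{T}\varepsilon_T$ suffices. This gives $\Vert\tilde\boh\Vert_\infty\lesssim t_T\sum_j2^{-j\beta_m}\lesssim\sqrt T\varepsilon_T$, hence $r_T\asymp\sqrt T\varepsilon_T$.

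For (\ref{P2_G}) and (\ref{A'_eps}) the goal is an $L_\infty$ bound $\Vert\tilde\boh-\tilde\boh^0\Vert_\infty\to0$. This forces $\Vert\boh-\boh^0\Vert_\infty<G$ and $Range(h_{l,k})\subset I_0(\epsilon)$, so the sets in question become empty apart from the excluded prior event. We split the wavelet expansion at a level $j^*$. For levels $j\le j^*$ we use $\Vert\cdot\Vert_\infty\lesssim 2^{j^*/2}\Vert\cdot\Vert_2$ (cf. (\ref{condi_sup})) on the projection, which gives $2^{j^*/2}\gamma_T$ with $\gamma_T$ the available $L_2$ radius. For the tail $j>j^*$ we use the coefficient bound from the prior. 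In case (i) this is $t_T\sum_{j>j^*}2^{-j\beta_m}\lesssim\sqrt{T}\varepsilon_T2^{-j^*\beta_m}$, and the truth contributes $2^{-j^*\beta}$.

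The first obstacle is that the $L_2$ control is on $\boh$ rather than $\tilde\boh$, and $\varphi$ need not be invertible globally (e.g. $x_+$). To pass from $\Vert\boh-\boh^0\Vert_2$ to $\Vert\tilde\boh-\tilde\boh^0\Vert_2$, we use that $\boh^0\ge c>0$: on $\{x: |h-h^0|(x)<c/2\}$ we have $h\in I_0(\epsilon)$, where $\varphi^{-1}$ is Lipschitz. The complementary set has Lebesgue measure $\lesssim\Vert\boh-\boh^0\Vert_2^2$, and there we bound $|\tilde h-\tilde h^0|$ by the sup-norm bound above. This closes up into $\Vert\tilde\boh-\tilde\boh^0\Vert_2\lesssim\gamma_T(1+\sup\text{-bound})$.

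The main obstacle is then the exponent bookkeeping. We need $2^{j^*/2}\gamma_T\to0$ and $\sqrt T\varepsilon_T2^{-j^*\beta_m}\to0$ simultaneously, where $\gamma_T=\log(T)r_T\varepsilon_T\asymp T^{1/2}\varepsilon_T^2$ up to logs in (\ref{P2_G}). With $\varepsilon_T\approx T^{-\beta/(2\beta+1)}$, the two conditions require $2^{j^*}\ll T^{(2\beta-1)/(2\beta+1)}$ and $2^{j^*\beta_m}\gg T^{1/(2(2\beta+1))}$. These are compatible iff $\beta_m(2\beta-1)>1/2$, roughly. I expect that tracking this carefully, with the logarithmic factors, is exactly what produces the threshold $\beta>\beta_m=5/6$ in case (i); I would first try $2^{j^*}\asymp T^{a}$ and optimise $a$. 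Case (ii) needs only $\beta>1/2$, since there $r_T$ is polylogarithmic.
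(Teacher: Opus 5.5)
\textbf{Gap in case (i): the $h\to\tilde h$ conversion factor is dropped.} The problem is the passage from $\Vert\boh-\boh^0\Vert_2$ to $\Vert\tilde\boh-\tilde\boh^0\Vert_2$. On the bad set $\{|h-h^0|\geq\epsilon\}$ you bound $|\tilde h-\tilde h^0|$ by the prior sup-norm bound, which in case (i) is of order $\sqrt T\varepsilon_T\to+\infty$. What this actually gives is $\Vert\tilde\boh-\tilde\boh^0\Vert_2\lesssim\gamma_T\sqrt T\varepsilon_T$, i.e.\ of order $T\varepsilon_T^3$ up to logarithms, not $\gamma_T$. Your bookkeeping then silently uses $\gamma_T$ alone. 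If you keep the factor, the low-frequency condition becomes $2^{j^*}\ll T^{(2\beta-2)/(2\beta+1)}$. This is compatible with $2^{j^*\beta_m}\gg T^{1/(2(2\beta+1))}$ only when $\beta>1+1/(4\beta_m)=13/10$, so as written case (i) is not covered for $\beta\in\,]5/6,13/10]$.

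The missing step is an absorption. On the bad set, bound $|\tilde h-\tilde h^0|$ by $S:=\Vert\tilde\boh-\tilde\boh^0\Vert_\infty$ itself. Then $\Vert\tilde\boh-\tilde\boh^0\Vert_2\lesssim\gamma_T(1+S/\epsilon)$ and
\[
S\lesssim 2^{j^*/2}\gamma_T\big(1+S/\epsilon\big)+\sqrt T\varepsilon_T2^{-j^*\beta_m}+2^{-j^*\beta}.
\]
Since $S<\infty$ and $2^{j^*/2}\gamma_T\to0$, the $S$-term on the right is absorbed into the left, and only your two conditions remain. They are compatible iff $\beta_m(2\beta-1)>1/2$, i.e.\ $\beta>4/5$ for $\beta_m=5/6$. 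So your bookkeeping does not produce the threshold $5/6$; it is merely implied by it.

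Two smaller corrections. First, the level $c/2$ must be replaced by something below $\epsilon$, because $\varphi^{-1}$ is only controlled on $I_0(\epsilon)$ and $\epsilon$ may be small. Second, in case (ii) the prior bound $\log(T)^22^{-j/2}$ gives no decay of the high-frequency part in sup-norm: each level contributes order $\log(T)^2$. There you must take $j^*=I_T$, i.e.\ use the $\sqrt{J_T}$ bound of Lemma \ref{prem_1}. Then $2^{I_T/2}\varepsilon_T$ times a power of $\log T$ tends to $0$ for $\beta>1/2$, and the crude conversion factor, being polylogarithmic, is harmless.

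\textbf{Comparison with the paper's route.} With this repair your argument is valid and genuinely differs from the paper's. For (\ref{P2_h+}) the paper applies Gaussian concentration to the Lipschitz functional $\sum_i2^{-\max(i,0)\beta_m}\Vert X^{(i)}\Vert_\infty$, where $X^{(i)}$ are the standardized level-$i$ coefficients. This is equivalent to your union bound. For (\ref{P2_G}) and (\ref{A'_eps}) the paper never passes to $\tilde\boh$ in $L_2$. It shows directly that $Range(\tilde h)\subset U_{2\epsilon_0}:=\varphi^{-1}(]\underline c_0-2\epsilon_0,\bar c_0+2\epsilon_0[)$, which gives (\ref{P2_G}) with a constant $G$. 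If $\tilde h$ leaves $U_{2\epsilon_0}$ at one point, its random Lipschitz constant, $\lesssim\sum_i2^{\max(i,0)(1-\beta_m)}\Vert X^{(i)}\Vert_\infty$, keeps it outside $U_{\epsilon_0}$ on an interval of length $\gtrsim 1/\mathrm{Lip}$. Hence $\Vert h-h^0\Vert_2^2\gtrsim1/\mathrm{Lip}$, so a small $L_2$ distance forces a huge Lipschitz constant, which Gaussian concentration makes exponentially unlikely. Balancing exponents in that step is what gives $\beta>5/6$. The paper's argument is shorter and avoids the $h\to\tilde h$ conversion, at the cost of using derivatives of the wavelets. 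Your frequency split uses only coefficient sizes and, once the absorption is added, needs a slightly weaker smoothness condition.
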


Before proving this lemma, we recall a classical Gaussian concentration inequality that can be found in \cite{book_concentration} (theorem 5.6). Let $X_1,...,X_n$ be i.i.d. standard and centered Gaussian random variables and $F:\R^n\rightarrow \R$ a $L$-Lipschitz function with respect to the euclidean norm $\Vert .\Vert_2$ on $\R^n$. Then, for any $u>0$,
\begin{align}\label{gaussian_ineq}
    \P\big ( F(X_1,...,X_n) \geq \E[F(X_1,...,X_n)]+u\big)\leq \exp \Big(-\frac{u^2}{2L^2}\Big)
\end{align}

\begin{proof}[Proof of Lemma \ref{verif_P2_A_wavelets}]

We first verify assumption \hyperref[P2]{(P2)} in case (i) (Gaussian distributions) and then we verify it in case (ii) (uniform distributions). In each case, when verifying \hyperref[P2]{(P2)}, we also verify \hyperref[as_A']{(A')}.

In case (i), for \hyperref[P2]{(P2)}, we begin by verifying (\ref{P2_h+}). Let $r_T = r_T(\beta) = \sqrt{\log(T)T\varepsilon_T(\beta)^2}$. Given the prior construction and since $\varphi$ is Lipschitz and $r_T(\beta)\rightarrow +\infty$, it is enough to show that for $I\leq I_T(\beta)$ and for variables $\boldsymbol{X}=(X_{i,v}, i=1,...,I, v=0,...,\bar v_I)\overset{i.i.d.}{\sim}\mathcal{N}(0,1)$, we have
\begin{align}\label{objP2_1}
    \P\bigg(\Big\Vert \sum_{i= -1}^I\sum_{v=0}^{\bar v_i} 2^{-\max(i,0)(\beta_m+1/2)}X_{i,v}\psi_{i,v} \Big\Vert_\infty >r_T(\beta)/2\bigg)\leq e^{-w_T T\varepsilon_T^2},
\end{align}
for some $w_T\rightarrow +\infty$. Let $\boldsymbol{X}^{(i)} = (X_{i,0}, ...., X_{i, \bar v_i})$. First, there exists a finite integer $B$, such that for any $x\in [0,A]$ and $i\geq -1$, $Card\big(\{v: \psi_{i,v}(x)\neq 0\}\big)\leq B$. Moreover, since $\Vert \psi_{i,v}\Vert_\infty \lesssim 2^{\max(i,0)/2}$, we obtain
\begin{align}
    \Big\Vert \sum_{i= -1}^I\sum_{v=0}^{\bar v_i} 2^{-\max(i,0)(\beta_m+1/2)}X_{i,v}\psi_{i,v} \Big\Vert_\infty \lesssim  \sum_{i= -1}^I 2^{-\max(i,0)\beta_m}\Vert \boldsymbol{X^{(i)}}\Vert_\infty.\nonumber
\end{align}
Furthermore, the application $F:\R^{c(I)}\rightarrow\R$ defined by $F(\boldsymbol{x})= \sum_{i= -1}^I 2^{-\max(i,0)\beta_m}\Vert \boldsymbol{x}^{(i)}\Vert_\infty$ is $c$-Lipschitz with respect to the euclidean norm for some $c>0$ and $\E[ F(\boldsymbol{X})] \leq C$ for some $C>0$. Whence, since $r_T(\beta)\rightarrow +\infty$, for $T$ large enough and for some $\bar c>0$ we have
\begin{align}
     &\P\bigg(\Big\Vert \sum_{i= -1}^I\sum_{v=0}^{\bar v_i} 2^{-\max(i,0)(\beta_m+1/2)}X_{i,v}\psi_{i,v} \Big\Vert_\infty >r_T(\beta)/2\bigg)\nonumber\\
     &\leq \P\Big(\sum_{i= -1}^I 2^{-\max(i,0)\beta_m}\Vert \boldsymbol{X^{(i)}}\Vert_\infty \geq \bar cr_T(\beta)/2\Big) \leq \P\Big(F( \boldsymbol{X})\geq \E[ F(\boldsymbol{X})]+\bar cr_T(\beta)/4\Big) \nonumber\\
    &\leq \exp\big(- \bar c^2r_T(\beta)^2/(32c^2)\big)=\exp\big(- \bar c^2\log(T)T\varepsilon_T(\beta)^2/(32c^2)\big)\leq \exp\big(-  \sqrt{\log(T)}T\varepsilon_T(\beta)^2\big),\nonumber
\end{align}
and it proves (\ref{objP2_1}).

We turn to (\ref{P2_G}). Let $\bar c_0= \max_{l,k}\max_{x\in [0,A]} h^0_{l,k}(x)$ and  $\underline c_0= \min_{l,k}\min_{x\in [0,A]} h^0_{l,k}(x)$. For $\epsilon>0$, when it is well-defined, let $U_\epsilon := \varphi^{-1}(]\underbar c_0 -\epsilon ; \bar c_0 + \epsilon[)$. It is assumed that there exists $\epsilon_0>0$ small enough such that $\varphi$ is infinitely differentiable with a positive derivative on $U_{2\epsilon_0}= ]a_{2\epsilon_0}; b_{2\epsilon_0}[$, for some $b_{2\epsilon_0}> a_{2\epsilon_0}$. Let $\boldsymbol{X}$ be Gaussian variables as before and set $\tilde h = \sum_{i= -1}^I\sum_{v=0}^{\bar v_i} 2^{-\max(i,0)(\beta_m +1/2)}X_{i,v}\psi_{i,v}$ for some $I\leq I_T(\beta)$, and $h = \varphi(\tilde h)$. Similarly to (\ref{objP2_1}), given the prior construction and since $\varphi$ is globally Lipschitz,  it is enough to show that for any $(l,k)\in \K^2$, we have 
\begin{align}\label{obj_strong_P2}
    \P\Big( \Vert  h - h^0_{l,k} \Vert_2^2\leq \log(T)^2r_T(\beta)^2\varepsilon_T(\beta)^2,  Range(\tilde h)\not\subset U_{2\epsilon_0} \Big)\leq e^{-w_T T\varepsilon_T^2},
\end{align}
for some $w_T\rightarrow +\infty$. First, consider that for some $x'\in [0,A]$, $\tilde h_{l,k}(x')\leq a_{2\epsilon_0} $. Then, let $x\in [0,A]$, for each $i\geq -1$, there is at most $2B$ functions $\psi_{i,v}$ such that $\psi_{i,v}(x)\neq 0$ or $\psi_{i,v}(x')\neq 0$, and thus
\begin{align}
    \vert \tilde h(x) - \tilde h(x')\vert &\leq  \sum_{i= -1}^I\sum_{v=0}^{\bar v_i} 2^{-\max(i,0)(\beta_m+1/2)}X_{i,v}\big\vert \psi_{i,v}(x) - \psi_{i,v}(x') \big\vert \nonumber\\
    &\lesssim \sum_{i= -1}^I 2^{-\max(i,0)(\beta_m+1/2)}\Vert \boldsymbol{X^{(i)}}\Vert_\infty \underset{v=0,...,\bar v_i}{\max}\big\vert \psi_{i,v}(x) - \psi_{i,v}(x') \big\vert\nonumber\\
    &\lesssim \vert x-x'\vert  \sum_{i= -1}^I 2^{-\max(i,0)(\beta_m+1/2)}\Vert \boldsymbol{X^{(i)}} \Vert_\infty \underset{v=0,...,\bar v_i}{\max}\Vert \psi'_{i,v}\Vert_\infty\nonumber\\
    &\lesssim \vert x-x'\vert  \sum_{i= -1}^I 2^{\max(i,0)(1-\beta_m)}\Vert \boldsymbol{X^{(i)}} \Vert_\infty\lesssim \vert x-x'\vert  \sum_{i= -1}^I 2^{\max(i,0)/6}\Vert \boldsymbol{X^{(i)}} \Vert_\infty,\nonumber
\end{align}
because $\beta_m = 5/6$. Whence, for some $b>0$ 
\begin{align}
    \tilde h(x) \leq \tilde h(x')  + \vert \tilde h(x) -\tilde h(x')\vert \leq  a_{2\epsilon_0} + \vert x-x'\vert  b\sum_{i= -1}^I 2^{\max(i,0)/6}\Vert \boldsymbol{X^{(i)}} \Vert_\infty. \nonumber
\end{align}
If $\vert x -x'\vert \leq (a_{\epsilon_0} -a_{2\epsilon_0})\big( b\sum_{i= -1}^I 2^{\max(i,0)/6}\Vert \boldsymbol{X^{(i)}} \Vert_\infty \big)^{-1}=: C_0(\boldsymbol{X^{(i)}})$, then $\tilde h(x) < a_{\epsilon_0}$. Moreover, because $\varphi$ is globally non decreasing and strictly increasing on $U_{2\varepsilon_0}$, if  $\tilde h(u)<  a_{\epsilon_0}$, then $ \varphi(\tilde h_{l,k}^0(u)) -\varphi(\tilde h(u)) \geq \epsilon_0$.  As a consequence, 
\begin{align}
    \Vert h^0_{l,k} - h \Vert_2^2 = \int_0^A \hspace{-0.2cm} \big( \varphi(\tilde h^0_{l,k}(u)) - \varphi(\tilde h(u)) \big)^2 du\geq  \epsilon_0^2 \int_0^A \hspace{-0.2cm} \onee_{\big\{\vert u-x'\vert \leq C_0(\boldsymbol{X^{(i)}})\}}du\geq  \epsilon_0^2C_0(\boldsymbol{X^{(i)}}).\nonumber
\end{align}
This reasoning can be reproduced in the case where for some $x'$, $\tilde h(x')\geq b_{2\epsilon_0}$. It proves that for some $\bar b_0>0$ and $T$ large enough
\begin{align}
    &\P\Big( \Vert  h -  h^0_{l,k} \Vert_2^2\leq \log(T)^2r_T(\beta)^2\varepsilon_T(\beta)^2,  Range(\tilde h)\not\subset U_{2\epsilon_0}\Big)\nonumber\\
    &\leq \P\Big( \sum_{i= -1}^I 2^{\max(i,0)/6
    }\Vert \boldsymbol{X^{(i)}} \Vert_\infty \geq \frac{\bar b_0}{\log(T)^2r_T(\beta)^2\varepsilon_T(\beta)^2}\Big).\nonumber
\end{align}
Now, let $\bar F(\boldsymbol{X}) =  \sum_{i= -1}^I 2^{\max(i,0)/6}\Vert \boldsymbol{X^{(i)}} \Vert_\infty$.  $\bar F$ is $4\times 2^{I_T(\beta)/6}$-Lipschitz for $T$ large enough and  $ 2^{I_T(\beta)/6}\asymp  T^{\frac{1/6}{2\beta +1}}\log(T)^{\frac{\beta/2 +1/6}{2\beta +1}}$. Moreover, $\E[\bar F(\boldsymbol{X})] \lesssim \log(T)^2T^{\frac{1/6}{2\beta +1}}$ and  $\log(T)^4T^{\frac{1/6}{2\beta+1}} r_T(\beta)^2\varepsilon_T^2 \rightarrow 0$ because $\beta>5/6$. So, for $T$ large enough, with the Gaussian concentration inequality (\ref{gaussian_ineq}), we have
\begin{align*}
    &\P\Big( \sum_{i= -1}^I 2^{\max(i,0)/2}\Vert \boldsymbol{X^{(i)}} \Vert_\infty \geq \frac{\bar b_0}{\log(T)^2r_T(\beta)^2\varepsilon_T(\beta)^2}\Big)\nonumber\\
    &\leq \P\Big(  \bar F(\boldsymbol{X})\geq \E[\bar F(\boldsymbol{X})] + \frac{\bar b_0}{2\log(T)^2r_T(\beta)^2\varepsilon_T(\beta)^2} \Big)\nonumber\\
    &\leq \exp \bigg( \frac{-\bar b_0^2}{8\log(T)^4T^{\frac{1/3}{2\beta +1}}r_T(\beta)^4\varepsilon_T(\beta)^4}\bigg)\leq \exp\big(-\log(T)T\varepsilon_T(\beta)^2\big),\nonumber
\end{align*}
where for the last inequality we have used again that $\beta>\beta_m=5/6$. It concludes the proof for (\ref{P2_G}).

Since  we have proven that (\ref{obj_strong_P2}) holds, assumption \hyperref[as_A']{(A')} is verified too and it the proof terminates for case $(i)$.

Now, we treat case (ii) (uniform distributions). In this case $\beta_m= 1/2$. We begin again by verifying (\ref{P2_h+}). Similarly to case $(i)$, given the prior construction and since $\varphi$ is Lipschitz, it is enough to show that for $I\leq I_T(\beta)$ and for variables $\boldsymbol{U}=(U_{i,v}, i=1,...,I, v=0,...,\bar v_I)\overset{i.i.d.}{\sim}\mathcal{U}([-1,1])$, we have
\begin{align}\label{objP2_ii}
    \Big\Vert \sum_{v=0}^{\bar v_{-1}}C_0U_{-1,v}\psi_{-1,v} + \sum_{i= 0}^I\sum_{v=0}^{\bar v_i} \log(T)^2 2^{-i/2}U_{i,v}\psi_{i,v} \Big\Vert_\infty \leq r_T,
\end{align}
for some $r_T>0$. Let $\boldsymbol{U}^{(i)} = (U_{i,0}, ...., U_{i, \bar v_i})$. With similar arguments as for case (i), we find that
\begin{align}
    \Big\Vert \sum_{v=0}^{\bar v_{-1}}C_0U_{-1,v}\psi_{-1,v} + \sum_{i= 0}^I\sum_{v=0}^{\bar v_i} \log(T)^2 2^{-i/2}U_{i,v}\psi_{i,v} \Big\Vert_\infty &\lesssim  \log(T)^2\sum_{i= -1}^I \Vert \boldsymbol{U}^{(i)} \Vert_\infty\lesssim \log(T)^3.\nonumber
\end{align}
Whence, choosing $r_T= C\log(T)^3$ for some $C>0$ large enough, (\ref{objP2_ii}) is proved.

Then, to verify (\ref{P2_G}), following the same steps as in case (i), it is sufficient that for some $w_T\rightarrow+\infty$
\begin{align}\label{obj_strong_P2_ii}
    \P\Big( \Vert  h - h^0_{l,k} \Vert_2^2\leq C^2\log(T)^6\varepsilon_T(\beta)^2,  Range(\tilde h)\not\subset U_{2\epsilon_0} \Big)\leq e^{-w_T T\varepsilon_T^2},
\end{align}
and this holds as soon as we have for some $\bar b_0'>0$ chosen large enough
\begin{align}\label{concentration_ref}
    \P\Big( \sum_{i= -1}^I 2^{\max(i,0)}\Vert \boldsymbol{U^{(i)}} \Vert_\infty \geq \frac{\bar b_0'}{\log(T)^8\varepsilon_T(\beta)^2}\Big)\leq \exp(-w_T T\varepsilon_T(\beta)^2).
\end{align}
But, note that $\sum_{i= -1}^I 2^{\max(i,0)}\Vert \boldsymbol{U^{(i)}} \Vert_\infty \lesssim 2^{I_T(\beta)} \lesssim T^{1/(2\beta+1)}\log(T)^\alpha$ for some $\alpha>0$. Moreover, since $\beta >1/2$, for any $\Delta>0$ we have for $T$ enough
\begin{align}
    \frac{1}{\log(T)^8\varepsilon_T(\beta)^2} >\Delta  T^{1/(2\beta+1)}\log(T)^\alpha,\nonumber
\end{align}
so that the probability on the left-hand side of (\ref{concentration_ref}) is equal to $0$ for $T$ large enough. Therefore (\ref{P2_G}) is verified.

Finally, as in case (i), assumption \hyperref[as_A']{(A')} is verified since (\ref{obj_strong_P2_ii}) holds and it terminates the proof of the lemma.
\end{proof}

\section{Proofs and additional results for Section \ref{sec_lfeq}}\label{proof_sec_lfeq}

\subsection{Proof of Lemma \ref{lemma_linf}}\label{sec:proof:gamma_inf}
First, recall that $\Gamma$ is linear and bijective operator of $\R^K\times L_2^{K^2}$.  As $\R^K\times L_\infty^{K^2}\subset \R^K\times L_2^{K^2}$, $\Gamma$ is also a linear bijective operator of $\R^K\times L_\infty^{K^2}$ to $\Gamma(\R^K\times L_\infty^{K^2})$. Moreover, it is shown in the proof of Lemma \ref{lemma_lb_p} that for all $(l,k)\in \K^2$ and $g\in \L_2$ we have $\Vert \zeta_{l,j,k}(g)\Vert_\infty \lesssim\Vert g\Vert_1$. So if $g\in L_\infty $, we have  $\Vert \zeta_{l,j,k}(g)\Vert_\infty \lesssim \Vert g\Vert_\infty$. With this last inequality, the upper and lower bounds on $p_{l,k}$ stated in Lemma \ref{lemma_lb_p} and using the expression of Lemma \ref{explicit_gamma}, one can show that  there exists $c>0$ such that have that for all $(\xi,\bog)\in \R^K\times \L_\infty^{K^2}$, $\Vert \Gamma(\xi, \bog)\Vert_\infty \leq c \Vert (\xi, \bog)\Vert_\infty$. In other words, $\Gamma$ is a bounded operator from $\R^K\times \L_\infty^{K^2}$ to  $\Gamma(\R^K\times \L_\infty^{K^2})\subseteq  \R^K\times \L_\infty^{K^2}$. It remains to show that $\Gamma(\R^K\times \L_\infty^{K^2})=  \R^K\times \L_\infty^{K^2}$. Let $(\xi,\bog) \in \R^K\times \L_\infty^{K^2}$, using again Lemma \ref{lemma_lb_p} and the converse expression (\ref{converse_expr}), one can again show similarly that $\Gamma^{-1}(\xi, \bog)\in \R^K\times \L_\infty^{K^2}$. It shows that $\Gamma^{-1}(\R^K\times \L_\infty^{K^2})\subseteq \R^K\times \L_\infty^{K^2}$ so $\R^K\times \L_\infty^{K^2}\subseteq  \Gamma(\R^K\times \L_\infty^{K^2})$ and we can conclude to the equality of these two spaces.

Now, we turn to the operator $\Gamma^e$ on $\R^K\times \L^{K^2}_\infty(\R)$. First, we have the follwing fact that we call $(*)$ for the remaining of the proof: $(\zeta_{l,j,k}^e(g_{j,k}), (l,j,k)\in \K^3)$  and $(\lambda_A^k(0,\bog_k), k\in \K)$ depends only on the values of the functions $\bog$ for $x\in[0,A]$. With $(*)$ it comes that for $\bog\in \L^{K^2}_\infty(\R)$, $\Gamma^e(\xi,\bog)_{\vert [0,A]} = \Gamma(\xi,\bog_{\vert [0,A]} )$. Moreover, as Lemma \ref{lemma_lb_p} is written for $p^e_{l,k}$ and $\zeta^e_{l,j,k}$, the same argument used for $\Gamma$ shows that $\Gamma^e$ is a bounded linear operator on $\R^K\times \L^{K^2}_\infty(\R)$. Next, we prove that this operator is injective. Let $(\xi^a, \bog^a),(\xi^b, \bog^b) \in \R^K\times \L^{K^2}_\infty(\R) $ such that $\Gamma^e(\xi^b, \bog^b)  =  \Gamma^e(\xi^a, \bog^a)$, in particular
\begin{align}
    \Gamma(\xi^b, \bog^b_{\vert [0,A]}) = \Gamma^e(\xi^b, \bog^b)_{\vert [0,A]}  =\Gamma^e(\xi^b, \bog^b)_{\vert [0,A]} = \Gamma(\xi^a, \bog^a_{\vert [0,A]})\nonumber
\end{align}
and consequently, $(\xi^b, \bog^b_{\vert [0,A]})=(\xi^a, \bog^a_{\vert [0,A]})$ as $\Gamma$ is injective. By $(*)$, for $i\in \{a,b\}$, the restricted solution $(\xi^i,\bog^i_{[0,A]})$ defines for all $(l,j,k)\in \K^3$ the function $\zeta_{l,j,k}^e(g^i_{j,k})$ on the whole real line, and thus $\zeta_{l,j,k}^e(g^a_{j,k})= \zeta_{l,j,k}^e(g^b_{j,k})$. Then, as the functions $p^e_{l,k}$ are bounded away from $0$ and $+\infty$ by Lemma \ref{lemma_lb_p}, we deduce by definition of $\Gamma^e$ that $\bog^a =\bog^b$ on the whole real line, which ends to show that $\Gamma^e$ is injective. It remains to show that it is also surjective. Let $(\xi',\bog')\in \R^K\times \L_\infty^{K^2}(\R)$, since $\Gamma$ is surjective on $\R^K\times \L_\infty^{K^2}$, there exists $(\xi, \bog_A)\in \R^K\times \L_\infty^{K^2}$ such that $(\xi',\bog'_{[0,A]}) = \Gamma(\xi,\bog_A)$. By $(*)$ again, $\bog_A$ defines the function $\zeta^e_{l,j,k}(g_{A,l,k})$ on the whole real line. Let $(\xi,\bog)$ be defined by $(\xi,\bog_{\vert [0,A]})=(\xi, \bog_A)$ and  for $x\in \R\backslash [0,A]$:
\begin{align}
    g_{l,k}(x)= \frac{g'_{l,k}(x) -\mu^0_l\sum_{j=1}^K\zeta^e(g_{A,j,k})(x)}{\mu^0_lp^e_{l,k}(x)} -\xi_k \hspace{0.1cm},\hspace{0.2cm}\forall (l,k)\in K^2\nonumber
\end{align}
Then, $(\xi,\bog)\in \R^K\times \L_\infty^{K^2}(\R)$ and we have $\Gamma^e(\xi,\bog) = (\xi',\bog')$. Thus, $\Gamma^e$ is surjective and by Banach-Schauder theorem, $(\Gamma^e)^{-1}$ is bounded.

\subsection{On the first order Palm distribution} \label{sec:firstorderPalm}

\begin{lemma}\label{moment_measure_palm}
Let $N$ be a stationary multivariate Hawkes process with parameters $f^0\in ]0,+\infty[^K\times \mathcal{H}$, such that $\boh^0\in \L_\infty^{K^2}$. The Palm first moment measure of the process is given by:
\begin{align}
    \E^{(0,l)}_0[N^k(B)] = \onee_{0\in B, l=k} + \int_B m_{l,k}(u) du,\nonumber
\end{align}
with $m_{l,k}$ bounded. Moreover, if the functions $\boh^0$ are in $C^\beta([0,A])$ for some $\beta>0$, then the functions $(m_{l,k}, (l,k)\in \K^2)$ are in $C^\beta(\R)$.
\end{lemma}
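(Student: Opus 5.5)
The plan is to obtain the Palm first moment measure from the second moment measure of the stationary process, via the disintegration (\ref{int_eq_palm}). Since $N$ is stationary with intensity vector $\mu^0=(\mu^0_l)_l$, the first moment measure is $M_N(dl\times dx)=\mu^0_l\,dx$. Applying (\ref{int_eq_palm}) with $\Phi(x,l,N)=\onee_{x\in B_1}N^k(B_2)$ gives
\begin{align}
\E_0\big[N^l(B_1)N^k(B_2)\big]=\mu^0_l\int_{B_1}\E_0^{(x,l)}[N^k(B_2)]\,dx.\nonumber
\end{align}
By (\ref{shift_palm}), $\E_0^{(x,l)}[N^k(B_2)]=\E_0^{(0,l)}[N^k(B_2-x)]$. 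It is therefore enough to identify the reduced second moment measure of the linear stationary Hawkes process. This measure has an atom $\mu^0_l\onee_{l=k}$ at the origin, coming from the diagonal, and an absolutely continuous part with density $\mu^0_l m_{l,k}$, where $m_{l,k}(u)=\mu^0_k+c_{l,k}(u)$ and $c_{l,k}$ is the reduced covariance density.

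Next I would identify $c_{l,k}$ explicitly using the cluster (Poisson branching) representation of \cite{hawkes_oakes}, or equivalently the Bacry--Muzy type formula. Let $\Psi=\sum_{n\ge1}(\boh^0)^{*n}$ be the matrix of iterated convolutions. The series converges in $\L_1$ because $r(\rho^0)<1$. The covariance density is then
\begin{align}
c_{l,k}(u)=\Psi_{l,k}(u)+\Psi_{k,l}(-u)+\sum_{j}\mu^0_j\int\Psi_{j,l}(s)\Psi_{j,k}(s+u)\,ds,\nonumber
\end{align}
up to the normalisation by $\mu^0_l$ that the Palm conditioning introduces. An alternative route avoids the covariance formula: condition on a point of type $l$ at $0$, use the Palm cluster description (Theorem 8.4.18 of \cite{bremaud_book}), and write $m_{l,k}$ as the stationary intensity $\mu^0_k$ plus the descendants' density $\Psi_{l,k}$ plus the ``ancestor'' contribution for negative $u$. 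Both routes give the formula above.

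Boundedness follows from the convolution structure. Since $\boh^0\in\L_\infty$ is supported on $[0,A]$, $\Vert h^{*n}\Vert_\infty\le\Vert h\Vert_\infty\Vert h\Vert_1^{n-1}$ entrywise in matrix form. Taking a matrix norm in which $\rho^0$ is a contraction, the series $\Psi$ converges in $\L_\infty$. The cross term is bounded by $\Vert\Psi\Vert_\infty\Vert\Psi\Vert_1$, so $m_{l,k}$ is bounded.

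For regularity I would use that convolution with an $\L_1$ function preserves $C^\beta$ norms, so each $h^{*n}$ with $n\ge2$ is as smooth as $h^0$ (and in fact smoother). The main obstacle is the discontinuity of $h^0$ at the endpoints $0$ and $A$ of its support: $h^0_{l,k}(0)$ need not vanish, so the first term $\boh^0$ itself is not $C^\beta$ on $\R$. Similarly, $\Psi(u)$ for $u>0$ glues against $\Psi(-u)$ at $u=0$.

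I would handle this by interpreting the $C^\beta(\R)$ claim via the extension convention of Section \ref{sec_lfeq}. First, I would prove Hölder continuity on $[0,\infty)$ and on $(-\infty,0]$ separately. Then I would check that the jumps of $\boh^0$ at $0$ and $A$ propagate only into terms with at least one convolution, which are continuous. The point I expect to require the most care is whether the statement genuinely requires $h^0_{l,k}(0)=h^0_{l,k}(A)=0$. If it does, I would state the result piecewise, which is what Lemma \ref{smooth_palm} actually uses through $\Delta_{u/n}$ bounds.
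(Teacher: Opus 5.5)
Your treatment of the first part is essentially the paper's. The paper also passes through the reduced second moment measure (Proposition 13.2.VI of \cite{DVJ2}) and writes $m_{l,k}=\mu^0_k+\Upsilon_{l,k}/\mu^0_l$. It characterises the reduced covariance density $\Upsilon$ through Hawkes' integral equation rather than through your resolvent series $\Psi=\sum_{n\ge1}(\boh^0)^{*n}$. Your bound $\Vert((\boh^0)^{*n})_{l,k}\Vert_\infty\le(S(\rho^0)^{n-1})_{l,k}$, with $S$ the matrix of sup norms, is a clean way of making the paper's ``direct computations'' for boundedness explicit. The weights in your covariance formula are not a common normalisation, however. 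It should read $c_{l,k}(u)=\mu^0_l\Psi_{l,k}(u)+\mu^0_k\Psi_{k,l}(-u)+\sum_j\mu^0_j\int\Psi_{j,l}(s)\Psi_{j,k}(s+u)\,ds$, with $m_{l,k}=\mu^0_k+c_{l,k}/\mu^0_l$.

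For the regularity part your suspicion is justified: the claim $m_{l,k}\in C^\beta(\R)$ is false in the stated generality, and the paper's ``it is clear from the functional equation'' passes over exactly this point. But your remedy points the wrong way.

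\emph{Where the jumps sit.} The terms with at least one convolution (the $n\ge2$ part of $\Psi$ and the cross term) are continuous, indeed in $C^{\beta\wedge1}_b(\R)$, since $\Vert h(\cdot+\delta)-h\Vert_{\L_1(\R)}\lesssim\delta^\beta+\delta$. The discontinuities of $\boh^0$ at the ends of its support are not smoothed away. They remain in the first-order terms $\mu^0_lh^0_{l,k}(u)$ and $\mu^0_kh^0_{k,l}(-u)$. Consequently $m_{l,k}$ jumps by $-h^0_{l,k}(A)$ at $u=A$, by $\mu^0_kh^0_{k,l}(A)/\mu^0_l$ at $u=-A$, and by $h^0_{l,k}(0)-\mu^0_kh^0_{k,l}(0)/\mu^0_l$ at $u=0$. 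The last jump vanishes when $K=1$ but not in general (take $h^0_{2,1}\equiv0$ and $h^0_{1,2}(0)\neq0$).

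\emph{Why your fix fails.} Splitting only at $0$ does not work, because of the jumps at $\pm A$. The right condition is not $h^0_{l,k}(0)=h^0_{l,k}(A)=0$. The extension convention of Section~\ref{sec_lfeq} cannot help either: $m_{l,k}$ is a functional of $\P_0$, which only sees $\boh^0$ (zero off $[0,A]$), never $\boh^{0,e}$.

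\emph{What is true.} For $\beta\le1$, $m_{l,k}$ is $C^\beta$ on each of $(-\infty,-A]$, $[-A,0]$, $[0,A]$, $[A,\infty)$, with one-sided values at the break points. It lies in $C^\beta(\R)$ exactly when the three jumps vanish, i.e.\ $h^0_{l,k}(A)=h^0_{k,l}(A)=0$ and $\mu^0_lh^0_{l,k}(0)=\mu^0_kh^0_{k,l}(0)$. For $\beta>1$ there is moreover a generic kink at $0$, already for $K=1$, where $m$ is even but $m'(0^+)\neq0$ in general.

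Finally, your remark about where this is used is off. Lemma~\ref{smooth_palm} and the proof of Lemma~\ref{final_lemma_operator} only need $m_{l,k}$ bounded. The Hölder property is invoked only in Section~\ref{sec:pr:beta>1}, for $x\mapsto m_{l,j}(s-x)$ with $s,x\in[0,A]$, i.e.\ with arguments in $[-A,A]$. There the jump and the kink at $0$ are exactly what a corrected argument would have to handle, for instance by splitting the $ds$-integral in (\ref{explicit_zeta}) at $s=x$.
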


For $x\in \R$, by (\ref{shift_palm}), we thus have $\displaystyle \E^{(x,l)}_0[N^k(B)] = \onee_{x\in B, l=k} + \int_B m_{l,k}(u-x) du$.

\begin{proof}[Proof of Lemma \ref{moment_measure_palm}]
We know by proposition 13.2.VI of \cite{DVJ2} that $ \E^{(0,l)}_0[N^k(B)] = \bar M_{l,k}^2(B)/ \mu^0_l$ ,
with $\bar M^2$ is the reduced second moment measure of the Hawkes process. Then, using equation (7) of \cite{hawkes:71:bis}, we obtain that 
\begin{align}
    \bar M_{l,k}^2(B) = \mu^0_l\onee_{0\in B, k=l} + \mu^0_l\mu^0_k\vert B\vert + \int_B \Upsilon_{l,k}( t) dt, \nonumber
\end{align}
with $\Upsilon$ verifying $\Upsilon(-t) = \Upsilon(t)^T$ and satisfying the following functional equation:
\begin{align}
    \Upsilon(t) = \boh^{0,T}(t) D(\mu^0) + (\boh^{0,T}\star \Upsilon)(t) \hspace{0.3cm},\hspace{0.5cm} t>0, \nonumber
\end{align}
where $D(\mu^0)$ is the diagonal matrix with $\mu^0$ as diagonal vector and $\star$ is the matrix product where all the multiplications are replaced by convolutions. Using that the functions $h^0_{l,k}$ are bounded, direct computations show that the functions $\Upsilon_{l,k}$ are also bounded. Whence, 
 $\E^{(0,l)}_0[N^k(.)]$ is the sum of the Dirac  $\onee\{0\in ., k=l\}$ and a measure absolutely continuous with respect to Lebesgue measure with a bounded Radon–Nikodym derivative given by  $ m_{l,k}(t):= \mu_k^0 + \Upsilon_{l,k}(t)/\mu_l^0$. Finally, given the functional equation verified by $\Upsilon$, it is clear that when the functions $\boh^0$ are in $C^\beta([0,A])$, the functions $(m_{l,k}, (l,k)\in \K^2)$ are in $C^\beta(\R)$.
\end{proof}

\subsection{Boundedness and smoothness of $p^e_{l,k}$ and $\zeta^e_{l,j,k}$} \label{sec:smooth:p}

In this section, we prove two lemmas on the functions $p^e_{l,k}$ and the operators $\zeta^e_{l,j,k}$ defined by (\ref{def_plk_e}) and (\ref{def_zeta_e}) respectively. Both depend on the extended functions $\boh^{0,e}$ and we recall that these functions are bounded on $\R$ and for all $(l,k)\in \K^2$, $h^{0,e}_{l,k}\geq -\nu^0_k/2$.

\begin{lemma}\label{lemma_lb_p}
For all $(l,k)\in \K^2$, the function $p_{l,k}^e$ is upper bounded and lower bounded by some positive constant $c_{l,k}$. Moreover, there exists $C>0$ such that for any $g:\R\rightarrow\R$ such that $g_{\vert [0,A]}$ is in $\L_2([0,A])$, for all $(l,k,j)\in \K^3$,  $\Vert \zeta_{l,j,k}^e(g)\Vert_\infty \leq C \Vert g_{\vert [0,A]}\Vert_1$. 
\end{lemma}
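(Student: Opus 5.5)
Three bounds are needed: an upper bound and a lower bound on $p^e_{l,k}$, and an $L_\infty$ bound on $\zeta^e_{l,j,k}(g)$ in terms of $\Vert g_{\vert[0,A]}\Vert_1$. Each is handled separately, using the lower bound $\lambda_A^{k,e}(f^0_k,(u,l))\geq \nu^0_k/2$ from (\ref{def_lambda_e}) together with the Palm moment bounds of Appendix \ref{appendix_palm} and Lemma \ref{moment_measure_palm}.

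\emph{Upper bound on $p^e_{l,k}$.} Since $\lambda_A^{k,e}(f^0_k,(u,l))\geq \nu^0_k/2$ almost surely under every Palm distribution, we get immediately $p^e_{l,k}\leq 2/\nu^0_k$.

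\emph{Lower bound on $p^e_{l,k}$.} Jensen's inequality gives
$$p^e_{l,k}(A-u)\geq 1/\E_0^{(u,l)}[\lambda_A^{k,e}(f^0_k,(u,l))],$$
so it suffices to bound the Palm expectation of the intensity uniformly in $u$. By definition,
$$\lambda_A^{k,e}\leq \nu^0_k+\Vert \boh^{0,e}\Vert_\infty\Big(1+\sum_{j}N^j([0,A[)\Big),$$
where the extra $1$ accounts for the atom at $(u,l)$ and for the term $h^{0,e}_{l,k}(A-u)\onee_{u\notin[0,A[}$. By Lemma \ref{moment_measure_palm} and the shift formula (\ref{shift_palm}),
$$\E_0^{(u,l)}[N^j([0,A])]\leq 1+A\max_{l,j}\Vert m_{l,j}\Vert_\infty,$$
uniformly in $u$. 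The expectation is therefore bounded by a constant, and $p^e_{l,k}\geq c_{l,k}>0$.

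\emph{Bound on $\zeta^e_{l,j,k}$.} Using again $1/\lambda_A^{k,e}\leq 2/\nu^0_k$, we obtain
$$|\zeta^e_{l,j,k}(g)(A-u)|\leq \frac{2}{\nu^0_k}\,\E_0^{(u,l)}\Big[\int_0^A \onee_{(s,j)\neq(u,l)}|g(A-s)|\,dN^j_s\Big].$$
The indicator removes exactly the atom at $(u,l)$. By Lemma \ref{moment_measure_palm} and (\ref{shift_palm}), the remaining Palm first moment measure of $N^j$ on $[0,A]$ has density $m_{l,j}(s-u)$ with respect to Lebesgue measure. The right-hand side is therefore at most
$$\frac{2}{\nu^0_k}\Vert m_{l,j}\Vert_\infty\int_0^A|g(A-s)|\,ds=C\Vert g_{\vert[0,A]}\Vert_1.$$
This holds uniformly in $u\in\R$, which is the claimed $L_\infty$ bound.

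\emph{Main obstacle.} The one delicate point is justifying that removing the self-point leaves a first moment measure that is absolutely continuous with bounded density. This follows directly from the form of the Palm first moment measure given in Lemma \ref{moment_measure_palm}: it is the Dirac at the conditioning point plus $m_{l,k}\,du$ with $m_{l,k}$ bounded. One must also handle measurability and integrability so that Fubini/Campbell arguments apply to nonnegative integrands. This is done by first treating $|g|$ and then passing to $g$, which is legitimate since $g_{\vert[0,A]}\in\L_2\subset\L_1$.
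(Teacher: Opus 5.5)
Your proof is correct and follows essentially the same route as the paper: the upper bound comes from $\lambda_A^{k,e}\geq \nu^0_k/2$, the lower bound from Jensen plus the uniform bound on $\E_0^{(u,l)}[N([0,A])]$ given by Lemma \ref{moment_measure_palm} and (\ref{shift_palm}), and the $\zeta^e_{l,j,k}$ bound from removing the atom at $(u,l)$ and using the bounded density $m_{l,j}$. The only cosmetic difference is in the lower bound: you apply Jensen first and then bound $\E_0^{(u,l)}[\lambda_A^{k,e}]$, bounding the extension term by $\Vert \boh^{0,e}\Vert_\infty$, whereas the paper bounds $1/\lambda_A^{k,e}$ from below first and then applies Jensen.
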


\begin{proof}[Proof of Lemma \ref{lemma_lb_p}.]
Let $(l,k,j)\in \K^3$. We begin with the function $p_{l,k}^e$. Clearly, $p_{l,k}^e \leq 2/\nu^0_k$ which gives the upper bounded. For the lower bound, we have by Jensen inequality, formula (\ref{shift_palm}) and Lemma \ref{moment_measure_palm}:
\begin{align}
    p_{l,k}^e(A-u) = \E_0^{(u,l)}\bigg[\frac{1}{ \lambda_A^{k,e}(f^0_k,(u,l))}\bigg]&\geq\E_0^{(u,l)}\bigg[\frac{1}{2\Vert f^0\Vert_\infty+ \Vert \boh^0_k\Vert_\infty N([0,A])}\bigg]\nonumber \\
    &\geq \Big(2\Vert f^0\Vert_\infty + \Vert \boh^0_k\Vert_\infty \E^{(u,l)}_0[N([0,A])]\Big)^{-1}\nonumber \\
    &= \Big(2\Vert f^0\Vert_\infty + \Vert \boh^0_k\Vert_\infty \E^{(0,l)}_0[N([-u,A-u])]\Big)^{-1}\nonumber \\
    &\geq \Big(2\Vert f^0\Vert_\infty + \Vert \boh^0_k\Vert_\infty\big(1+A\sum_{k=1}^K\Vert m_{l,k}\Vert_\infty\Big)^{-1}=:c_{l,k}.\nonumber 
\end{align}
So, we have shown that $p_{l,k}^e \geq c_{l,k}>0$.

For the functions $\zeta_{l,j,k}^e(g)$, with again Lemma \ref{moment_measure_palm} we have
\begin{align}
    \vert \zeta_{l,j,k}^e(g)(A-u)\vert &\leq \frac{2}{\nu^0_k} \E^{(u,l)}_0\bigg[ \int_0^A \onee_{(s,j)\neq (u,l)} \vert g(A-s)\vert dN_s^j\bigg] \nonumber\\
    &\leq \frac{2}{\nu^0_k}\E^{(0,l)}_0\bigg[ \int_{-u}^{A-u} \onee_{(s,j)\neq (0,l)} \vert g(A-u-s)\vert dN_s^j\bigg] \nonumber\\
    &\leq\frac{2}{\nu^0_k} \int_{-u}^{A-u} \onee_{(s,j)\neq (0,l)} \vert g(A-u-s)\vert m_{l,j}(s)ds \leq \frac{2\Vert m_{l,j}\Vert_\infty}{\nu^0_k} \Vert g_{\vert [0,A]}\Vert_1,\nonumber
\end{align}
and it proves the result.
\end{proof}

The next lemma is on the smoothness of the functions $p_{l,k}^e$ when it is assumed that the functions $\boh^0$ are in $C^\beta([0,A])$. We recall that in this case, the extended functions $\boh^{0,e}$ are in $C^\beta(\R)$.

\begin{lemma}\label{smooth_palm}
If the functions $h^0_{l,k}$ belong to $C^\beta([0,A])$ for some  $\beta \in ]0,1]$, then the functions $p_{l,k}^e$  belong to $C_b^\beta(\R)$.
\end{lemma}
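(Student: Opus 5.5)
Boundedness of $p^e_{l,k}$ is already given by Lemma \ref{lemma_lb_p}, so only the $\beta$-Hölder seminorm has to be controlled, uniformly over $\R$. I will compare $p^e_{l,k}$ at two nearby points by moving the increment onto the evaluation time with the stationarity formula (\ref{shift_palm}). Fix $x\in\R$ and $0<\delta\le 1$, and write $x=A-u$. By (\ref{shift_palm}) applied to $\P^{(u+\delta,l)}_0$,
\begin{align*}
p^e_{l,k}(x-\delta)=\E_0^{(u,l)}\Big[\Big(\lambda_{A-\delta}^k(f^0_k)+h^{0,e}_{l,k}(A-\delta-u)\onee_{u\notin[A-\delta-A,A-\delta[}\Big)^{-1}\Big].
\end{align*}
Both $p^e_{l,k}(x)$ and $p^e_{l,k}(x-\delta)$ are then expectations under the same Palm law $\P_0^{(u,l)}$. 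Their difference is the expectation of a difference of reciprocals. Each denominator is at least $\nu^0_k/2$, since $h^{0,e}\ge -\nu^0_k/2$ and $\lambda^k\ge\nu^0_k$. So
\begin{align*}
|p^e_{l,k}(x)-p^e_{l,k}(x-\delta)|\le \frac{4}{(\nu^0_k)^2}\,\E_0^{(u,l)}\big[|\Delta_\delta(k,u,l)|\big],
\end{align*}
where $\Delta_\delta$ is the difference of the two extended intensities. This is the quantity named (\ref{def_delta}) and reused in the proof of Lemma \ref{final_lemma_operator}.

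I will decompose $\Delta_\delta$ term by term. For points $s$ of $N^i$, excluding the Palm point $(u,l)$, that lie in $[\delta,A[$, the contribution is $\sum_i\int (h^0_{i,k}(A-s)-h^0_{i,k}(A-\delta-s))dN^i_s$. The Hölder property of $\boh^0$ on $[0,A]$ bounds it by $\delta^\beta$ times the number of such points. Points in the boundary strips $[0,\delta[$ and $[A-\delta,A[$ enter only one of the two intensities, so they cost at most $\Vert\boh^0\Vert_\infty$ times the number of points there. The Palm atom at $u$ needs separate care, because it switches between the $\lambda^k$ part and the extension part. When $u$ stays on the same side of the window in both expressions, the extension $h^{0,e}_{l,k}\in C^\beta_b(\R)$, or $h^0_{l,k}$ itself, gives a $\delta^\beta$ bound. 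When $u$ lies in a boundary strip, I use the facts that $h^{0,e}_{l,k}$ is continuous with $C^\beta$ norm on $\R$ and that it agrees with $h^0_{l,k}$ on $[0,A]$. Then $|h^{0,e}_{l,k}(A-u)-h^{0}_{l,k}(A-\delta-u)|\lesssim\delta^\beta$, because both arguments are within $\delta$ of each other. The extension from Whitney's theorem is needed exactly here.

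Taking expectations with Lemma \ref{moment_measure_palm} gives $\E_0^{(u,l)}[N^i(B\setminus\{u\})]\le \Vert m_{l,i}\Vert_\infty|B|$, uniformly in $u$. Hence
\begin{align*}
\E_0^{(u,l)}[|\Delta_\delta|]\lesssim \delta^\beta(1+A)+\delta\lesssim\delta^\beta,
\end{align*}
uniformly in $x$. This yields $p^e_{l,k}\in C^\beta_b(\R)$ for $\beta\le 1$. The main obstacle is the bookkeeping around the Palm atom and the window edges, namely making sure no term of order $1$ survives when $u$ crosses $A-\delta$ or $0$. Continuity of the extension across the endpoints of $[0,A]$ is what handles this.
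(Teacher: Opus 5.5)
Your proposal is correct and follows essentially the same argument as the paper. You use (\ref{shift_palm}) to put both values under the single Palm law $\P_0^{(u,l)}$, bound the difference of reciprocals by a constant times $\E_0^{(u,l)}[|\Delta_\delta(k,u,l)|]$, split $\Delta_\delta$ into interior Hölder increments, boundary-strip counts and the Palm-atom term, and conclude with the bounded Palm first-moment density of Lemma \ref{moment_measure_palm}. The paper handles the atom in one stroke: since $h^{0,e}_{l,k}=h^0_{l,k}$ on $[0,A]$, the atom always contributes $h^{0,e}_{l,k}(A-u)$, resp.\ $h^{0,e}_{l,k}(A-\delta-u)$.

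One small slip: when comparing $\lambda^k_A$ with $\lambda^k_{A-\delta}$, the common window is $[0,A-\delta[$ and the strips are $[-\delta,0[$ and $[A-\delta,A[$, not $[\delta,A[$ and $[0,\delta[$. With your windows the interior increments would cross the jump of $h^0_{j,k}$ at $0$; with the corrected windows the same bound $\delta^\beta+\delta$ follows.
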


\begin{proof}[Proof of Lemma \ref{smooth_palm}.]  By Lemma \ref{lemma_lb_p}, we already know that the functions $p_{l,k}^e$ are in $L_\infty(\R)$, we just have to show that they are also in $C^\beta(\R)$. Let $(l,k)\in \K^2$ and  $x,y\in \R$.  Without loss of generality, assume that $x<y$. First, if $y-x>A$, because $p_{l,k}^e$ is bounded, we have directly $ \vert p_{l,k}^e(y) - p_{l,k}^e(x)\vert \lesssim \vert y-x\vert$. Now, we consider that $y-x\leq A$. For $t\in \R$, let
\begin{align}\label{def_lambda!}
    \lambda_t^{k}(f^0_k, !(x,l)):= \nu^0_k + \sum_{j=1}^K\int_{t-A}^t\onee_{(u,j)\neq  (x,l)} h^0_{j,k}(t-u)dN_u^j.
\end{align}
Since $N$ has a fixed atom at $(x,l)$ under $\P^{(x,l)}$  (see Appendix \ref{appendix_palm}), we have under $\P^{(x,l)}$
\begin{align}
    \lambda_t^{k,e}(f^0_k,(x,l)) = \lambda_t^k(f^0_k, !(x,l)) + h^0_{l,k}(t-x)\onee_{x\in [t-A,t)}+ h^{0,e}_{l,k}(t-x)\onee_{x\notin [t-A,t)}, \nonumber
\end{align}
and thus
\begin{align}
    p_{l,k}^e(A-x)=\E_0^{(x,l)}\bigg[\frac{1}{\lambda_A^k(f^0_k,!(x,l))+ h^{0,e}_{l,k}(A-x) }\bigg].\nonumber
\end{align}
Then, using  in addition formula (\ref{shift_palm}), we also have:
\begin{align}
    p_{l,k}^e(A-y) &= \E^{(y,l)}_0\bigg[\frac{1}{\lambda_A^k(f^0_k)+ h^{0,e}_{l,k}(A-y)\onee_{y\notin [0,A[}}\bigg]\nonumber\\
    &=\E^{(x,l)}_0\bigg[\frac{1}{\lambda_{A+x-y}^k(f^0_k)+h^{0,e}_{l,k}(A-y)\onee_{y\notin [0,A[}}\bigg]\nonumber\\
    &= \E^{(x,l)}_0\bigg[\frac{1}{\lambda_{A+x-y}^k(f^0_k, !(x,l))+ h^{0,e}_{l,k}(A-y)}\bigg].\nonumber
\end{align}
For $\delta\in \R$, let 
\begin{align}\label{def_delta}
    \Delta_\delta(k,x,l)&:=  \lambda_{A}^{k,e}(f^0_k,(x,l)) - \lambda_{A-\delta}^{k,e}(f^0_k,(x,l)),
\end{align}
and note that under $\P^{(x,l)}$, when $\delta>0$
\begin{equation}\label{decomp_delta}
\begin{aligned}
    &\Delta_\delta(k,x,l)=  h^{0,e}_{l,k}(A-x)- h^{0,e}_{l,k}(A-x-\delta)+ \lambda_{A}^k(f^0_k, !(x,l))-\lambda_{A-\delta}^k(f^0_k, !(x,l))\\
    &=  \big(h^{0,e}_{l,k}(A-x) - h^{0,e}_{l,k}(A-x-\delta)\big) \\
     &+\sum_{j=1}^K\int_{0}^{A-\delta}\onee_{(s,j)\neq (x,l)} (h^0_{j,k}(A-s) - h^0_{j,k}(A-s-\delta))dN^j_s \\ 
                    &+\sum_{j=1}^K-\int_{-\delta}^0 \onee_{(s,j)\neq (x,l)} h^0_{j,k}(A-s-\delta)dN^j_s+ \sum_{j=1}^K\int_{A-\delta}^A \onee_{(s,j)\neq (x,l)} h^0_{j,k}(A-s)dN^j_s\\
\end{aligned}
\end{equation}
and if $\delta<0$, the same decomposition holds after adapting the limits of integration. Since the functions $h^{0,e}_{l,k}$ are $\beta$-Hölder on $\R$, we find
\begin{align}
    &\vert p_{l,k}^e(A-y) - p_{l,k}^e(A-x)\vert\lesssim\nonumber\E_0^{(x,l)}\Big[ \vert\Delta_{y-x}(k,x,l)\vert \Big] \\
    &\hspace{0.5cm}\lesssim \vert y-x\vert^\beta+ \E_0^{(x,l)}\Big[\big\vert  \lambda_A^k(f^0_k,!(x,l)) - \lambda_{A+x-y}^k(f^0_k,!(x,l))\big \vert \Big] \nonumber\\
    &\hspace{0.5cm}\lesssim \vert y-x\vert^\beta +  \vert y-x\vert^\beta \E_0^{(x,l)}\bigg[\sum_{j\neq l}N^j([0, A+x-y]) + N^l([0, A+x-y]\backslash\{x\})\bigg] \nonumber\\
    &\hspace{0.5cm}\hspace{0.1cm}+ \Vert \boh^0\Vert_\infty\E_0^{(x,l)}\bigg[\sum_{j\neq l}N^j([x-y, 0]\cup [A+x-y,A]) + N^l\big(([x-y, 0]\cup [A+x-y,A])\backslash\{x\}\big)\bigg]\nonumber\\
    &\hspace{0.5cm}\lesssim \vert y-x\vert^\beta + \vert y-x\vert, \nonumber
\end{align}
where for the last inequality we have used Lemma \ref{moment_measure_palm} on the Palm first moment measure. It ends to prove that the function $p^e_{l,k}$ is in $C^\beta(\R)$.
\end{proof}

\subsection{Extension to the case $\beta>1$}\label{sec:pr:beta>1}
In this section we aim to prove that when functions $\boh^0$ and $\bog^0_2$ are in $C^\beta([0,A])$ for some $\beta>1$, then the functions $\bog^0_L$ are also in $C^\beta([0,A])$ (it extends corollary \ref{g0_palm_holder} to the case $\beta>1$). We recall that $\bog^0_L$ verifies
\begin{align}
    g^0_{L,l,k} = \frac{g^0_{2,l,k} - \mu^0_l\sum_{j=1}^K \zeta_{l,j,k}(g^0_{L,j,k}) }{\mu^0_l p_{l,k}} - \xi^0_{L,k} \hspace{0.1cm},\hspace{0.2cm}\forall (l,k)\in \K^2.\nonumber
\end{align}
 We want to show, using the second order Palm distribution, that the functions $p_{l,k}$ and $\zeta_{l,j,k}(g^0_{L,j,k})$ are in $C^\beta([0,A])$ and since the functions $p_{l,k}$ are bounded away from $0$ (Lemma \ref{lemma_lb_p}), it would prove the desired result. In particular, by applying (\ref{iter_palm}), we obtain for the most problematic term $\zeta_{l,j,k}(g^0_{L,j,k})$ a more explicit expression involving the second order Palm distribution:
\begin{align}\label{explicit_zeta}
    \zeta_{l,j,k}(g^0_{L,j,k})(A-x)= \int_0^{A}g^0_{L,j,k}
    (A-s)\E_0^{(x,s,l,j)}\Big[ \frac{1}{\lambda_A^k(f^0_k)} \Big] m_{l,j}(s-x)ds,
\end{align}
where we recall that $\E^{(x,u,l,j)}_0$ is an expectation under the second order Palm distribution (see the Appendix \ref{appendix_palm}). However, our approach requires certain regularity properties on the second order Palm distribution that we have not been able to establish but that we believe to hold. We conjecture these regularity properties.

Before stating the conjecture, we recall and introduce some notations. We denote by $\Vert . \Vert_{C^\beta}$ the usual Hölder norm on $[0,A]$. A map $x\mapsto \phi(x,s)$ will be sometimes written for short $\phi(\cdot{,}s)$. Furthermore, we denote by $\partial h$ and $\partial^m h$ the first derivative and the $m$-th derivative of a function $h$, respectively. Recall the notation defined $\lambda_A^k(f^0_k, !(s,j))$ by (\ref{def_lambda!}). Let $x,s$ in a neighborhood of $[0,A]$, $(l,j,k)\in\K^3$, $r\in \mathbb{N}$, and for $r\geq 1$ $0 \leq i\leq r-1$ and $c\geq 0$, we define
\begin{align}
    &p^{[2]}_{r,k}(x,s,l,j):=\E^{(x,s,l,j)}_0\bigg[\frac{1}{\lambda_{A}^{k}(f^0_k)^r}\bigg],\nonumber\\
    &p^{![2]}_{r,i,c,k}(x,s,l,j) := \E^{(x,s,l,j)}_0\bigg[\frac{1}{\lambda_{A}^{k}(f^0_k,!(s,j))^{r-i}\big(\lambda_{A}^{k}(f^0_k,!(s,j))+c\big)^{i+1}}\bigg]\nonumber.
\end{align}
In the following conjecture, the regularity properties must hold for any $(l,j,k)\in \K^3$, $r\in \{1,..., \lfloor\beta\rfloor\}$, $i\in \{0,...,r-1\}$ and $c\geq 0$.

\begin{conjecture}\label{conj_beta1}
Consider that the functions $\boh^0$ are in $C^\beta([0,A])$ for some $\beta>1$.  We conjecture that for all $x\in [0,A]$, there exists a version of the function $s\mapsto p^{![2]}_{r,i,c,k}(x,s,l,j)$ which is continuous at $A$ and at $0$. Moreover, we conjecture that there are versions of the functions $x\mapsto p^{![2]}_{r,i,c,k}(x,A,l,j)$ and $x\mapsto p^{![2]}_{r,i,c,k}(x,0,l,j)$ that are in $C^{\beta}([0,A])$, and that
\begin{align}
    \int_0^A \Vert p^{[2]}_{r,k}(\cdot{,}s,l,j)\Vert_{C^{\beta}} \hspace{0.1cm}ds <+\infty.\nonumber
\end{align}

\end{conjecture}

Since the second order Palm distribution is symmetric in $(x,s)$,  we think that if one can show that the functions $x\mapsto \E^{(x,s,l,j)}_0[\lambda_{A}^{k}(f^0_k)^{-1}]$ are $\beta$-Hölder, then one can show similarly that this conjecture holds. Note that this conjecture holds in the Poisson case (namely when $\boh^0=0$) since the second order Palm distribution of a Poisson process at $(x,s,l,j)$ is just the convolution between the original Poisson process, a Dirac measure at $(x,l)$ and a Dirac measure at $(s,j)$ (see Lemma 6.15 of \cite{kal} for instance).

Under Conjecture \ref{conj_beta1}, with (\ref{explicit_zeta}) and since by Lemma \ref{moment_measure_palm} there exists $L>0$ such that for all $s\in [0,A]$ the function $x\rightarrow m_{l,j}(s-x)$ is $(L,\beta)$-Hölder, it is clear that the functions $\zeta_{l,j,k}(g^0_{L,j,k})$ are in $C^\beta([0,A])$. Then, we show that the functions $p_{l,k}$ are also in $C^\beta([0,A]) $ under this conjecture. 
\begin{lemma}\label{smooth_palm_beta1}
If the functions $\boh^0$ are in $C^\beta([0,A])$ for some $\beta >1$, then, under Conjecture \ref{conj_beta1}, the functions $(p_{l,k}, (l,k)\in \K^2)$ are also in $C^\beta([0,A])$.
\end{lemma}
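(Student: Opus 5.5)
We need to show $p_{l,k}(A-x)=\E_0^{(x,l)}[1/\lambda_A^k(f^0_k)]$ is $C^\beta$ on $[0,A]$ for $\beta>1$. The plan is to differentiate in $x$ under the Palm expectation, using the shift formula (\ref{shift_palm}), and to express the derivatives through second order Palm quantities via the iteration principle (\ref{iter_palm}). We write $\beta=m+\alpha$ with $m=\lfloor\beta\rfloor\geq 1$ and proceed by induction on the number of derivatives.

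\textbf{Step 1: isolate the dependence on the atom.} Under $\P^{(x,l)}_0$, decompose as in the proof of Lemma \ref{smooth_palm}:
\begin{align}
\lambda_A^k(f^0_k)=\lambda_A^k(f^0_k,!(x,l))+h^0_{l,k}(A-x).\nonumber
\end{align}
By (\ref{shift_palm}), this gives
\begin{align}
p_{l,k}(A-x)=\E_0^{(0,l)}\Big[\big(\lambda_{A-x}^k(f^0_k,!(0,l))+h^0_{l,k}(A-x)\big)^{-1}\Big],\nonumber
\end{align}
so the dependence on $x$ enters in two ways: through the explicit term $h^0_{l,k}(A-x)$, which is $C^\beta$, and through the time index $A-x$ of the reduced intensity.

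\textbf{Step 2: differentiate the time index.} The reduced intensity equals $\nu^0_k+\sum_j\int h^0_{j,k}(A-x-s)\,dN^j_s$, with the atom at $0$ removed. Differentiating in $x$ produces two contributions.
\begin{itemize}
\item A bulk term $-\sum_j\int \partial h^0_{j,k}(A-x-s)\,dN^j_s$, weighted by $(\cdot)^{-2}$.
\item Boundary terms coming from points entering or leaving the window $[A-x-A,A-x)$, i.e. points at $s=-x$ and $s=A-x$. These carry $h^0_{j,k}(A)$ and $h^0_{j,k}(0)$ respectively.
\end{itemize}
Because the integrals are against $dN^j_s$ under the first-order Palm measure, we apply (\ref{iter_palm}) to both contributions. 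The bulk term becomes an integral $\int \partial h^0_{j,k}(A-x-s)\,p^{[2]}_{2,k}$-type quantities times $m_{l,j}(s)\,ds$. The boundary terms become evaluations of second order Palm expectations at $s=0$ or $s=A$ relative to the window. In these evaluations the added point contributes a jump $c=h^0_{j,k}(0)$ or $h^0_{j,k}(A)$, which produces exactly expressions of the form $p^{![2]}_{r,i,c,k}(x,0,l,j)$ and $p^{![2]}_{r,i,c,k}(x,A,l,j)$. The continuity at $0$ and $A$ assumed in Conjecture \ref{conj_beta1} justifies taking these boundary limits, so $x\mapsto p_{l,k}(A-x)$ is differentiable. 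We verify this rigorously as in Lemma \ref{smooth_palm}, via difference quotients controlled by Lemma \ref{moment_measure_palm} and (\ref{bound_mom_palm_second}), followed by dominated convergence.

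\textbf{Step 3: higher derivatives and Hölder regularity of the last one.} Iterating Step 2 $m$ times yields a finite sum of terms of three kinds.
\begin{itemize}
\item Products of derivatives $\partial^{i}h^0_{l,k}(A-x)$ with first-order Palm expectations of negative powers $r\leq m$, treated like $p^e_{l,k}$ in Lemma \ref{smooth_palm}.
\item Integrals $\int \partial^{i}h^0_{j,k}(A-x-s)\,p^{[2]}_{r,k}(x,s,l,j)\,m_{l,j}(s-x)\,ds$.
\item Boundary terms $p^{![2]}_{r,i,c,k}(x,\cdot,l,j)$ evaluated at $0$ and $A$.
\end{itemize}
Only powers $r\leq\lfloor\beta\rfloor$ occur, which matches the range covered by the conjecture. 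It then remains to show that the $m$-th derivative is $\alpha$-Hölder. The boundary terms are $C^\beta$ by the conjecture. The integral terms are Hölder because $\int_0^A\|p^{[2]}_{r,k}(\cdot,s,l,j)\|_{C^\beta}\,ds<\infty$, $m_{l,j}(\cdot-x)$ is $C^\beta$ by Lemma \ref{moment_measure_palm}, and $\partial^{i}h^0\in C^{\beta-i}$.

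The main obstacle is the bookkeeping of the boundary terms in Step 2 when iterating. After the first derivative, each new differentiation of a boundary term hits both its $x$-dependence, handled by the conjecture, and possibly further window endpoints, which produce nested second-order evaluations. We first handle the case $\beta\in(1,2]$ explicitly, where only one differentiation is needed, and then argue inductively that every term produced lies in the class controlled by Conjecture \ref{conj_beta1}.
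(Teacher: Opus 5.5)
Your plan is essentially the paper's argument: you differentiate once through the bulk/boundary split of $\Delta^{\pm}_\delta$, use the iteration principle (\ref{iter_palm}) to produce $p^{[2]}$ and $p^{![2]}$ terms whose $x$-regularity is assumed in Conjecture \ref{conj_beta1}, and run a recursion on the power $r$ of $p^{[1]}_{r,k}$ anchored by the Lipschitz bound of Lemma \ref{lemma_powerm} (your ``treated like $p^e_{l,k}$''). The obstacle you anticipate does not arise, and two details need adjusting: once the integral terms are written with $s\in[0,A)$ fixed and $x$ entering only through $p^{[2]}_{r,k}(x,s,l,j)$, $m_{l,j}(s-x)$ and $p^{![2]}_{r,i,c,k}(x,0\text{ or }A,l,j)$, they are never re-differentiated by hand; only $\partial h^0_{l,k}(A-x)\,p^{[1]}_{2,k}(x,l)$ is, so powers up to $\lfloor\beta\rfloor+1$ do appear, contrary to your claim; and you should justify the boundary limits by the Palm orderliness of Lemma \ref{orderliness_palm}, i.e.\ by (\ref{second_palm_first_bound}), since (\ref{bound_mom_palm_second}) alone does not suffice.
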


To prove this lemma, we introduce some notations to shorten the computations. For a random variable $X_\delta$ that depends on some $\delta\in \R$, we will denote $X_\delta= O_{\E^{(x,l)}}(\delta)$ and $X_\delta= o_{\E^{(x,l)}}( \delta)$ when, at $\delta=0$, $\E_0^{(x,l)}[\vert X_\delta\vert ] = O( \delta)$ and $\E_0^{(x,l)}[\vert X_\delta \vert ] = o( \delta)$ respectively. For $x\in \R$, $(l,k)\in \K^2$ and $r\in \{1,...,\lfloor\beta\rfloor\}$, we set
\begin{align}\label{def_p1}
    p^{[1]}_{r,k}(A-x,l)= \E^{(x,l)}_0\bigg[\frac{1}{\lambda_{A}^{k}(f^0_k)^r}\bigg].
\end{align}
Note that $ p^{[1]}_{1,k}(x,l) = p_{l,k}(x)$. Finally, we set 
\begin{align}
    &p^{![2],0}_{k}(x,s,l,j):= p^{![2]}_{1,0,h^0_{j,k}(0),k}(x,s,l,j)\hspace{0.3cm}\text{and}\hspace{0.3cm} p^{![2],A}_{k}(x,s,l,j):= p^{![2]}_{1,0,h^0_{j,k}(A),k}(x,s,l,j).\nonumber
\end{align}
\begin{proof}[Proof of Lemma \ref{smooth_palm_beta1}]  We first  differentiate the functions $p_{l,k}$. Let $(l,k)\in \K^2$, $x\in [0,A]$ and $ \delta\in \R$ with $\delta >0$ if $x=0$ and $\delta <0$ if $x=A$ and $\delta$ is small enough so that $x+\delta\in [0,A]$. With similar computations as in the proof of Lemma \ref{smooth_palm} and since the functions $h^{0}_{l,k}$ are $\beta$-Hölder, we find that for $\delta>0$
\begin{align}
    p_{l,k}(A-x-\delta) - p_{l,k}(A-x)= \E^{(x,l)}_0\bigg[\frac{ \Delta^+_\delta(k,x,l) }{\lambda_{A-\delta}^{k}(f^0_k)\lambda_{A}^{k}(f^0_k)}\bigg],\nonumber
\end{align}
with
\begin{align}
    &\Delta^+_\delta(k,x,l) = \underbrace{\delta \partial h^{0}_{l,k}(A-x)+
   \delta     \sum_{j=1}^K            \int_{0}^{A-\delta}\onee_{(s,j)\neq (x,l)} \partial h^0_{j,k}(A-s) dN^j_s}_{\Delta_\delta(\partial\boh^{0},l,k)}\nonumber \\
    & -\sum_{j=1}^Kh^0_{j,k}(A)\int_{-\delta}^0\onee_{(s,j)\neq (x,l)} dN^j_s+ h^0_{j,k}(0)\int_{A-\delta}^A \onee_{(s,j)\neq (x,l)}dN^j_s + O_{\E^{(x,l)}}(\delta^{\beta_d})\nonumber,
\end{align}
where $\beta_d =\min(2,\beta)>1$. If $\delta<0$, the same decomposition holds after replacing $\Delta^+_\delta(k,x,l)$  by $\Delta^-_\delta(k,x,l)$ given by
\begin{align*}
    &\Delta^-_\delta(k,x,l)=\delta \partial h^{0}_{l,k}(A-x)
   + \delta  \sum_{j=1}^K               \int_{-\delta}^{A^-}\onee_{(s,j)\neq (x,l)} \partial h^0_{j,k}(A-s) dN^j_s\nonumber \\ & +\sum_{j=1}^Kh^0_{j,k}(A)\int_0^{-\delta}\onee_{(s,j)\neq (x,l)} dN^j_s- h^0_{j,k}(0)\int_{A}^{A-\delta} \onee_{(s,j)\neq (x,l)}dN^j_s+ O_{\E^{(x,l)}}(\delta^{\beta_d}).\nonumber
\end{align*}
Using again that the functions $h^{0}_{l,k}$ are $\beta$-Hölder and (\ref{bound_mom_palm_second}), we find when $\delta>0$
\begin{equation}\label{decomp_p1}
\begin{aligned}
     &p_{l,k}(A-x-\delta) - p_{l,k}(A-x) =
     \E_0^{(x,l)}\bigg[ \frac{ \Delta_\delta(\partial\boh^{0},l,k)}{\lambda_A^{k}(f^0_k)^2} \bigg] \\
     & - \sum_{j=1}^Kh^0_{j,k}(A)  \E_0^{(x,l)}\bigg[ \frac{ \int_{-\delta}^{0^-}\onee_{(s,j)\neq (x,l)} dN^j_s }{\lambda_A^{k}(f^0_k) (\lambda_A^{k}(f^0_k) + h^0_{j,k}(A) \int_{-\delta}^{0^-}\onee_{(s,j)\neq (x,l)} dN^j_s)} \bigg]\\
     &  + \sum_{j=1}^Kh^0_{j,k}(0)  \E_0^{(x,l)}\bigg[ \frac{ \int_{A-\delta}^{A^-}\onee_{(s,j)\neq (x,l)} dN^j_s }{\lambda_A^{k}(f^0_k) (\lambda_A^{k}(f^0_k) - h^0_{j,k}(0) \int_{A-\delta}^{A^-}\onee_{(s,j)\neq (x,l)} dN^j_s)} \bigg]+ O(\delta^{\beta_d})
\end{aligned}
\end{equation}
Moreover, by Lemma \ref{orderliness_palm}, for all $j\in\K$ 
     $$\int_{-\delta}^0\onee_{(s,j)\neq (x,l)} dN^j_s = \left(\int_{-\delta}^0\onee_{(s,j)\neq (x,l)} dN^j_s\right)
     \onee_{\int_{-\delta}^0\onee_{(s,j)\neq (x,l)} dN^j_s=1}+ o_{\E^{(x,l)}}(\delta),$$
     so that for all $j$
     \begin{align*}
         \E_0^{(x,l)}&\bigg[ \frac{ \int_{-\delta}^0\onee_{(s,j)\neq (x,l)} dN^j_s }{\lambda_A^{k}(f^0_k)) (\lambda_A^{k}(f^0_k) + h^0_{j,k}(A) \int_{-\delta}^0\onee_{(s,j)\neq (x,l)} dN^j_s)} \bigg] \\
         & = \E_0^{(x,l)}\bigg[ \frac{ \int_{-\delta}^0\onee_{(s,j)\neq (x,l)} dN^j_s }{\lambda_A^{k}(f^0_k) (\lambda_A^{k}(f^0_k) + h^0_{j,k}(A)) } \bigg] + o(\delta).
     \end{align*}
   Doing similarly for the third term on the right-hand side of (\ref{decomp_p1}) and then applying (\ref{iter_palm}), it leads to
   \begin{align}
&p_{l,k}(A-x-\delta) - p_{l,k}(A-x)\nonumber\\
&=  \delta \partial h^{0}_{l,k}(A-x)p^{[1]}_{2,k}(x,l) +\delta \sum_j \int_0^{A-}\onee_{(s,j)\neq (x,l)}\partial h^0_{j,k}(A-s)p^{[2]}_{2,k}(x,s,l,j)m_{l,j}(s-x)ds\nonumber\\
&\hspace{0.4cm}-\underbrace{\sum_j h_{j,k}^{0}(A)\int_{-\delta}^0\E_0^{(x,s,l,j)}\bigg[\frac{1}{\lambda_A^{k}(f^0_k)(\lambda_A^{k}(f^0_k)+h_{j,k}^{0}(A))}\bigg]m_{l,j}(s-x)ds}_{F_1} \nonumber\\
&\hspace{0.4cm}+ \underbrace{\sum_jh_{j,k}^{0}(0)\int_{A-\delta}^A\E_0^{(x,s,l,j)}\bigg[\frac{1}{\lambda_A^{k}(f^0_k)(\lambda_A^{k}(f^0_k)-h_{j,k}^{0}(0))}\bigg]m_{l,j}(s-x)ds}_{F_2} + o(\delta).\nonumber
\end{align}
Now, recall from the Appendix \ref{appendix_palm} that $\P^{(x,s,l,j)} \big(N^{j}(\{s\})=1\big)=1$, so when $s\in [A-\delta, A]$, we have
\begin{align}
    \lambda_A^{k}(f^0_k) &= \lambda_A^{k}(f^0_k, !(s,j)) + h^0_{j,k}(A-s)\hspace{0.1cm}, \hspace{0.2cm}\P^{(x,s,l,j)}-a.s. \nonumber\\
    &=\lambda_A^{k}(f^0_k, !(s,j)) + h^0_{j,k}(0) + O(\delta)\hspace{0.1cm}, \hspace{0.2cm}\P^{(x,s,l,j)}-a.s. ,\nonumber
\end{align}
with $\lambda_A^{k,e}(f^0_k, !(s,j))$ defined by (\ref{def_lambda!}). Furthermore, when $s\in [-\delta, 0[$, $\lambda_A^{k,e}(f^0_k) = \lambda_A^{k,e}(f^0_k, !(s,j))$. Whence,
\begin{align}
    &F_1=-\sum_j h_{j,k}^{0}(A)\int_{-\delta}^0p^{![2],A}_{k}(x,s,l,j)m_{l,j}(s-x)ds, \nonumber\\
    &F_2=\sum_jh_{j,k}^{0}(0)\int_{A-\delta}^A p^{![2],0}_{k}(x,s,l,j)m_{l,j}(s-x)ds + O(\delta^2).\nonumber
\end{align}
When $\delta<0$, doing the same, we find that 
\begin{align}
&p_{l,k}(A-x-\delta) - p_{l,k}(A-x)\nonumber\\
&=  \delta \partial h^{0}_{l,k}(A-x)p^{[1]}_{2,k}(A-x,l) +\delta \sum_j \int_0^{A-}\onee_{(s,j)\neq (x,l)}\partial h^0_{j,k}(A-s)p^{[2]}_{2,k}(x,s,l,j)m_{l,j}(s-x)ds\nonumber\\
&\hspace{0.4cm}+\sum_j h_{j,k}^{0}(A)\int_0^{-\delta}p^{![2],A}_{k}(x,s,l,j)m_{l,j}(s-x)ds\nonumber\\
&\hspace{0.4cm}- \sum_j h_{j,k}^{0}(0)\int^{A-\delta}_A p^{![2],0}_{k}(x,s,l,j)m_{l,j}(s-x)ds+ O(\delta^2).\nonumber
\end{align}
By Lemma \ref{moment_measure_palm}, the functions $s\rightarrow m_{l,j}(s-x)$ are $\beta$-Hölder. Thus, with in addition Conjecture \ref{conj_beta1}, we have that 
\begin{align}
    &\partial p_{l,k}(A-x) = -\underset{\delta \rightarrow 0}{\lim}\frac{p_{l,k}(A-x-\delta)-  p_{l,k}(A-x)}{\delta}\nonumber\\
    &\hspace{0.4cm}=-\partial h^{0}_{l,k}(A-x)p^{[1]}_{2,k}(x,l) -\sum_j \int_0^{A-}\onee_{(s,j)\neq (x,l)}\partial h^0_{j,k}(A-s)p^{[2]}_{2,k}(x,s,l,j)m_{l,j}(s-x)ds\nonumber\\
    &\hspace{0.8cm}+\sum_j h_{j,k}^{0}(A)p^{![2],A}_{k}(x,0,l,j)m_{l,j}(-x)-\sum_j h_{j,k}^{0}(0) p^{![2],0}_{k}(x,A,l,j)m_{l,j}(A-x).\nonumber
\end{align}
Now, we study the regularity of this derivative.  Conjecture \ref{conj_beta1} implies that if $ x\mapsto p^{[1]}_{2,k}(x,l)$ is $(\beta-1)$-Hölder, then $\partial p^e_{l,k}$ is also $(\beta-1)$-Hölder, which proves the result. So it remains to study the regularity of $ x\mapsto p^{[1]}_{2,k}(x,l)$. Using $a^r - b^r = (a - b)\sum_{i=0}^{r-1} a^{r-1-i} b^i $, we have for $r\geq 2$ and $\delta>0$
\begin{align}
    p^{[1]}_{r,k}(A-x-\delta,l) -  p^{[1]}_{r,k}(A-x,l)&=\sum_{i=0}^{r-1}E^{(x,l)}\bigg[\frac{\Delta^+_\delta(k,x,l)}{\lambda_{A-\delta}^{k,e}(f^0_k)^{1+i}\lambda_{A}^{k,e}(f^0_k)^{r-i}}\bigg] .\nonumber
\end{align}
Thus, using what we just did for $p_{l,k}(x)= p^{[1]}_{1,k}(x,l)$, one can show similarly that for $r\in \{2,....\lfloor\beta \rfloor\}$, the function $x\rightarrow p^{[1]}_{r,k}(x,l)$ is differentiable with
\begin{align}
    &\partial p^{[1]}_{r,k}(A-x,l) =-r\partial h^{0}_{l,k}(A-x)p^{[1]}_{r+1,k}(x,l)\nonumber\\
    &\hspace{0.7cm}-r\sum_j \int_0^{A-}\onee_{(s,j)\neq (x,l)}\partial h^0_{j,k}(A-s)p^{[2]}_{r+1,k}(x,s,l,j)m_{l,j}(s-x)ds\nonumber\\
    &\hspace{0.7cm}+\sum_{i=0}^{r-1}\sum_j h_{j,k}^{0}(A)p^{![2]}_{r,i,h^0_{j,k}(A),k}(x,0,l,j)m_{l,j}(-x)\nonumber\\
    &\hspace{0.7cm}-\sum_{i=0}^{r-1}\sum_j h_{j,k}^{0}(0) p^{![2]}_{r,i,h^0_{j,k}(0),k}(x,A,l,j)m_{l,j}(A-x),\nonumber
\end{align}
so that if $p^{[1]}_{r+1,k}(\cdot{,}l)$ is Lipschitz then $\partial p^{[1]}_{r,k}(\cdot{,}l)$ is Lipschitz. Moreover, by Lemma \ref{lemma_powerm}, for all $r\geq2$, $p^{[1]}_{r,k}(\cdot{,}l)$ is Lipschitz. In particular, $ p^{[1]}_{\lfloor \beta \rfloor+1,k}(\cdot{,}l)$ is Lipschitz and thus $\partial^{(\lfloor \beta \rfloor-1)} p^{[1]}_{2,k}(\cdot{,}l)$ is Lipschitz. This proves that $p^{[1]}_{2,k}(\cdot{,}l)$ is $\beta$-Hölder, which concludes the proof.
\end{proof}

We state and prove two results used in the previous proof of Lemma \ref{smooth_palm_beta1}. The first one is on the orderliness of the process under the first order Palm distribution.

\begin{lemma}\label{orderliness_palm}
 Let $(x,l)\in \R\times \K$, $z\in \R$ and $\delta\in \R$. Let $B_{z,\delta}:= \big\{ s\in \R: \vert s-z\vert \leq \vert \delta\vert \big\}$ and $B_{z,\delta}^{-x}=B_{z,\delta}\backslash\{x\}$. Then,  $\P_0^{(x,l)}\big(N^l(B_{z,\delta}^{-x})\geq 2 \big)\lesssim \delta^2$ and for $j\neq l$, $\P_0^{(x,l)}\big(N^j(B_{z,\delta})\geq 2 \big)\lesssim \delta^2$
\end{lemma}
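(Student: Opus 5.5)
The natural tool is the factorial-moment bound $\onee_{n\geq 2}\leq n(n-1)$, which reduces both claims to second-moment estimates under the Palm distribution. For $j\neq l$, write
\[
\P_0^{(x,l)}\big(N^j(B_{z,\delta})\geq 2\big)\leq \E_0^{(x,l)}\big[N^j(B_{z,\delta})\big(N^j(B_{z,\delta})-1\big)\big],
\]
and similarly for $N^l(B_{z,\delta}^{-x})$. Using stationarity through (\ref{shift_palm}), I would first shift to the Palm distribution at $(0,l)$, replacing $B_{z,\delta}$ by $B_{z-x,\delta}$ and removing the atom at $0$ instead of at $x$. This reduces everything to controlling the reduced second factorial moment measure of $N$ under $\P_0^{(0,l)}$ restricted to a small interval of length $2|\delta|$.

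The key step is to show that this factorial moment measure has a bounded density with respect to Lebesgue measure on $\R^2$. Concretely, I would show that for $s_1\neq s_2$, neither equal to $(0,l)$, the factorial moment measure $\E_0^{(0,l)}[N^{j}(ds_1)N^{j}(ds_2)]$ with diagonal removed is bounded by $C\,ds_1ds_2$.

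One route is the iteration principle (\ref{iter_palm}). It gives this measure as $\E_0^{(0,s_1,l,j)}[N^j(ds_2)\setminus\{s_1\}]\, m_{l,j}(s_1)\,ds_1$. Then (\ref{second_palm_first_bound}) bounds $\E_0^{(0,s_1,l,j)}[N^j(B)]\lesssim |B|$ when the atoms are excluded from $B$, and Lemma \ref{moment_measure_palm} gives $m_{l,j}$ bounded.

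Alternatively, I can avoid second-order Palm measures and use (\ref{bound_mom_palm_second}) directly. Apply it with $B_1=B_2=B:=B_{z-x,\delta}$, removing $\{0\}$ when the mark is $l$. For $j\neq l$ the indicator $\onee_{l=k_2,0\in B_2}$ vanishes. For mark $l$ with $0$ removed, the hypothesis ``$0\notin B_1$'' holds, and the term $\onee_{l=k_2,0\in B_2}$ disappears after removing $0$ from $B_2$ as well. What remains is
\[
\E_0^{(0,l)}\big[N^j(B)^2\big]\lesssim |B|+|B|^2 .
\]
This only bounds the non-factorial second moment, and the diagonal term $|B|$ is exactly $\E[N^j(B)]$. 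So I would subtract the first moment, computed exactly via Lemma \ref{moment_measure_palm}, and I need the constant in front of $|B\cap B|$ in (\ref{bound_mom_palm_second}) to come precisely from the diagonal.

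This is the main obstacle. The inequality (\ref{bound_mom_palm_second}) is stated only up to constants, so the diagonal contribution cannot be cancelled from it. I would therefore go back to the origin of (\ref{bound_mom_palm_second}): the Palm second moment is the reduced third moment measure divided by $\mu^0_l$, as in proposition 13.2.VI of \cite{DVJ2}. For linear Hawkes processes with bounded $\boh^0$, the third-order factorial cumulant densities are bounded, following the cluster/branching representation as in \cite{hawkes:71:bis}. The factorial part of the reduced third moment measure, with all three points distinct, thus has a bounded density $\rho_3$. The $\P_0^{(0,l)}$ factorial second moment of $N^j$ on $B$ (atom at $0$ excluded) then equals $\mu_l^{-1}\int_{B^2}\rho_3(0,s_1,s_2)\,ds_1ds_2\lesssim|B|^2=4\delta^2$.

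Both cases follow uniformly in $x,z$, since the bounds depend only on $|B|$.
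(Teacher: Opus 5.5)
Your first route (the factorial-moment bound, then the iteration principle (\ref{iter_palm}) using the fixed atom at $(s,j)$ under $\P_0^{(x,s,l,j)}$, then (\ref{second_palm_first_bound}) and boundedness of $m_{l,j}$) is exactly the paper's proof, and it already gives the $\delta^2$ bound on its own. The detour through (\ref{bound_mom_palm_second}) and bounded third-order factorial moment densities is therefore unnecessary, and it relies on an external fact the paper never establishes, so you can drop it.
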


\begin{proof}[Proof of Lemma \ref{orderliness_palm}]
First, note that when $j\neq l$, $\P_0^{(x,l)}\big(N^j(B_{z,\delta}) = N^j(B_{z,\delta}^{-x})\big)=1$, so we can just consider $B_{z,\delta}^{-x}$. Let $(l,j)\in \K^2$,
\begin{align*}
    \mathbb P_0^{(x,l)}\big(N^j (B_{z,\delta}^{-x})\geq 2 \big) &\leq  \mathbb E_0^{(x,l)}\left[\onee_{N^j (B_{z,\delta}^{-x})\geq 2}\Big(N^j(B_{z,\delta}^{-x})^2- N^j(B_{z,\delta}^{-x})\Big)\right]\\
    &= \mathbb E_0^{(x,l)}\left[N^j(B_{z,\delta}^{-x})^2- N^j(B_{z,\delta}^{-x})\right]\\
    & = \int_{B_{z,\delta}^{-x}} \E_0^{(x,s,l,j)}\left[ N^j (B_{z,\delta}^{-x})- 1\right]m_{l,j}(s-x)ds,
\end{align*}
where for the last equality we have used (\ref{iter_palm}). Moreover, for $s\in B_{z,\delta}^{-x}$, since under  $\P_0^{(x,s,l,j)}$ the process has a fixed atom at $(s,j)$ and with (\ref{second_palm_first_bound}), we have
$$\mathbb E_0^{(x,s,l,j)}\left[ N^j (B_{z,\delta}^{-x}) - 1\right] = \mathbb E_0^{(x,s,l,j)}\left[N^j(B_{z,\delta}^{-x}\backslash\{s\})\right]\lesssim \delta ,$$
which leads to the proof.
\end{proof}

The second lemma is on the Lipschitz continuity of the functions $x\mapsto p^{[1]}_{r,k}(x,l)$  defined in (\ref{def_p1}) at start of the proof of Lemma \ref{smooth_palm_beta1} 
\begin{lemma}\label{lemma_powerm}
If the functions $\boh^0$ are in $C^\beta([0,A])$ for some $\beta\geq 1$, then for all $r\geq 2$, the functions $x\mapsto p^{[1]}_{r,k}(x,l)$ are Lipschitz.
\end{lemma}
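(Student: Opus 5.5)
The plan is to mimic the proof of Lemma \ref{smooth_palm}, now applied to negative powers $r\geq 2$, and to bound the increment $p^{[1]}_{r,k}(A-x-\delta,l)-p^{[1]}_{r,k}(A-x,l)$ by $O(|\delta|)$ uniformly in $x$. Large increments are handled by boundedness. Since $\lambda_A^{k}(f^0_k)\geq \nu^0_k$, we have $p^{[1]}_{r,k}\leq (\nu^0_k)^{-r}$, so when $|\delta|>A$ (or $\delta$ is bounded away from $0$) the bound $\lesssim |\delta|$ is immediate. We may therefore assume that $|\delta|$ is small.

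For small $\delta$, use the shift formula (\ref{shift_palm}) as in the proof of Lemma \ref{smooth_palm}. This expresses both terms as expectations under the same Palm measure $\P^{(x,l)}_0$, evaluated at the times $A$ and $A-\delta$ (with the extended intensity $\lambda^{k,e}$ if needed). Apply the identity $a^{-r}-b^{-r}=(b-a)\sum_{i=0}^{r-1}a^{-1-i}b^{-(r-i)}$ with $a=\lambda^{k,e}_{A-\delta}$ and $b=\lambda^{k,e}_A$. Both intensities are bounded below by $\nu^0_k/2$, so the increment is bounded by
\[
r\,(2/\nu^0_k)^{r+1}\,\E_0^{(x,l)}\big[|\Delta_\delta(k,x,l)|\big].
\]
This is the key simplification: the powers only add a constant factor, and no second order Palm quantity is needed.

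It remains to show $\E_0^{(x,l)}[|\Delta_\delta(k,x,l)|]\lesssim|\delta|$. Use the decomposition (\ref{decomp_delta}). Because $\beta\geq1$, the functions $h^0_{j,k}$ (and the extension $h^{0,e}_{l,k}$) are Lipschitz. Hence the deterministic term is $O(|\delta|)$, and the integral term over $[0,A-\delta]$ is bounded by $|\delta|\,\Vert\partial \boh^0\Vert_\infty \E_0^{(x,l)}[N([0,A])\setminus\{x\}]$. The boundary terms over windows of length $|\delta|$ are bounded by $\Vert\boh^0\Vert_\infty\E_0^{(x,l)}[N(\text{window}\setminus\{x\})]$. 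By Lemma \ref{moment_measure_palm}, the Palm first moment measure has a bounded density away from the atom, so both expectations are $\lesssim |\delta|$ uniformly in $x$. Summing gives the Lipschitz bound.

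The main obstacle is the atom at $(x,l)$, which could contribute a non-small jump when $x$ lies in a boundary window $[-\delta,0]$ or $[A-\delta,A]$. This is exactly why the extended intensity $\lambda^{k,e}$ with $h^{0,e}$ is used: the atom's contribution then becomes $h^{0,e}_{l,k}(A-x)-h^{0,e}_{l,k}(A-x-\delta)$, which is Lipschitz, rather than an indicator jump. For $x\in[0,A]$ itself, I would check that the restriction matches the definition (\ref{def_p1}) (only $x$ near the endpoints matters), and extend $h^0$ in a Lipschitz way as in Section \ref{sec_lfeq}.
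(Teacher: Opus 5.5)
Your proposal is correct and follows essentially the same route as the paper: reduce to small increments, use (\ref{shift_palm}) to put both values under $\P^{(x,l)}_0$, factor the difference of $r$-th powers so that the powers contribute only a constant factor, and then invoke the bound $\E_0^{(x,l)}[\vert\Delta_\delta(k,x,l)\vert]\lesssim\vert\delta\vert^\beta+\vert\delta\vert\lesssim\vert\delta\vert$ from the proof of Lemma~\ref{smooth_palm}. Your lower bound $\nu^0_k/2$ for the extended intensities (the paper writes $\nu^0_k$) and your remark on the endpoint behaviour of the unextended definition (\ref{def_p1}) are, if anything, slightly more careful than the paper's write-up.
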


\begin{proof}[Proof of Lemma \ref{lemma_powerm}]
 Let $r\geq 2$. First, as in proof of Lemma \ref{smooth_palm}, it is sufficient to show that for $x,y$ such that $0\leq y-x\leq A$, $\vert p^{[1]}_{r,k}(y,l) -p^{[1]}_{r,k}(x,l) \vert \lesssim \vert y-x\vert$. Using the same computations as at start of the proof of Lemma \ref{smooth_palm} and in addition that $a^r - b^r = (a - b)\sum_{k=0}^{r-1} a^{r-1-k} b^k := (a-b) P_r(a,b)$, we find
 \begin{align}
     \Big \vert p^{[1]}_{r,k}(y,l) -p^{[1]}_{r,k}(x,l)\Big \vert &\leq \E_0^{(x,l)}\bigg[\frac{\vert \Delta_{y-x}(k,x,l) \vert \times P_r\Big(\lambda_{A+x-y}^{k,e}(f^0_k,(x,l)),\lambda_{A}^{k,e}(f^0_k,(x,l))\Big) }{\lambda_{A+x-y}^{k,e}(f^0_k,(x,l))^r\lambda_{A}^{k,e}(f^0_k,(x,l))^r}\bigg]\nonumber\\
     &\leq \frac{r}{(\nu^0_k)^{r+1}}\E_0^{(x,l)}\big[\vert \Delta_{y-x}(k,x,l) \vert\big]\nonumber
 \end{align}
 where for the last inequality we have used that for $a,b\geq \nu^0_k$, $P_r(a,b)/(a^rb^r) \leq r/(\nu^0_k)^{r+1}$. Then, is it shown in the proof of Lemma \ref{smooth_palm} that $\E_0^{(x,l)}\big[\vert \Delta_{y-x}(k,x,l) \vert\big]\lesssim \vert y-x\vert$ and it concludes the proof of the Lemma.
\end{proof}

In conclusion, we have the following corollary.

 \begin{corollary}\label{g0_holder_beta1}
 If the functions $\boh^0$ and $\bog^0_2$ are in $C^\beta([0,A])$ for some $\beta>1$, then, under Conjecture \ref{conj_beta1}, the functions $\bog^0_L$ are also in $C^\beta([0,A])$.
 \end{corollary}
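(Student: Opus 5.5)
The corollary is the $\beta>1$ analogue of corollary \ref{g0_palm_holder}. I will read regularity directly off the fixed point equation (\ref{converse_expr}) satisfied by $(\xi^0_L,\bog^0_L)$ with right-hand side $(\xi^0_2,\bog^0_2)$:
\begin{align}
    g^0_{L,l,k} = \frac{g^0_{2,l,k} - \mu^0_l\sum_{j=1}^K \zeta_{l,j,k}(g^0_{L,j,k}) }{\mu^0_l p_{l,k}} - \xi^0_{L,k}.\nonumber
\end{align}
Since $\xi^0_{L,k}$ is a constant and $g^0_{2,l,k}\in C^\beta([0,A])$ by assumption, it suffices to show that $p_{l,k}$ and $\zeta_{l,j,k}(g^0_{L,j,k})$ are in $C^\beta([0,A])$. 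By Lemma \ref{lemma_lb_p}, $p_{l,k}$ is bounded away from $0$ and from above. Then $1/p_{l,k}$ is $C^\beta$, since $x\mapsto 1/x$ is smooth on the range of $p_{l,k}$ and $C^\beta$ is stable under composition with smooth maps and under products. Hence the right-hand side is in $C^\beta([0,A])$.

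For $p_{l,k}$, I invoke Lemma \ref{smooth_palm_beta1} directly: under Conjecture \ref{conj_beta1} and $\boh^0\in C^\beta([0,A])$, each $p_{l,k}\in C^\beta([0,A])$.

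For the term $\zeta_{l,j,k}(g^0_{L,j,k})$, I use the representation (\ref{explicit_zeta}), obtained from the iteration principle (\ref{iter_palm}):
\begin{align}
    \zeta_{l,j,k}(g^0_{L,j,k})(A-x)= \int_0^{A}g^0_{L,j,k}(A-s)\,p^{[2]}_{1,k}(x,s,l,j)\, m_{l,j}(s-x)\,ds.\nonumber
\end{align}
First, Lemma \ref{lemma_linf} with $\bog^0_2\in\L_\infty^{K^2}$ gives $g^0_{L,j,k}\in\L_\infty$. By Lemma \ref{moment_measure_palm}, the maps $x\mapsto m_{l,j}(s-x)$ are in $C^\beta$ with Hölder norm uniform in $s$. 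Therefore
\begin{align}
    \Vert p^{[2]}_{1,k}(\cdot{,}s,l,j)\,m_{l,j}(s-\cdot)\Vert_{C^\beta}\lesssim \Vert p^{[2]}_{1,k}(\cdot{,}s,l,j)\Vert_{C^\beta}\,\Vert m_{l,j}\Vert_{C^\beta}.\nonumber
\end{align}
Differentiating under the integral up to order $\lfloor\beta\rfloor$ (dominated convergence), and bounding the Hölder quotient of the top derivative in the same way, gives
\begin{align}
    \Vert \zeta_{l,j,k}(g^0_{L,j,k})\Vert_{C^\beta}\lesssim \Vert g^0_{L,j,k}\Vert_\infty\int_0^A \Vert p^{[2]}_{1,k}(\cdot{,}s,l,j)\Vert_{C^\beta}\,ds,\nonumber
\end{align}
which is finite by Conjecture \ref{conj_beta1}. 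The map $x\mapsto A-x$ preserves $C^\beta$, so $\zeta_{l,j,k}(g^0_{L,j,k})\in C^\beta([0,A])$.

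Combining these gives $\bog^0_L\in C^\beta([0,A])$. The hard analytic content sits in Lemma \ref{smooth_palm_beta1} and the conjecture. The only delicate point left here is the measurability and dominated-convergence step needed to differentiate under the integral, for which I rely on the integrability of the $C^\beta$ norms assumed in Conjecture \ref{conj_beta1}. A mild subtlety is that $g^0_L$ is only known to be bounded, not smooth, but it enters only through the integration variable $s$, so this causes no loss.
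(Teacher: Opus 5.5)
Your proposal follows the paper's own argument step for step and is correct relative to the paper's stated lemmas: you read regularity off the fixed-point equation (\ref{converse_expr}), take $p_{l,k}\in C^\beta([0,A])$ from Lemma \ref{smooth_palm_beta1} together with the lower bound of Lemma \ref{lemma_lb_p}, and handle $\zeta_{l,j,k}(g^0_{L,j,k})$ through (\ref{explicit_zeta}), the integrability assumption in Conjecture \ref{conj_beta1}, the H\"older bound on $x\mapsto m_{l,j}(s-x)$ from Lemma \ref{moment_measure_palm}, and the boundedness of $\bog^0_L$ from Lemma \ref{lemma_linf}, which, as you rightly note, is all that is needed from $\bog^0_L$ itself. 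One caveat, which applies equally to the paper: for $\beta>1$ the uniform-in-$s$ $C^\beta$ bound on $x\mapsto m_{l,j}(s-x)$ is the fragile step, because $m_{l,j}$ is built from $\Upsilon$, which solves its equation only for $t>0$ and is extended by $\Upsilon(-t)=\Upsilon(t)^T$, so it is typically not $C^1$ at $t=0$ (and for $l\neq j$ not even continuous there), i.e.\ at $x=s$; a fully rigorous version would have to treat the diagonal $s=x$ separately, which neither your proposal nor the paper does.
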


\section{Technical lemmas}\label{sec:pr:tech_lemmas}

In this section, we state and prove two technical lemmas. First, Lemma \ref{Bernstein} gives Bernstein-type inequalities that are used in Lemma \ref{lemma_rk} to control the bias term of appearing in Theorem \ref{main_theorem}. These Bernstein-type inequalities are also used to prove Lemma \ref{control_rem} which is the second technical lemma of this section and which shows that the difference between the LAN remainders in the proof of Theorem \ref{main_theorem} is uniformly controlled.

\subsection{Bernstein-type inequalities}

The following lemma gives three Bernstein-type inequalities, derived from results of \cite{hansen_al} and \cite{reynaud_roy_hawkes}. The first one is for an integral with respect to $dN_t - \lambda_t(f^0)dt$ and thus the proof exploits the martingale property of this quantity. The second is for an integral with respect to $dN_t$ whose integrand is centered in expectation, it is based on an approximation of the Hawkes process by i.i.d. sequences (see section 3 of \cite{reynaud_roy_hawkes}). The third is a direct combination of the two first.

\begin{lemma}\label{Bernstein}
Let $N$ be a stationary multivariate linear Hawkes process with parameters $f^0$ such that the functions $h^0_{l,k}$ are bounded and supported on $[0,A]$. Let $\alpha>0$ and with this $\alpha$ define $\Omega_T$ as in Lemma \ref{omega_T}. There exists a constant $C>0$ depending only on $(f^0, \alpha, K, A)$ such that for $T$ large enough, for any $f,\bar f\in \R^K\times \L_\infty^{K^2}$ and $u>0$,
\begin{align}
    &\P_0\bigg(\Omega_T \cap \Big \{\Big\vert \sum_{k=1}^K \int_0^T \lambda_t^k(\bar f_k)\lambda_t^k(f_k) \big(dN_t^k -\lambda_t^k(f^0_k)dt\big)\Big\vert> u \Big\}\bigg) \nonumber\\
    &\hspace{1cm}\leq 2\exp\bigg(\frac{-C\log(T)^{-5}u^2}{ T\Vert \bar f\Vert_\infty^2  \Vert f\Vert_2^2+ u\Vert \bar f\Vert_\infty\Vert f\Vert_\infty}\bigg). \nonumber
\end{align}
Moreover, there exist an event $\Omega_T'\subset \Omega_T$, $\P_0(\Omega_T')\xrightarrow[]{}1$, and a constant $C'>0$ that depends only on $(f^0, \alpha, K, A)$  such that for $T$ large enough, for any $f,\bar f\in \R^K\times \L_\infty^{K^2}$ and $u>0$,
\begin{align}
    &\P_0\bigg( \Omega_T'\cap \Big \{\Big\vert \sum_{k=1}^K \int_0^T \lambda_t^k(\bar f_k)\lambda_t^k(f_k) -\E_0\big[\lambda_A^k(\bar f_k)\lambda_A^k(f_k)\big]dt  \Big\vert> u \Big\}\bigg)\nonumber\\
    &\hspace{1cm}\leq 4\exp\bigg( \frac{-C'\log(T)^{-3}u^2}{T\Vert \bar f \Vert_\infty^2 \Vert f \Vert_2^2 + u\Vert \bar f\Vert_\infty \Vert  f\Vert_\infty} \bigg).\nonumber
\end{align}
Combining the two previous inequalities,  we obtain that there exists a constant $C''>0$ such that for $T$ large enough, for any $f,\bar f\in \R^K\times \L_\infty^{K^2}$ and $u>0$,
\begin{align}
    &\P_0\bigg( \Omega_T' \cap \Big \{\Big\vert \sum_{k=1}^K \int_0^T \lambda_t^k(\bar f_k)\lambda_t^k(f_k)dN_t^k -T\E_0\big[\lambda_A^k(\bar f_k)\lambda_A^k(f_k)\lambda_t^k(f^0_k)\big]  \Big\vert> u \Big\}\bigg)\nonumber\\
    &\leq 6\exp\Bigg(\frac{- C''\log(T)^{-5}u^2}{T\Vert \bar f\Vert_\infty^2\Vert f\Vert_2^2+ u\Vert \bar f\Vert_\infty\Vert f\Vert_\infty\big)}\Bigg). \nonumber
\end{align}

\end{lemma}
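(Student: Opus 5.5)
We prove the three inequalities one at a time; the third follows from the first two. The common first step is to localise on $\Omega_T$ (Lemma \ref{omega_T}). On this event, $\sup_{t\in[0,T]}\max_l N^l([t-A,t))\le C_\alpha\log(T)$. Hence, uniformly in $t\in[0,T]$,
\[
\vert\lambda_t^k(\bar f_k)\vert\lesssim \log(T)\Vert\bar f\Vert_\infty,\qquad \vert\lambda_t^k(f_k)\vert\lesssim\log(T)\Vert f\Vert_\infty .
\]
Throughout, we use $\lambda_t^k(g)$ for the linear functional $\tilde\lambda_t^k(g)$; it may take negative values.

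For the first inequality, write $H_t^k:=\lambda_t^k(\bar f_k)\lambda_t^k(f_k)$, which is predictable. We truncate: set $\tilde H_t^k:=H_t^k\onee\{\sup_{s\in[t-A,t)}N([s-A,s))\le C_\alpha\log(T)\}$. This truncation is still predictable and coincides with $H_t^k$ on $\Omega_T$. The process $M_T=\sum_k\int_0^T\tilde H_t^k(dN_t^k-\lambda_t^k(f^0_k)dt)$ is a martingale with jumps bounded by $b\lesssim\log(T)^2\Vert\bar f\Vert_\infty\Vert f\Vert_\infty$. Its predictable quadratic variation is $\langle M\rangle_T=\sum_k\int_0^T(\tilde H_t^k)^2\lambda_t^k(f^0_k)dt$, which we bound by $\log(T)^3\Vert\bar f\Vert_\infty^2\sum_k\int_0^T\lambda_t^k(f_k)^2dt$.

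The key step is to bound $\int_0^T\lambda_t^k(f_k)^2dt\lesssim T\log(T)^{a}\Vert f\Vert_2^2$ on $\Omega_T$, and not only by $\Vert f\Vert_\infty^2$. For this, expand the square. By Cauchy--Schwarz, $\big(\int_{t-A}^t f(t-s)dN_s\big)^2\le N([t-A,t))\int_{t-A}^tf(t-s)^2dN_s$, and then Fubini gives $\int_0^T\int_{t-A}^t f(t-s)^2dN_sdt\le \Vert f\Vert_2^2 N([-A,T])$. On $\Omega_T$, $N([-A,T])\lesssim T$, since $\Omega_T$ also controls the total count. With this bound, the Bernstein inequality for martingales with bounded jumps (as in \cite{hansen_al}, or Theorem 3.3 of \cite{van_zanten}) applied with $v=T\log(T)^{5}\Vert\bar f\Vert_\infty^2\Vert f\Vert_2^2$ yields the stated bound with $\log(T)^{-5}$, after absorbing the $\log$ powers.

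The second inequality concerns the centred time-integral $\int_0^T(H_t^k-\E_0[H_A^k])dt$. We follow the coupling approach of section 3 of \cite{reynaud_roy_hawkes}. Split $[0,T]$ into about $T/(2x_T)$ blocks of length $x_T\asymp\log(T)$. Then approximate $N$ on each block by independent copies built from the cluster representation, on an event $\Omega_T'\subset\Omega_T$ of probability tending to one on which the approximation is exact; this uses the exponential decay of cluster lengths when $\VERT\rho^0\VERT_1<1$. Treat odd and even blocks separately, which is where the factor $4=2\times2$ comes from. Each block sum is then an i.i.d. centred variable with sup-norm $\lesssim x_T\log(T)^2\Vert\bar f\Vert_\infty\Vert f\Vert_\infty$ and variance $\lesssim x_T^2\log(T)^2\Vert\bar f\Vert_\infty^2\Vert f\Vert_2^2$, the latter from the same $L_2$ computation as above combined with stationarity and Lemma \ref{moment_measure_palm}. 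The classical Bernstein inequality then gives the bound with $\log(T)^{-3}$.

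The third inequality follows by writing $dN_t=(dN_t-\lambda_t(f^0)dt)+\lambda_t(f^0)dt$. Apply the first inequality to the integrand $H\lambda_t(f^0)/\lambda_t(f^0)$ with $\bar f$ replaced by the product structure. Concretely, we apply the martingale argument with integrand $H_t^k$ and the compensator term $H_t^k\lambda_t^k(f^0_k)$. On the event $\Omega_T$ the extra factor $\lambda_t^k(f^0_k)\lesssim\log(T)$ only costs further $\log$ powers, and the block argument handles this product identically. A union bound with $u/2$ in each part gives the constant $6$. The main obstacle is the variance control in terms of $\Vert f\Vert_2$ instead of $\Vert f\Vert_\infty$, together with the coupling in the second inequality.
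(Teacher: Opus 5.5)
Your proposal is correct and follows essentially the same route as the paper. The common steps are: Bernstein's inequality for the martingale, with the jump and variance bounds obtained on $\Omega_T$ (Cauchy--Schwarz plus Fubini give $\Vert f\Vert_2^2$ instead of $\Vert f\Vert_\infty^2$); the Reynaud-Bouret--Roy coupling on alternating blocks of length of order $\log(T)$ for the centred time integral; and the split $dN_t=(dN_t-\lambda_t(f^0)dt)+\lambda_t(f^0)dt$ with a union bound for the third inequality. Two small fixes: on $\Omega_T$ one only has $N([-A,T])\lesssim T\log(T)$, not $\lesssim T$, though your choice $v=T\log(T)^5\Vert\bar f\Vert_\infty^2\Vert f\Vert_2^2$ already absorbs this; and since the block sums are bounded only on an event, you need the event form of Bernstein's inequality (Theorem 3.3 of \cite{van_zanten}, as the paper uses) rather than the classical one, which requires almost-sure bounds.
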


\begin{proof}[Proof of Lemma \ref{Bernstein}]
We begin by proving the first inequality. Let,
\begin{align}
    \hat v_T = \displaystyle\sum_{k=1}^K\int_0^T \lambda_t^k(\bar f_k)^2\lambda_t^k(f_k)^2 \lambda_t^k(f^0_k)dt \hspace{0.3cm} \text{and}\hspace{0.3cm} \hat B_T = \underset{k\in\K, \hspace{0.1cm}t\in[0,T]}{\sup}\vert \lambda_t^k(\bar f_k)\lambda_t^k(f_k)\vert. \nonumber
\end{align}
By applying theorem 3.3 of \cite{van_zanten} to the $\mathcal{G}$-martingale
\begin{align}
    (M_T^0)_T:=\bigg(\sum_{k=1}^K \int_0^T \lambda_t^k(\bar f_k)\lambda_t^k(f_k) \big(dN_t^k -\lambda_t^k(f^0_k)dt\big)\bigg)_{T\geq 0}, \nonumber
\end{align}
we obtain for $u>0$,
\begin{align}
     \P_0\Big(\big \vert M_T^0\big\vert >u, \hspace{0.2cm}\hat v_T \leq v \hspace{0.1cm}\text{and} \hspace{0.1cm} \hat B_T\leq B\Big) \leq 2\exp\Big(\frac{-u^2}{v+ uB/3}\Big).\nonumber
\end{align}
First, on the event $\Omega_T$, for $T$ large enough independently of $f$ and $\bar f$,
\begin{align}
    \hat B_T \leq K^2 C_\alpha^2\log(T)^2\Vert \bar f\Vert_\infty \Vert f\Vert_\infty. \nonumber
\end{align}
Secondly, let $r_T = K^3 C_\alpha^3\log(T)^3\Vert \bar f \Vert_\infty^2 \Vert f^0\Vert_\infty$. With the inequality $(x+y)^n\leq 2^{n-1}(x^n+y^n)$ and Jensen inequality, we have on $\Omega_T$ for $T$ large enough,
\begin{equation}\label{bound_variance_1}
\begin{aligned}
    \hat v_T &\leq r_T\sum_{k=1}^K \int_0^T\lambda_t^k(f_k)^2dt\leq 2r_T \sum_{k=1}^K \int_0^T \nu_k^2 +\Big(\sum_{l=1}^K\int_{t-A}^{t-} h_{l,k}(t-u)dN_u^l\Big )^2dt  \\
    &\leq 2r_TT\Vert \nu\Vert_2^2+ 2^{K}r_T\sum_{k=1}^K\sum_{l=1}^K\int_0^T  N^l([t-A,t[)\int_{t-A}^{t-} h_{l,k}(t-u)^2 dN_u^l dt ,
\end{aligned}
\end{equation}
and with Fubini-Tonelli we further obtain on $\Omega_T$,
\begin{equation}\label{bound_variance}
\begin{aligned}
    &2^{K}r_T\sum_{k=1}^K\sum_{l=1}^K\int_0^T  N^l([t-A,t[)\int_{t-A}^{t-} h_{l,k}(t-u)^2 dN_u^l dt \\
    &\leq 2^{K}C_\alpha r_T\log(T)\sum_{k=1}^K\sum_{l=1}^K\int_0^T  \int_{t-A}^{t-} h_{l,k}(t-u)^2 dN_u^l dt \\
    &=   2^{K}C_\alpha r_T \log(T)\sum_{k=1}^K\sum_{l=1}^K\int_{-A}^T  \int_{u+}^{u+A} h_{l,k}(t-u)^2 dt dN_u^l \\
    &= 2^K  C_\alpha r_T\log(T)\sum_{k=1}^K\sum_{l=1}^K N^l([-A,T])\Vert h_{l,k}\Vert_2^2 \\
     &= 2^K  C_\alpha r_T\log(T)(\lceil T/A\rceil+1)\underset{l\in \K, t\in [0,T]}{\sup}N^l([t-A,t])\sum_{k=1}^K\sum_{l=1}^K \Vert h_{l,k}\Vert_2^2 \\
    &\leq  2^{K+1} C_\alpha K r_T   (\lceil T/A\rceil+1)\log(T)^2\Vert \boh\Vert_2^2.
\end{aligned}
\end{equation}
Let $C_0 = c C_\alpha^4 \Vert  f^0\Vert_\infty$ and $c=2^{K+3} K^4/A$, we have proved that for $T$ large enough, on $\Omega_T$, $\hat v_T \leq C_0\log(T)^5  T\Vert \bar f \Vert_\infty^2\Vert f\Vert_2^2,$ and thus
\begin{align}
    \Omega_T\subset \big\{\hat v_T\leq C_0 \log(T)^5  T\Vert \bar f \Vert_\infty^2\Vert f\Vert_2^2 \hspace{0.2cm}\text{and}\hspace{0.2cm} \hat B_T\leq K^2 C_\alpha^2\log(T)^2\Vert \bar f\Vert_\infty \Vert f\Vert_\infty \big\}. \nonumber
\end{align}
As a consequence, it proves that 
\begin{align}\label{interm_eq_first_bernstein}
    \P_0\Big(\Omega_T \cap \big \{\vert M_T^0\vert > u \big\}\Big)\leq 2\exp\bigg(\frac{-\log(T)^{-5}u^2}{ C_0T\Vert \bar f\Vert_\infty^2  \Vert f\Vert_2^2+ uK^2 C_\alpha^2\Vert \bar f\Vert_\infty\Vert f\Vert_\infty}\bigg) ,
\end{align}
and letting $C=\max(C_0, K^2 C_\alpha^2)^{-1}$, we have proved the first inequality.

For the second inequality, we adapt the proof of proposition 3 of \cite{hansen_al} to obtain a Bernstein-type inequality on a certain event $\Omega_T'=\Omega_T^1\cap \Omega_T^2$ defined just after. For a real Borel set $A$, we denote by $N_{\vert A}$ the random measure $N$ restricted to $A$: $N_{\vert A}(B) = N(B\cap A)$. To begin with, 
\begin{align}
    Z_t := \sum_{k=1}^K \lambda_t^k(\bar f_k)\lambda_t^k(f_k) - \E_0[\lambda_A^k(\bar f_k)\lambda_A^k(f_k)].\nonumber
\end{align}
depends only on $N_{\vert [t-A,t[}$ and can be written as $Z_0\circ \mathfrak
{S}_t(N)$ with $\mathfrak
{S}_t$ the time shift operator (it means that $Z_t$ depends on $N_{\vert [t-A,t[}$ as $Z_0$ depends on $N_{\vert [-A,0[}$). Let $c>0$ fixed later, $d_T = \lfloor cT/\log(T)\rfloor $ and $x=x_T=T/2d_T$. Note that  $x$ is of order $\log(T)$ and so for $T$ large enough $x>A$. We split
\begin{align*}
    &\Big\vert \sum_{k=1}^K \int_0^T \lambda_t^k(\bar f_k)\lambda_t^k(f_k) -\E_0[\lambda_A^k(\bar f_k)\lambda_A^k(f_k)]dt\big)\Big\vert \nonumber\\
    &\leq \Big\vert \sum_{q=0}^{d_T-1} \int_{2qx}^{2qx +x}\hspace{-0.2cm} Z_0\circ \mathfrak
{S}_t(N)  dt \Big\vert + \Big\vert\sum_{q=0}^{d_T-1}\int_{2qx+x}^{2qx +2x} \hspace{-0.2cm}Z_0\circ \mathfrak
{S}_t(N) dt\big)\Big\vert  = :\Delta_1+ \Delta_2,
\end{align*} 
and we treat each term separately but using a similar argument. We first study $\Delta_1$. There exists a sequence $M^{x,1}=(M^{x,1}_q)_q$ of i.i.d. point processes such that for all $q$, $M^{x,1}_q$ has the same distribution as $N_{\vert [2qx-A,2qx+x]}$. Moreover, there exists $c>0$ small enough (in the definition of $x$) such that  
\begin{align}\label{first_om}
    \P\Big(\exists q\leq d_T-1,\,  M^{x,1}_q \neq N_{\vert [2qx-A,2qx+x]}\Big) = o(1).
\end{align}
Hence, if $\Omega_{1,T} = \Omega_T \cap \{  \forall q\leq d_T-1,\,  M^{x,1}_q = N_{\vert [2qx-A,2qx+x]}\}$ then $\mathbb P( \Omega_{1,T}^c) =o(1)$; see section 3 of \cite{reynaud_roy_hawkes} for details on the construction of $(M_q^{x,1})_q$ and for a proof of (\ref{first_om}). On $\Omega_{1,T}$, since $\sup_{t\in [0;T]}N([t-A, t[)\leq KC_\alpha\log T$, we also have $\max_q \sup_{t\in [0;T]}M_q^{x,1}([t-A, t[)\leq KC_\alpha\log T$. Next, let $F_q = \int_{2qx}^{2qx +x} Z_0\circ \mathfrak
{S}_t(M^{x,1}_q) dt$, the variables $(F_q)_q$ are independent and centered so they form a martingale difference sequence with respect to their natural filtration denoted by  $\mathfrak{F}$. Moreover,
\begin{align}
    \P_0\bigg( \Omega_T^1 \bigcap\Big\{\Big\vert\sum_{q=0}^{d_T-1}  \int_{2qx}^{2qx +x} Z_0\circ \mathfrak
{S}_t(N)  dt\Big\vert >\frac{u}{2}\Big\}\bigg) &= \P_{M^{x,1}}\bigg( \Omega_T^1 \bigcap\Big\{ \Big\vert\sum_{q=0}^{d_T-1}  F_q\Big\vert >\frac{u}{2}\Big\}\bigg).\nonumber
\end{align}
For $T$ large enough (independently of $f$ and $\bar f$), on the event $\Omega_T^1$,
\begin{align}
    \vert F_q\vert &\leq  x\hspace{0.1cm}\underset{t\in [2qx,2qx+x]}{\sup} \vert Z_0\circ \mathfrak {S}_t(N)\vert \nonumber\\
    &\leq x\Vert \bar f\Vert_\infty\Vert f\Vert_\infty\Big( \underset{t\in [2qx,2qx+x]}{\sup} \big(1+M^{x,1}_q([t-A,t[)\big)^2 + \E_0\big[(1+ N([0,A]))^2\big]\Big) \nonumber \\
    &\leq  2K^2C_\alpha^2\log(T)^2x\Vert \bar f\Vert_\infty\Vert f\Vert_\infty=  \frac{4K^2C_\alpha^2}{c}\log(T)^3\Vert \bar f\Vert_\infty\Vert f\Vert_\infty.\nonumber
\end{align}
Then, by independence $\E_{M^{x,1}}[F_q^2\vert \mathfrak{F}_{q-1}] = \E_{M^{x,1}_q}[F_q^2]$. As $M^{x,1}_q$ is equal in distribution to $N_{\vert [2qx-A,2qx+x]}$, we also have $\E_{M^{x,1}_q}[F_q^2]= \E_0[F_q^2]$. By Jensen inequality,
\begin{align}
    \E_0[F_q^2] &=  \E_0\bigg[ \Big( \sum_{k=1}^K\int_{2qx}^{2qx+x} \lambda_t^k(\bar f)\lambda_t^k(f)  - \E_0[\lambda_A^k(\bar f)\lambda_A^k(f)]dt \Big)^2 \bigg] \nonumber\\
    &\leq 2^{K-1}\sum_{k=1}^K\E_0\bigg[ \Big( \int_{2qx}^{2qx+x} \lambda_t^k(\bar f)\lambda_t^k(f)  - \E_0[\lambda_A^k(\bar f)\lambda_A^k(f)]dt \Big)^2 \bigg]\nonumber\\
    &\leq 2^{K}x\sum_{k=1}^K\E_0\bigg[  \int_{2qx}^{2qx+x} \lambda_t^k(\bar f)^2\lambda_t^k(f)^2  + \E_0[\lambda_A^k(\bar f)\lambda_A^k(f)]^2dt  \bigg]. \nonumber
\end{align}
Let $k\in \K$, by Cauchy-Schwarz inequality,
\begin{align}
    \E_0[\lambda_A^k(\bar f)\lambda_A^k(f)]^2 \leq \E_0[\lambda_A^k(\bar f)^2]\E_0[\lambda_A^k(f)^2]\leq K\Vert \bar f\Vert_\infty^2 \E_0\big[(1+N([0,A]))^2\big]\E_0[\lambda_A^k(f)^2].\nonumber
\end{align}
In the same way as in the proof of (\ref{(b)}), one can show that there exists a finite positive constant $C_2'$, independent of $f$, such that  $\sum_{k=1}^K \E_0[\lambda_t^k(f)^2] \leq C_2' \Vert f \Vert _2^2$. Hence, for some $c(K)>0$,
\begin{align}
   \E_0[F_q^2] \leq c(K)x^2\Vert \bar f\Vert_\infty^2 \Vert f\Vert_2^2 + 2^{K}x\sum_{k=1}^K\E_0\bigg[  \int_{2qx}^{2qx+x} \lambda_t^k(\bar f)^2\lambda_t^k(f)^2 dt  \bigg]. \nonumber 
\end{align}
Next, using similar computations to those used in  (\ref{bound_variance}) (in particular Fubini theorem to switch $dN_u$ and $dt$), we find that
\begin{align}
    &\E_0\bigg[  \int_{2qx}^{2qx+x} \lambda_t^k(\bar f)^2\lambda_t^k(f)^2 dt  \bigg]\leq \E_0\bigg[ \underset{l\in \K, t'\in [2qx,2qx+x]}{\sup}\lambda_{t'}^l(\bar f)^2 \int_{2qx}^{2qx+x} \lambda_t^k(f)^2 dt  \bigg]\nonumber\\ 
    &\leq \Vert \bar f\Vert_\infty^2\E_0\bigg[ \underset{l\in \K, t'\in [2qx, 2qx+x]}{\sup}(1+ N^l([t'-A,t'])^2 \int_{2qx}^{2qx+x} \lambda_t^k(f)^2 dt  \bigg]\nonumber\\
    &\leq 2x\nu_k^2\Vert \bar f\Vert_\infty^2\E_0\big[ \underset{l\in \K, t'\in [2qx, 2qx+x]}{\sup}(1+ N^l([t'-A,t'])^2\big]\nonumber\\
    &+ 2^K\Vert \bar f\Vert_\infty^2\E_0\bigg[ \underset{ t'\in [2qx, 2qx+x]}{\sup}(1+ N([t'-A,t'])^2 \sum_{l=1}^K \int_{2qx}^{2qx+x}\hspace{-0.1cm}\Big(\int_{t-A}^{t-} \hspace{-0.2cm}h_{l,k}(A-u)dN_u^l \Big)^2\hspace{-0.15cm}dt \bigg]\nonumber\\
    &\leq 2x\nu_k^2\Vert \bar f\Vert_\infty^2\E_0\big[ \underset{l\in \K, t'\in [2qx, 2qx+x]}{\sup}(1+ N^l([t'-A,t'])^2\big]\nonumber\\
    &+ 2^K \Vert h_k\Vert^2_2 \Vert \bar f\Vert_\infty^2\E_0\big[ \underset{ t'\in [2qx, 2qx+x]}{\sup}(1+ N([t'-A,t'])^3N([2qx-A,2qx+x])\big]\nonumber\\
    &\leq 2x\Vert f\Vert_2^2\Vert \bar f\Vert_\infty^2\E_0\big[ \underset{ t'\in [0, T]}{\sup}(1+ N([t'-A,t'])^2\big]\nonumber\\
    &+ 2^K\Vert f\Vert_2^2 \Vert \bar f\Vert_\infty^2\E_0\big[ \underset{ t'\in [0,T]}{\sup}(1+ N([t'-A,t'])^3 N([2qx-A,2qx+x])\big].\nonumber
\end{align}
Finally, let $p\geq 0$, note that by Lemma \ref{omega_T}, we have for $T$ large enough
\begin{align}
    &\E_0\Big[\underset{t\in [0,T]}{\sup}(1+N([t-A,t[))^p\Big]\nonumber\\
    &= \E_0\Big[\underset{t\in [0,T]}{\sup}(1+N([t-A,t[))^p\onee_{\Omega_T}\Big]+ \E_0\Big[\underset{t\in [0,T]}{\sup}(1+N([t-A,t[))^p\onee_{\Omega_T^c}\Big] \leq 2K^pC_\alpha^p\log(T)^p.\nonumber
\end{align}
Putting all together, we obtain that for $T$ large enough
\begin{align*}
    &\sum_{q=0}^{d_T-1} \E_0[F_q^2\vert \mathfrak{F}_{q-1}]\nonumber\\
    &\lesssim \Vert \bar f \Vert_\infty^2 \Vert f \Vert_2^2 \bigg(d_Tx^2 + d_Tx\log(T)^2+ \E_0\Big[\underset{t\in [0,T]}{\sup}(1+N([t-A,t]))^2 \sum_{q=0}^{d_T-1}N([2qx-A,2qx+x])\Big] \bigg)\nonumber\\
    &\lesssim \Vert \bar f \Vert_\infty^2 \Vert f \Vert_2^2 \bigg(T\log(T) + T\log(T)^2 + \E_0\Big[\underset{t\in [0,T]}{\sup}(1+N([t-A,t]))^2 N([-A,T])\Big] \bigg)\nonumber\\
    &\lesssim \Vert \bar f \Vert_\infty^2 \Vert f \Vert_2^2 \bigg(T\log(T)^2 + T\E_0\Big[\underset{t\in [0,T]}{\sup}(1+N([t-A,t]))^3\Big]\bigg)\lesssim T\log(T)^3\Vert \bar f \Vert_\infty^2 \Vert f \Vert_2^2.\nonumber
\end{align*}
Therefore, again by theorem 3.3 of \cite{van_zanten} (see also equation (1.1) of this article for a formulation in discrete time), for some constant finite constant $C'>0$ and for $T$ large enough, both independent of $f$ and $\bar f$, we have
\begin{align}
    \P_{M^{x,1}}\bigg( \Omega_T^1 \cap\Big\{ \Big\vert\sum_{q=0}^{d_T-1}  F_q\Big\vert >\frac{u}{2}\Big\}\bigg) &\leq 2\exp\bigg( \frac{-C'\log(T)^{-3}u^2}{T\Vert \bar f \Vert_\infty^2 \Vert f \Vert_2^2 + u\Vert \bar f\Vert_\infty \Vert  f\Vert_\infty} \bigg).\nonumber
\end{align}
It remains to study the term $\Delta_2$. To do so, we proceed as for $\Delta_1$ but with this time a sequence $(M^{x,2}_q)_q$ such that for all $q$, $M^{x,2}_q$ that has the same distribution as $N_{\vert [2qx+x-A,2qx+2x]}$ and we define $\Omega_{2,T} := \Omega_T \cap \{  \forall q\leq d_T-1,\,  M^{x,2}_q = N_{\vert [2qx+x-A,2qx+2x]}\}$. Then, everything done for $\Delta_1$ holds similarly for $\Delta_2$ and, recalling that we have set $\Omega_T'=\Omega_T^1\cap \Omega_T^2$, we conclude that
\begin{align}
    &\P_0\bigg(\Omega_T'\cap\Big\{\Big\vert \sum_{k=1}^K \int_0^T \lambda_t^k(\bar f_k)\lambda_t^k(f_k) -\E_0[\lambda_A^k(\bar f_k)\lambda_A^k(f_k)]dt\big)\Big\vert> u \Big\}\bigg)\nonumber\\
    &\leq 4\exp\bigg( \frac{-C'\log(T)^{-3}u^2}{T\Vert \bar f \Vert_\infty^2 \Vert f \Vert_2^2 + u\Vert \bar f\Vert_\infty \Vert  f\Vert_\infty} \bigg). \nonumber 
\end{align}
This ends the proof of the second inequality.
\end{proof}

\subsection{Control of the remainders in the LAN expansions for the proof of Theorem \ref{main_theorem}}

The following lemma is used in the proof of Theorem \ref{main_theorem} to get rid of remainder terms coming from the LAN expansions. The proof relies on a chaining argument from \cite{talagrand_chaining} for variables satisfying Bernstein-type inequalities.

\begin{lemma}\label{control_rem} Under the assumptions of Theorem \ref{main_theorem}, the remainder term $R_{T,\varphi}$ defined in the proof of Theorem \ref{main_theorem} verifies
\begin{align}
    \underset{j\in \mathcal{J}_T}{\max}\underset{f\in \A_{T}(j)}{\sup}\vert R_{T,\varphi}(f) - R_{T,\varphi}(f_{u,j})\vert =o_{\P_0}(1)\nonumber
\end{align}
\end{lemma}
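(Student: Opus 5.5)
We will decompose $R_{T,\varphi}(f)-R_{T,\varphi}(f_{u,j})$ into the pieces coming from the definition
\begin{align*}
R_{T,\varphi}(f)= R_T(f) + \sqrt{T}W_{T}\big(0, \omega_\varphi(\tilde \boh)\big)-\frac{T}{2}\Vert \omega_\varphi(\tilde \boh)\Vert_{L}^2 - T\langle \tilde f -\tilde f^{0}, (0, \omega_\varphi(\tilde \boh))\rangle_{L},
\end{align*}
and control each difference uniformly over $f\in\A_T(j)$, $j\in\mathcal{J}_T$. The linearization terms are handled first. Write $\tilde f_{u,j}=\tilde f-u\tilde\psi^{0,j}_{L,\varphi}/\sqrt T$, where $\Vert \tilde\psi^{0,j}_{L,\varphi}\Vert_\infty$ is uniformly bounded by assumption. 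A second-order Taylor expansion of $\varphi$ (assumption (A'), (\ref{A_range})) then gives
\begin{align*}
\omega_\varphi(\tilde\boh)-\omega_\varphi(\tilde\boh-u\bog^{0,j}_{L,\varphi}/\sqrt T)=O\big(\vert\tilde\boh-\tilde\boh^0\vert/\sqrt T+1/T\big)
\end{align*}
pointwise. Hence the deterministic terms $T\Vert\omega\Vert_L^2$ and $T\langle\cdot,\omega\rangle_L$ change by at most $O(\sqrt T\log(T)^2\varepsilon_T^2)+o(1)=o(1)$, using Lemma \ref{lemma_linearization}, $\Vert\tilde f-\tilde f^0\Vert_2\lesssim\log(T)\varepsilon_T$ and (\ref{condi_rate}). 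The stochastic term $\sqrt T W_T(0,\cdot)$ applied to the difference $\omega$-increment is a martingale integral. We bound it with the first Bernstein inequality of Lemma \ref{Bernstein}, followed by a supremum argument described below.

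The main term is $R_T(f)-R_T(f_{u,j})$. Writing $\delta=f-f^0$, $\delta_u=f_{u,j}-f^0$ and using the exact form (\ref{likelihood_expansion_proof}) (valid on $\Omega_T$, where the intensities stay bounded below, as in the proof of Lemma \ref{lemma_LAN_convol_RELU}), $R_T(f)$ splits into two parts. The first is the quadratic fluctuation
\begin{align*}
\Delta Q(\delta)=\tfrac T2\Vert\delta\Vert_L^2-\tfrac12\sum_k\int_0^T\frac{\tilde\lambda^k_t(\delta_k)^2}{\lambda_t^k(f^0_k)^2}dN^k_t.
\end{align*}
The second is a cubic remainder $\int \tilde\lambda^2 R(\tilde\lambda/\lambda^0)\,dN/\lambda^0$.

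For the cubic part, on $\Omega_T$ we have $\sup_t\vert\tilde\lambda_t(\delta)\vert\lesssim\log(T)\Vert\delta\Vert_\infty\lesssim\log(T)^2\sqrt{J_T}\varepsilon_T$ (Lemma \ref{prem_1}). Differentiating $x\mapsto x^2R(x)$, the difference between $\delta$ and $\delta_u$ is bounded by
\begin{align*}
\log(T)^2\sqrt{J_T}\varepsilon_T\int\vert\tilde\lambda_t(\delta)\vert\,\vert\tilde\lambda_t(\delta-\delta_u)\vert\,dN_t \lesssim \log(T)^2\sqrt{J_T}\varepsilon_T\cdot T\cdot\log(T)^{c}\varepsilon_T/\sqrt T.
\end{align*}
This bound uses the third inequality of Lemma \ref{Bernstein} with $\Vert\delta-\delta_u\Vert_\infty\lesssim1/\sqrt T$. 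It is $o(1)$ by the second part of (\ref{condi_rate}), and explains the $\log(T)^7$ there.

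For the quadratic part, $\Delta Q(\delta)-\Delta Q(\delta_u)$ is bilinear. It equals a centered empirical process $Z_T(\bar f,f):=\sum_k\int\tilde\lambda(\bar f)\tilde\lambda(f)(dN/(\lambda^0)^2)-T\E_0[\cdot]$, evaluated at $\bar f=u\tilde\psi^{0,j}/\sqrt T$ (bounded in sup norm by $1/\sqrt T$) and at $f$ ranging over $\{\delta:\Vert\delta\Vert_2\lesssim\log(T)\varepsilon_T\}$ in the $j$-dimensional space, plus a term of order $Z_T(\bar f,\bar f)=o_{\P_0}(1)$. Lemma \ref{Bernstein} (second and third inequalities, applied with weights $1/\lambda^0$) gives, for fixed $f$, tails with variance proxy $T\cdot T^{-1}\log(T)^2\varepsilon_T^2\log(T)^5$ and scale proxy $T^{-1/2}\log(T)\sqrt{J_T}\varepsilon_T$. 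This mixed subgaussian/subexponential increment structure, in the $L_2$ and $L_\infty$ metrics, is exactly the setting of Talagrand's generic chaining bound \cite{talagrand_chaining}. We apply it to the sets $\A_T(j)$, with entropies computed from (\ref{condi_rho})--(\ref{condi_sup}) as in Proposition \ref{th_concentration_L1}: $\log\mathcal N(\epsilon)\lesssim J_T\log(T)$ and Dudley integrals of order $\sqrt{J_T\log T}$. The sup is then $O_{\P}(\log(T)^{c}\sqrt{J_T}\varepsilon_T)=o(1)$. A union bound over $j\in\mathcal{J}_T$ costs only $\log J_T\lesssim\log T$. The same chaining handles the $W_T(0,\omega)$ term.

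The expected main obstacle is this chaining step: the Bernstein inequalities hold only on the events $\Omega_T,\Omega_T'$ and with polylog losses, so the chaining must be done conditionally on these events. The increments in the $L_\infty$ metric also have to be tracked carefully so that the powers of $\log T$ match the exponent $7$ in (\ref{condi_rate}).
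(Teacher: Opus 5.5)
Your decomposition (quadratic fluctuation $\Delta Q$, cubic remainder, $\omega_\varphi$-terms) and the generic chaining of the bilinear empirical process with the mixed-tail bounds of Lemma \ref{Bernstein} follow the paper. However, your estimate of the cubic remainder $\sum_k\int_0^T V\big(\tilde\lambda_t^k(\delta_k)/\lambda_t^k(f^0_k)\big)\,dN_t^k/\lambda_t^k(f^0_k)$, with $V(x)=x^2R(x)$, does not close.

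Your final bound is $\log(T)^{2}\sqrt{J_T}\varepsilon_T\cdot T\cdot\log(T)^{c}\varepsilon_T/\sqrt{T}=\log(T)^{2+c}\sqrt{J_T}\varepsilon_T\cdot\sqrt{T}\varepsilon_T$. You dropped the factor $\sqrt{T}\varepsilon_T$, which tends to $+\infty$ because (P1) gives $T\varepsilon_T^2\gtrsim\log(T)^5$. So the second part of (\ref{condi_rate}) does not make your bound $o(1)$. Since $J_T\asymp T\varepsilon_T^2/\log(T)$ under (P1), your bound is of order $T\varepsilon_T^3$ up to logarithms, and (\ref{condi_rate}) does not force $T\varepsilon_T^3\to 0$. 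With $\varepsilon_T\approx T^{-\beta/(2\beta+1)}$ it diverges for every $\beta\in]1/2,1]$, i.e.\ throughout the setting of Corollary \ref{histo_coro}.

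The loss comes from spending the sup-norm bound $\sup_t|\tilde\lambda_t(\delta)|\lesssim\log(T)^2\sqrt{J_T}\varepsilon_T$ (whose $\sqrt{J_T}$ comes from Lemma \ref{prem_1}) on one of the two factors of $V'(\xi)\asymp\xi^2$. Only the increment should be bounded in sup norm. Let $x=\tilde\lambda_t^k(\delta_k)/\lambda_t^k(f^0_k)$ and define $x_u$ likewise from $\delta_u$. Write $V(x_u)-V(x)=(x_u-x)V'(x)+\frac12(x_u-x)^2V''(\Gamma)$. Then use $|V'(x)|\le 2x^2$, $|x_u-x|\lesssim\log(T)/\sqrt{T}$ on $\Omega_T$, and $|V''(\Gamma)|\lesssim|\Gamma|\lesssim\log(T)^2\sqrt{J_T}\varepsilon_T$. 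The second piece is then $O(\log(T)^5\sqrt{J_T}\varepsilon_T)$. The first is bounded, up to a constant, by $\frac{\log(T)}{\sqrt{T}}\sum_k\int_0^T\tilde\lambda_t^k(\delta_k)^2\,dN_t^k/\lambda_t^k(f^0_k)^2$. That is $O(\log(T)^3\sqrt{T}\varepsilon_T^2)$ plus $\log(T)/\sqrt{T}$ times a centered quadratic empirical process; this is the paper's route.

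That centered process must be controlled uniformly over $f\in\A_T(j)$ and $j\in\mathcal{J}_T$. A single application of Lemma \ref{Bernstein} at fixed $f$, which is all your sketch invokes for this term, does not give a supremum. You need the same chaining as in your quadratic part, applied to the increments $\tilde\lambda(\delta)^2-\tilde\lambda(\delta')^2=\tilde\lambda(\delta-\delta')\tilde\lambda(\delta+\delta')$, using that $\Vert\delta+\delta'\Vert_\infty$ is bounded on $\A_T$.

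Two smaller points. First, when $\varphi$ is not the identity, $\delta-\delta_u=f-f_{u,j}$ in the quadratic part is not the fixed direction $u\tilde\psi^{0,j}_{L,\varphi}/\sqrt{T}$. It equals $u\,\tilde\psi^{0,j}_{L,\varphi}.\boldsymbol{\bar\varphi}^0/\sqrt{T}$ plus $(0,\omega_\varphi(\tilde\boh)-\omega_\varphi(\tilde\boh_{u,j}))$, which depends on $f$. The extra $\omega_\varphi$ cross terms (the third to sixth terms of (\ref{diff_lin_delta_Q})) therefore need separate treatment.

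Second, your pointwise Taylor bound on the $\omega_\varphi$-increment does not cover the cross term $T\langle\tilde f_{u,j}-\tilde f,(0,\omega_\varphi(\tilde\boh_{u,j}))\rangle_L$, which is of order $\sqrt{T}\,|\langle\tilde\psi^{0,j}_{L,\varphi},(0,\omega_\varphi(\tilde\boh_{u,j}))\rangle_L|$. For it you need $|\langle\tilde\psi^{0,j}_{L,\varphi},(0,\omega)\rangle_L|\lesssim\Vert\omega\Vert_1$ uniformly in $j$. This holds because $\sup_j\Vert\bog^{0,j}_{L,\varphi}\Vert_\infty<\infty$ and the Palm first-moment density is bounded (Lemma \ref{moment_measure_palm}). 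The paper instead obtains the bound $\log(T)^2\varepsilon_T^{2(1-\delta)}$ through a H\"older argument, which is where the $\delta$ of (\ref{condi_rate}) is used.
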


\begin{proof}[Proof of Lemma \ref{control_rem}]
 First, to shorten some computations, given $H = (H_t^k)_{t,k}$ a marked, stationary and predictable process having a second moment, we define the process $H\bullet (N -\E_0)_T$ by 
\begin{align}
    H\bullet (N -\E_0)_T =   \sum_{k=1}^K \int_0^T H_t^k  dN_t^k -T\sum_{k=1}^K \E_0[H_A^k\lambda_A^k(f^0_k)].\nonumber
\end{align}
Moreover, a process $\big(\tilde \lambda_t^k(f)\tilde \lambda_t^k(f')/\tilde \lambda_t^k(f'')\big)_{t,k}$ will be denoted $\tilde \lambda(f)\tilde \lambda(f')/\tilde\lambda(f'')$. Let $\Omega_T'$ be the event defined in Lemma \ref{Bernstein}, as $\P_0(\Omega_T')\xrightarrow[]{} 1$, it is sufficient to prove that for all $x>0$,
\begin{align}
   \P_0^{\Omega_T'}\Big( \underset{j\in \mathcal{J}_T}{\max}\underset{f\in \A_{T}(j)}{\sup}\vert R_{T,\varphi}(f) - R_{T,\varphi}(f_{u,j})\vert >x\Big) \rightarrow 0,
\end{align}
where $\P_0^{\Omega_T'}$ is the conditional probability given $\Omega_T'$: $\P_0^{\Omega_T'}(.) = \P_0(\Omega_T'\cap .)/\P_0(\Omega_T')$. Now we recall that for $f\in \A_T$, 
\begin{equation}\label{recall_remainder}
\begin{aligned}
    &R_{T,\varphi}(f) = T\Delta Q_T(f-f^0)+ \sum_{k=1}^K \int_0^T  V\Big(\frac{\lambda_t^k(f_k-f^0_k)}{\lambda_t^k(f^0_k)} \Big)\frac{dN_t^k}{\lambda_t^k(f^0_k)} + \tilde R_{T,\varphi}(f), \\
    &\Delta Q_T(f-f^0)=\frac{1}{2}\Vert f-f^0\Vert_L^2  - \frac{1}{2T} \sum_{k=1}^K \int_0^T \frac{\lambda_t^k(f_k-f^0_k)^2}{\lambda_t^k(f^0_k)} \frac{dN_t^k}{\lambda_t^k(f^0_k)}, \\
    &\tilde R_{T,\varphi}(f)=  \sqrt{T}W_{T}\big(0, \omega_\varphi(\tilde \boh) \big)-\frac{T}{2}\Vert \omega_\varphi(\tilde \boh)  \Vert_{L}^2 - T\langle \tilde f -\tilde f^{0}, (0, \omega_\varphi(\tilde \boh) )\rangle_{L},
\end{aligned}
\end{equation}
with $V(x) = \log(1+x) -x +x^2/2=x^2R(x)$ for $x>-1$. We will study separately the difference between the evaluation in $f_{u,j}$ and in $f$ of each term appearing in (\ref{recall_remainder}) for some integer $j\in \mathcal{J}_T$ and some $f\in \A_T(j)$. Next, for each term, we will obtain a uniform control of the difference over $f\in \A_T(j)$ and then over $\A_T$.  We begin with the difference  $T(\Delta Q_{T}(f_{u,j}-f^0) -\Delta Q_{T}(f-f^0))$.

\textit{Difference $\mathit{T\vert \Delta Q_{T}(f_{u,j}-f^0) -\Delta Q_{T}(f-f^0)}\vert  $}. Let $j\in \mathcal{J}_T$ and let $f = (\nu,\varphi(\tilde \boh))\in \A_{T}(j)$ , set as before $\tilde f = (\nu,\tilde \boh)$ and $\tilde f\varphi^0 = (\nu, \tilde \boh.\boldsymbol{\bar\varphi}^0)$. We first use the linearization (\ref{linearization}) (justified by lemma \ref{lemma_linearization}) to rewrite the difference:
\begin{align}\label{diff_lin_delta_Q}
&T\big(\Delta Q_{T}(f_{u,j} -f^0) - \Delta Q_{T}(f-f^0)\big)\nonumber\\
&= \frac{u^2}{2T}\frac{\tilde \lambda(\tilde \psi^{0,j}_{L,\varphi}.\boldsymbol{\bar\varphi}^0)^2}{\lambda(f^{0})^2}\bullet (N -\E_0)_T\ -\frac{u}{\sqrt{T}} \frac{\tilde \lambda((\tilde f - \tilde f^{0}).\boldsymbol{\bar\varphi}^0)\tilde\lambda(\tilde \psi^{0,j}_{L,\varphi}.\boldsymbol{\bar\varphi}^0)}{\lambda(f^{0})^2}\bullet (N -\E_0)_T \nonumber \\
    &\hspace{0.3cm}-\frac{u}{\sqrt{T}}\frac{\tilde\lambda(\tilde \psi^{0,j}_{L,\varphi}.\boldsymbol{\bar\varphi}^0)\tilde\lambda(0, \omega_\varphi(\tilde \boh))}{\lambda(f^0)^2}\bullet(N-\E_0)_T + \frac{\tilde\lambda((\tilde f-\tilde f^0).\boldsymbol{\bar\varphi}^0)\tilde\lambda(0,\omega_\varphi(\tilde \boh_{u,j})- \omega_\varphi(\tilde \boh))}{\lambda(f^0)^2}\bullet(N-\E_0)_T\nonumber \\
    &\hspace{0.3cm}-\frac{\tilde\lambda(0, \omega_\varphi(\tilde \boh))\tilde\lambda(0,\omega_\varphi(\tilde \boh_{u,j})- \omega_\varphi(\tilde \boh))}{\lambda(f^0)^2}\bullet(N-\E_0)_T+  \frac{1}{2} \frac{\tilde\lambda(0, \omega_\varphi (\tilde \boh_{u,j})-\omega_\varphi (\tilde \boh))^2}{\lambda(f^{0})^2}\bullet (N -\E_0)_T.
\end{align}
We study in detail  the first term and the second term on the right-hand side of (\ref{diff_lin_delta_Q}). Then, we will show that the third  can treated similarly to the second term. Finally, we will treat the last three terms with simple inequalities on $\omega_\varphi$ and using what we did for the second and third terms. Note that in the case where $\varphi$ is the identity function, the last four terms are equal to $0$ and for two first terms, the functions $\bar\varphi^0_{l,k}$ are constant and equal to $1$.

The first term on the right-hand side of (\ref{diff_lin_delta_Q}) is independent of $f$. By assumption we have for some $c>0$ independent of $j$, $\Vert \tilde \psi^{0,j}_{L,\varphi} .\boldsymbol{\bar\varphi}^0\Vert_\infty <c$. Moreover, by the equivalence of norms and since an orthogonal projection is norm-decreasing, we find
\begin{align}
    \Vert \tilde \psi^{0,j}_{L,\varphi}. \boldsymbol{\bar\varphi}^0\Vert_2\lesssim \Vert \tilde \psi^{0,j}_{L,\varphi} \Vert_{L,\varphi} \lesssim \Vert \tilde \psi^{0}_{L,\varphi} \Vert_{L,\varphi}\nonumber
\end{align}
Hence, with the third Bernstein inequality of Lemma \ref{Bernstein}, we obtain that for some $B_1>0$, for any $x>0$,
\begin{align}
    &\P_0\bigg(\Omega_T'\cap \Big\{\Big\vert \frac{1}{2T}\frac{\lambda(\tilde \psi^{0,j}_{L,\varphi}.\boldsymbol{\bar\varphi}^0)^2}{\lambda(f^{0})^2}\bullet (N -\E_0)_T\Big\vert >x\Big\}\bigg)\leq 6\exp\bigg(\frac{-B_1Tx^2}{\log(T)^5\big(c^2\Vert \tilde \psi^{0}_{L,\varphi} \Vert_{L,\varphi}^2 + xc^2\big)} \bigg).\nonumber
\end{align}
This upper bound does not depend on $j$ so, 
\begin{align}
    &\P_0\bigg(\Omega_T'\cap \Big\{\underset{j\in \mathcal{J}_T}{\max}\Big\vert \frac{1}{2T}\frac{\lambda(\tilde \psi^{0,j}_{L,\varphi}.\boldsymbol{\bar\varphi}^0)^2}{\lambda(f^{0})^2}\bullet (N -\E_0)_T\Big\vert >x\Big\}\bigg)  \nonumber  \\
    &\leq 6J_T\exp\bigg(\frac{-B_1Tx^2}{\log(T)^5\big(c^2\Vert \tilde \psi^{0}_{L,\varphi} \Vert_{L,\varphi}^2 + xc^2\big)} \bigg)=o(1).\nonumber
\end{align}
It shows that the first term on the right-hand side of (\ref{diff_lin_delta_Q}) is a $o_{\P_0}(1)$ uniformly in $j$.

For the second term on the right-hand side of (\ref{diff_lin_delta_Q}), we set
\begin{align}
    H_2(\tilde f,T) := \frac{1}{\sqrt{T}}\frac{\tilde \lambda((\tilde f).\boldsymbol{\bar\varphi}^0)\tilde\lambda(\tilde \psi^{0,j}_{L,\varphi}.\boldsymbol{\bar\varphi}^0)}{\lambda(f^{0})^2},\nonumber
\end{align}
and so we have to show that 
\begin{align}\label{goal_2}
    \underset{j\in \mathcal{J}_T}{\max}\hspace{0.05cm}\underset{\tilde f\in \tilde \A_{T}(j)}{\sup} \Big\vert H_2(\tilde f-\tilde f^0,T) \bullet (N - \E_0)_T  \Big\vert \xrightarrow[T\rightarrow +\infty]{\P_0^{\Omega_T'} }0,
\end{align}
where $\tilde \A_T(j)$ is such that $\A_T(j) = \{\varphi(\tilde f), \tilde f\in \tilde \A_T(j)\}$. To do so, we first fix $j\in \mathcal{J}_T$ and we study the increments of the process $\big(H_2(\tilde f-\tilde f^0, T) \bullet (N -\E_0)_T\big)_{ \tilde f\in \tilde \A_{T}(j)} $ (which is a kind of empirical process) in order to conclude with a chaining argument.   By the third inequality of lemma \ref{Bernstein} and with the previous remarks on the sup norm and the $\L_2$ norm of $\tilde \psi^{0,j}_{L,\varphi}.\boldsymbol{\bar\varphi}^0$, there exists $B_2>0$ such that for any $ \tilde f, \tilde f'\in \tilde \A_{T}(j)$ and $x>0$
\begin{equation}\label{bernstein_incr}
\begin{aligned}
    &\P_0\Big( \Omega_T'\cap \Big\{\Big \vert \big(H_2(\tilde f-\tilde f^0,T) - H_2(\tilde f '-\tilde f^0,T)\big)\bullet(N-\E)_T\big \vert >x\Big\}\Big) \\
    &=\P_0\Big( \Omega_T'\cap \Big\{\Big \vert H_2(\tilde f-\tilde f ',T)\bullet(N-\E)_T\big \vert >x\Big\}\Big) \\
    &\lesssim \exp\bigg( \frac{-B_2\log(T)^{-5}x^2}{\Vert \tilde f - \tilde f'\Vert_2^2 + xT^{-1/2}\Vert \tilde f - \tilde f'\Vert_\infty}\bigg)\lesssim  \exp\bigg( \frac{-B_2\log(T)^{-5}x^2}{\Vert \tilde f - \tilde f '\Vert_2^2 + x\sqrt{j}T^{-1/2}\Vert \tilde f - \tilde f'\Vert_2}\bigg),
\end{aligned}
\end{equation}
where the last inequality comes from (\ref{condi_sup}). With inequality (\ref{bernstein_incr}), we can apply Theorem 1.2.7 of \cite{talagrand_chaining} (more precisely the penultimate inequality in the proof of this Theorem), which combined with inequality 1.49 again from \cite{talagrand_chaining} gives the following bound (\ref{first_chain}) on the deviation of the supremum around some $\tilde f^j_*\in \tilde \A_T(j)$. Let $d_{\Vert .\Vert_2}$ be the distance induced by $\Vert .\Vert_2$. There exists a universal constant $\bar L>0$ such that for any $x>0$ 
\begin{equation}\label{first_chain}
\begin{aligned}
    &\P_0^{\Omega_T'}\Bigg(\underset{ \tilde f\in  \tilde \A_{T}(j)}{\sup} \Big\vert  \Big(H_2(\tilde f -\tilde f^0,T)- H_2(\tilde f^j_*-\tilde f^0,T)\Big) \bullet (N - \E_0)_T  \Big\vert >x\bigg) \\
    &\leq \bar L\exp\Bigg( \frac{-\bar L^{-1}B_2\log(T)^{-5}x}{E_1(T,j) + E_2(T,j)}\Bigg)
\end{aligned}
\end{equation}
with 
\begin{align}
    &E_1(T,j) = \sqrt{j}T^{-1/2}\inf \underset{ \tilde f\in \tilde \A_{T}(j)}{\sup}\sum_{n\geq 0}2^{n}d_{\Vert .\Vert_2}(\tilde f, \tilde A_n) \nonumber\\
    &E_{2}(T,j) =\inf \underset{\tilde f\in \tilde \A_{T}(j)}{\sup}\sum_{n\geq 0}2^{n/2}d_{\Vert .\Vert_2}(\tilde f, \tilde A_n)\nonumber
\end{align}
and where the infimum is each time taken on the sequences $(\tilde A_n)_n$ of subsets of $\tilde \A_{T}(j)$ such that $Card(\tilde A_0)=1$ and $Card(\tilde A_n)\leq 2^{2^n}$ for $n\geq 1$. We first bound the term $E_1(T,j)$. Let $\tilde f_c = \bth_c^TB_j \in \tilde \A_{T}(j)$, by definition we have  $$\tilde \A_{T}(j)\subset \big\{\tilde f\in [0,+\infty[^K\times \mathcal{\tilde H}(j)^{K^2}; \Vert \tilde f -\tilde f_c\Vert_2\leq 2M\log(T)\varepsilon_T\big\}:= \tilde \A_{T}(j)^{ext}.$$
$\tilde \A_{T}(j)^{ext}$ is a $\Vert .\Vert_2$-ball in $[0,+\infty[^K\times \mathcal{\tilde H}(j)^{K^2}$ and an internal covering of $\A_{T}(j)^{ext}$ is an external covering of $\tilde \A_T(j)$. So,   with (\ref{condi_rho}), there exists $c>0$ such that   any $r>0$, 
\begin{align}
    \mathcal{N}(r, \tilde \A_{T}(j), \Vert.\Vert_2)\leq \mathcal{N}\big(r/2, \tilde \A_{T}(j)^{ext}, \Vert.\Vert_2\big)&\lesssim  \mathcal{N}\big(r/2, B_{\Vert . \Vert}(\bth_c, 2cM\log(T)\varepsilon_T), \Vert.\Vert\big) \nonumber \\
    &\leq \max \Big(1, \frac{12cM\log(T)\varepsilon_T}{r}\Big)^{K^2j}, \nonumber
\end{align}
where $\Vert . \Vert$ is the usual euclidean norm on $\R^{K^2j}$. Let $c'= 12cM$, we choose $r_{n,j,T}= c'\log(T)\varepsilon_T 2^{-2^n/(K^2j)}$ so that the set of centering points of the covering of $\tilde \A_{T}(j)$ with radius $r_{n,j,T}$, denoted $\tilde A_{n,T}$, verifies $Card(\tilde A_{n,T})\leq 2^{2^n}$. Consequently,
\begin{align}
    E_1(T,j)&\leq \sqrt{j}T^{-1/2}\sum_{n\geq 0}2^{n}r_{n,j,T}\leq K^2 c'\log(T)T^{-1/2}\varepsilon_T J_T^{3/2}\sum_{n\geq 0}\frac{2^{n}}{K^2j}2^{-2^n/(K^2j)}. \nonumber
\end{align}
where for the last inequality we have used that $j\leq J_T$. Let $n_j:= \lceil \log(3K^2j)/\log(2)\rceil$, if $n\geq n_j$ then $2^n/(K^2j)\geq 3$. Moreover, the map $x\mapsto x2^{-x}$ is bounded by $1$ on $[0,+\infty[$ and is decreasing on $[2,+\infty[$. Whence,
\begin{align}
    \sum_{n\geq 0} \frac{2^n}{K^2j} 2^{-\frac{2^n}{K^2j}} \leq  n_j +\sum_{n\geq n_j} \frac{2^n}{K^2j} 2^{-\frac{2^n}{K^2j}}\leq   n_j +\int_{n_j-1}^{+\infty} \frac{2^x}{K^2j} 2^{-\frac{2^x}{K^2j}} dx\leq   n_j +\int_{0}^{+\infty} \frac{e^{-u\log(2)}}{\log(2)} du,\nonumber
\end{align}
and the last inequality comes with the change of variable $u=2^x/(K^2j)$. So, for $T$ large enough, $\sum_{n\geq 0} \frac{2^n}{K^2j} 2^{-\frac{2^n}{K^2j}} \leq 2\log(J_T)$ and thus, $ E_1(T,j) \lesssim  \log(T)^2T^{-1/2}\varepsilon_T J_T^{3/2}$. The last bound being independent of $j\leq J_T$, we even have
\begin{align}
    \underset{j\in \mathcal{J}_T}{\max}\hspace{0.1cm}E_1(T,j)&\lesssim \log(T)^2T^{-1/2}\varepsilon_T J_T^{3/2}\lesssim
    \log(T)\sqrt{J_T}\varepsilon_T,\nonumber
\end{align}
because  by \hyperref[as_P1]{(P1)}, $\log(T)J_T\lesssim T\varepsilon_T^2$ and by (\ref{condi_rate}), $\sqrt{T}\varepsilon_T^2=o(1)$. Now, for the second term appearing in (\ref{first_chain}), namely $E_2(T,j)$, using Dudley inequality (see again \cite{talagrand_chaining}), we find similarly that
$E_2(T,j)\lesssim \log(T)\sqrt{J_T}\varepsilon_T$. 
Therefore, we have proved that for some finite positive constant $L'$, we have for any $j\leq J_T$ and $x>0$
\begin{align}
    \P_0^{\Omega_T'}\Bigg(\underset{\tilde f\in \tilde \A_{T}(j)}{\sup} \Big\vert \Big(H_2(\tilde f-\tilde f^0,T)-H_2(\tilde f^j_*-\tilde f^0,T)\Big) \bullet (N - \E_0)_T  \Big\vert >x\bigg)\leq L'\exp\bigg(\frac{-L'x}{\log(T)^{6}\sqrt{J_T}\varepsilon_T}\bigg).\nonumber
\end{align}
We deduce that over $\A_T$:
\begin{equation}\label{res_chaining_1_prem}
\begin{aligned}
    &\P_0^{\Omega_T'}\Bigg(\underset{ f\in  \A_T}{\sup} \Big\vert  H_2(\tilde f-\tilde f^0,T) \bullet (N - \E_0)_T  \Big\vert >x\bigg)\\
    &\leq \sum_{j\in \mathcal{J}_T}\P_0^{\Omega_T'}\Bigg(\underset{\tilde f\in \tilde \A_{T}(j)}{\sup} \Big\vert  \Big(H_2(\tilde f -\tilde f^0,T)- H_2(\tilde f^j_*-\tilde f^0,T)\Big) \bullet (N - \E_0)_T  \Big\vert >x/2\bigg)\\
    &\hspace{1cm}+\sum_{j\in \mathcal{J}_T} P_0^{\Omega_T'}\Bigg( \Big\vert  H_2( \tilde f^j_*-\tilde f^0,T) \bullet (N - \E_0)_T  \Big\vert >x/2\bigg) \\
    &\lesssim J_TL'\exp\bigg(\frac{-L'x/2}{\log(T)^{6}\sqrt{J_T}\varepsilon_T}\bigg)+\sum_{j\in \mathcal{J}_T} P_0^{\Omega_T'}\Bigg( \Big\vert  H_2( \tilde f^j_*-\tilde f^0,T) \bullet (N - \E_0)_T  \Big\vert >x/2\bigg) .
\end{aligned}
\end{equation}
For the first term on the right hand side of (\ref{res_chaining_1_prem}), one can check that by assumption (\ref{condi_rate}), it goes to $0$ as $T\rightarrow +\infty$. For the second term on the right hand side of (\ref{res_chaining_1_prem}), recall that by definition of $\A_T(j)$,  $\Vert f^j_*-\tilde f^0\Vert_2^2\lesssim \log(T)^2\varepsilon_T^2$ and with (\ref{condi_sup}) we also have $\Vert f^j_* -\tilde f^0\Vert_\infty\lesssim \log(T)\sqrt{J_T}\varepsilon_T=o(1)$. Whence, for some $B_2'>0$ and $T$ large enough, we have for $x>0$,
\begin{align}
    &\sum_{j\in \mathcal{J}_T} P_0^{\Omega_T'}\bigg( \Big\vert  H_2( \tilde f^j_*-\tilde f^0, T) \bullet (N - \E_0)_T  \Big\vert >x/2\bigg) \nonumber\\
    &\leq \sum_{j\in \mathcal{J}_T} 12\exp\bigg( - \frac{B_2'\log(T)^{-5}Tx^2}{T\Vert f^j_* -\tilde f^0\Vert_2^2 + x\sqrt{T}\Vert f^j_* -\tilde f^0\Vert_\infty}\bigg)\nonumber\\
    &\leq 12J_T \exp\bigg( - \frac{B_2'\log(T)^{-5}x^2}{\log(T)^2\varepsilon_T^2 + xT^{-1/2}}\bigg)=o(1).\nonumber
\end{align}
which terminates to prove (\ref{goal_2}).

So far, we have treated the two first terms on the right-hand side of (\ref{diff_lin_delta_Q}), it remains to deal with the following  four terms in (\ref{diff_lin_delta_Q}). These four terms involve the remainder $\omega_\varphi$ and we first recall some facts and prove useful inequalities on it. These inequalities will also be useful to study $\tilde R_{T,\varphi}(f_{u,j}) -\tilde R_{T,\varphi}(f)$ later in the proof. First, recall that
\begin{align}
    \omega_\varphi(\tilde \boh) = \varphi(\tilde \boh) -\varphi(\tilde \boh^0) -(\tilde \boh -\tilde \boh^0).\boldsymbol{\bar\varphi}^0,\nonumber 
\end{align}
and that by Lemma \ref{lemma_linearization}, $\Vert \omega_\varphi(\tilde \boh)\Vert_1\lesssim \log(T)^2\varepsilon_T^2$ on $\A_T$. Take $f, f'\in\A_T $, since $\varphi$ is Lipschitz and the functions $\bar\varphi^0$ are bounded, we have  for any $p\in [1,+\infty]$ 
\begin{align}\label{diff_omega_p}
    \Vert \omega_\varphi(\tilde \boh) - \omega_\varphi(\tilde \boh')\Vert_p \lesssim  \Vert \tilde \boh -\tilde \boh'\Vert_p ,
\end{align}
and thus
\begin{align}\label{diff_omega_j}
    \Vert \omega_\varphi(\tilde \boh_{u,j}) - \omega_\varphi(\tilde \boh)\Vert_p \lesssim \frac{1}{\sqrt{T}}.
\end{align}
We can even be more precise:
\begin{equation}\label{diff_omega_j_precise}
\begin{aligned}
    \omega_\varphi(\tilde \boh_{u,j}) - \omega_\varphi(\tilde \boh) &= \varphi(\tilde \boh_{u,j}) - \varphi(\tilde \boh) - (\tilde \boh_{u,j} - \tilde \boh).\boldsymbol{\bar\varphi}^0\\
    &= (\tilde \boh_{u,j} - \tilde \boh). \big( \varphi'(\gamma(\tilde \boh_{u,j},\tilde \boh)) - \varphi'(\tilde \boh^0)\big)\\
    &= -\frac{u\bog^{0,j}_{L,\varphi}}{\sqrt{T}}.\big( \varphi'(\gamma(\tilde \boh_{u,j},\tilde \boh)) - \varphi'(\tilde \boh^0)\big).
\end{aligned}
\end{equation}
and for all $(l,k)\in \K^2$, $\gamma(\tilde \boh_{u,j},\tilde \boh)_{l,k}$ is a function in the bracket  $[\tilde \boh_{u,j,l,k},\tilde \boh_{l,k}]$. We will use throughout the proof, without recalling it,  that by assumption $\sup \{\Vert \bog^{0,j}_{L,\varphi}\Vert_\infty, j\geq 1\}<+\infty$. Finally, recall (\ref{A_inf})  which gives that for some $D'>0$
\begin{align}
    \A_T\subset \big\{f: \Vert \tilde f -\tilde f^0\Vert_\infty \leq D'\log(T)\sqrt{J_T}\varepsilon_T\big\},\nonumber
\end{align} 
and $\log(T)\sqrt{J_T}\varepsilon_T \rightarrow 0$ by assumption.  Now, we are ready to study individually the last four terms of (\ref{diff_lin_delta_Q}).

For the third term, define for $\tilde f\in \tilde \A_T(j)$
\begin{align}
    H_3(\tilde f,T) := \frac{1}{\sqrt{T}} \frac{\tilde \lambda(\tilde\psi^{0,j}_{L,\varphi}\tilde \lambda(0,\omega_\varphi(\tilde \boh))}{\tilde \lambda(f^{0})^2}\nonumber
\end{align}
Using the third inequality of lemma \ref{Bernstein} and (\ref{diff_omega_p}), we find that for some $B_3,B_3'>0$ (independent of $f, f', T$ and $x$)
\begin{align}
    &\P_0\Big( \Omega_T'\cap \Big\{\Big \vert \big(H_3(\tilde f,T) - H_3(\tilde f ',T)\big)\bullet(N-\E)_T\big \vert >x\Big\}\Big)\nonumber\\
    &\hspace{3cm}-T\E_0\bigg[\frac{\tilde\lambda_A^k(\tilde\psi^{0,j}_{L,\varphi})\tilde\lambda_A^k(0,\omega_\varphi(\tilde \boh)-\omega_\varphi(\tilde \boh'))}{\lambda_A^k(f^0_k)}\bigg]  \Big\vert> \sqrt{T}x\Big\}\bigg)\nonumber \\
    &\leq 6\exp\bigg( \frac{-B_3x^2}{\log(T)^5\big(\Vert  \omega_\varphi(\tilde \boh) - \omega_\varphi(\tilde \boh')\Vert_2^2 + xT^{-1/2}\Vert \omega_\varphi(\tilde \boh) - \omega_\varphi(\tilde \boh')\Vert_\infty\big)}\bigg) \nonumber \\
    &\leq 6\exp\bigg( \frac{-B_3'x^2}{\log(T)^5\big(\Vert \tilde f - \tilde f\Vert_2^2 + xT^{-1/2}\Vert \tilde f - \tilde f'\Vert_\infty\big)}\bigg) \nonumber \\
    &\leq  6\exp\bigg( \frac{-B_3\log(T)^{-5}x^2}{\Vert \tilde f - \tilde f '\Vert_2^2 + x\sqrt{j}T^{-1/2}\Vert \tilde f - \tilde f'\Vert_2}\bigg).\nonumber
\end{align}
Using the same arguments as for $H_2$,
\begin{align}
    \P_0^{\Omega_T'}\bigg(\hspace{0.05cm}\underset{\tilde f\in  \A_T}{\sup} \Big\vert  H_3(\tilde f) \bullet (N - \E_0)_T  \Big\vert >x\bigg)\xrightarrow[T\rightarrow +\infty]{} 0\nonumber.
\end{align}

For the fourth, fifth and sixth terms, we could again study the increment of the associated process and conclude with a chaining argument. But thanks to (\ref{diff_omega_j_precise}) and (\ref{A_inf}), we can be more concise (and rougher).  For the fourth term, we have
\begin{equation}\label{eq_4th}
\begin{aligned}
    &\bigg \vert \frac{\tilde\lambda((\tilde f-\tilde f^0).\boldsymbol{\bar\varphi}^0)\tilde \lambda(0,\omega_\varphi(\tilde \boh_{u,j})- \omega_\varphi(\tilde \boh))}{\lambda(f^0)^2}\bullet(N-\E_0)_T\bigg\vert \\
    &\leq \bigg \vert \sum_{k=1}^K \int_0^T\frac{\tilde \lambda_t^k((\tilde f_k-\tilde f^0_k).\boldsymbol{\bar\varphi}^0_k)\tilde \lambda^k_t(0,\omega_\varphi(\tilde \boh_{u,j,k})- \omega_\varphi(\tilde \boh_k))}{\lambda_t^k(f^0_k)^2} dN_t^k\bigg \vert\\
    &\hspace{1cm}+ T\sum_{k=1}^K\E_0\bigg[\Big\vert \frac{\tilde\lambda_A^k((\tilde f_k-\tilde f^0_k).\boldsymbol{\bar\varphi}^0_k)\tilde \lambda_A^k(0,\omega_\varphi(\tilde \boh_{u,j})_k- \omega_\varphi(\tilde \boh)_k)}{\lambda_A^k(f^0_k)}\Big \vert \bigg]
\end{aligned}
\end{equation}
Now, using in order Cauchy-Schwarz inequality, the equivalence between the LAN norm and the L2 norm, (\ref{diff_omega_j_precise}) and that by assumption $\varphi'$ is Lipschitz on $\mathcal{\tilde A}_T(j)$ for $T$ large enough, we obtain the following bound on the second term of the right hand side of (\ref{eq_4th}):
\begin{equation}\label{4th_1}
\begin{aligned}
    &T\sum_{k=1}^K\Vert (\tilde f_k -\tilde f^0_k).\boldsymbol{\bar\varphi}^0_k\Vert_L \Vert \omega_\varphi(\tilde \boh_{u,j,k})- \omega_\varphi(\tilde \boh_k)\Vert_L\\
    &\lesssim T\sum_{k=1}^K\Vert (\tilde f_k -\tilde f^0_k).\boldsymbol{\bar\varphi}^0_k\Vert_2 \Vert \omega_\varphi(\tilde \boh_{u,j,k})- \omega_\varphi(\tilde \boh_k)\Vert_2 \\
    &\lesssim \log(T)T\varepsilon_T\Big\Vert \frac{u\bog^{0,j}_{L,\varphi}}{\sqrt{T}}\varphi'(\gamma(\tilde\boh_{u,j} ,\tilde \boh)) - \varphi'(\tilde \boh^0)\Big\Vert_2\\
    &\lesssim \log(T)\sqrt{T}\varepsilon_T\Vert \varphi'(\gamma(\tilde\boh_{u,j} ,\tilde \boh)) - \varphi'(\tilde \boh^0)\Vert_2\\
    &\lesssim \log(T)\sqrt{T}\varepsilon_T\Vert \gamma(\tilde\boh_{u,j} ,\tilde \boh) -\tilde \boh^0\Vert_2\\
    &\lesssim \log(T)\sqrt{T}\varepsilon_T\big(\Vert \tilde\boh_{u,j} -\tilde \boh^0\Vert_2+ \Vert \tilde \boh -\tilde \boh^0\Vert_2\big)\\
    &\lesssim\log(T)^2\sqrt{T}\varepsilon_T^2=o(1).
\end{aligned}
\end{equation}
Next, for the first term on the right hand side of (\ref{eq_4th}), with similar arguments (in particular (\ref{diff_omega_j_precise})) we find that:
\begin{align}
    &\bigg \vert \sum_{k=1}^K \int_0^T\frac{\tilde \lambda_t^k((\tilde f_k-\tilde f^0_k).\boldsymbol{\bar\varphi}^0_k)\tilde \lambda_t^k(0,\omega_\varphi(\tilde \boh_{u,j,k})- \omega_\varphi(\tilde \boh_k))}{\lambda_t^k(f^0_k)^2} dN_t^k\bigg \vert\nonumber \\
    &\leq \sum_{k=1}^K \int_0^T\frac{\tilde \lambda_t^k((\vert \tilde f_k-\tilde f^0_k\vert).\boldsymbol{\bar\varphi}^0_k)\tilde \lambda_t^k(0,\vert \omega_\varphi(\tilde \boh_{u,j,k})- \omega_\varphi(\tilde \boh_k)\vert )}{\lambda_t^k(f^0_k)^2} dN_t^k\nonumber\\
    &\lesssim  \frac{1}{\sqrt{T}}\sum_{k=1}^K \int_0^T\frac{\tilde \lambda_t^k((\vert \tilde f_k-\tilde f^0_k\vert).\boldsymbol{\bar\varphi}^0_k)\tilde \lambda_t^k(0,\vert \varphi'(\gamma(\tilde \boh_{u,j}, \tilde \boh))-\varphi'(\tilde \boh^0))\vert )}{\lambda_t^k(f^0_k)^2} dN_t^k\nonumber\\
    &\lesssim  \frac{1}{\sqrt{T}}\sum_{k=1}^K \int_0^T\frac{\tilde \lambda_t^k((\vert \tilde f_k-\tilde f^0_k\vert).\boldsymbol{\bar\varphi}^0_k)\tilde \lambda_t^k\big(0,\vert \tilde \boh_{u,j} - \tilde \boh^0\vert + \vert \tilde \boh -\tilde \boh^0\vert \big) \big)}{\lambda_t^k(f^0_k)^2} dN_t^k.\nonumber
\end{align}
Moreover, similarly to (\ref{4th_1}), we have on the event $\Omega_T$
\begin{align}
    T\E_0\bigg[ \frac{\tilde \lambda_t^k((\vert \tilde f_k-\tilde f^0_k\vert).\boldsymbol{\bar\varphi}^0_k)\tilde \lambda_t^k\big(0,\vert \tilde \boh_{u,j} - \tilde \boh^0\vert + \vert \tilde \boh -\tilde \boh^0\vert  \big)}{\lambda_t^k(f^0_k)}\bigg] \lesssim \log(T)^2\sqrt{T}\varepsilon_T^2\nonumber.
\end{align}
Whence, on the event $\Omega_T$, we have for the fourth term:
\begin{align}\label{4th_ccl}
    &\bigg\vert \frac{\tilde \lambda((\tilde f-\tilde f^0).\boldsymbol{\bar\varphi}^0)\tilde \lambda(0,\omega_\varphi(\tilde \boh_{u,j})- \omega_\varphi(\tilde \boh))}{\lambda(f^0)^2}\bullet(N-\E_0)_T\bigg\vert \nonumber \\
    &\lesssim \frac{1}{\sqrt{T}}\bigg\vert \frac{\tilde \lambda((\vert \tilde f-\tilde f^0\vert ).\boldsymbol{\bar\varphi}^0)\tilde \lambda(0,\vert \tilde \boh_{u,j} - \tilde \boh^0\vert + \vert \tilde \boh -\tilde \boh^0\vert)}{\lambda(f^0)^2}\bullet(N-\E_0)_T\bigg\vert  +  \log(T)^2\sqrt{T}\varepsilon_T^2.
\end{align}
Using triangular inequality and what we did for the second and third term of (\ref{diff_lin_delta_Q}), one can obtain a uniform control over $\A_T(j)$ of the first term on the right hand of side of (\ref{4th_ccl}). We can then as before deduce that
\begin{align}
    \underset{j\in \mathcal{J}_T}{\max}\underset{f\in \A_T(j)}{\sup}\bigg\vert \frac{\tilde \lambda((\tilde f-\tilde f^0).\boldsymbol{\bar\varphi}^0)\tilde \lambda(0,\omega_\varphi(\tilde \boh_{u,j})- \omega_\varphi(\tilde \boh))}{\lambda(f^0)^2}\bullet(N-\E_0)_T\bigg\vert \xrightarrow[T\rightarrow +\infty]{\P_0}0 .\nonumber 
\end{align}

The fifth term can be treated as the fourth one.

The sixth term can be directly bounded. Indeed,
\begin{align}\label{eq_5th}
    &\Big \vert \frac{\tilde \lambda(0, \omega_\varphi (\tilde \boh_{u,j})-\omega_\varphi (\tilde \boh))^2}{\lambda(f^{0})^2}\bullet (N -\E_0)_T \Big \vert \nonumber\\
    &\leq \sum_{k=1}^K\int_0^T\frac{\tilde \lambda_t^k(0, \omega_\varphi (\tilde \boh_{u,j})_k-\omega_\varphi (\tilde \boh)_k)^2}{\tilde \lambda_t^k(f^{0}_k)^2} dN_T^k + T\sum_{k=1}^K\E_0\bigg[\frac{\tilde \lambda_A^k(0, \omega_\varphi (\tilde \boh_{u,j})_k-\omega_\varphi (\tilde \boh)_k)^2}{\lambda_A^k(f^{0}_k)}\bigg].
\end{align}
Using (\ref{diff_omega_j_precise}) and (\ref{A_inf}),  we find that on the event $\Omega_T$:
\begin{align}
    \sum_{k=1}^K\int_0^T\frac{\tilde \lambda_t^k(0, \omega_\varphi (\tilde \boh_{u,j})_k-\omega_\varphi (\tilde \boh)_k)^2}{\lambda_t^k(f^{0}_k)^2} dN_T^k&\lesssim N([0,T]) \log(T)^{2}\Vert \omega_\varphi (\tilde \boh_{u,j})-\omega_\varphi (\tilde \boh)\Vert_\infty^2\nonumber \\
    &\lesssim \frac{N([0,T])}{T} \log(T)^{2}\Vert \varphi'(\gamma(\tilde \boh_{u,j},\tilde \boh))-\varphi'(\tilde \boh^0)\Vert_\infty^2\nonumber \\
    &\lesssim \log(T)^2 \big(\Vert \tilde \boh_{u,j} -\tilde \boh^0\Vert_\infty^2 + \Vert \tilde \boh -\tilde \boh^0\Vert_\infty^2 \big)\nonumber \\
    &\lesssim \log(T)^{4}J_T\varepsilon_T^2\xrightarrow[T\rightarrow +\infty]{}0.\nonumber
\end{align}
In the same way, the expectation on the right hand side of (\ref{eq_5th}), can be uniformly bounded independently of $f$ and $j$ by some sequence $u_T\rightarrow 0$. Thus, it proves that
\begin{align}
    \underset{j\in \mathcal{J}_T}{\max}\underset{f\in \A_T(j)}{\sup}\Big \vert \frac{\tilde \lambda(0, \omega_\varphi (\tilde \boh_{u,j})-\omega_\varphi (\tilde \boh))^2}{\lambda(f^{0})^2}\bullet (N -\E_0)_T \Big \vert  \xrightarrow[T\rightarrow +\infty]{\P_0} 0. \nonumber
\end{align}

Coming back to (\ref{recall_remainder}), it remains to study the term involving the function $V$ and the term $\tilde R_{T,\varphi}$.\\

\textit{Term involving function $\mathit{V}$:} 
We recall that for $x>-1$, $V(x) = \log(1+x)-x+x^2/2$, $V'(x) = x^2/(1+x)$ and $V''(x) = (x^2+2x)/(1+x)^2$.  We first fix $j\in \mathcal{J}_T$ and $f\in \A_T(j)$. To begin, note that  by (\ref{A_inf}),  on the event $\Omega_T$, for any $t\in [0,T]$ and $k\in \K$, we have
\begin{align}\label{rq_V_1}
    \Big\vert \frac{\tilde \lambda_t^k(f_k-f^0_k)}{\lambda_{t}^k(f^0_k)}\Big\vert \lesssim \log(T) \Vert f-f^0\Vert_\infty \lesssim \log(T)\sqrt{J_T}\varepsilon_T =o(1),
\end{align}
and
\begin{align}\label{rq_V_2}
    \Big\vert \frac{\tilde \lambda_t^k(f_{u,j,k}-f_k)}{\lambda_{t}^k(f^0_k)}\Big\vert &\lesssim \log(T) \Vert f_{u,j}-f\Vert_\infty \lesssim  \frac{\log(T)}{\sqrt{T}}=o(1).
\end{align}
Then, by Taylor-Lagrange expansion, for all $k\in \K$ and $t\geq 0$, there exists a random variable
\begin{align}
    \Gamma_t^k(f) \in \bigg[ \frac{\tilde \lambda_t^k(f_{u,j,k}-f^{0}_k)}{\lambda_{t}^k(f^0_k)}, \frac{\tilde \lambda_t^k(f_k-f^{0}_k)}{\lambda_{t}^k(f^0_k)}\bigg],\nonumber
\end{align}
such that
\begin{align}\label{dl_V}
     &\sum_{k=1}^K \int_0^T V\bigg(\frac{\tilde \lambda_t^k(f_{u,j,k}-f^{0}_k)}{\lambda_{t}^k(f^0_k)}\bigg)- V\bigg(\frac{\tilde \lambda_t^k(f_k-f^{0}_k)}{\lambda_{t}^k(f^0_k)}\bigg)  \frac{dN_t^k}{\lambda_{t}^k(f^0_k)}\nonumber \\
     &=  \sum_{k=1}^K \int_0^T\frac{\tilde \lambda_t^k(f_{u,j,k}-f_k)}{\lambda_{t}^k(f^0_k)} V'\Big(\frac{\tilde \lambda_t^k(f_k-f^0_k)}{\lambda_{t}^k(f^0_k)}\Big)  \frac{dN_t^k}{\lambda_{t}^k(f^0_k)} + \sum_{k=1}^K \int_0^T \frac{\tilde \lambda_t^k(f_{u,j,k}-f_k)^2}{2\lambda_{t}^k(f^0_k)^2}V''\big(\Gamma_t^k(f)\big) \frac{dN_t^k}{\lambda_{t}^k(f^0_k)}.
\end{align}
For the first term on the right hand side of (\ref{dl_V}), with the two previous remarks (\ref{rq_V_1}) and (\ref{rq_V_2}) and the fact that for $\vert x\vert <1/2$, $\vert V'(x)\vert \leq 2x^2$,   we have  on the event $\Omega_T$ for $T$ large enough
\begin{align}
    &\bigg \vert \sum_{k=1}^K \int_0^T\frac{\tilde \lambda_t^k(f_{u,j,k}-f_k)}{\lambda_{t}^k(f^0_k)} V'\Big(\frac{\tilde \lambda_t^k(f_k-f^0_k)}{\lambda_{t}^k(f^0_k)}\Big)  \frac{dN_t^k}{\lambda_{t}^k(f^0_k)} \bigg\vert\nonumber\\
    &\lesssim\frac{\log(T)}{\sqrt{T}}\sum_{k=1}^K \int_0^T \frac{\tilde \lambda_t^k(f_k-f^0_k)^2}{\lambda_{t}^k(f^0_k)^2} dN_t^k \lesssim  \frac{\log(T)}{\sqrt{T}} \frac{\tilde \lambda(f-f^0)^2}{\lambda(f^{0})^2} \bullet (N-\E_0)_T +  \log(T) \sqrt{T} \Vert f-f^0\Vert_L^2 \nonumber \nonumber\\
    &\lesssim   \frac{\log(T)}{\sqrt{T}} \frac{\tilde \lambda(f-f^0)^2}{\lambda(f^{0})^2} \bullet (N-\E_0)_T + \log(T)^3 \sqrt{T} \varepsilon_T^2,\nonumber
\end{align}
and by assumption $\log(T)^3 \sqrt{T} \varepsilon_T^2 \rightarrow 0$. Furthermore, linearizing again by Lemma \ref{lemma_linearization}, we obtain
\begin{equation} \label{decomp_term1_V}
\begin{aligned}
    &\frac{\log(T)}{\sqrt{T}} \frac{\tilde\lambda(f-f^0)^2}{\lambda(f^{0})^2} \bullet (N-\E_0)_T =  \frac{\log(T)}{\sqrt{T}} \frac{\tilde \lambda((\tilde f-\tilde f^0).\boldsymbol{\bar\varphi}^0)^2}{\lambda(f^{0})^2} \bullet (N-\E_0)_T\\
    &\hspace{3cm}+ \frac{\log(T)}{\sqrt{T}} \frac{\tilde \lambda(0, \omega_\varphi(\tilde h))^2}{\lambda(f^{0})^2} \bullet (N-\E_0)_T  \\
    &\hspace{3cm}+ 2\frac{\log(T)}{\sqrt{T}} \frac{\tilde \lambda((\tilde f-\tilde f^0).\boldsymbol{\bar\varphi}^0)\tilde \lambda(0, \omega_\varphi(\tilde h))}{\lambda(f^{0})^2} \bullet (N-\E_0)_T .
\end{aligned}
\end{equation}
Then, because $\underset{f\in \A_T}{\sup }\Vert \tilde f-\tilde f^0\Vert_\infty \leq C$ and $\underset{f\in \A_T}{\sup }\Vert  \omega_\varphi(\tilde h)\Vert_\infty \leq C$ for some $C>0$, these three terms on the right hand side of (\ref{decomp_term1_V}) can be treated in the same way as the second and the third terms of (\ref{diff_lin_delta_Q}). Thus, we have that
\begin{align}
    \underset{j\in \mathcal{J}_T}{\max}\underset{f\in \A_T(j)}{\sup } \frac{\log(T)}{\sqrt{T}}\bigg \vert  \frac{\tilde \lambda(f-f^0)^2}{\lambda(f^{0})^2} \bullet (N-\E_0)_T  \bigg \vert \xrightarrow[T\rightarrow +\infty]{\P_0} 0,\nonumber
\end{align}
and therefore $$\underset{j\in\mathcal{J}_T}{\max}\underset{f\in \A_T(j)}{\sup }  \bigg \vert \sum_{k=1}^K \int_0^T\frac{\tilde \lambda_t^k(f_{u,j,k}-f_k)}{\tilde \lambda_{t}^k(f^0_k)} V'\Big(\frac{\tilde \lambda_t^k(f_k-f^0_k)}{\lambda_{t}^k(f^0_k)}\Big)  dN_t^k \bigg\vert = o_{\P_0}(1).$$
Now, for the second term on the right hand side of (\ref{dl_V}), using (\ref{rq_V_1}), (\ref{rq_V_2}) and that $\vert V''(x)\vert \leq 4(x^2+2\vert x\vert )$ for $\vert x\vert \leq 1/2$, we can bound it on the event $\Omega_T$ (for $T$ large enough) in the following way,
\begin{align}
    &\bigg \vert \sum_{k=1}^K \int_0^T \frac{\tilde \lambda_t^k(f_{u,j,k}-f_k)^2}{2\lambda_{t}^k(f^0_k)^2}V''\big(\Gamma_t^k(f)\big) \frac{dN_t^k}{\lambda_t^k(f^0_k)} \bigg \vert\nonumber\\
    &\lesssim N([0,T]) \log(T)^3 \Vert f_{u,j} -f\Vert_\infty^2\times  \underset{t\in [0,T], k\in \K}{\sup} \big\vert V''\big(\Gamma_t^k(f)\big)\big\vert\nonumber \\
    &\lesssim \frac{N([0,T]) \log(T)^3}{T} \times  \underset{t\in [0,T], k\in \K}{\sup}\vert \Gamma_t^k(f)\vert^2 +  2\vert \Gamma_t^k(f)\vert \lesssim \log(T)^5 \sqrt{J_T}\varepsilon_T =o(1).\nonumber
\end{align}
Whence,
\begin{align}
    \underset{j\in \mathcal{J}_T}{\max}\underset{f\in \A_T(j)}{\sup}\bigg\vert \sum_{k=1}^K \int_0^T \frac{\tilde \lambda_t^k(f_{u,j,k}-f_k)^2}{2\lambda_{t}^k(f^0_k)^2}V''\big(\Gamma_t^k(f)\big) \frac{dN_t^k}{\lambda_{t}^k(f^0_k)} \bigg\vert \xrightarrow[T\rightarrow +\infty]{\P_0}0.\nonumber
\end{align}

 Lastly we have to study the difference of the terms defined by $R_{T,\varphi}$.\\

 \textit{Difference} $\mathit{\tilde R_{T,\varphi}(f_{u,j}) -\tilde R_{T,\varphi}(f)} $. Let $j\in \mathcal{J}_T$ (in particular $j\leq J_T$) and $f \in \A_{T}(j)$. First, recall that by Lemma \ref{lemma_linearization} we have $ \Vert \omega_\varphi(\tilde h_{u,j})\Vert_1^{1/2}+ \Vert \omega_\varphi(\tilde h)\Vert_1^{1/2}\lesssim \log(T)\varepsilon_T $. Moreover, by (\ref{A_inf}), $\Vert \omega_\varphi(\tilde h_{u,j})\Vert_\infty^{1/2}+ \Vert \omega_\varphi(\tilde h)\Vert_\infty^{1/2}$ can be bounded by a independently of $j$ and $\tilde f$. These two remarks, combined with as before (\ref{diff_omega_j_precise}) and the equivalence of norms, give
\begin{align}
    T\Big\vert \Vert \omega_\varphi(\tilde h_{u,j})\Vert_L^2 -\Vert \omega_\varphi(\tilde h)\Vert_L^2\Big\vert&\leq T \big( \Vert \omega_\varphi(\tilde h_{u,j})\Vert_L + \Vert \omega_\varphi(\tilde h)\Vert_L\big) \Vert \omega_\varphi(\tilde h_{u,j}) - \omega_\varphi(\tilde h)\Vert_L \nonumber\\
    &\lesssim T \big( \Vert \omega_\varphi(\tilde h_{u,j})\Vert_2 + \Vert \omega_\varphi(\tilde h)\Vert_2\big) \Vert \omega_\varphi(\tilde h_{u,j}) - \omega_\varphi(\tilde h)\Vert_2  \nonumber \\
    &\lesssim T \big( \Vert \omega_\varphi(\tilde h_{u,j})\Vert_1^{1/2}\Vert \omega_\varphi(\tilde h_{u,j})\Vert_\infty^{1/2}+ \Vert \omega_\varphi(\tilde h)\Vert_1^{1/2}\omega_\varphi(\tilde h)\Vert_\infty^{1/2}\big)\nonumber\\
    &\hspace{0.5cm}\times \frac{1 }{\sqrt{T}} \Vert  \varphi'(\gamma(\tilde \boh_{u,j},\tilde \boh)) - \varphi'(\tilde \boh^0)\Vert_2  \nonumber \\
    &\lesssim  \log(T)\sqrt{T} \varepsilon_T\times \Vert\varphi'(\gamma(\tilde \boh_{u,j},\tilde \boh)) - \varphi'(\tilde \boh^0)\Vert_2\nonumber\lesssim \log(T)\sqrt{T} \varepsilon_T^2 =o(1),
\end{align}
by assumption. So, $\underset{j\in \mathcal{J}_T}{\max}\underset{f\in \A_T(j)}{\sup}T\Big\vert \Vert \omega_\varphi(\tilde h_{u,j})\Vert_L^2 -\Vert \omega_\varphi(\tilde h)\Vert_L^2\Big\vert\xrightarrow[T\rightarrow +\infty ]{}0$. Secondly, these arguments also give
\begin{align}
    &T\big \vert \langle \tilde f_{u,j} - \tilde f^{0}, (0, \omega_\varphi(\tilde h_{u,j}) )\rangle_{L} -  \langle\tilde f- \tilde f^{0}, (0, \omega_\varphi(\tilde h) )\rangle_{L} \big \vert \nonumber \\
    &\leq  T \big \vert \langle \tilde f_{u,j}-\tilde f, (0, \omega_\varphi(\tilde h_{u,j}) )\rangle_{L}\big \vert + T \big \vert \langle\tilde f- \tilde f^{0}, (0, \omega_\varphi(\tilde h_{u,j}) -\omega_\varphi(\tilde h) )\rangle_{L} \big \vert \nonumber \\
    &\lesssim  \sqrt{T} \big \vert \langle \tilde \psi^{0,j}_{L,\varphi}, (0, \omega_\varphi(\tilde h_{u,j}) )\rangle_{L}\big \vert + T\Vert \tilde f- \tilde f^{0}\Vert_2 \Vert \omega_\varphi(\tilde h_{u,j}) -\omega_\varphi(\tilde h) \Vert_2 \nonumber \\
    &\lesssim  \sqrt{T} \big \vert \langle \tilde \psi^{0,j}_{L,\varphi}, (0, \omega_\varphi(\tilde h_{u,j}) )\rangle_{L}\big \vert+ \log(T)^2\sqrt{T}\varepsilon_T^2\nonumber .
\end{align}
Now, for $k\in \K$ let $p_{k,n}= \P_0(N^k([0,A])=n)$. There exists $v>0$ such that for any $k\in \K$ the variable $vN^k([0,A])$ has exponential moments (see proposition 3 of \cite{hansen_al} for instance). Therefore, $p_{k,n} = o(e^{-vn})$ and thus for any $a,b>0$ the series of general term $n^ap_{k,n}^b$ converges. Moreover, since $\E_0[\lambda_A^k(0,\vert \omega_\varphi(\tilde h_u))\vert]\lesssim \Vert\omega_\varphi(\tilde h_u)\Vert_1\lesssim  \log(T)^2\varepsilon_T^2$, we have
\begin{align}
    \big \vert \langle \tilde \psi^{0,j}_{L,\varphi}, (0,\omega_\varphi(\tilde h_u))\rangle_L\big\vert \lesssim \sum_{k=1}^K\E_0\Big[\big(1+N([0,A])\big) \lambda_A^k(0,\vert\omega_\varphi(\tilde h_u)\vert) \Big] \nonumber\\
    \lesssim \log(T)^2\varepsilon_T^2 + \sum_{k=1}^K\E_0\Big[N([0,A]) \lambda_A^k(0,\vert\omega_\varphi(\tilde h_u)\vert) \Big]. \nonumber 
\end{align}
Let $0<\delta<1$ be as in Theorem \ref{main_theorem}, let $k\in \K$, by Hölder inequality (with $p=1/(1-\delta)$ and $q=1/\delta$), we find
\begin{align}
    \E_0\Big[N([0,A]) \lambda_A^k(0,\vert\omega_\varphi(\tilde h_u)\vert) \Big] &= \E_0\Big[N([0,A]) \lambda_A^k(0,\vert\omega_\varphi(\tilde h_u)\vert)^\delta\lambda_A^k(0,\vert\omega_\varphi(\tilde h_u)\vert)^{1-\delta}  \Big]\nonumber\\
    &\leq \Vert \omega_\varphi(\tilde h_u)\Vert_\infty^\delta\E_0\Big[N([0,A])^{1+\delta}\lambda_A^k(0,\vert\omega_\varphi(\tilde h_u)\vert)^{1-\delta}  \Big] \nonumber \\
    &\lesssim \E_0\Big[N([0,A])^{1+\delta}\lambda_A^k(0,\vert\omega_\varphi(\tilde h_u)\vert)^{1-\delta}  \Big] \nonumber \\
    &= \sum_{n\geq 0} n^{1+\delta}\E_0\Big[\onee_{N^k([0,A])=n}\lambda_A^k(0,\vert\omega_\varphi(\tilde h_u)\vert)^{1-\delta}  \Big] \nonumber \\
    &\lesssim \sum_{n\geq 0} n^{1+\delta}p_{k,n}^{\delta}\E_0\big[\lambda_A^k(0,\vert\omega_\varphi(\tilde h_u)\vert)  \big]^{1-\delta}\nonumber \\
    &\lesssim \Big(\log(T)^2\varepsilon_T^{2}\Big)^{1-\delta}\sum_{n\geq 0} n^{1+\delta}p_{k,n}^{1/\delta}\lesssim \log(T)^2\varepsilon_T^{2(1-\delta)}.\nonumber
\end{align}
Consequently, $\sqrt{T}\big \vert \langle \tilde \psi^{0,j}_{L,\varphi}, (0,\omega_\varphi(\tilde h_u))\rangle_L\big\vert  \lesssim\log(T)^2\sqrt{T}\varepsilon_T^{2(1-\delta)}= o(1)$. So, it proves that
\begin{align}
    &\underset{j\in \mathcal{J}_T}{\max}\underset{f\in \A_T(j)}{\sup}T\big \vert \langle \tilde f_{u,j} - \tilde f^{0}, (0, \omega_\varphi(\tilde h_{u,j}) )\rangle_{L} -  \langle\tilde f- \tilde f^{0}, (0, \omega_\varphi(\tilde h) )\rangle_{L} \big \vert=o(1).\nonumber
\end{align}
 Thirdly, we have to show that
\begin{align} \label{rem_omega_third}
    \underset{j\in \mathcal{J}_T}{\sup}\hspace{0.05cm}\underset{\tilde f \in \A_{T}(j)}{\sup}\sqrt{T}\big\vert W_{T}\big(0, \omega_\varphi(\tilde h_{u,j})- \omega_\varphi(\tilde h)\big)\big \vert  \xrightarrow[T\rightarrow +\infty]{\P_0}0.
\end{align}
 To do so, we begin by rewriting in a different way (\ref{diff_omega_j_precise}). Using a Taylor expansion at order 3 around $\tilde h$ (we can by assumption \hyperref[as_A']{(A')}) we find
\begin{align}
    &\omega_\varphi(\tilde \boh_{u,j}) - \omega_\varphi(\tilde \boh)\nonumber\\
    &= \varphi(\tilde \boh_{u,j}) - \varphi(\tilde \boh) - (\tilde \boh_{u,j} - \tilde \boh).\boldsymbol{\bar\varphi}^0\nonumber\\
    &= (\tilde \boh_{u,j} - \tilde \boh).(\varphi'(\tilde \boh) -  \varphi'(\tilde \boh^0)) + \frac{1}{2}(\tilde \boh_{u,j} - \tilde \boh)^2\varphi''(\tilde \boh) + \frac{1}{6}(\tilde \boh_{u,j} - \tilde \boh)^3.\varphi'''(\gamma(\tilde \boh_{u,j},\tilde \boh))\nonumber \\
    &= \frac{-u\bog^{0,j}_{L,\varphi}}{\sqrt{T}}.(\varphi'(\tilde \boh) -  \varphi'(\tilde \boh^0)) + \frac{u^2(\bog^{0,j}_{L,\varphi})^2}{2T}.\varphi''(\tilde \boh) - \frac{u^3(\bog^{0,j}_{L,\varphi})^3}{6T^{3/2}}.\varphi'''(\gamma(\tilde \boh_{u,j},\tilde \boh)).\nonumber
\end{align}
and as before $\gamma(\tilde \boh_{u,j},\tilde \boh)$ is a function in the bracket $[\tilde \boh_{u,j},\tilde \boh]$. Whence, to prove (\ref{rem_omega_third}), it is enough to prove that
\begin{align}
    & \underset{j\in \mathcal{J}_T}{\sup}\hspace{0.05cm}\underset{\tilde f \in \A_{T}(j)}{\sup}\big\vert W_{T}\big(0,\bog^{0,j}_{L,\varphi}.(\varphi'(\tilde \boh) -  \varphi'(\tilde \boh^0))\big)\big \vert  \xrightarrow[T\rightarrow +\infty]{\P_0}0 \label{eq_third_1},\\
    &\underset{j\in \mathcal{J}_T}{\sup}\hspace{0.05cm}\underset{\tilde f \in \A_{T}(j)}{\sup}\big\vert \frac{1}{\sqrt{T}}W_{T}\big(0,(\bog^{0,j}_{L,\varphi})^2.\varphi''(\tilde \boh)\big)\big \vert  \xrightarrow[T\rightarrow +\infty]{\P_0}0\label{eq_third_2},\\
    &\underset{j\in \mathcal{J}_T}{\sup}\hspace{0.05cm}\underset{\tilde f \in \A_{T}(j)}{\sup}\frac{1}{T}\big\vert W_{T}\big(0, (\bog^{0,j}_{L,\varphi})^3.\varphi'''(\gamma(\tilde \boh_{u,j},\tilde \boh))\big)\big \vert  \xrightarrow[T\rightarrow +\infty]{\P_0}0 \label{eq_third_3}.
\end{align}
(\ref{eq_third_1}) and (\ref{eq_third_2}) can be proved with the same chaining argument and the similar computations to those used for the second term of (\ref{diff_lin_delta_Q}) (for the Bernstein inequality, use the second inequality of lemma \ref{Bernstein} instead of the third one and then recall that $\varphi'$ and $\varphi''$ are Lipschitz on $\varphi^{-1}(I_{0}(G))$). For (\ref{eq_third_3}), it can be directly uniformly bounded on the event $\Omega_T$, independently of $f$ and $j$ in the following way.  Recall that $\varphi'''$ is bounded on $\tilde \A_T$ and we obtain  on the event $\Omega_T$,
\begin{align}
    \frac{1}{T}\big\vert W_{T}\big(0, (\bog^{0,j}_{L,\varphi})^3.\varphi'''(\gamma(\tilde \boh_{u,j},\tilde \boh))\big)\big \vert \lesssim \frac{1}{T^{3/2}}\big(\log(T)N([0,T]) + \log(T)^2T\big)\lesssim \frac{\log(T)^2}{\sqrt{T}}.\nonumber
\end{align}
Therefore, (\ref{eq_third_3}) is also proved. It concludes the proof of Lemma \ref{control_rem}.
\end{proof}

\section{Control of the number of points of the process}\label{appendix_omega}
We recall here lemma A.1 of \cite{Sulem_concentration} on the number of points of the process.
\begin{lemma}
\label{omega_T}
Let $N$ be a stationary,  $K$-multivariate and non-linear ReLu Hawkes process with parameters $f^0$ such that functions $h^0_{l,k}$ are supported on a known and bounded interval $[0,A]$. For any $Q>0$ and $\alpha>0$, there exists a constant $C_\alpha$, depending only on $f^0$ such that for $T>0$, the set
\begin{align}
    \Omega_T = \bigg\{ \underset{l\in\K}{\max}\hspace{0.1cm}\underset{t\in[0,T]}{\sup} N^l\big([t-A,t]\big) \leq C_\alpha \log(T)\bigg\}\nonumber
\end{align}
satisfies $\P_0(\Omega_T) \leq T^{-\alpha}$. Moreover, for any $1\leq q \leq Q$, for $T$ large enough,
\begin{align}
    \E_0\Big[\underset{k\in \K}{\max}\hspace{0.05cm}\underset{t\in [0,T]}{\sup}N^k([t-A,t[)^q\onee_{\Omega_T^c}\Big] \leq 2T^{-\alpha/2}.\nonumber
\end{align}
\end{lemma}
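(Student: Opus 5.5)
The plan is to prove the bound in the form $\P_0(\Omega_T^c)\le T^{-\alpha}$ (this is clearly the intended reading, since $\Omega_T$ is the high-probability event). Everything rests on exponential moments of the number of points in a window of fixed length. The argument has three steps: establish those exponential moments, cover $[0,T]$ by boundedly many windows and take a union bound, then deduce the moment bound on $\Omega_T^c$ by Cauchy--Schwarz.

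\emph{Step 1: exponential moments.} First I reduce from the ReLU process to a linear one. Since $x\mapsto x_+$ is non-decreasing and $\big(\nu_k+\sum_l\int h_{l,k}\,dN^l\big)_+\le \nu_k+\sum_l\int h^+_{l,k}\,dN^l$, a standard thinning/coupling construction from a common Poisson random measure on $\R\times\K\times\R_+$ produces a stationary linear Hawkes process $\bar N$ with parameters $(\nu^0,\boh^{0,+})$ such that $N\le\bar N$ pathwise. The stationary version of $\bar N$ exists because $r(\rho_+)<1$. For $\bar N$ I use the Poisson cluster representation of \cite{hawkes_oakes}:
\begin{itemize}
\item immigrants form a Poisson process, so their number in any bounded window has all exponential moments;
\item each cluster is a multitype Galton--Watson tree with Poisson offspring and mean matrix $\rho_+$; since this branching is subcritical, the total progeny has an exponential moment;
\item clusters intersecting $[-A,A]$ come from immigrants in a region whose expected count is finite, because cluster lengths also have exponential tails, as each generation extends the time span by at most $A$.
\end{itemize}
Combining these gives $v>0$ and $M<\infty$ with $\E_0[e^{vN([-A,A])}]\le M$; this is essentially proposition 3 of \cite{hansen_al}. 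By stationarity the same bound holds for every window of length $2A$.

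\emph{Step 2: union bound.} Every interval $[t-A,t]$ with $t\in[0,T]$ lies in some $I_i=[iA-A,iA+A]$ with $0\le i\le\lceil T/A\rceil$. Hence, by Markov's inequality,
\begin{align*}
\P_0(\Omega_T^c)\le\sum_{i}\P_0\big(N(I_i)>C\log T\big)\le (T/A+2)\,M\,T^{-vC}.
\end{align*}
Choosing $C_\alpha$ with $vC_\alpha\ge \alpha+3$ gives $\P_0(\Omega_T^c)\le T^{-\alpha-2}\le T^{-\alpha}$ for $T$ large. For $T$ small, I enlarge $C_\alpha$ if needed.

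\emph{Step 3: moment bound on $\Omega_T^c$.} Write $X=\max_k\sup_{t\in[0,T]}N^k([t-A,t[)\le\max_i N(I_i)$. By Cauchy--Schwarz,
\begin{align*}
\E_0[X^q\onee_{\Omega_T^c}]\le \E_0[X^{2q}]^{1/2}\,\P_0(\Omega_T^c)^{1/2}.
\end{align*}
I bound $\E_0[X^{2q}]\le\sum_i\E_0[N(I_i)^{2q}]\lesssim T$, since $\E_0[N(I_i)^{2q}]$ is finite by Step 1, uniformly in $q\le Q$. With the choice $vC_\alpha\ge\alpha+3$ from Step 2, this gives $\E_0[X^q\onee_{\Omega_T^c}]\lesssim T^{1/2}T^{-(\alpha+2)/2}\le 2T^{-\alpha/2}$ for $T$ large, with $C_\alpha$ depending on $Q$ as well.

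The main obstacle is Step 1, specifically the domination coupling for the ReLU intensity together with exponential tails of subcritical multitype clusters. I would cite \cite{hansen_al} and Lemma 5.1 of \cite{Sulem_concentration} for these rather than reprove them.
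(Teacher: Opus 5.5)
Your argument is correct, including your reading of the first claim as $\P_0(\Omega_T^c)\le T^{-\alpha}$. The paper gives no proof of its own here; it only recalls Lemma A.1 of \cite{Sulem_concentration}. Your route is the standard argument behind that lemma: domination by the linear process with kernels $(h^0_{l,k})_+$, exponential moments of $N([-A,A])$ as in Proposition 3 of \cite{hansen_al} (which the paper itself invokes for exponential moments), a union bound over $O(T)$ windows of length $2A$, and Cauchy--Schwarz with $\E_0[X^{2q}]\lesssim T$.

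The one loose phrase is ``for $T$ small, enlarge $C_\alpha$''. For $T$ just above $1$, the Markov bound $(T/A+2)MT^{-vC_\alpha}$ stays above $1$, so on a bounded range of $T$ you must combine it with $\P_0(N([-A,T_0])=0)>0$. This does not matter, since the lemma is only used as $T\to\infty$.
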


\end{appendix}

\begin{acks}[Acknowledgments]
We thank Marc Hoffmann and Vincent Rivoirard for pointing out \cite{ker_picard} to us. We also thank Richard Nickl and Aad van der Vaart for useful discussions on the regularity of the least favorable direction.

\end{acks}
\begin{funding}
MD's PhD position is funding by the Imperial College London - CNRS joint PhD programme. JR received funding from the European Research
Council (ERC) under the European Union’s Horizon 2020 research and innovation programme (grant agreement No 834175).
\end{funding}

\bibliographystyle{imsart-number} 
\bibliography{bib_hawkes.bib}       

\end{document}